\definecolor{indigo}{HTML}{492DA5}
\g@addto@macro\bfseries{\boldmath}\makeatother
\let\origsection\section
\renewcommand{\section}{\@ifstar{\starsection}{\nostarsection}}
\newcommand{\sectionspace}{\vspace{0.5ex}}
\newcommand{\nostarsection}[1]{\sectionspace\origsection{#1}\sectionspace}
\newcommand{\starsection}[1]{\sectionspace\origsection*{#1}\sectionspace}
\setlist[enumerate]{font=\normalfont}
\crefname{enumi}{}{}
\crefname{enumii}{}{}
\newcommand\numberthis{\addtocounter{equation}{1}\tag{\theequation}}
\numberwithin{equation}{section}
\crefname{equation}{Equation}{Equations}
\crefname{inequality}{Inequality}{Inequalities}
\newenvironment{inequality}{\begin{equation}}{\end{equation}\ignorespacesafterend}
\newtheorem{thmx}{Theorem}
\crefname{thmx}{Theorem}{Theorems}
\newtheorem{theorem}{Theorem}[section]
\newtheorem{thm}[theorem]{Theorem}
\crefname{thm}{Theorem}{Theorems}
\newtheorem{lemma}[theorem]{Lemma}
\crefname{lemma}{Lemma}{Lemmas}
\newtheorem{prop}[theorem]{Proposition}
\crefname{prop}{Proposition}{Propositions}
\newtheorem{cor}[theorem]{Corollary}
\crefname{cor}{Corollary}{Corollaries}
\theoremstyle{definition}
\newtheorem{defn}[theorem]{Definition}
\crefname{defn}{Definition}{Definitions}
\newtheorem{remark}[theorem]{Remark}
\crefname{remark}{Remark}{Remarks}
\newtheorem{example}[theorem]{Example}
\crefname{example}{Example}{Examples}
\renewcommand{\d}{\mathrm{d}}
\newcommand{\C}{\mathbb{C}}
\newcommand{\N}{\mathbb{N}}
\newcommand{\R}{\mathbb{R}}
\newcommand{\Z}{\mathbb{Z}}
\newcommand{\CC}{\mathcal{C}}
\newcommand{\EE}{\mathcal{E}}
\newcommand{\GG}{\mathcal{G}}
\newcommand{\II}{\mathcal{I}}
\newcommand{\JJ}{\mathcal{J}}
\newcommand{\KK}{\mathcal{K}}
\newcommand{\MM}{\mathcal{M}}
\newcommand{\NN}{\mathcal{N}}
\newcommand{\QQ}{\mathcal{Q}}
\newcommand{\WW}{\mathcal{W}}
\newcommand{\GGo}{\GG^{(0)}}
\newcommand{\rmU}{\mathrm{U}}
\newcommand{\SO}[1]{\mathrm{SO}(#1)}
\newcommand{\loc}{\mathrm{loc}}
\newcommand{\Mloc}{\MM_\loc}
\newcommand{\id}{\mathrm{id}}
\newcommand{\supp}{\operatorname{supp}}
\newcommand{\osupp}{\supp^{\mathrm{o}}}
\newcommand{\vecspan}{\operatorname{span}}
\newcommand{\cl}[2][]{{\overline{{#2}}}^{#1}}
\newcommand{\clspan}{\cl{\vecspan}}
\newcommand{\restr}[1]{\ensuremath{\vert_{#1}}}
\newcommand{\dom}{\operatorname{dom}}
\newcommand{\ran}{\operatorname{ran}}
\newcommand{\im}{\operatorname{im}}
\newcommand{\Iso}{\operatorname{Iso}}
\newcommand{\alg}{\mathrm{alg}}
\newcommand{\ess}{\mathrm{ess}}
\newcommand{\full}{\mathrm{max}}
\newcommand{\red}{\mathrm{red}}
\newcommand{\simalpha}{\sim_\alpha}
\newcommand{\rtimesalg}{\rtimes_\alg}
\newcommand{\rtimesess}{\rtimes_\ess}
\newcommand{\rtimesfull}{\rtimes_\full}
\newcommand{\rtimesred}{\rtimes_\red}
\newcommand{\ipmid}{\,{\mid}\,}
\newcommand{\Cu}{\mathrm{Cu}}
\newcommand{\Cuc}{\Cu_c}
\newcommand{\Lsc}{\mathrm{Lsc}}
\newcommand{\EL}{\mathrm{EL}}
\newcommand{\ER}{\mathrm{E}}
\newcommand{\precalpha}{\preceq_\alpha}
\newcommand{\rmpath}{\mathrm{path}}
\newcommand{\pathalpha}{\preceq_{\alpha,\rmpath}}
\date{\today}
\title[A dichotomy for inverse-semigroup crossed products]{A dichotomy for inverse-semigroup crossed products via dynamical Cuntz semigroups}
\author[Armstrong]{Becky Armstrong}
\author[Clark]{Lisa Orloff Clark}
\author[an Huef]{Astrid an Huef}
\author[Mart\'inez]{Diego Mart\'inez}
\author[Tolich]{Ilija Tolich}
\address[B.~Armstrong, L.O.~Clark, A.~an Huef, and I.~Tolich]{School of Mathematics and Statistics, Victoria University of Wellington, PO Box 600, Wellington 6140, NEW ZEALAND.}
\email[B.~Armstrong]{\href{mailto:becky.armstrong@vuw.ac.nz}{becky.armstrong@vuw.ac.nz}}
\email[L.O.~Clark]{\href{mailto:lisa.orloffclark@vuw.ac.nz}{lisa.orloffclark@vuw.ac.nz}}
\email[A.~an Huef]{\href{mailto:astrid.anhuef@vuw.ac.nz}{astrid.anhuef@vuw.ac.nz}}
\email[I.~Tolich]{\href{mailto:ilija.tolich@vuw.ac.nz}{ilija.tolich@vuw.ac.nz}}
\address[D.~Mart\'inez]{Department of Mathematics, KU Leuven, Celestijnenlaan 200B, 3001 Leuven, BELGIUM.}
\email[D.~Mart\'inez]{\href{mailto:diego.martinez@kuleuven.be}{diego.martinez@kuleuven.be}}
\subjclass[2020]{46L05}
\keywords{Inverse semigroup, action by an inverse semigroup on a C*-algebra, stably finite C*\nobreakdash-algebra, purely infinite C*-algebra, Cuntz semigroup, type semigroup, non-Hausdorff groupoid}
\thanks{This research was supported by the Marsden Fund of the Royal Society of New Zealand (21\nobreakdash-VUW\nobreakdash-156 and 24\nobreakdash-VUW\nobreakdash-014); the Deutsche Forschungsgemeinschaft (EXC 2044 -- 390685587, Mathematics M\"unster -- Dynamics -- Geometry -- Structure and Project-ID 427320536 -- SFB 1442); the European Research Council (Advanced Grant 834267 -- AMAREC); the Research Foundation Flanders (projects G085020N and 1218726N); and the Engineering and Physical Sciences Research Council of the UK (Grant Number EP/V521929/1). The authors thank the Isaac Newton Institute for Mathematical Sciences for support and hospitality during the programme ``Topological groupoids and their C*-algebras'' where work on this paper was undertaken; thus this work was supported by EPSRC grant no.~EP/Z000580/1 and the Simons Foundation Award SFI-MPS-T-Institutes-00006117.}
\begin{document}

\begin{abstract}
We characterise stable finiteness and pure infiniteness of the essential crossed product of a C*-algebra by an action of an inverse semigroup. Under additional assumptions, we prove a stably finite / purely infinite dichotomy. Our main technique is the development, using an induced action, of a ``dynamical Cuntz semigroup'' that is a subquotient of the usual Cuntz semigroup. We prove that the essential crossed product is stably finite / purely infinite if and only if the dynamical Cuntz semigroup admits / does not admit a nontrivial state. Indeed, a retract of our dynamical Cuntz semigroup suffices to prove the dichotomy. Our results generalise those by Rainone on crossed products of groups acting by automorphisms of a C*-algebra, and we recover results by Kwa\'sniewski--Meyer--Prasad on C*-algebras of non-Hausdorff groupoids.
\end{abstract}

\maketitle
\vspace{-1.5ex}
\tableofcontents
\vspace{-1.5ex}

\section{Introduction}

A set is Dedekind-infinite if it admits a bijection onto a proper subset of itself; otherwise, it is Dedekind-finite. These definitions are intrinsic to the set in question---that is, they do not depend on any extrinsic structure, such as the set of natural numbers. A Dedekind-infinite set consists of the space itself, which can be decomposed into smaller pieces and then glued back together, and some dynamics, which are given by partial bijections and identify certain pieces with others. These ideas have inspired numerous works for more than a century, and in particular motivate many of the ideas in this paper, including our \emph{dynamical Cuntz semigroup}, which is a kind of \emph{type semigroup}.

Type semigroups originally appeared in the work of Banach--Tarski \cite{TW2016}, and encapsulate the idea that ``$2$ units may be equal to $1$ unit''. The prime example of this behaviour is the Banach--Tarski paradox \cite{TW2016}, which involves the decomposition of the unit ball in $\R^3$ (the space) into smaller subsets (the pieces), which are moved via $\SO{3}$ (the dynamics) and then glued together (via taking unions) to form two copies of the unit ball. Tarski's original definition of a type semigroup exploits this phenomenon: the elements of the semigroup are (copies of) subsets of the unit ball, and two subsets are equivalent if one can be decomposed into smaller pieces, these moved via $\SO{3}$, and the images rearranged to get the other set.

Another manifestation of this same idea has appeared in the study of C*-algebras. Given a C*-algebra $A$, we say that projections $p, q \in A$ are Murray--von Neumann equivalent, if $p = v^*v$ and $q = vv^*$ for some $v \in A$. We view the projections in $A$ as the pieces of $A$ that are made to be equivalent via partial isometries (the dynamics). This leads to the definition of the $K_0$-group $K_0(A)$, which may be viewed as a type semigroup (see \cite{Blackadar1998} and references therein). If $A$ contains few or no projections, it is preferable to view the positive elements $a \in A^+$ (of which there are plenty) as the pieces of $A$, and a similar approach to the construction of $K_0(A)$ then yields the \emph{Cuntz semigroup} $\Cu(A)$ of $A$ (see \cite{CEI2008, GardellaPerera2024, ORT2011, Thiel2017}).

Likewise, given a discrete group $\Gamma$ acting on a locally compact Hausdorff space $X$ by homeomorphisms, we may consider open sets $A, B \subseteq X$ to be \emph{pieces} of $X$, and say that $A \precsim B$ if $A$ can be covered by finitely many open sets and these moved via the action of $\Gamma$ to be properly contained in $B$. This leads to the notion of \emph{comparison} of open sets (see \cite{Kerr2020, Ma2022, Rainone2017} and references therein). Other examples of type semigroups appear in \cite{ALM2020, ABBL2020, ABBL2023, BoenickeLi2020, BPWZ2025, KMP2025, Ma2023, Rainone2017, RS2020}. In most of these references, the type semigroups are used to capture a stably finite / purely infinite dichotomy of the respective crossed-product C*-algebras.

This paper grew out of a desire to generalise Rainone's results from \cite{Rainone2017} about crossed products of groups acting by automorphisms of C*-algebras, and analogous results about Hausdorff groupoid C*-algebras due to B\"onicke and Li \cite{BoenickeLi2020} and Rainone and Sims \cite{RS2020}. To achieve this, we consider actions of \emph{inverse semigroups} on C*-algebras; see \cref{subsec: inverse semigroups}. The C*-algebras associated to such actions generalise both the class of groupoid C*-algebras and the class of crossed products by groups acting on (possibly noncommutative) C*-algebras, so our approach should be compared to those of \cite{BoenickeLi2020, BPWZ2025, KMP2025, Ma2022, Rainone2016}.

We introduce a dynamical Cuntz semigroup $\Cu(A)_\alpha$ for an action $\alpha$ of an inverse semigroup $S$ on a (possibly noncommutative) C*-algebra $A$, and we relate the structure of this ``type semigroup'' to a stably finite / purely infinite dichotomy of the essential crossed product $A \rtimesess S$. We use the Cuntz semigroup rather than the $K_0$-group to allow us to discuss actions on C*\nobreakdash-algebras that are not of real rank $0$.

Following this endeavour it quickly becomes apparent that there is an inherent tension between the reduced crossed product $A \rtimesred S$ and the essential crossed product $A \rtimesess S$ \cite{ExelPitts2022, KwasniewskiMeyer2021}. This tension arises because the canonical method of extending an invariant trace on $A$ to $A \rtimesred S$ does not work for $A \rtimesess S$, and even though $A \rtimesess S$ is a quotient of $A \rtimesred S$, not every trace on $A \rtimesred S$ factors through a trace on $A \rtimesess S$. In \cite{KMP2025} Kwa\'sniewski, Meyer, and Prasad resolve this tension by requiring $A \rtimesess S$ to be isomorphic to $A \rtimesred S$, and we follow this strategy here as well. In essence, $A \rtimesred S$ has the ``correct'' functorial properties, while $A \rtimesess S$ has the ``correct'' ideal structure. Thus, the condition that $A \rtimesred S \cong A \rtimesess S$ heuristically means that this crossed product is well-behaved with respect to functoriality (as the Cuntz semigroup should be) and with respect to ideals (which we need to consider because we want to construct faithful traces on the crossed product).

Once we establish a characterisation of when the crossed product is stably finite or purely infinite, the main theorem in the paper is the following dichotomy.

\begin{thmx}[{\cref{thm: dichotomy result}}] \label{thm: main}
Let $\alpha\colon S \curvearrowright A$ be an action of a unital inverse semigroup $S$ on a unital and exact C*-algebra $A$. Suppose that $A \rtimesred S$ is simple, and that the dynamical Cuntz semigroup $\Cu(A)_\alpha$ (as defined in \cref{defn: dynamical cuntz semigroup}) has plain paradoxes. Then $A \rtimesred S$ is either stably finite or purely infinite.
\end{thmx}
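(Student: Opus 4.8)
The plan is to obtain the dichotomy as a formal consequence of two characterisations that I would establish first: that $A \rtimesess S$ is \emph{stably finite} exactly when the dynamical Cuntz semigroup $\Cu(A)_\alpha$ admits a nontrivial state, and that it is \emph{purely infinite} exactly when $\Cu(A)_\alpha$ admits no nontrivial state. Granting these, the conclusion is immediate from the tautology that $\Cu(A)_\alpha$ either carries a nontrivial state or it does not; so the substance lies entirely in the two characterisations, and the role of the present hypotheses is to license both of them at once. Throughout I would use that minimality together with aperiodicity makes $A \rtimesess S$ \emph{simple}, and that the identification $A \rtimesred S = A \rtimesess S$ aligns the functorial theory of the reduced crossed product with the ideal theory of the essential one, so that the dichotomy is genuinely a statement about a single well-behaved algebra.

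For the stably finite branch, suppose $\Cu(A)_\alpha$ admits a nontrivial state. Since $\Cu(A)_\alpha$ is built from the induced action, such a state corresponds to an $\alpha$\nobreakdash-invariant lower-semicontinuous quasitrace on $A$, which extends canonically to a quasitrace on $A \rtimesred S$. Exactness of $A$, and hence of the crossed product, promotes this to a genuine trace by Haagerup's theorem; simplicity forces any nonzero trace to be faithful, and a faithful trace obstructs infinite elements in every matrix amplification. Thus $A \rtimesred S$ is stably finite. Here separability and unitality are the standing regularity hypotheses under which the Cuntz-semigroup-to-quasitrace correspondence and the extension procedure behave as expected.

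The purely infinite branch is the crux, and I expect the \textbf{main obstacle} to lie here. Suppose $\Cu(A)_\alpha$ admits no nontrivial state. Minimality makes $\Cu(A)_\alpha$ simple as an ordered semigroup---every nonzero element is an order unit for the induced dynamics---so Tarski's dichotomy for ordered abelian monoids applies uniformly: the absence of a nontrivial state forces every nonzero $u \in \Cu(A)_\alpha$ to be \emph{paradoxical}, i.e.\ $2u \le u$. The delicate step is transporting this semigroup-level paradoxicality into the genuine Cuntz semigroup $\Cu(A \rtimesess S)$, and this is exactly what the \emph{plain paradoxes} hypothesis is designed to supply: it guarantees that each relation $2u \le u$ in the subquotient $\Cu(A)_\alpha$ is witnessed by an honest Cuntz subequivalence $2\langle a \rangle \le \langle a \rangle$ in $\Cu(A \rtimesess S)$, with no loss in lifting through the subquotient. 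Once every nonzero class is properly infinite in this way, simplicity and the Kirchberg--R\o rdam characterisation yield that $A \rtimesess S = A \rtimesred S$ is purely infinite.

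The subtlety I expect to fight with is controlling the subquotient so that paradoxes are neither created nor destroyed in passing between $\Cu(A \rtimesess S)$ and $\Cu(A)_\alpha$; this is where the \emph{retract} of $\Cu(A)_\alpha$ advertised in the abstract should streamline matters, since both states and paradoxes descend to and lift from the retract, reducing the Tarski step to a semigroup with cleaner order-theoretic behaviour. Finally, the two branches are visibly exclusive---a faithful trace and a properly infinite unit cannot coexist---so the same analysis confirms that the state alternative is sharp and that no third possibility survives under these hypotheses.
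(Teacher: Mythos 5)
Your overall architecture is the same as the paper's: \cref{thm: dichotomy result} is proved there by combining \cref{thm: stable finiteness} and \cref{thm: pure infiniteness}, both of which pivot on the existence or non-existence of a nontrivial order-preserving homomorphism $\nu\colon \Cu(A)_\alpha \to [0,\infty]$, exactly your ``state alternative''. However, both of your branches contain a genuine gap as written.

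The serious one is in the purely infinite branch. After Tarski and plain paradoxes give $2[x]_\alpha \le [x]_\alpha$ for all $x$, the invariant embedding $\hat{\iota}\colon \Cu(A) \to \Cu(A \rtimesess S)$ (\cref{lemma: Cu-embedding in crossed product is invariant,cor: iota hat preserves precalpha}) transports this to $2\hat{\iota}(x) \le \hat{\iota}(x)$ --- but only for classes in the image of $\hat{\iota}$, i.e.\ for positive elements coming from $A$ (or $A \otimes \KK$). The Kirchberg--R{\o}rdam characterisation requires \emph{every} nonzero positive element of $A \rtimesess S$ to be properly infinite, and simplicity does not bridge this gap: by R{\o}rdam's example of a simple nuclear C*-algebra containing both a finite and an infinite projection, ``simple plus an abundance of properly infinite elements'' does not imply purely infinite. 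The paper closes this gap by citing \cite{KwasniewskiMeyer2021} (Corollary~6.7): for \emph{aperiodic} actions, $A \rtimesess S$ is purely infinite if and only if every element of $A^+ {\setminus} \{0\}$ is infinite in $A \rtimesess S$. This is where aperiodicity does real work beyond yielding simplicity, and your proposal has no substitute for it. Relatedly, you misattribute the role of plain paradoxes: it is the \emph{internal} implication $(n+1)x \le nx \implies 2x \le x$ in $\Cu(A)_\alpha$, needed because Tarski's theorem (\cref{thm: Tarski-Wehrung paradoxicality thm}) only yields $(n+1)x \le nx$ for \emph{some} $n$; the lifting into $\Cu(A \rtimesess S)$ is done unconditionally by the invariance of $\hat{\iota}$, not by plain paradoxes.

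In the stably finite branch, the step ``Exactness of $A$, and hence of the crossed product'' is unjustified: exactness is not known to pass to inverse-semigroup crossed products, and your route needs it because you extend the quasitrace first and apply Haagerup second. The paper does these in the opposite order (\cref{thm: OP hom on DCS induces functional,thm: traces reduced}): the nontrivial homomorphism $\nu$ induces a faithful invariant functional on $\Cu(A)$, hence an invariant quasitrace on $A$, which Haagerup's theorem (applied to the exact algebra $A$ itself) upgrades to an invariant tracial state on $A$; this is then extended to $A \rtimesred S = A \rtimesess S$ via the weak conditional expectation, with faithfulness obtained from simplicity or from aperiodicity (via $A$ detecting ideals). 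This gap is repairable by reordering, but as stated the argument does not go through.
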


In general, the Cuntz semigroup of an arbitrary C*-algebra is difficult to compute. However, we establish that a smaller object---a retract of a $\Cu$-semigroup that is strongly invariant with respect to the induced action (see \cref{defn: retract,defn: invariant retract})---can be used instead. This gives a second form of our dichotomy theorem.

\begin{thmx}[{\cref{thm: retract dichotomy}}] \label{thm: main retract}
Let $\alpha\colon S \curvearrowright A$ be an action of a unital inverse semigroup $S$ on a unital and exact C*-algebra $A$. Suppose that $A \rtimesred S$ is simple, and that $(R,\rho,\sigma)$ is a strongly invariant retract of $\Cu(A)$ such that $[a]_\Cu \in \rho(R)$ for each $a \in A^+$. Let $R_\alpha$ be the retracted dynamical Cuntz semigroup, as defined in \cref{defn: retracted DCS}. If $R_\alpha$ has plain paradoxes, then $A \rtimesred S$ is either stably finite or purely infinite.
\end{thmx}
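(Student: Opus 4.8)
The plan is to deduce the statement directly from \cref{thm: main}: the action $\alpha$, the algebra $A$, and the crossed products already meet every hypothesis of \cref{thm: main} except possibly plain paradoxes of $\Cu(A)_\alpha$, so it is enough to show that plain paradoxes of the retracted semigroup $R_\alpha$ force plain paradoxes of the full semigroup $\Cu(A)_\alpha$. The retract data $(R,\rho,\sigma)$ is exactly the bridge that moves paradoxical decompositions from $R_\alpha$ up to $\Cu(A)_\alpha$, and, as a sanity check, the same bridge will match the two objects on the stably finite side as well.

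First I would promote the retract to the dynamical level. By the strong invariance of $(R,\rho,\sigma)$ (\cref{defn: invariant retract}), the maps $\rho$ and $\sigma$ intertwine the induced $S$-actions and hence descend through the subquotient construction of \cref{defn: dynamical cuntz semigroup,defn: retracted DCS} to order-monoid morphisms $\rho_\alpha\colon R_\alpha \to \Cu(A)_\alpha$ and $\sigma_\alpha\colon \Cu(A)_\alpha \to R_\alpha$, each additive and monotone for the dynamical subequivalence $\precalpha$. Functoriality gives $\sigma_\alpha\circ\rho_\alpha = \id_{R_\alpha}$, so $R_\alpha$ is a retract of $\Cu(A)_\alpha$. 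The hypothesis $[a]_\Cu\in\rho(R)$ for every $a\in A^+$ then says that $\rho_\alpha$ is surjective onto the generators $[a]_\Cu$ of $\Cu(A)_\alpha$; equivalently, $\rho_\alpha\circ\sigma_\alpha$ fixes each such generator. Together with $\sigma_\alpha\circ\rho_\alpha=\id_{R_\alpha}$ this makes $\phi\mapsto\phi\circ\rho_\alpha$ and $\psi\mapsto\psi\circ\sigma_\alpha$ mutually inverse bijections between the nontrivial states of $\Cu(A)_\alpha$ and those of $R_\alpha$, confirming that nothing is lost on the stably finite alternative.

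With these maps in hand, the transfer of plain paradoxes is a push-pull argument. Let $[a]_\Cu$ be a generator of $\Cu(A)_\alpha$ with $a\in A^+$, and set $r=\sigma_\alpha([a]_\Cu)\in R_\alpha$, so that $\rho_\alpha(r)=[a]_\Cu$. Since $R_\alpha$ has plain paradoxes, $r$ is paradoxical; in its simplest form this reads $r\oplus r\precalpha r$, and the witnessing relation is built solely from addition and $\precalpha$, both of which $\rho_\alpha$ preserves. Pushing forward therefore yields $[a]_\Cu\oplus[a]_\Cu=\rho_\alpha(r\oplus r)\precalpha\rho_\alpha(r)=[a]_\Cu$, and the same transport works verbatim for any paradoxical relation witnessing plain paradoxes. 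Hence every generator $[a]_\Cu$ of $\Cu(A)_\alpha$ is paradoxical, so $\Cu(A)_\alpha$ has plain paradoxes, and \cref{thm: main} immediately gives that $A\rtimesred S$ is either stably finite or purely infinite.

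The hard part will be the descent in the second step: verifying that the strongly invariant retract maps really do pass to the subquotients $R_\alpha$ and $\Cu(A)_\alpha$ and respect the dynamical order $\precalpha$, rather than only the ambient Cuntz order on $\Cu(A)$. This amounts to checking that the order ideal and the congruence used in \cref{defn: dynamical cuntz semigroup,defn: retracted DCS} are carried into one another by $\rho$ and $\sigma$, and that $\precalpha$---defined through the induced action and finite covers---is preserved; strong invariance is tailored to this, but the compatibility with finite covers must be spelled out. Once the descent is established, the surjectivity on generators, the push-pull transfer, and the appeal to \cref{thm: main} are all routine.
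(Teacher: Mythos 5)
Your strategy hinges on a single transfer step: that plain paradoxes of $R_\alpha$ imply plain paradoxes of $\Cu(A)_\alpha$, so that \cref{thm: dichotomy result} can be invoked. This is precisely the implication the paper warns is false --- see the remark immediately following the statement of \cref{thm: retract PIS}: ``it is not the case that $R_\alpha$ having plain paradoxes implies that $\Cu(A)_\alpha$ does'' --- and the failure is visible inside your own push--pull argument. A retract only satisfies $\sigma \circ \rho = \id_R$; the composite $\rho \circ \sigma$ is \emph{not} the identity on $\Cu(A)$. Given $x \in \Cu(A)^\ll$ with $(n+1)[x]_\alpha \le n[x]_\alpha$, you can push down to get $(n+1)[\sigma(x)]_\alpha^R \le n[\sigma(x)]_\alpha^R$ (\cref{prop: retracts preserve dynamically below}), apply plain paradoxes in $R_\alpha$, and push back up, but what you obtain is $2\rho(\sigma(x)) \pathalpha \rho(\sigma(x))$, not $2x \pathalpha x$. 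The hypothesis $[a]_\Cu \in \rho(R)$ for $a \in A^+$ does not repair this: elements of $\Cu(A)$ are classes of positive elements of $A \otimes \KK$, not of $A$, and $\{[a]_\alpha : a \in A^+\}$ does not generate $\Cu(A)_\alpha$, so $\rho_\alpha \circ \sigma_\alpha$ fixes only a small part of $\Cu(A)_\alpha$. For the same reason your claimed bijection between states of $\Cu(A)_\alpha$ and of $R_\alpha$ fails (it needs $\rho_\alpha \circ \sigma_\alpha = \id$, not just $\sigma_\alpha \circ \rho_\alpha = \id$), and there is even a well-definedness issue for $\rho_\alpha$, since $\sigma(\Cu(A)^\ll)$ need not lie in $R^\ll$ and so $\rho$ need not carry it into $\Cu(A)^\ll$. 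You have also misplaced the difficulty: the descent of $\sigma$ to $\sigma_\alpha$ is exactly \cref{lemma: sigma_alpha} and is unproblematic, whereas the transfer you call ``routine'' is the step that cannot be made.

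The paper avoids this trap by never returning to $\Cu(A)_\alpha$: it proves retracted analogues of both characterisation theorems (\cref{thm: retract stable finiteness,thm: retract PIS}) working with $R_\alpha$ throughout, and the dichotomy is then decided by the existence or non-existence of a nontrivial order-preserving homomorphism $\nu\colon R_\alpha \to [0,\infty]$. The hypothesis $[a]_\Cu \in \rho(R)$ for $a \in A^+$ enters exactly once, on the purely infinite side: by \cite[Corollary~6.7]{KwasniewskiMeyer2021}, pure infiniteness of $A \rtimesess S$ only has to be tested on elements of $A^+ {\setminus} \{0\}$, and these (unlike general elements of $(A \otimes \KK)^+$) lift through $\rho$, so $(2,1)$-paradoxicality established in $R_\alpha$ can be pushed into $\Cu(A \rtimesess S)$ via $\hat{\iota} \circ \rho$. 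To salvage your outline, replace the transfer step by this argument: when no nontrivial $\nu$ exists on $R_\alpha$, plain paradoxes of $R_\alpha$ make every nonzero element of $\sigma(\Cu(A)^\ll)$ properly infinite, and pure infiniteness of the crossed product is then concluded through $A^+$ rather than through $\Cu(A)_\alpha$.
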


The plain paradoxes hypothesis in \cref{thm: main,thm: main retract} remains difficult to verify. This condition is common in the type semigroup literature (see \cite{Rainone2017, BoenickeLi2020, KMP2025, Rainone2017, RS2020}). For certain classes of examples, for example self-similar actions on directed graphs for which the associated groupoid is minimal, the associated type semigroup has plain paradoxes and the stably finite / purely infinite dichotomy holds (see \cite[Proposition~7.9 and Theorem~7.15]{KMP2025}).

For every \'etale groupoid $\GG$ with compact Hausdorff unit space $X$, there is an action of an inverse semigroup $S$ on $C(X)$ such that the reduced and essential C*-algebras of $\GG$ are crossed products $C(X) \rtimesred S$ and $C(X) \rtimesess S$; see \cref{sec: groupoid C*-algebras} for the details. We show that the set $\Lsc(X,\bar{\N})$ of lower-semicontinuous functions $F\colon X \to \N \cup \{\infty\}$ is an invariant retract of $\Cu(C(X))$. Thus, as an immediate corollary of \cref{thm: main retract}, we obtain an explicit stably finite / purely infinite dichotomy result for the essential C*-algebra of $\GG$ in \cref{thm: retract dichotomy}. \cref{thm: retract dichotomy} is a special case of the dichotomy results in \cite{KMP2025} and should be compared with \cite{BoenickeLi2020, Rainone2017, RS2020, Ma2022}.

The paper is organised as follows. In \cref{sec: preliminaries} we discuss much of the background necessary for the rest of the paper. In \cref{sec: Cuntz semigroups} we introduce the necessary preliminaries on Cuntz semigroups. In \cref{sec: dynamical Cuntz semigroup} we construct our dynamical Cuntz semigroup, and we discuss how minimality of the inverse-semigroup action can be seen in it. In \cref{sec: stably finite,sec: purely infinite} we present and prove characterisations of stable finiteness and pure infiniteness of the crossed product, and we then combine the two to prove our dichotomy theorem (\cref{thm: dichotomy result}). In \cref{sec: retracts} we consider retracts of Cuntz semigroups and show that these simpler objects retain sufficient structure to state our dichotomy results. Finally, in \cref{sec: groupoid C*-algebras}, we discuss the consequences of our main theorems within the context of non-Hausdorff \'etale groupoid C*-algebras.

\vspace{1ex}

\noindent\textbf{Acknowledgements.} We are grateful to Hannes Thiel and Bartosz Kwa\'sniewski for useful conversations about Cuntz semigroups and similar results in the groupoid setting. We also thank Aaron Tikuisis for bringing \cite{BPWZ2025} to our attention, and we thank Changyuan Gao for alerting us to an error in \cref{thm: homomorphism induces Cu-morphism} in the first version of the paper. Finally, we thank the anonymous referee for their careful reviewing of the paper and for their helpful comments that in particular enabled us to remove all assumptions of separability from our results.

\section{Preliminaries}
\label{sec: preliminaries}

In this section we introduce the background necessary for the rest of the paper.

\subsection{Actions of inverse semigroups}
\label{subsec: inverse semigroups}

For inverse semigroups and actions of inverse semigroups on C*-algebras, we follow \cite{BussExelMeyer2017, BussMartinez2023, Exel2008, Paterson1999, Sieben1997}.

\begin{defn}
An \emph{inverse semigroup} is a semigroup $S$ such that every $s \in S$ has a unique \emph{inverse} $s^* \in S$ satisfying $ss^*s = s$ and $s^*ss^* = s^*$.
\end{defn}

Every inverse semigroup has a natural partial order given by $s \le t \iff s = ss^*t$ for $s, t \in S$. An $e \in S$ is called an \emph{idempotent} if $e^2 = e$. We write $E(S)$ for the collection of idempotents in $S$. For $e, f \in E(S)$, we have $e^* = e$, and hence $e \le f \iff e = ef$, and $E(S) = \{ ss^* : s \in S \}$. Moreover, $E(S)$ is a commutative subsemigroup of $S$, and it follows that $s \le t \iff s = ts^*s$ for $s, t \in S$. The commutativity of $E(S)$ implies that for all $s \in S$ and $e \in E(S)$, we have $ses^*, s^*es \in E(S)$. If $S$ has an identity element, which we denote by $1 \in S$, then $E(S) = \{s \in S : s \le 1\}$.

\begin{defn}[{\cite[Definition~4.2]{Exel2008}}]
Let $X$ be a set. Define \emph{$\II(X)$} to be the set of partial bijections on $X$; that is, bijections $f\colon \dom(f) \to \ran(f)$, where $\dom(f)$ and $\ran(f)$ are subsets of $X$.
\end{defn}

The set $\II(X)$ is an inverse semigroup with respect to composition ``where it makes sense''; that is, $f \circ g$ is defined by $(f \circ g)(x) \coloneqq f(g(x))$ for all $x \in g^{-1}\big(\!\ran(g) \cap \dom(f)\big)$. The inverse of $f \in \II(X)$ is the map $f^*\colon \ran(f) \to \dom(f)$ given by $f^*(x) \coloneqq f^{-1}(x)$. Note that for each $f \in \II(X)$, $f^*f$ is the identity map on $\dom(f)$, and $ff^*$ is the identity map on $\ran(f)$. We may therefore identify $f^*f$ with $\dom(f)$ and $ff^*$ with $\ran(f)$.

\begin{defn}[{\cite[Definition~9.1]{Exel2008}}] \label{defn: C*-algebra action}
An \emph{action} $\alpha\colon S \curvearrowright A$ of an inverse semigroup $S$ on a C*-algebra $A$ is an inverse semigroup homomorphism $\alpha\colon S \to \II(A)$ such that
\begin{enumerate}[label=(\alph*)]
\item \label{item: alpha_s iso} for each $s \in S$, $\alpha_s$ is a C*-isomorphism and its domain $A_{s^*s}$ is an ideal of $A$; and
\item \label{item: ideals dense} the linear span of $\cup_{s \in S} \, A_{s^*s}$ is dense in $A$.
\end{enumerate}
\end{defn}

An action of an inverse semigroup $S$ on a C*-algebra $A$ can be viewed as a generalisation of the natural action of the inverse semigroup of open bisections of an \'etale groupoid. The details of this connection are made explicit in \cref{subsec: groupoid C*s as CPs} and in \cite[Definition~2.14 and Theorem~6.13]{KwasniewskiMeyer2021}.

Below are some immediate consequences of \cref{defn: C*-algebra action} that we will use throughout the paper without necessarily referencing \cref{lemma: action properties}.

\begin{lemma} \label{lemma: action properties}
Let $\alpha\colon S \curvearrowright A$ be an action of an inverse semigroup $S$ on a C*-algebra $A$. Then
\begin{enumerate}[label=(\alph*)]
\item \label{item: alpha_s inverse and idempotent} $\alpha_{s^*} = \alpha_s^{-1}$ and $\alpha_e = \id_{A_e}$ for all $s \in S$ and $e \in E(S)$;
\item \label{item: idempotent ideals} $A_e \cap A_f = A_{ef}$ for all idempotents $e, f \in E(S)$; and
\item \label{item: alpha_s ideal containment} if $s, t \in S$ satisfy $s \le t$, then $A_{s^*s} \subseteq A_{t^*t}$ and $A_{ss^*} \subseteq A_{tt^*}$.
\end{enumerate}
Moreover, if $S$ has a unit $1$, then $A_1 = A$.
\end{lemma}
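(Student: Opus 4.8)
The plan is to verify each of the three consequences directly from the axioms of an inverse-semigroup action in \cref{defn: C*-algebra action}, together with the elementary properties of $E(S)$ recorded before the lemma and the identification of $f^*f$ with $\dom(f)$ and $ff^*$ with $\ran(f)$ in $\II(A)$. Throughout I would freely use that $\alpha\colon S \to \II(A)$ is an inverse-semigroup homomorphism, so that $\alpha_{st} = \alpha_s \circ \alpha_t$ and $\alpha_{s^*} = (\alpha_s)^*$ where $(\alpha_s)^*$ denotes the inverse partial bijection.

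For part \ref{item: alpha_s inverse and idempotent}, since $\alpha$ is a homomorphism it must preserve inverses, and the inverse of $\alpha_s$ in $\II(A)$ is by definition the inverse partial bijection $\alpha_s^{-1}$; hence $\alpha_{s^*} = \alpha_s^{-1}$. For the second claim, let $e \in E(S)$. Because $e = e^* = e^2$, applying the homomorphism gives $\alpha_e = \alpha_e \circ \alpha_e$ with $\alpha_e = \alpha_e^{-1}$, so $\alpha_e$ is an idempotent partial bijection; the only idempotents in $\II(A)$ are the identity maps on subsets, and here the relevant subset is $\dom(\alpha_e) = A_{e^*e} = A_e$. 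Thus $\alpha_e = \id_{A_e}$.

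For part \ref{item: idempotent ideals}, I would compute $A_e \cap A_f$ by recalling that $A_e = \dom(\alpha_e) = \ran(\alpha_e)$ since $\alpha_e = \id_{A_e}$, and similarly for $f$. Using that $\alpha$ is a homomorphism and that $ef \in E(S)$ (as $E(S)$ is a commutative subsemigroup), the composition $\alpha_{ef} = \alpha_e \circ \alpha_f$ is the identity on the set of points where both are defined, namely $A_e \cap A_f$; on the other hand $\alpha_{ef} = \id_{A_{ef}}$ by \ref{item: alpha_s inverse and idempotent}. Comparing domains yields $A_{ef} = A_e \cap A_f$.

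For part \ref{item: alpha_s ideal containment}, suppose $s \le t$, so $s = ss^*t$. Then $s^*s = (ss^*t)^*(ss^*t) = t^*ss^*t$, and since $ss^* \in E(S)$ this expresses $s^*s$ as $t^*(ss^*)t$, an idempotent dominated by $t^*t$ in the natural order; concretely $s^*s = (s^*s)(t^*t)$, giving $s^*s \le t^*t$ and hence $A_{s^*s} = A_{(s^*s)(t^*t)} = A_{s^*s} \cap A_{t^*t} \subseteq A_{t^*t}$ by part \ref{item: idempotent ideals}. The inclusion $A_{ss^*} \subseteq A_{tt^*}$ follows symmetrically by applying the same argument to $s^* \le t^*$. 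Finally, if $S$ has a unit $1$, then $1 \in E(S)$ and $s^*s \le 1$ for every $s$, so every $A_{s^*s} \subseteq A_1$; combined with the density axiom \ref{item: ideals dense} and the fact that $A_1$ is a (closed) ideal, this forces $A_1 = A$. The main obstacle, such as it is, lies in carefully translating between the order-theoretic relations in $S$ and the set-theoretic containments of the ideals $A_e$ via the identifications in $\II(A)$; once the dictionary $e \le f \iff A_e \subseteq A_f$ is established cleanly from \ref{item: idempotent ideals}, the remaining verifications are routine.
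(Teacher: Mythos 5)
Your proposal is correct and takes essentially the same route as the paper, which simply declares parts (a)--(c) to be routine consequences of \cref{defn: C*-algebra action} and the preceding discussion of $\II(X)$, and deduces $A_1 = A$ from the density axiom together with $E(S) = \{s \in S : s \le 1\}$. Your write-up just supplies the routine verifications (idempotents of $\II(A)$ are identity maps, $s \le t \implies s^*s \le t^*t$ and $s^* \le t^*$, closedness of $A_1$) that the paper leaves implicit, and all of these check out.
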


\begin{proof}
Parts \crefrange{item: alpha_s inverse and idempotent}{item: alpha_s ideal containment} follow from \cref{defn: C*-algebra action}\cref{item: alpha_s iso} and the paragraph preceding it. The final statement follows from \cref{defn: C*-algebra action}\cref{item: ideals dense} using that $E(S) = \{s \in S : s \le 1\}$.
\end{proof}

\subsection{Ideals}

In this subsection we introduce ideals of a C*-algebra $A$ that are invariant with respect to a given action of an inverse semigroup $S$, and we study certain special ideals of $A$ whose elements are fixed by the actions of particular elements of $S$.

\begin{defn}
Let $\alpha\colon S \curvearrowright A$ be an action of an inverse semigroup $S$ on a C*-algebra $A$. We say that a subset $I \subseteq A$ is \emph{invariant} under $\alpha$ if, for all $s \in S$, $\alpha_s(I \cap A_{s^*s}) \subseteq I$. We say that $\alpha$ is \emph{minimal} if the only invariant ideals of $A$ are $\{0\}$ and $A$ itself.
\end{defn}

Following \cite[Definition~2.12]{BussMartinez2023} (see also \cite{BussExelMeyer2017}), for each pair $(s,t) \in S \times S$, we define an ideal $I_{s,t}$ on which $\alpha_s$ and $\alpha_t$ agree.

\begin{defn}
Let $\alpha\colon S \curvearrowright A$ be an action of an inverse semigroup $S$ on a C*-algebra $A$, and fix $s, t \in S$. Define
\[
I_{s,t} \coloneqq \clspan\{ a \in A : a \in A_{v^*v} \text{ for some } v \in S \text{ with } v \le s, t \}.
\]
\end{defn}

\begin{lemma} \label{lemma: I_st properties}
Let $\alpha\colon S \curvearrowright A$ be an action of an inverse semigroup $S$ on a C*-algebra $A$. Fix $s, t \in S$. Then $I_{s,t}$ is an ideal of $A$, and $I_{s,t} = I_{t,s} \subseteq A_{s^*s} \cap A_{t^*t}$.
\end{lemma}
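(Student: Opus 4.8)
The plan is to verify the three assertions separately, in increasing order of effort: the symmetry $I_{s,t} = I_{t,s}$, the containment $I_{s,t} \subseteq A_{s^*s} \cap A_{t^*t}$, and finally that $I_{s,t}$ is an ideal. Throughout I would write $G$ for the generating set $\{a \in A : a \in A_{v^*v} \text{ for some } v \in S \text{ with } v \le s,t\}$, so that by definition $I_{s,t} = \clspan(G)$. Recording that $I_{s,t}$ is a closed linear span makes it automatically a closed subspace, which is half of what is needed for the ideal claim.

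The symmetry is immediate and needs no real argument: the defining condition ``$v \le s,t$'' means $v \le s$ \emph{and} $v \le t$, which is unchanged under swapping $s$ and $t$, so $G$ is literally the same set for the pairs $(s,t)$ and $(t,s)$, whence $I_{s,t} = I_{t,s}$. For the containment, I would observe that if $v \le s$ and $v \le t$, then \cref{lemma: action properties}\cref{item: alpha_s ideal containment} yields $A_{v^*v} \subseteq A_{s^*s}$ and $A_{v^*v} \subseteq A_{t^*t}$; hence every element of $G$ lies in $A_{s^*s} \cap A_{t^*t}$. Since an intersection of (closed) ideals is again a closed subspace, the closed linear span of $G$ remains inside it, giving $I_{s,t} \subseteq A_{s^*s} \cap A_{t^*t}$.

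For the ideal property, the only thing left to check is the two-sided module condition. The crux is that $G$ is stable under multiplication by $A$ on either side: if $a \in A_{v^*v}$ with $v \le s,t$ and $b \in A$, then because $A_{v^*v}$ is an ideal by \cref{defn: C*-algebra action}\cref{item: alpha_s iso}, both $ab$ and $ba$ lie in $A_{v^*v}$, hence in $G$ with the same witness $v$. Consequently the algebraic span $\vecspan(G)$ is a two-sided ideal, and a routine approximation argument promotes this to the closure: for $x \in I_{s,t}$ and $b \in A$, choosing $x_n \in \vecspan(G)$ with $x_n \to x$ gives $x_n b, b x_n \in \vecspan(G)$ and, by continuity of multiplication, $x_n b \to xb$ and $b x_n \to bx$, so $xb, bx \in I_{s,t}$ by closedness.

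I do not expect a genuine obstacle here, since the lemma is essentially bookkeeping. The only points requiring a little care are fixing the convention that ``ideal'' means a closed two-sided ideal, and noting that the witness $v$ survives multiplication \emph{precisely} because each $A_{v^*v}$ is an ideal rather than merely a subalgebra. The degenerate case in which $s$ and $t$ admit no common lower bound is harmless: then $G \subseteq \{0\}$ and $I_{s,t} = \{0\}$, which satisfies all three conclusions trivially.
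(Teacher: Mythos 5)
Your proof is correct and follows essentially the same route as the paper's: symmetry holds by construction, the containment is deduced from the basic properties of the action, and the ideal property comes from each $A_{v^*v}$ being an ideal of $A$ together with a span-and-closure argument (which the paper compresses into ``a routine argument'' and you spell out). The only difference is cosmetic: for the containment you invoke \cref{lemma: action properties}\cref{item: alpha_s ideal containment} directly (namely $v \le s,t$ implies $A_{v^*v} \subseteq A_{s^*s} \cap A_{t^*t}$), whereas the paper rederives this instance from \cref{lemma: action properties}\cref{item: idempotent ideals} via the computation $A_{v^*v} = A_{(v^*vs^*)(sv^*v)} = A_{v^*v} \cap A_{s^*s}$; both are valid one-line arguments.
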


\begin{proof}
Since $A_{v^*v}$ is an ideal of $A$ for each $v \in S$ with $v \le s,t$, a routine argument shows that the set
\[
\vecspan\{ a \in A : a \in A_{v^*v} \text{ for some } v \in S \text{ with } v \le s, t \}
\]
is an algebraic ideal of $A$, and from this it follows that $I_{s,t}$ is an ideal of $A$. For each $v \in S$ with $v \le s, t$, we have $v = sv^*v$, and so by \cref{lemma: action properties}\cref{item: idempotent ideals},
\[
A_{v^*v} = A_{(v^*vs^*)(sv^*v)} = A_{v^*v} \cap A_{s^*s} \subseteq A_{s^*s}.
\]
Similarly, we have $v = tv^*v$, and so $A_{v^*v} \subseteq A_{t^*t}$. Thus $I_{s,t} \subseteq A_{s^*s} \cap A_{t^*t}$, and we have $I_{s,t} = I_{t,s}$ by construction.
\end{proof}

\Cref{lemma: I_st properties} shows that $I_{s,t}$ is contained in the domains of both $\alpha_s$ and $\alpha_t$. We now show that $\alpha_s$ and $\alpha_t$ agree on $I_{s,t}$.

\begin{lemma} \label{lemma: alpha_s is alpha_t on I_st}
Let $\alpha\colon S \curvearrowright A$ be an action of an inverse semigroup $S$ on a C*-algebra $A$, and fix $s, t \in S$. Then $\alpha_s(a) = \alpha_t(a)$ for all $a \in I_{s,t}$.
\end{lemma}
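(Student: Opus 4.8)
The plan is to reduce the claim to the dense spanning set that defines $I_{s,t}$ and then exploit continuity and the inverse-semigroup structure. By the definition of $I_{s,t}$ as a closed linear span, and since $\alpha_s$ and $\alpha_t$ are bounded (indeed isometric) linear maps whose common domain contains $I_{s,t}$ by \cref{lemma: I_st properties}, it suffices to verify that $\alpha_s(a) = \alpha_t(a)$ for every $a$ lying in the generating set---that is, for every $a \in A_{v^*v}$ with $v \le s, t$---and then extend by linearity and continuity to all of $I_{s,t}$.

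So the heart of the matter is the following: given $v \le s$ and $v \le t$ and an element $a \in A_{v^*v}$, show that $\alpha_s(a) = \alpha_v(a) = \alpha_t(a)$. I would establish the first equality $\alpha_s(a) = \alpha_v(a)$, and the second $\alpha_v(a) = \alpha_t(a)$ will follow by the identical argument with $t$ in place of $s$. The key identity to use is that $v \le s$ means $v = s v^* v$ (equivalently $v = ss^* v$), so the idempotent $v^*v$ satisfies $v = s v^* v$. Applying the homomorphism property $\alpha_{s v^* v} = \alpha_s \circ \alpha_{v^* v}$ together with \cref{lemma: action properties}\cref{item: alpha_s inverse and idempotent}, which gives $\alpha_{v^* v} = \id_{A_{v^*v}}$, we get $\alpha_v = \alpha_s \circ \id_{A_{v^*v}}$ on the appropriate domain. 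Since $a \in A_{v^*v} = \dom(\alpha_{v^*v})$, this yields $\alpha_v(a) = \alpha_s(\alpha_{v^*v}(a)) = \alpha_s(a)$.

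The main technical point to get right is the bookkeeping of domains: I must confirm that $a \in A_{v^*v}$ genuinely lies in the domain of $\alpha_v$ (it does, since $\dom(\alpha_v) = A_{v^*v}$) and in the domain of $\alpha_s$ (it does, by \cref{lemma: I_st properties}, which gives $A_{v^*v} \subseteq A_{s^*s} = \dom(\alpha_s)$), so that the composition $\alpha_s \circ \alpha_{v^*v}$ is actually defined at $a$ and agrees with $\alpha_{sv^*v} = \alpha_v$ on this element. The homomorphism property of $\alpha$ as a map into $\II(A)$ encodes composition ``where it makes sense,'' so I would be slightly careful that the product $s \cdot (v^*v)$ in $S$ really does correspond to the composition of partial maps on the relevant subdomain, but this is exactly what \cref{defn: C*-algebra action} guarantees once the domain containments are in hand.

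I do not anticipate a serious obstacle here; the argument is essentially a direct unpacking of the order relation $v \le s$ via the idempotent $v^*v$ together with $\alpha_{v^*v} = \id$. The only place requiring mild care is ensuring that passing from the generating set to its closed span is legitimate, which hinges on the fact that $\alpha_s$ and $\alpha_t$ are continuous and everywhere defined on the closed ideal $I_{s,t}$; this is why \cref{lemma: I_st properties} is invoked first, as it places $I_{s,t}$ inside $A_{s^*s} \cap A_{t^*t}$ and thus inside the common domain on which both isomorphisms act continuously.
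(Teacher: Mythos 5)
Your proposal is correct and is essentially the paper's own proof: both reduce by linearity and continuity to the generating elements $a \in A_{v^*v}$ with $v \le s,t$ (using \cref{lemma: I_st properties} to place everything in the common domain), and then combine $v = sv^*v = tv^*v$ with $\alpha_{v^*v} = \id_{A_{v^*v}}$ to obtain $\alpha_s(a) = \alpha_{sv^*v}(a) = \alpha_v(a) = \alpha_{tv^*v}(a) = \alpha_t(a)$. One unused parenthetical in your write-up is inaccurate---$v \le s$ is \emph{not} equivalent to $v = ss^*v$ (in a group the latter holds for all $v,s$)---but since your argument only invokes the correct identity $v = sv^*v$, this does not affect the proof.
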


\begin{proof}
Fix $a \in I_{s,t}$. Since $\alpha_s$ and $\alpha_t$ are bounded linear maps and every element of $I_{s,t}$ can be approximated arbitrarily well by a finite sum $\sum_{v \le s, t} \, a_v$, where $a_v \in A_{v^*v}$, it suffices to assume that $a \in A_{v^*v}$ for some $v \le s,t$ and to then show that $\alpha_s(a) = \alpha_t(a)$. So assume this, and note that since $v^*v \in E(S)$, we have $\alpha_{v^*v}(a) = a$ by \cref{lemma: action properties}\cref{item: alpha_s inverse and idempotent}. Since $v \le s,t$, we have $sv^*v = v = tv^*v$, and so
\[
\alpha_s(a) = \alpha_s\big(\alpha_{v^*v}(a)\big) = \alpha_{sv^*v}(a) = \alpha_v(a) = \alpha_{tv^*v}(a) = \alpha_t\big(\alpha_{v^*v}(a)\big) = \alpha_t(a). \qedhere
\]
\end{proof}

\begin{remark} \label{rem: I_s1 properties} Suppose that $S$ has a unit $1$, and fix $s \in S$. For all $v \in S$ with $v \le s,1$, we have $v^*v = v \in E(S)$, and thus
\[
I_{s,1} = \clspan\{ a \in A : a \in A_v \text{ for some } v \in E(S) \text{ with } v \le s \}.
\]
For all $v \in E(S)$, we have $v \le s \iff v \le s^*$, and hence it follows from \cref{lemma: I_st properties} that
\[
I_{s,1} = I_{s^*,1} \subseteq A_{s^*s} \cap A_{ss^*}.
\]
\end{remark}

\Cref{rem: I_s1 properties} shows that $I_{s,1}$ is contained in the domains of both $\alpha_s$ and $\alpha_{s^*}$. We now show that $\alpha_s$ and $\alpha_{s^*}$ are the identity on $I_{s,1}$.

\begin{cor} \label{cor: alpha_s is alpha_s* on I_1s}
Let $\alpha\colon S \curvearrowright A$ be an action of a unital inverse semigroup $S$ on a C*\nobreakdash-algebra $A$. Fix $s \in S$. Then $\alpha_s(a) = a = \alpha_{s^*}(a)$ for all $a \in I_{s,1}$.
\end{cor}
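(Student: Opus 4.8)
The plan is to recognise \cref{cor: alpha_s is alpha_s* on I_1s} as essentially the specialisation of \cref{lemma: alpha_s is alpha_t on I_st} to the case $t = 1$, together with the observation that the unit of $S$ acts as the identity on all of $A$. Since $I_{s,1}$ by definition records the elements on which $\alpha_s$ and $\alpha_1$ must agree, and $\alpha_1 = \id_A$, the two maps $\alpha_s$ and $\alpha_{s^*}$ should collapse to the identity on $I_{s,1}$. There is no substantive obstacle here; the only care needed is in treating $\alpha_s$ and $\alpha_{s^*}$ symmetrically.

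First I would observe that the unit $1 \in S$ is an idempotent, so $1 \in E(S)$, and therefore \cref{lemma: action properties}\cref{item: alpha_s inverse and idempotent} gives $\alpha_1 = \id_{A_1}$. Combining this with the final statement of \cref{lemma: action properties}, namely $A_1 = A$, yields $\alpha_1 = \id_A$. Now I would apply \cref{lemma: alpha_s is alpha_t on I_st} with the second element taken to be $t = 1$: this says that $\alpha_s(a) = \alpha_1(a)$ for every $a \in I_{s,1}$. Since $\alpha_1 = \id_A$, this immediately gives $\alpha_s(a) = a$ for all $a \in I_{s,1}$.

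For the remaining equality $\alpha_{s^*}(a) = a$, I would invoke the symmetry $I_{s,1} = I_{s^*,1}$ recorded in \cref{rem: I_s1 properties}. Applying \cref{lemma: alpha_s is alpha_t on I_st} with $s^*$ in place of $s$ (and again $t = 1$) gives $\alpha_{s^*}(a) = \alpha_1(a) = a$ for every $a \in I_{s^*,1}$; and since $I_{s^*,1} = I_{s,1}$, this holds for all $a \in I_{s,1}$, as required. The whole argument is thus a two-line deduction from the preceding lemma and remark, with the identification $\alpha_1 = \id_A$ doing all the work.
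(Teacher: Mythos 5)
Your proof is correct, and its first half is exactly the paper's argument: apply \cref{lemma: alpha_s is alpha_t on I_st} with $t = 1$ and use $\alpha_1 = \id_A$ (which, as you note, follows from \cref{lemma: action properties}) to get $\alpha_s(a) = a$ on $I_{s,1}$. Where you diverge is the second equality. The paper obtains $\alpha_{s^*}(a) = a$ by composition: having shown $a = \alpha_s(a)$, it writes $\alpha_{s^*}(a) = \alpha_{s^*}\big(\alpha_s(a)\big) = \alpha_{s^*s}(a) = a$, invoking \cref{lemma: action properties}\cref{item: alpha_s inverse and idempotent}. You instead use the symmetry $I_{s,1} = I_{s^*,1}$ recorded in \cref{rem: I_s1 properties} and reapply \cref{lemma: alpha_s is alpha_t on I_st} with $s^*$ in place of $s$. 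Both routes are valid two-line deductions; yours has the mild aesthetic advantage of treating $s$ and $s^*$ symmetrically and never composing partial maps (so no domain bookkeeping is needed beyond $I_{s^*,1} \subseteq A_{ss^*}$, which \cref{rem: I_s1 properties} supplies), while the paper's is marginally more self-contained in that it leans only on the preceding lemma and the basic properties of the action rather than on the remark's observation that $v \le s \iff v \le s^*$ for idempotents $v$.
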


\begin{proof}
Fix $a \in I_{s,1}$. Then $\alpha_s(a) = \alpha_1(a) = a$ by \cref{lemma: alpha_s is alpha_t on I_st}. It follows by \cref{lemma: action properties}\cref{item: alpha_s inverse and idempotent} that $\alpha_{s^*}(a) = \alpha_{s^*}\big(\alpha_s(a)\big) = \alpha_{s^*s}(a) = a$.
\end{proof}

The following lemma shows the relationship between the ideals $I_{st,1}$ and $I_{ts,1}$ for $s, t \in S$. An analogous result in the setting of Fell bundles is presented within the proof of \cite[Proposition~2.17]{BussMartinez2023}, but for convenience we reprove the result in our setting.

\begin{lemma} \label{lemma: I_1st iso to I_1ts}
Let $\alpha\colon S \curvearrowright A$ be an action of a unital inverse semigroup $S$ on a C*-algebra $A$. Fix $s, t \in S$. Then
\begin{enumerate}[label=(\alph*)]
\item \label{item: alpha_s* = alpha_t} $\alpha_{s^*}(a) = \alpha_t(a)$ for all $a \in I_{st,1}$; and
\item \label{item: alpha_s* image} $\alpha_{s^*}(I_{st,1}) = I_{ts,1}$.
\end{enumerate}
In other words, $\alpha_{s^*}\restr{I_{st,1}} = \alpha_t\restr{I_{st,1}}$ as isomorphisms from $I_{st,1}$ onto $I_{ts,1}$.
\end{lemma}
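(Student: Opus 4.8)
The plan is to reduce everything to the generating ideals $A_v$ with $v \in E(S)$ and $v \le st$. By \cref{rem: I_s1 properties} (applied with $s$ replaced by $st$) we may write $I_{st,1} = \clspan\{A_v : v \in E(S),\ v \le st\}$, and since $\alpha_{s^*}$ is a continuous linear map it suffices to track where it sends each such $A_v$. The key order facts I would establish first, using only compatibility of the natural partial order with multiplication and $e \le 1$ for idempotents, are: (i) $v \le ss^*$, so that $A_v$ lies in the domain $A_{ss^*}$ of $\alpha_{s^*}$; (ii) $w \coloneqq s^*vs$ is an idempotent with $w \le ts$; and (iii) the identification $\alpha_{s^*}(A_v) = A_{s^*vs}$, which follows from the homomorphism property via $\alpha_{s^*}\alpha_v\alpha_s = \alpha_{s^*vs}$ together with $\alpha_v = \id_{A_v}$. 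For (i) I would note $v = vv^* \le (st)(st)^* = s(tt^*)s^* \le ss^*$, and for (ii) that $s^*vs \le s^*(st)s = (s^*s)(ts) \le ts$, with idempotency of $s^*vs$ following from (i).

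Granting these, the forward inclusion of part \cref{item: alpha_s* image} is immediate: each $A_{s^*vs}$ is one of the generating ideals of $I_{ts,1}$, so $\alpha_{s^*}(I_{st,1}) \subseteq I_{ts,1}$. For the reverse inclusion I would run the symmetric computation with the roles of $s$ and $t$ interchanged—using $w \le ts \Rightarrow w \le s^*s$ and $sws^* \le st$—to obtain $\alpha_s(I_{ts,1}) \subseteq I_{st,1}$. Since $I_{ts,1} \subseteq A_{s^*s}$ (again by \cref{rem: I_s1 properties} and \cref{lemma: action properties}\cref{item: alpha_s ideal containment}) and $\alpha_{s^*}\alpha_s = \id$ there by \cref{lemma: action properties}\cref{item: alpha_s inverse and idempotent}, applying $\alpha_{s^*}$ gives $I_{ts,1} \subseteq \alpha_{s^*}(I_{st,1})$. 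Together these yield equality, proving \cref{item: alpha_s* image}.

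For part \cref{item: alpha_s* = alpha_t}, I would apply \cref{cor: alpha_s is alpha_s* on I_1s} to the element $st$: for $a \in I_{st,1} = I_{(st)^*,1}$ this gives $a = \alpha_{(st)^*}(a) = \alpha_{t^*}\big(\alpha_{s^*}(a)\big)$. Applying $\alpha_t$ and using $\alpha_t \circ \alpha_{t^*} = \alpha_{tt^*}$ then yields $\alpha_t(a) = \alpha_{tt^*}\big(\alpha_{s^*}(a)\big)$, so it remains only to see that $\alpha_{s^*}(a) \in A_{tt^*}$, whence $\alpha_{tt^*}$ acts as the identity by \cref{lemma: action properties}\cref{item: alpha_s inverse and idempotent}. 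This membership is exactly what \cref{item: alpha_s* image} delivers, since $I_{ts,1} \subseteq A_{(ts)(ts)^*} \subseteq A_{tt^*}$; thus I would prove \cref{item: alpha_s* image} first and deduce \cref{item: alpha_s* = alpha_t} from it. The final ``in other words'' statement is then a formality: $\alpha_{s^*}\restr{I_{st,1}}$ is the restriction of a C*-isomorphism, hence injective, its range is $I_{ts,1}$ by \cref{item: alpha_s* image}, and it agrees with $\alpha_t\restr{I_{st,1}}$ by \cref{item: alpha_s* = alpha_t}.

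The main obstacle is not conceptual but the bookkeeping: verifying the order relations $v \le ss^*$ and $s^*vs \le ts$ (and their mirror images) and the identity $\alpha_{s^*}(A_v) = A_{s^*vs}$ cleanly, while keeping careful track of which ideal each element lives in so that every composition $\alpha_t \circ \alpha_{t^*}$ and $\alpha_{s^*} \circ \alpha_s$ is applied on its correct domain. I expect the compatibility of the natural partial order with multiplication and the commutativity of $E(S)$ to do all the heavy lifting.
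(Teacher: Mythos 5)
Your proposal is correct, and its computational engine is the same as the paper's: both reduce to the generating ideals $A_v$ with $v \in E(S)$, $v \le st$, use the homomorphism property to identify $\alpha_{s^*}(A_v)$ with $A_{s^*vs}$, and verify the order relation $s^*vs \le ts, 1$. The difference is in the logical organisation, and it is a real one. The paper proves part~\cref{item: alpha_s* = alpha_t} first, directly from \cref{cor: alpha_s is alpha_s* on I_1s} via $\alpha_{s^*}(a) = \alpha_{s^*}\big(\alpha_{st}(a)\big) = \alpha_{s^*s}\big(\alpha_t(a)\big) = \alpha_t(a)$, and then \emph{uses} part~\cref{item: alpha_s* = alpha_t} (with $s$ and $t$ swapped) to obtain the reverse inclusion in part~\cref{item: alpha_s* image}: by symmetry $\alpha_{t^*}(I_{ts,1}) \subseteq I_{st,1}$, part~\cref{item: alpha_s* = alpha_t} identifies $\alpha_{t^*} = \alpha_s$ on $I_{ts,1}$, and applying $\alpha_{s^*}$ finishes. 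You instead make part~\cref{item: alpha_s* image} self-contained by running the mirror generator computation ($w \le ts$ idempotent forces $w \le s^*s$ and $sws^* \le st$) to get $\alpha_s(I_{ts,1}) \subseteq I_{st,1}$ directly, and only afterwards deduce part~\cref{item: alpha_s* = alpha_t} from \cref{cor: alpha_s is alpha_s* on I_1s} applied to $(st)^*$ together with the membership $\alpha_{s^*}(a) \in I_{ts,1} \subseteq A_{tt^*}$ supplied by part~\cref{item: alpha_s* image}. Both routes are of comparable length; the paper's gets part~\cref{item: alpha_s* = alpha_t} for free and recycles it, while yours inverts the dependency, which is equally valid and arguably cleaner in that part~\cref{item: alpha_s* image} stands on its own. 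One small citation slip: to justify $I_{ts,1} \subseteq A_{s^*s}$ you invoke \cref{lemma: action properties}\cref{item: alpha_s ideal containment}, but that item concerns the relation $s \le t$, which does not hold here; the correct justification is \cref{lemma: action properties}\cref{item: idempotent ideals} applied to the idempotent inequality $(ts)^*ts \le s^*s$ (or simply the observation that every generating ideal $A_w$ of $I_{ts,1}$ satisfies $w \le s^*s$). This is cosmetic and does not affect the argument.
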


\begin{proof}
For part~\cref{item: alpha_s* = alpha_t}, fix $a \in I_{st,1}$. By \cref{cor: alpha_s is alpha_s* on I_1s}, $a = \alpha_{st}(a) \in \ran(\alpha_s) = \dom(\alpha_{s^*})$, and hence $\alpha_{s^*}(a) = \alpha_{s^*}\big(\alpha_{st}(a)\big) = \alpha_{s^*s}\big(\alpha_t(a)\big) = \alpha_t(a)$.

For part~\cref{item: alpha_s* image}, fix $a_e \in A_e$ for some $e \in S$ satisfying $e \le st, 1$. Then
\[
\alpha_{s^*}(a_e) = \alpha_{s^*}\big(\alpha_e(a_e)\big) \in \ran(\alpha_{s^*e}) = A_{s^*e(s^*e)^*} = A_{s^*es}.
\]
To see that $\alpha_{s^*}(a_e) \in I_{ts,1}$, it suffices to show that $s^*es \le ts, 1$, because then $A_{s^*es}$ is a generating ideal of $I_{ts,1}$. Since $e \in E(S)$, we have $s^*es \in E(S)$, and hence $s^*es \le 1$. Since $e \le st$, we have $e = est$, and hence
\[
(s^*es)(s^*es)^*ts = (s^*es)ts = s^*(est)s = s^*es.
\]
Thus $\alpha_{s^*}(a_e) \in A_{s^*es} \subseteq I_{ts,1}$. Since $\alpha_{s^*}$ is a bounded linear map and $A_e$ is a generating ideal of $I_{st,1}$, it follows that $\alpha_{s^*}(I_{st,1}) \subseteq I_{ts,1}$. For the reverse containment, note that by symmetry, we have $\alpha_{t^*}(I_{ts,1}) \subseteq I_{st,1}$, and also part~\cref{item: alpha_s* = alpha_t} implies that $\alpha_{t^*}(a) = \alpha_s(a)$ for all $a \in I_{ts,1}$, so $\alpha_{t^*}(I_{ts,1}) = \alpha_s(I_{ts,1})$. Therefore,
\[
I_{ts,1} = \alpha_{s^*s}(I_{ts,1}) = \alpha_{s^*}\big(\alpha_s(I_{ts,1})\big) = \alpha_{s^*}\big(\alpha_{t^*}(I_{ts,1})\big) \subseteq \alpha_{s^*}(I_{st,1}) \subseteq I_{ts,1}.
\]
Thus we have equality throughout, so $\alpha_{s^*}(I_{st,1}) = I_{ts,1}$.
\end{proof}

\subsection{Enveloping von Neumann algebras and local multiplier algebras}

In this subsection we introduce essential ideals and complements of ideals, as well as the enveloping von Neumann algebra $A''$ and the local multiplier algebra $\Mloc(A)$ of a C*-algebra $A$ (see \cite{AraMathieu2003}).

Let $I$ be an ideal of a C*-algebra $A$. We define the \emph{complement of $I$ in $A$} to be the set
\[
I^\perp \coloneqq \{ x \in A : xI = Ix = \{0\}\}.
\]
The complement of $I$ in $A$ is the largest ideal $K \trianglelefteq A$ such that $I \cap K = \{0\}$. We say that $I$ is \emph{essential} if $I \cap J \ne \{0\}$ for every nonzero ideal of $J$. Thus $I$ is essential if and only if $I^\perp = \{0\}$. Moreover, $I$ is essential if and only if, for $a \in A$, we have $aI = \{0\} \implies a = 0$ . It follows that if $I$ and $J$ are essential ideals of $A$ such that $I \subseteq J$, then $I$ is also an essential ideal of $J$. We say that $I$ is \emph{complemented in $A$} if the internal direct sum $I \oplus I^\perp$ is isomorphic to $A$. Regardless of whether $I$ is complemented in $A$, the ideal $I \oplus I^\perp$ is always essential.

We now define the enveloping von Neumann algebra of a C*-algebra, and we introduce some of its properties.

\begin{defn}
Let $A$ be a C*-algebra, and let $\pi_\rmU\colon A \to B(H_\rmU)$ be the universal representation of $A$ (that is, the direct sum of the GNS representations corresponding to all states of $A$, which is faithful). The \emph{enveloping von Neumann algebra} $A''$ of $A$ is the double commutant
\[
A'' \coloneqq \big(\pi_\rmU (A)\big)'' \subseteq B(H_\rmU).
\]
\end{defn}

Then $A''$ is a unital C*-algebra and is isomorphic as a Banach space to the double dual of $A$ (see, for instance, \cite[Proposition~3.7.8]{Pedersen2018}). Moreover, it follows from von Neumann's double commutant theorem \cite[Theorem~2.2.2]{Pedersen2018} that $A''$ is equal to the strong closure of $\pi_\rmU(A)$ in $B(H_\rmU)$, so $\pi_\rmU$ is an inclusion of $A$ in $A''$. The local multiplier algebra of $A$ is a certain subquotient of $A''$ that contains a copy of the multiplier algebra $\MM(A)$. The following results are tailored towards constructing particular inclusion maps of multiplier algebras of essential ideals of $A$, and studying the behaviour of the associated directed system of multiplier algebras.

We will implicitly use the following result without necessarily referencing \cref{lemma: inclusions of ideals}.

\begin{lemma} \label{lemma: inclusions of ideals}
Let $I$ be an ideal of a C*-algebra $A$. Then $I''$ is an ideal of $A''$, and the natural inclusion $I \hookrightarrow A\hookrightarrow A''$ extends to inclusions $I'' \hookrightarrow A''$ and $\MM(I) \hookrightarrow A''$.
\end{lemma}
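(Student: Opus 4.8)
The plan is to prove the three claims of Lemma~\ref{lemma: inclusions of ideals} in sequence, exploiting the fact that the universal representation $\pi_\rmU$ restricts to (a faithful copy of) the universal representation of any ideal. First I would establish that $I''$ is an ideal of $A''$. The key fact is that, since $I \trianglelefteq A$, there is a central projection $z \in A''$ — the support projection of $I''$ — such that $I'' = A'' z = z A''$, and $z$ lies in the centre of $A''$ because $I$ is an ideal (so $az = za$ for all $a \in A''$, as can be checked on the dense subalgebra $\pi_\rmU(A)$ by approximating $z$ strongly with an approximate identity of $I$). Once $z$ is central, $I'' = z A''$ is automatically a (two-sided, strongly closed) ideal of $A''$; this is the standard structure theory of von Neumann algebras and ideals (see, e.g., \cite{Pedersen2018}), so I would cite it rather than reconstruct it.

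Next I would produce the inclusion $I'' \hookrightarrow A''$. With the identification $I'' = z A'' \subseteq A''$ from the previous step, the inclusion is simply the inclusion of this corner, and the content is only to verify that the abstract enveloping von Neumann algebra $I''$ (built from the universal representation of $I$ on $H_I$) is $*$-isomorphic to $z A''$ in a way compatible with the inclusion $I \hookrightarrow A$. The standard argument is that restricting the universal representation $\pi_\rmU$ of $A$ to $I$ decomposes, up to unitary equivalence and amplification, into copies of the universal representation of $I$ together with a zero part, so that strong closure of $\pi_\rmU(I)$ inside $B(H_\rmU)$ recovers $I''$ as $z\pi_\rmU(A)''z = zA''$. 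I would phrase this as an appeal to the universal property: every representation of $I$ extends uniquely to a normal representation of $I''$, and applying this to the restriction to $I$ of $\pi_\rmU$ yields the desired normal embedding $I'' \hookrightarrow A''$ whose range is $zA''$.

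For the inclusion $\MM(I) \hookrightarrow A''$, I would use the universal property of the multiplier algebra together with the fact that the double commutant of any faithful nondegenerate representation contains a copy of the multiplier algebra. Concretely, $\MM(I)$ embeds canonically into $I''$ (since $I''$ is a unital C*-algebra containing $I$ as an essential ideal, $\MM(I)$ maps into the idealiser of $I$ inside $I''$, which for the von Neumann algebra $I''$ is all of $I''$), and composing with $I'' \hookrightarrow A''$ gives the claimed inclusion. The compatibility with $I \hookrightarrow A \hookrightarrow A''$ is immediate because all the maps restrict to the identity on $I$.

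The main obstacle, and the only step that requires genuine care, is the identification of the abstractly-defined $I''$ with the corner $zA''$ compatibly with inclusions — that is, checking that the universal representation of $A$, when restricted to $I$, is normally isomorphic to an amplification of the universal representation of $I$. This is where the separate GNS constructions for states of $A$ versus states of $I$ must be reconciled; the cleanest route is to note that every state of $I$ extends to a state of $A$ and every state of $A$ restricts (after normalisation on the support of $I$) to a state of $I$, so the two universal Hilbert spaces are related by a unitary-plus-zero decomposition that intertwines the two double commutants. I would handle this by invoking the normal extension/universal property of enveloping von Neumann algebras rather than tracking Hilbert-space decompositions explicitly.
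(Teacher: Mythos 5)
Your overall route coincides with the paper's. For the first two assertions (that $I''$ is an ideal of $A''$ and that $I \hookrightarrow A''$ extends to an inclusion $I'' \hookrightarrow A''$), the paper simply cites \cite[Corollary~3.7.9]{Pedersen2018}, and what you outline---the central support projection $z$ with $I'' = zA''$, together with the reconciliation of the universal representations of $I$ and of $A$ by extending and restricting states---is exactly the content of that citation, so those parts are fine. For the third assertion your architecture is also the paper's: embed $\MM(I)$ into $I''$ and compose with $I'' \hookrightarrow A''$.

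The flaw is in your justification of the embedding $\MM(I) \hookrightarrow I''$. You assert that $I''$ contains $I$ as an essential ideal, and that the idealiser of $I$ inside $I''$ is all of $I''$. Both statements are false: $I$ is not an ideal of $I''$ at all, and the idealiser of $I$ in $I''$ is not $I''$ but precisely a copy of $\MM(I)$ (this identification, see \cite[Section~3.12]{Pedersen2018}, is the fact that actually produces the embedding). Concretely, take $I$ unital and infinite-dimensional, say $I = C([0,1])$, which occurs as an ideal of $A = I \oplus \C$: for $x \in I''$ one has $x = x\,1_I$, so if $I$ were an ideal of $I''$ then $I'' \subseteq I$; likewise the idealiser of $I$ in $I''$ equals $I = \MM(I)$, a proper subalgebra of the bidual $I''$. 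Your claim, taken literally, would force $\MM(I) \cong I''$ for every $I$, which is false. The repair is either to quote the idealiser description of $\MM(I)$ just mentioned, or to argue as the paper does: $I$ is an essential ideal of $\MM(I)$ (this is where essentiality genuinely enters, not essentiality in $I''$), the canonical inclusion $\pi_\rmU\colon I \hookrightarrow I'' = \MM(I'')$ is injective and nondegenerate, and so by \cite[Proposition~2.50]{RaeburnWilliams1998} it extends uniquely to an injective homomorphism $\MM(I) \hookrightarrow I''$. Note that your own opening sentence of that paragraph---that a faithful nondegenerate representation $\pi$ of $I$ yields a faithful copy of $\MM(I)$ inside $\pi(I)''$---already delivers the conclusion when applied to $\pi_\rmU\restr{I}$; it is only the parenthetical elaboration that goes astray.
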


\begin{proof}
The existence of the inclusion $I'' \hookrightarrow A''$ and that $I''$ is an ideal of $A''$ both follow from \cite[Corollary~3.7.9]{Pedersen2018}. Since $I$ is an essential ideal of $\MM(I)$ and the canonical inclusion $\pi_\rmU\colon I \hookrightarrow I'' = \MM(I'')$ is a nondegenerate injective homomorphism, \cite[Proposition~2.50]{RaeburnWilliams1998} implies that $\pi_\rmU$ extends uniquely to an inclusion $\MM(I) \hookrightarrow I''$. Composing this with the inclusion $I'' \hookrightarrow A''$ gives an inclusion $\MM(I) \hookrightarrow A''$.
\end{proof}

\begin{prop} \label{lemma: compatible maps between multiplier algebras}
Let $A$ be a C*-algebra and let $I$ be an essential ideal of $A$. Then there is a unique injective homomorphism $\phi\colon A \to \MM(I)$ that extends the natural inclusion $I \hookrightarrow \MM(I)$. The map $\phi$ extends uniquely to an injective homomorphism $\bar{\phi}\colon \MM(A) \to \MM(I)$. Furthermore, if $I$ and $J$ are essential ideals of $A$ such that $I \subseteq J$, then there is a unique injective homomorphism $\phi_{I,J}\colon \MM(J) \to \MM(I)$ that extends the canonical inclusion $I \hookrightarrow \MM(I)$.
\end{prop}

\begin{proof}
The existence and uniqueness of $\phi$ is due to \cite[Theorem~3.1.8]{Murphy1990}. (That $I$ is an essential ideal of $A$ is needed for injectivity of $\phi$.) The existence and uniqueness of $\bar{\phi}$ follows from \cite[Corollary~2.51]{RaeburnWilliams1998}. The final statement then follows immediately because $I$ is also an essential ideal of $J$.
\end{proof}

The maps $(\phi_{I,J})_{I,J}$ given in \cref{lemma: compatible maps between multiplier algebras} form a directed system, and the direct limit of this system is a distinguished C*-algebra $\Mloc(A)$.

\begin{defn}
The \emph{local multiplier algebra} of a C*-algebra $A$ is the C*-algebra
\[
\Mloc(A) \coloneqq \varinjlim \MM(J),
\]
where the limit is taken over all essential ideals $J \subseteq A$ with connecting maps $\phi_{I,J}\colon \MM(J) \to \MM(I)$ given in \cref{lemma: compatible maps between multiplier algebras}.
\end{defn}

\subsection{C*-algebras of actions of inverse semigroups}

In this subsection we recall various constructions of C*-algebras associated to an action $\alpha\colon S \curvearrowright A$ of a unital inverse semigroup $S$ on a C*-algebra $A$. We refer the reader to \cite{Sieben1997, Exel2008, BussExelMeyer2017, BussMartinez2023, KwasniewskiMeyer2021} and references therein for more details.

We begin by considering the collection of functions
\[
C_c(S,A)\coloneqq \{ f\colon S \to A \,:\, f(s) \in A_{ss^*} \text{ and } f(s) = 0 \text{ for all but finitely many } s \in S \}.
\]
(We use the notation $C_c$ to denote continuous compactly supported functions, but since the domain $S$ is discrete, these are actually finitely supported functions.) We equip $C_c(S,A)$ with pointwise addition, scalar multiplication, and multiplication and adjoint given by convolution and involution, respectively: for $f, g \in C_c(S,A)$ and $s \in S$,
\[
(f * g)(s) \coloneqq \sum_{\substack{t,u \in S, \\ s = tu}} \alpha_t\big(\alpha_{t^*}(f(t)) \, g(u) \big) \quad \text{ and } \quad f^*(s) \coloneqq \alpha_s\big(f(s^*)^*\big).
\]
For $s \in S$ and $a_s \in A_{ss^*}$, we define $a_s\delta_s\colon S \to A$ by
\[
a_s\delta_s(t) \coloneqq \begin{cases} a_s & \text{if } t = s \\ 0 & \text{if } t \ne s. \end{cases}
\]
Then for $s, t \in S$, $a_s \in A_{ss^*}$, and $b_t \in A_{tt^*}$, we have
\[
(a_s\delta_s) * (b_t\delta_t) = \alpha_s\big(\alpha_{s^*}(a_s) \, b_t\big) \delta_{st} \quad \text{ and } \quad (a_s\delta_s)^* = \alpha_{s^*}(a_s^*) \delta_{s^*}.
\]
Since each $f = \sum_{s \in S} f(s) \delta_s \in C_c(S,A)$ is finitely supported, the map $f \mapsto \oplus_{s \in S} \, f(s)$ is an isomorphism of $C_c(S, A)$ onto $\oplus_{s \in S} \, A_{ss^*}$.

Fix $s, t \in S$ with $s \le t$, and let $a \in A_{ss^*}$. Then $s^* \le t^*$, and so $A_{ss^*} \subseteq A_{tt^*}$. Thus $a\delta_s$ and $a\delta_t$ are both elements of $C_c(S,A)$. Following \cite[Lemma~4.5]{Sieben1997}, we would like to construct a quotient of $C_c(S,A)$ in which $a\delta_s$ is identified with $a\delta_t$. There are two algebraic ideals of $C_c(S,A)$ that are defined in the literature for this purpose:
\begin{align*}
\II_\alpha &\coloneqq \vecspan \{ a\delta_v - a\delta_r : v, r \in S, \, v \le r, \, a \in A_{vv^*} \} \quad \text{and} \\
\JJ_\alpha &\coloneqq \vecspan \{ a\delta_s - a\delta_t : s, t \in S, \, a \in I_{s^*,t^*} \}.
\end{align*}
For all $v, r \in S$ with $v \le r$, we have $v^* \le r^*$, so $A_{vv^*} \subseteq I_{v^*,r^*}$, and it follows that $\II_\alpha\subseteq \JJ_\alpha$.

\begin{defn} \label{defn: algebraic crossed product} Let $\alpha\colon S \curvearrowright A$ be an action of a unital inverse semigroup $S$ on a C*\nobreakdash-algebra $A$. We define the \emph{algebraic crossed product} $A \rtimesalg S$ to be the $*$-algebra $C_c(S,A) / \JJ_\alpha$. We define the \emph{maximal crossed product} $A \rtimesfull S$ to be the universal C*-algebra obtained by completing $A \rtimesalg S$ with respect to the norm given by
\begin{equation} \label{eqn: A rtimesalg S norm}
\norm{x} \coloneqq \sup \{ \norm{\pi(x)} : \pi \text{ is a $*$-representation of } A \rtimesalg S \}
\end{equation}
for $x \in A \rtimesalg S$.
\end{defn}

\begin{remark}
Let $\alpha\colon S \curvearrowright A$ be an action of a unital inverse semigroup $S$ on a C*-algebra $A$. It is possible to construct the maximal crossed product $A \rtimesfull S$ without any reference to the ideal $\JJ_\alpha$. For this, let $\QQ_\alpha \coloneqq C_c(S,A) / \II_\alpha$, and define
\[
\varsigma(x) \coloneqq \sup \{ \norm{\pi(x)} : \pi \text{ is a $*$-representation of } \QQ_\alpha \}
\]
for $x \in \QQ_\alpha$. Then $\varsigma$ is a seminorm on $\QQ_\alpha$, and $A \rtimesfull S$ is the Hausdorff completion of $\QQ_\alpha$ with respect to $\varsigma$. (See \cite[Definition~3.11]{Exel2011} for details.)
\end{remark}

We now show that the action $\alpha\colon S \curvearrowright A$ is implemented by conjugation in $A \rtimesalg S$.

\begin{lemma} \label{lemma: alpha implemented by conjugation}
Let $\alpha\colon S \curvearrowright A$ be an action of a unital inverse semigroup $S$ on a C*-algebra $A$. Fix $s \in S$, and let $\{e_\lambda\}_\lambda$ be an approximate identity for $A_{ss^*}$. Then for all $a \in A_{s^*s}$, we have
\[
a_s(a) \delta_{ss^*} = \lim_\lambda \big( e_\lambda \delta_s * a \delta_1 * \alpha_{s^*}(e_\lambda) \delta_{s^*} \big).
\]
\end{lemma}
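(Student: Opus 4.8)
The plan is to evaluate the right-hand side directly, applying the multiplication formula $(a_s\delta_s) * (b_t\delta_t) = \alpha_s\big(\alpha_{s^*}(a_s)\,b_t\big)\delta_{st}$ twice, simplifying the resulting coefficient by means of $\alpha_{s^*} = \alpha_s^{-1}$, and then passing to the limit over $\lambda$. Throughout I would use that $\alpha_{s^*}$ restricts to a C*-isomorphism $A_{ss^*} \to A_{s^*s}$, so that $\{\alpha_{s^*}(e_\lambda)\}_\lambda$ is an approximate identity for $A_{s^*s}$; this is the one auxiliary fact that makes the limiting step work.

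First I would compute the left convolution. Since $e_\lambda \in A_{ss^*}$ and (as $S$ is unital) $a \in A_{s^*s} \subseteq A = A_1$, the formula gives
\[
e_\lambda \delta_s * a\delta_1 = \alpha_s\big(\alpha_{s^*}(e_\lambda)\,a\big)\delta_s,
\]
where $\alpha_{s^*}(e_\lambda)\,a \in A_{s^*s}$ because $A_{s^*s}$ is an ideal. Writing $b_\lambda \coloneqq \alpha_s\big(\alpha_{s^*}(e_\lambda)\,a\big) \in A_{ss^*}$ and convolving on the right by $\alpha_{s^*}(e_\lambda)\delta_{s^*}$ (legitimate since $\alpha_{s^*}(e_\lambda) \in A_{s^*s} = A_{s^*(s^*)^*}$), the formula gives
\[
b_\lambda\delta_s * \alpha_{s^*}(e_\lambda)\delta_{s^*} = \alpha_s\big(\alpha_{s^*}(b_\lambda)\,\alpha_{s^*}(e_\lambda)\big)\delta_{ss^*}.
\]
The key simplification is that $\alpha_{s^*}(b_\lambda) = \alpha_{s^*s}\big(\alpha_{s^*}(e_\lambda)\,a\big) = \alpha_{s^*}(e_\lambda)\,a$, using $\alpha_{s^*}\alpha_s = \id$ on $A_{s^*s}$ from \cref{lemma: action properties}\cref{item: alpha_s inverse and idempotent}. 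Hence the whole product collapses to
\[
e_\lambda\delta_s * a\delta_1 * \alpha_{s^*}(e_\lambda)\delta_{s^*} = \alpha_s\big(\alpha_{s^*}(e_\lambda)\,a\,\alpha_{s^*}(e_\lambda)\big)\delta_{ss^*}.
\]

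Finally I would take the limit. Since $\{\alpha_{s^*}(e_\lambda)\}_\lambda$ is an approximate identity for $A_{s^*s}$ and $a \in A_{s^*s}$, the two-sided product $\alpha_{s^*}(e_\lambda)\,a\,\alpha_{s^*}(e_\lambda)$ converges in norm to $a$, and applying the isometric map $\alpha_s$ yields $\alpha_s\big(\alpha_{s^*}(e_\lambda)\,a\,\alpha_{s^*}(e_\lambda)\big) \to \alpha_s(a)$ in $A_{ss^*}$. To transfer this to convergence in $A \rtimesalg S$, I would use that $ss^* \in E(S)$ satisfies $ss^* \le 1$, so the defining relations of $\II_\alpha$ identify $c\delta_{ss^*}$ with $c\delta_1$ for every $c \in A_{ss^*}$, while the canonical embedding $A \hookrightarrow A \rtimesalg S$, $c \mapsto c\delta_1$, is norm-decreasing. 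Thus the displayed net converges to $\alpha_s(a)\delta_{ss^*}$, as required. I expect the only genuine care to lie in the domain/range bookkeeping that keeps each convolution and each application of $\alpha_{s^*}$ within the correct ideal, and in confirming that $\{\alpha_{s^*}(e_\lambda)\}_\lambda$ is an approximate identity for $A_{s^*s}$; once these are in place the convergence is immediate from the isometry of $\alpha_s$ and the contractivity of the embedding.
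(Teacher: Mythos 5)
Your proof is correct and takes essentially the same route as the paper's: both evaluate the convolution directly via the formula $(a_s\delta_s)*(b_t\delta_t) = \alpha_s\big(\alpha_{s^*}(a_s)\,b_t\big)\delta_{st}$, arrive at the same intermediate expression $\alpha_s\big(\alpha_{s^*}(e_\lambda)\,a\,\alpha_{s^*}(e_\lambda)\big)\delta_{ss^*}$, and then pass to the limit. The only cosmetic differences are that you associate the triple product left-to-right rather than right-to-left, and you handle the limit using the approximate identity $\{\alpha_{s^*}(e_\lambda)\}_\lambda$ for $A_{s^*s}$ together with isometry of $\alpha_s$, whereas the paper pulls $\alpha_s$ inside to rewrite the coefficient as $e_\lambda\,\alpha_s(a)\,e_\lambda$ and applies $\{e_\lambda\}_\lambda$ directly.
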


\begin{proof}
Fix $a \in A_{s^*s}$. Then
\begin{align*}
\lim_\lambda \big( e_\lambda \delta_s * a \delta_1 * \alpha_{s^*}(e_\lambda) \delta_{s^*} \big) &= \lim_\lambda \big( e_\lambda \delta_s * a \, \alpha_{s^*}(e_\lambda) \, \delta_{s^*} \big) \\
&= \lim_\lambda \, \alpha_s\big( \alpha_{s^*}(e_\lambda) \, a \, \alpha_{s^*}(e_\lambda) \big) \delta_{ss^*} \\
&= \lim_\lambda \, e_\lambda \, \alpha_s(a) \, e_\lambda \, \delta_{ss^*} \\
&= \alpha_s(a) \delta_{ss^*}. \qedhere
\end{align*}
\end{proof}

We now construct the reduced and essential crossed products $A \rtimesred S$ and $A \rtimesess S$. We build these in parallel to highlight the similarities between the constructions. We first define some special elements of $I_{s,1}''$ and $\MM\big(I_{s,1} \oplus I_{s,1}^\perp\big)$ that will be used in these constructions.

\begin{defn} \label{defn: identity for expectations}
Let $\alpha\colon S \curvearrowright A$ be an action of a unital inverse semigroup $S$ on a C*-algebra $A$. Fix $s \in S$.
\begin{enumerate}[label=(\alph*)]
\item \label{item: identity for ER} We write $1_s$ for the identity of $I_{s,1}''$, where $I_{s,1}''$ is viewed as an ideal of $A''$ by \cref{lemma: inclusions of ideals}.
\item \label{item: identity for EL} We define $i_s \in \MM(I_{s,1}\oplus I_{s,1}^\perp)$ by $i_s(a\oplus b) \coloneqq a$ for $a \in I_{s,1}$ and $b \in I_{s,1}^\perp$.
\end{enumerate}
\end{defn}

\begin{remark} \label{rem: 1_s and i_s properties} \leavevmode
\begin{enumerate}[label=(\alph*), ref={\cref{rem: 1_s and i_s properties}(\alph*)}]
\item \label{item: 1_s is central} Fix $s \in S$, and observe that $1_s$ is in the centre of $A''$. To see this, fix $x \in A''$, and note that $1_s \, x, \, x \, 1_s \in I_{s,1}''$ because $I_{s,1}''$ is an ideal of $A''$. Thus, since $1_s$ is the identity of $I_{s,1}''$, we have $1_s \, x = (1_s \, x) 1_s = 1_s (x \, 1_s) = x \, 1_s$.
\item A straightforward calculation shows that any approximate identity $\{e_{\lambda, s}\}_\lambda$ for $I_{s,1}$ converges strictly to $i_s$ in $\MM\big(I_{s,1} \oplus I_{s,1}^\perp\big)$.
\end{enumerate}
\end{remark}

We now define a map $\ER\colon A \rtimesfull S \to A''$ that is referred to in \cite[Definition~3.1]{KwasniewskiMeyer2021} as a \emph{weak conditional expectation}. We will then quotient the maximal crossed product by the nucleus of this map $\ER$ to obtain the reduced crossed product $A \rtimesred S$ (see \cref{defn: reduced and essential crossed products}).

\begin{thm}[{see \cite[Lemma~4.5]{BussExelMeyer2017} or \cite[Proposition~3.16]{KwasniewskiMeyer2021}}] \label{thm: ER existence and properties}
Let $\alpha\colon S \curvearrowright A$ be an action of a unital inverse semigroup $S$ on a C*-algebra $A$. Let $1_s \in I_{s,1}'' \subseteq A''$ be as in \cref{defn: identity for expectations}\cref{item: identity for ER}. There exists a completely positive, linear, contractive map $\ER\colon A \rtimesfull S \to A''$ such that $\ER(a_s\delta_s) = a_s 1_s$ for all $s \in S$ and $a_s \in A_{ss^*}$.
\end{thm}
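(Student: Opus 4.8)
The plan is to sidestep the possibly non-injective quotient map $\QQ_\alpha\to A\rtimesalg S$ by building $\ER$ directly from a $*$-representation, so that complete positivity and contractivity come for free and the only genuinely computational input is compatibility of the defining formula with the relations. First I would record that, since $\{\delta_s\}_{s\in S}$ is a basis of $C_c(S,A)$, the rule $a_s\delta_s\mapsto a_s1_s$ defines a linear map $\ER_0\colon C_c(S,A)\to A''$, and then check that $\ER_0$ annihilates $\II_\alpha$, i.e.\ that $a_s1_s=a_s1_t$ whenever $a_s\in A_{ss^*}$ and $s\le t$. By \cref{rem: I_s1 properties} every idempotent $v\le s$ also satisfies $v\le t$, so $I_{s,1}\subseteq I_{t,1}$, and hence the central projections of \cref{rem: 1_s and i_s properties} obey $1_s\le1_t$; thus the claim is equivalent to $a_s(1_t-1_s)=0$, and as $1_t-1_s$ is the central projection corresponding to the ideal $I_{t,1}\cap I_{s,1}^\perp$, it suffices to show $A_{ss^*}\cap I_{t,1}\subseteq I_{s,1}$. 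This I would obtain from the one-line idempotent identity $s\,ss^*v=ss^*t\,ss^*v=ss^*t\,vss^*=ss^*v\,ss^*=ss^*v$ (using $s=ss^*t$, $tv=v$, and commutativity of $E(S)$), which shows $ss^*v\le s$ for every idempotent $v\le t$; then $A_{ss^*}\cap A_v=A_{ss^*v}\subseteq I_{s,1}$ by \cref{lemma: action properties}, and since the $A_v$ with $v\le t$ generate $I_{t,1}$ the containment follows.

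For the analytic content I would realise $\ER_0$ as a compression. Let $\EE$ be the Hilbert $A''$-module underlying the regular representation of the action (the inverse-semigroup analogue of $\ell^2(S)\otimes A''$, assembled from the ideals $A_{ss^*}$), let $\Lambda\colon A\rtimesfull S\to\mathcal L(\EE)$ be the associated $*$-representation, in which $\delta_s$ acts by the partial isometry implementing $\alpha_s$ composed with the index shift $u\mapsto su$, and let $V\colon A''\to\EE$ be the isometry onto the fibre indexed by the unit $1\in S$. Because $\Lambda$ respects the order relation of $S$ on the relevant ideals, it is bounded by the full norm and so extends to $A\rtimesfull S$. Setting $\ER\coloneqq V^*\Lambda(\cdot)V$ then yields a map that is automatically linear, completely positive (a $*$-representation sandwiched by an isometry), and contractive, with values in $\mathcal L(A'')\cong A''$ since $V^*V=1$. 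It then remains to verify on generators that $V^*\Lambda(a_s\delta_s)V=a_s1_s$: the operator $\Lambda(a_s\delta_s)V$ carries the unit fibre into the $s$-shifted fibre, and $V^*$ returns the overlap of this with the unit fibre, which is supported exactly on the ideal $I_{s,1}$ where $\alpha_s$ acts trivially---this is precisely where the projection $1_s$ is produced.

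The main obstacle is this last verification together with the precise set-up of $\EE$ and $\Lambda$: one must arrange the inverse-semigroup regular representation carefully enough that the idempotent overlaps in $V^*\Lambda(a_s\delta_s)V$ collapse to the central projection $1_s$ rather than to a larger or smaller projection, and confirm that $\Lambda$ factors through the identifications defining $\QQ_\alpha$ and $A\rtimesfull S$. Once the compression formula is established on the dense subalgebra, both sides agree with the continuous map $\ER$ there, and since $\ER$ is contractive it is determined on all of $A\rtimesfull S$, with complete positivity and contractivity passing to the completion as closed conditions. Alternatively one could avoid representations and try to prove positivity of $\ER_0(f^**f)=\sum_{s,u}\alpha_{s^*}(a_s^*a_u)\,1_{s^*u}\ge0$ together with its matrix amplifications by hand, but verifying complete positivity of this kernel directly is more delicate than the compression argument, which is why I would favour the latter.
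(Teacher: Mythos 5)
Your overall strategy is sound, and it is essentially the strategy of the sources the paper itself defers to for this statement (\cite[Lemma~4.5]{BussExelMeyer2017}, \cite[Proposition~3.16]{KwasniewskiMeyer2021}): the paper omits the proof, cites these references, and notes that the argument is parallel to its proof of \cref{thm: EL existence and properties}, which likewise obtains complete positivity and contractivity from a Hilbert-module construction and then only verifies the formula on generators. Your algebraic step is correct: for idempotents $v \le t$ and $s \le t$ one indeed gets $ss^*v \le s$, hence $A_{ss^*} \cap A_v = A_{ss^*v} \subseteq I_{s,1}$, and from there $a 1_t = a 1_s$ for $a \in A_{ss^*}$. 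Two small details are glossed over: passing from the generating ideals $A_v$ to all of $I_{t,1}$ requires the identity $A_{ss^*} \cap I_{t,1} = A_{ss^*} \cdot I_{t,1} = \clspan\{A_{ss^*}A_v : v \in E(S),\ v \le t\}$ for closed ideals (an element of $A_{ss^*} \cap I_{t,1}$ need not be approximable by sums of elements lying in the sets $A_{ss^*} \cap A_v$), and converting the ideal containment into $a(1_t - 1_s) = 0$ needs a $\sigma$-weak limit argument with an approximate unit of $I_{t,1}$. Both are routine.

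The genuine gap is the second half. Everything that makes the theorem nontrivial---that $\ER$ is defined on all of $A \rtimesfull S$ (not merely on $\QQ_\alpha$, whose image in the completion you cannot control), that it is completely positive, and that it is contractive---is carried by the triple $(\EE, \Lambda, V)$, and you never construct it. For a group action, $\ell^2(G) \otimes A''$ with the translation representation works immediately; for an inverse semigroup the naive $\ell^2(S)$-construction fails, because $\Lambda$ must annihilate $\II_\alpha$, which forces the fibres indexed by comparable elements $s \le t$ to be glued along the parts corresponding to the ideals $I_{s,t}$, and the claim that the unit-fibre compression of $\Lambda(a_s\delta_s)$ equals $a_s 1_s$ (rather than $0$, as in the group case, or some other projection) is precisely the statement that the glued overlap of the $s$-fibre with the $1$-fibre is supported on $1_s$. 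You explicitly flag this set-up and verification as ``the main obstacle,'' but resolving it is the entire content of the results the paper cites; as written, your argument reduces the theorem to an object whose existence and properties are asserted rather than proved. Your fallback---showing directly that $\sum_{s,u}\alpha_{s^*}(a_s^*a_u)\,1_{s^*u} \ge 0$ together with its matrix amplifications---is, as you say, the same kernel-positivity difficulty in different clothing, not a way around it.
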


The proof of \cref{thm: ER existence and properties} can be found (with different notation) in \cite[Lemma~4.5]{BussExelMeyer2017} and \cite[Proposition~3.16]{KwasniewskiMeyer2021}, and it is also similar to the proof of \cref{thm: EL existence and properties} below, so we omit it.

\begin{thm}[{see \cite[Proposition~4.3]{KwasniewskiMeyer2021}}] \label{thm: EL existence and properties}
Let $\alpha\colon S \curvearrowright A$ be an action of a unital inverse semigroup $S$ on a C*-algebra $A$. Let $i_s \in \MM\big(I_{s,1} \oplus I_{s,1}^\perp\big) \subseteq \Mloc(A)$ be as in \cref{defn: identity for expectations}\cref{item: identity for EL}. There exists a completely positive, linear, contractive map $\EL\colon A \rtimesfull S \to \Mloc(A)$ such that $\EL(a_s \delta_s) = a_s i_s$ for all $s \in S$ and $a_s \in A_{ss^*}$.
\end{thm}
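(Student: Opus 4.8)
The plan is to mirror the (omitted) construction of $\ER$ from \cref{thm: ER existence and properties}: first write down a candidate map on the dense $*$-subalgebra, show it descends to the quotient $\QQ_\alpha = C_c(S,A)/\II_\alpha$, and then prove that it is completely positive and contractive with respect to the maximal norm, which will let it factor through $A \rtimesalg S$ and extend to the completion $A \rtimesfull S$. Concretely, I would set $\EL_0(f) \coloneqq \sum_{s \in S} f(s)\, i_s$ for $f \in C_c(S,A)$, where each product $f(s)\, i_s$ is formed in $\MM\big(I_{s,1} \oplus I_{s,1}^{\perp}\big)$ — viewing $f(s) \in A_{ss^*}$ inside this multiplier algebra via \cref{lemma: compatible maps between multiplier algebras}, using that $I_{s,1} \oplus I_{s,1}^{\perp}$ is essential — and then regarded as an element of $\Mloc(A)$. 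One structural caveat to keep in mind throughout is that $\Mloc(A)$ is only a subquotient of $A''$ and not a subalgebra, so every equality and every positivity assertion must ultimately be verified in the directed system, i.e.\ by restricting multipliers to a sufficiently small common essential ideal.

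The first genuine step is to show $\EL_0$ annihilates $\II_\alpha$, which amounts to the identity $a\, i_s = a\, i_t$ in $\Mloc(A)$ for all $s \le t$ and $a \in A_{ss^*}$. I would deduce this from the ideal identity $A_{ss^*} \cap I_{t,1} = I_{s,1}$. The inclusion $\supseteq$ is immediate from \cref{rem: I_s1 properties} together with $I_{s,1} \subseteq I_{t,1}$. For $\subseteq$, since $A_{ss^*} \cap A_w = A_{ss^* w}$ for each idempotent $w \le t$ by \cref{lemma: action properties}\cref{item: idempotent ideals}, it suffices to prove $ss^* w \le s$; this follows because $s = ss^* t$ and $wt = w$ give $ss^* w\, s = ss^* w\,(ss^* t) = ss^* w\, t = ss^* w$ (using that idempotents commute and $wt = w$), so $ss^* w = (ss^* w) s$. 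Granting this, for $a \in A_{ss^*}$ one has $a\, I_{t,1} \subseteq A_{ss^*} \cap I_{t,1} = I_{s,1}$; combining this with the facts that the projection onto $I_{s,1}$ factors through the projection onto $I_{t,1}$ (as $I_{s,1} \subseteq I_{t,1}$) and that $a$ commutes with the summand projection $i_s$ (since $I_{s,1}$ and $I_{s,1}^{\perp}$ are invariant under left multiplication by $a$), a short computation on a common essential ideal contained in $\big(I_{s,1} \oplus I_{s,1}^{\perp}\big) \cap \big(I_{t,1} \oplus I_{t,1}^{\perp}\big)$ yields $a\, i_t = a\, i_s$. Hence $\EL_0$ descends to a linear map $\EL\colon \QQ_\alpha \to \Mloc(A)$ with $\EL(a_s \delta_s) = a_s i_s$.

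The main obstacle is complete positivity, and with it contractivity in the maximal norm (which is precisely what permits $\EL$ to factor through $A \rtimesalg S$ and extend to $A \rtimesfull S$). Rather than attempt a brute-force positivity computation on convolutions $f^* * f$, I would realise $\EL$ as a compression of a $*$-representation: for a cofinal family of essential ideals $J$, build a $*$-homomorphism of $A \rtimesfull S$ into the adjointable operators on a Hilbert $\MM(J)$-module assembled from the ideals $I_{s,1}$, together with a projection $V$ onto the ``diagonal'' copy of $J$, so that $V^*(\,\cdot\,)V$ recovers the restriction of $x \mapsto \EL(x)$ to $\MM(J)$; complete positivity is then automatic, and compatibility across the $J$'s lets one pass to the direct limit defining $\Mloc(A)$. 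Contractivity is then cheap: $\EL$ is completely positive and restricts to the identity on the copy $A\delta_1$ of $A$ (since $I_{1,1} = A$ forces $i_1$ to be a unit), and $\{e_\lambda \delta_1\}$ for an approximate unit $\{e_\lambda\}$ of $A$ is an approximate unit of $A \rtimesfull S$, so $\|\EL\| = \lim_\lambda \|e_\lambda\| = 1$. The delicate point — where I expect most of the work to lie — is organising this compression coherently over the directed system, since the projections $i_s$ live in the distinct multiplier algebras $\MM\big(I_{s,1} \oplus I_{s,1}^{\perp}\big)$, and the off-diagonal convolution terms must be controlled using that $\alpha_s$ and $\alpha_t$ agree on $I_{s,t}$ (\cref{lemma: alpha_s is alpha_t on I_st}).
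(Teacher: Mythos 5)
Your first two paragraphs are correct, and they already do more from scratch than the paper attempts: the paper does not reconstruct $\EL$ at all, but cites \cite[Proposition~4.3]{KwasniewskiMeyer2021} for the existence of the completely positive contractive map and devotes its entire proof to verifying the formula $\EL(a_s\delta_s) = a_s i_s$, by translating the action into the Hilbert-module framework of \cite{KwasniewskiMeyer2021} and checking that the structure maps there restrict to identity maps on $I_{s,1}$. Your descent argument is sound: the identity $A_{ss^*} \cap I_{t,1} = I_{s,1}$ for $s \le t$ holds (the computation $ss^*w \le s$ is right, and the lattice of closed ideals of a C*-algebra distributes over closed sums), and then for $x$ in the common essential ideal, writing $x$ according to both decompositions and using $A_{ss^*}I_{t,1} \subseteq I_{s,1}$, $A_{ss^*}I_{t,1}^\perp \subseteq I_{s,1}^\perp$, the difference $a\,i_tx - a\,i_sx$ lies in $I_{s,1} \cap I_{s,1}^\perp = \{0\}$, so $a\,i_t = a\,i_s$ in $\Mloc(A)$.

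The genuine gap is in your third paragraph: complete positivity and max-norm contractivity are the entire analytic content of the theorem, and your plan for them cannot work as formulated. You propose, for each essential ideal $J$ in a cofinal family, a representation of $A \rtimesfull S$ on a Hilbert $\MM(J)$-module whose compression ``recovers the restriction of $x \mapsto \EL(x)$ to $\MM(J)$''; but $\EL$ does not take values in $\MM(J)$ for any single essential ideal $J$, since $a_si_s$ lives in (the image of) $\MM\big(I_{s,1}\oplus I_{s,1}^\perp\big)$ and, when $S$ is infinite, in general no essential ideal is contained in all of the ideals $I_{s,1}\oplus I_{s,1}^\perp$ simultaneously (intersections of infinitely many essential ideals can be zero). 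So there is no compression over a fixed $J$ whose range contains $\EL(A\rtimesfull S)$; the coherence problem you defer is not a technicality but the obstruction itself. Your contractivity argument is also circular: the approximate-unit estimate applies to a CP map already defined on all of $A \rtimesfull S$, whereas what you need is max-norm boundedness of $\EL_0$ on the dense $*$-subalgebra in order to extend it at all (positivity on a dense, non-closed $*$-subalgebra does not imply boundedness). The missing idea, which stays inside the paper's toolkit, is that $\EL_0$ is \emph{locally implemented} by $\ER$: for $f \in C_c(S,A)$ set $J_f \coloneqq \bigcap_{s \in \supp f}\big(I_{s,1}\oplus I_{s,1}^\perp\big)$, an essential ideal since the intersection is finite; because the central projection $1_s$ fixes $I_{s,1}$ and kills $I_{s,1}^\perp$, one has $1_s a = i_s a$ for $a \in I_{s,1}\oplus I_{s,1}^\perp$, hence $\EL_0(f)a = \ER(f)a$ for all $a \in J_f$. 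This gives $\norm{\EL_0(f)}_{\MM(J_f)} \le \norm{\ER(f)} \le \norm{f}$ in the maximal norm, and (matrix) positivity of $\EL_0(f)$ follows from that of $\ER(f)$ because positivity of a multiplier of $J_f$ can be tested on compressions $a^*(\cdot)\,a$ with $a \in J_f$. Thus CP and contractivity of $\EL$ are inherited from \cref{thm: ER existence and properties}, and $\EL$ extends by continuity; this replaces your unproven compression construction.
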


\begin{proof}
Let $\EL\colon A \rtimesfull S \to \Mloc(A)$ be the linear, completely positive, contractive map defined in \cite[Proposition~4.3]{KwasniewskiMeyer2021}. We will show that this map satisfies $\EL(a_s \delta_s) = a_s i_s$ for all $s \in S$ and $a_s \in A_{ss^*}$. In order to do this, we must first translate our setting to that of \cite{KwasniewskiMeyer2021}.

The action $\alpha$ induces an action of $S$ on $A$ by Hilbert $A$-modules, as defined in \cite[Definition~2.8]{KwasniewskiMeyer2021}, as follows. For $s \in S$, we set $\EE_s \coloneqq A_{ss^*}$. We equip $\EE_s$ with left and right actions of $A$, and left and right inner products with values in $A$, given by
\[
a \cdot \xi = a \xi, \quad \xi \cdot b = \alpha_s\big(\alpha_{s^*}(\xi) \, b\big), \quad {}_L\langle \xi, \, \eta \rangle = \xi\eta^*, \,\ \text{ and } \ \ \langle \xi, \, \eta \rangle_R = \alpha_{s^*}(\xi^*\eta)
\]
for $a, b \in A$ and $\xi, \eta \in \EE_s$. With this structure, $\EE_s$ is a Hilbert $A$-module; the ranges of the inner products are $r(\EE_s) \coloneqq A_{ss^*}$ and $s(\EE_s) \coloneqq A_{s^*s}$, respectively. Together with the bimodule isomorphisms $\mu_{s,t}\colon \EE_s \otimes_A \EE_t \to \EE_{st}$ given by $\mu_{s,t}(\xi \otimes \eta) = \alpha_s\big(\alpha_{s^*}(\xi) \, \eta\big)$, this gives an action of $S$ on $A$ by Hilbert $A$-modules. (See also \cite[Example~2.3]{BussExelMeyer2017}, where $\EE_s = A_{s^*s}$ is used instead.)

To continue translating our setting to that of \cite{KwasniewskiMeyer2021}, we now identify $\xi \coloneqq a_s\delta_s$ with $a_s \in A_{ss^*}$ (via the isomorphism $C_c(S,A) \to \oplus_{s \in S} \, A_{ss^*}$), and trace through the maps used to define $\EL(\xi)$ in the discussion preceding \cite[Proposition~4.3]{KwasniewskiMeyer2021}. Given $v \le s$, we have $\EE_v \subseteq \EE_s$ by \cref{lemma: action properties}\cref{item: alpha_s ideal containment}. As in \cite[Section~2.2]{KwasniewskiMeyer2021}, let $j_{s,v}\colon \EE_v \to \EE_s$ be the inclusion map. In \cite[Section~3.2]{KwasniewskiMeyer2021} it is noted that $j_{s,v}$ may be viewed as an isomorphism onto its range $r(\EE_v) \cdot \EE_s = \EE_s \cdot s(\EE_v)$, and in our setting this just amounts to the observation that $j_{s,v}\colon \EE_v \to \EE_v$ is an isomorphism because it is the identity map. By \cite[Equation~3.2]{KwasniewskiMeyer2021}, there is a unique Hilbert-bimodule isomorphism
\[
\vartheta_{1,s}\colon \EE_s \cdot I_{s,1} \to \EE_1 \cdot I_{s,1}
\]
that restricts to $j_{1,v} \circ j_{s,v}^{-1} = \id_{\EE_v}$ on $\EE_v$ for any $v \in E(S)$ satisfying $v \le s$. By \cref{cor: alpha_s is alpha_s* on I_1s}, $\alpha_s$ and $\alpha_{s^*}$ both restrict to the identity map on $I_{s,1}$, and it follows that $\EE_s \cdot I_{s,1} = I_{s,1} = \EE_1 \cdot I_{s,1}$. Thus $\vartheta_{1,s}$ is the identity map on $I_{s,1}$.

Fix $a_1 \in I_{s,1}$ and $a_2 \in I_{s,1}^\perp$, and write $a \coloneqq a_1 \oplus a_2$. By the discussion preceding \cite[Prop\-osition~4.3]{KwasniewskiMeyer2021}, we have
\begin{equation} \label{eqn: EL(xi)(a)}
\EL(\xi)(a) = i_s\big((\vartheta_{1,s} \oplus \id)(\xi \cdot a)\big) = i_s\big(\vartheta_{1,s}(\xi \cdot a_1) \oplus (\xi \cdot a_2)\big) = \vartheta_{1,s}(\xi \cdot a_1).
\end{equation}
Since $\alpha_{s^*}(\xi), a_1 \in A_{s^*s}$, we have
\[
\xi \cdot a_1 = \alpha_s\big(\alpha_{s^*}(\xi) \, a_1\big) = \alpha_{ss^*}(\xi) \, \alpha_s(a_1) = \xi \alpha_s(a_1).
\]
Since $a_1 \in I_{s,1}$, we have $\alpha_s(a_1) = a_1$ by \cref{cor: alpha_s is alpha_s* on I_1s}, and thus $\xi \cdot a_1 = \xi \alpha_s(a_1) = \xi a_1 \in I_{s,1}$. Since $\vartheta_{1,s}\colon I_{s,1} \to I_{s,1}$ is the identity map, \cref{eqn: EL(xi)(a)} implies that
\[
\EL(\xi)(a) = \vartheta_{1,s}(\xi \cdot a_1) = \vartheta_{1,s}(\xi a_1) = \xi a_1 = \xi \, i_s(a).
\]
It follows that $\EL(\xi) = \xi i_s$, as claimed.
\end{proof}

We now define the reduced and essential crossed products $A \rtimesred S$ and $A \rtimesess S$. Let $\alpha\colon S \curvearrowright A$ be an action of a unital inverse semigroup $S$ on a C*-algebra $A$. Let $\ER\colon A \rtimesfull S \to A''$ and $\EL\colon A \rtimesfull S \to \Mloc(A)$ be the completely positive linear contractions from \cref{thm: ER existence and properties,thm: EL existence and properties}, respectively. Set
\[
\NN_\ER \coloneqq \{a \in A \rtimesfull S : \ER(a^*a) = 0\} \quad \text{ and } \quad \NN_\EL \coloneqq \{a \in A \rtimesfull S : \EL(a^*a) = 0\}.
\]
We call $\NN_\ER$ the \emph{nucleus} of $\ER$ and $\NN_\EL$ the \emph{nucleus} of $\EL$. By \cite[Remark~4.5]{KwasniewskiMeyer2021}, $\NN_\ER$ and $\NN_\EL$ are ideals of $A \rtimesfull S$ such that $\NN_\ER \subseteq \NN_\EL$.

\begin{defn} \label{defn: reduced and essential crossed products}
Let $\alpha\colon S \curvearrowright A$ be an action of a unital inverse semigroup on a C*-algebra $A$. We define the \emph{reduced crossed product of $(A, S, \alpha)$} and the \emph{essential crossed product of $(A, S, \alpha)$} by
\[
A \rtimesred S \coloneqq (A \rtimesfull S) / \NN_\ER \quad \text{ and } \quad A \rtimesess S \coloneqq (A \rtimesfull S) / \NN_\EL,
\]
respectively. We write $\Lambda\colon A \rtimesred S \to (A \rtimesred S) / \NN_\EL \cong A \rtimesess S$ for the quotient map, and we call $\ker(\Lambda)$ the \emph{singular ideal} of $A \rtimesred S$.
\end{defn}

The map $A \owns a \mapsto a\delta_1 \in C_c(S,A)$ induces embeddings
\[
\iota_\red\colon A \to A \rtimesred S \quad \text{ and } \quad \iota_\ess\colon A \to A \rtimesess S
\]
(see \cite[Lemma~2.28]{BussMartinez2023} for the reduced case, and \cite[Corollary~4.12]{KwasniewskiMeyer2021} for the essential case). It follows that $\ker(\Lambda) \cap A = \{0\}$.

\begin{remark}
We can identify precisely when $\ER$ takes values in $A$ rather than in $A''$. Let $\alpha\colon S \curvearrowright A$ be an action of a unital inverse semigroup $S$ on a C*-algebra $A$. Following the naming convention established in \cite[Definitions~2.5 and 2.18]{KwasniewskiMeyer2021}, we say that $\alpha$ is \emph{closed} if $I_{s,1}$ is complemented in $A_{s^*s}$ for each $s \in S$. The map $\ER\colon A \rtimesfull S \to A''$ takes values in $A$ if and only if $\alpha$ is closed (see \cite[Proposition~3.20]{KwasniewskiMeyer2021} or \cite[Corollary~2.29]{BussMartinez2023}). In this case, $\ER = \EL$, and the quotient map $\Lambda\colon A \rtimesred S \to A \rtimesess S$ is injective (see \cite[Remark 4.6]{KwasniewskiMeyer2021}).

When $S = G$ is a group, $I_{g,1}^\perp = A = I_{1,1}$ for all $g \in G {\setminus} \{1\}$, and hence $I_{g,1}$ is trivially complemented in $A_{g^*g} = A$ for all $g \in G$. Thus every group action $G\curvearrowright A$ is closed, and we have $A\rtimesred G = A\rtimesess G$.

When $\alpha$ is an action of an inverse semigroup on a commutative C*-algebra $C_0(X)$ (as in \cref{sec: groupoid C*-algebras}), the ideals $I_{s,1}$ are the subalgebras $C_0\big(\bigcup_{v \le s, 1} O_{v^*v}\big)$, where $O_{v^*v} \subseteq X$ are the open sets such that $C_0(O_{v^*v}) = A_{v^*v}$. The ideal $I_{s,1}$ is complemented in $A_{ss^*}$ precisely when $\bigcup_{v \le s, 1} O_{v^*v}$ is closed in $O_{s^*s}$. In this case, the transformation groupoid (sometimes called the groupoid of germs) of the action $\alpha$ has a closed unit space; that is, the groupoid is Hausdorff.
\end{remark}

\begin{example}
Let $S \coloneqq \Z_2 \sqcup \{0\}$ be the inverse semigroup formed by adjoining a $0$ to the group $\Z_2=\{1,s\}$ with $2$ elements where $s^2 = 1$ and $1 \cdot 0 = s \cdot 0 = 0 = 0 \cdot s = 0 \cdot 1$. Let $A$ be a C*-algebra with a nontrivial proper ideal $J \subseteq A$. Let $\alpha\colon S \curvearrowright A$ be the trivial action, where $A_0 \coloneqq J$ and $A_1 = A_{ss^*} = A$. The only relation in the inverse semigroup is $0 \le 1,s$, and hence
\[
\JJ_\alpha = \vecspan\!\big\{ i\delta_1 - i\delta_0, \, j\delta_s - j\delta_0 : i, j \in J \big\}.
\]
A straightforward calculation shows that the map $\Psi\colon C_c(S,A) \to A \oplus (A / J)$ given by
\[
\Psi(a\delta_1 + b\delta_s + c\delta_0) \coloneqq (a + b + c, \, a - b + J)
\]
for $a, b \in $ and $c \in J$ is a surjective homomorphism. We have $\ker(\Psi) = \JJ_\alpha$, and hence $\Psi$ induces an isomorphism $\Phi\colon A \rtimesalg S = C_c(S,A) / \JJ_\alpha \to A \oplus (A / J)$. To find $A\rtimesfull S$ we must complete $A \rtimesalg S$ with respect to the norm given in \cref{eqn: A rtimesalg S norm}, but this is trivial, so we have
\[
A \rtimesfull S \cong A \rtimesred S \cong A \rtimesalg S \cong A \oplus (A/J).
\]

Now we will make an additional assumption that the ideal $J$ is essential in $A$ and show that in this case, the essential crossed product $A \rtimesess S$ differs from $A \rtimesred S$. We have $I_{1,1} = A_1 = A$ and $I_{s,1} = I_{0,1} = A_0 = J$. Recall that since $J$ and $A$ are essential ideals of $A$, $J^\perp = A^\perp = \{0\}$. Thus
\[
\MM\big(I_{1,1} \oplus I_{1,1}^\perp\big) = \MM(A) \quad \text{ and } \quad \MM\big(I_{0,1} \oplus I_{0,1}^\perp\big) = \MM\big(I_{s,1} \oplus I_{s,1}^\perp\big) = \MM(J),
\]
and so $i_1 = 1_{\MM(A)}$ and $i_s = i_0 = 1_{\MM(J)}$. Thus, for all $a, b \in A$ and $c \in J$,
\[
\EL\big(a\delta_1 + b\delta_s + c\delta_0 + \JJ_\alpha\big) = a i_1 + b i_s + c i_0 = a + b + c \in A \subseteq \Mloc(A).
\]
Therefore, $\EL$ is $A$-valued, and so it is a ``genuine'' conditional expectation even though $\ER$ does not always take values in $A$. A straightforward computation shows that
\[
\NN_\EL = \big\{ a\delta_1 + b\delta_s + c\delta_0 + \JJ_\alpha \in A \rtimesalg S : a + b + c = 0 \big\},
\]
and it follows that the map $\widetilde{\Psi}\colon A \rtimesess S = (A \rtimesfull S) / \NN_\EL \to A$ given by
\[
\widetilde{\Psi}\big(a\delta_1 + b\delta_s + c\delta_0 + \NN_\EL\big) \coloneqq a + b + c
\]
for $a, b \in A$ and $c \in J$ is an isomorphism.
\end{example}

\section{Preliminaries on Cuntz semigroups}
\label{sec: Cuntz semigroups}

\subsection{Preordered abelian monoids}

In this subsection we introduce the necessary background on preordered abelian monoids. (These are called \emph{positively ordered monoids} in \cite[Definition~1.1]{Wehrung1992}.) We write $\N$ for the set of nonnegative integers.

\begin{defn}
A \emph{preorder} is a reflexive and transitive binary relation $\le$ on some set. A \emph{monoid} is a semigroup with an identity element. If $x \le y$, then we also write $y \ge x$. We say that an abelian monoid $(M,+)$ is \emph{preordered} if it has a preorder $\le$ such that
\begin{enumerate}[label=(\alph*)]
\item $z \ge 0$ for all $z \in M$; and
\item for all $x, x', y, y' \in M$ with $x \le x'$ and $y \le y'$, we have $x + y \le x' + y'$.
\end{enumerate}
\end{defn}

\begin{defn} \label{defn: infinite monoid}
Let $M$ be a preordered abelian monoid.
\begin{enumerate}[label=(\alph*), ref={\cref{defn: infinite monoid}(\alph*)}]
\item \label{item: infinite element of monoid} An element $x \in M$ is \emph{infinite} if there is some $y \in M {\setminus} \{0\}$ such that $x + y \le x$.
\item \label{item: properly infinite element of monoid} An element $x \in M {\setminus} \{0\}$ is \emph{properly infinite} if $2x \le x$.
\item The monoid $M$ is \emph{purely infinite} if $M \ne \{0\}$ and every $x \in M {\setminus} \{0\}$ is properly infinite.
\item The monoid $M$ is \emph{almost unperforated} if, whenever $x, y \in M$ satisfy $(n+1)x \le ny$ for some $n \in \N$, we have $x \le y$.
\item The monoid $M$ has \emph{plain paradoxes} if, whenever $x \in M$ satisfies $(n+1)x \le nx$ for some $n \in \N$, we have $2x \le x$.
\end{enumerate}
\end{defn}

It is straightforward to show that if $M$ is almost unperforated then $M$ has plain paradoxes. In \cite{KMP2025} the authors note that having plain paradoxes is in fact \emph{strictly} weaker than being almost unperforated, and so we use the plain paradoxes condition as it is easier to check and sufficient for our results.

\begin{defn}[{see~\cite[Definition~3.1]{Wehrung1992}}] \label{defn: injective}
Let $M$ be a preordered abelian monoid. We say that $M$ is \emph{injective} if, whenever $A$ is a submonoid of a preordered abelian monoid $B$, every order-preserving homomorphism $A \to M$ extends to an order-preserving homomorphism $B \to M$.
\end{defn}

We remark that $M$ is injective in the sense of \cref{defn: injective} if it is injective as an object in the category of preordered abelian monoids. Wehrung proved in \cite[Example~3.10]{Wehrung1992} that the set of extended positive reals $[0,\infty]$ (with the usual addition and preorder) is an injective preordered abelian monoid. This result allows Tarski's theorem (regarding abelian monoids with the algebraic preorder) to be strengthened to apply to all preordered abelian monoids.

\begin{thm}[{Tarski's theorem; see \cite[Page~43]{Wehrung1992} and \cite[Theorem~A.5]{KMP2025}}] \label{thm: Tarski-Wehrung paradoxicality thm}
Let $M$ be a preordered abelian monoid, and fix $x \in M {\setminus} \{0\}$. The following are equivalent:
\begin{enumerate}[label=(\arabic*)]
\item $(n+1)x \not\le nx$ for all $n \in \N$; and
\item there is an order-preserving homomorphism $\nu\colon M \to [0,\infty]$ such that $\nu(x) = 1$.
\end{enumerate}
\end{thm}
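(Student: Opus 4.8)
The plan is to prove the two implications separately; the forward direction is routine, and the reverse direction rests entirely on the injectivity of $[0,\infty]$.

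For $(2) \Rightarrow (1)$, I would argue by contraposition. Suppose $(n+1)x \le nx$ for some $n \in \N$, and let $\nu \colon M \to [0,\infty]$ be any order-preserving homomorphism with $\nu(x) = 1$. Applying $\nu$ gives $n+1 = (n+1)\nu(x) = \nu\big((n+1)x\big) \le \nu(nx) = n\nu(x) = n$, which is absurd. Hence no such $\nu$ exists, which is exactly the contrapositive of $(2) \Rightarrow (1)$.

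For $(1) \Rightarrow (2)$, the first step is to upgrade the hypothesis to the cleaner statement that $mx \le nx$ implies $m \le n$ for all $m, n \in \N$. Indeed, suppose $mx \le nx$ with $m > n$. Since every element of $M$ is $\ge 0$ and addition respects the preorder, we have $(m-n-1)x \ge 0$ and therefore $(n+1)x = (n+1)x + 0 \le (n+1)x + (m-n-1)x = mx \le nx$, contradicting $(1)$. With this in hand, I would consider the cyclic submonoid $A \coloneqq \{nx : n \in \N\} \subseteq M$, equipped with the restricted preorder, and define $\nu_0\colon A \to [0,\infty]$ by $\nu_0(nx) \coloneqq n$. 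This is well defined (if $mx = nx$ then $m \le n$ and $n \le m$, so $m = n$), it is visibly a monoid homomorphism, and it is order-preserving by the previous step; moreover $\nu_0(x) = 1$.

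The final step, and the crux of the argument, is to extend $\nu_0$ from $A$ to all of $M$. Here I would invoke Wehrung's theorem that $[0,\infty]$ is an injective preordered abelian monoid, recorded in the discussion following \cref{defn: injective}: with $A \subseteq M$ the submonoid inclusion and $\nu_0 \colon A \to [0,\infty]$ the order-preserving homomorphism just constructed, injectivity of $[0,\infty]$ furnishes an order-preserving homomorphism $\nu \colon M \to [0,\infty]$ extending $\nu_0$, whence $\nu(x) = \nu_0(x) = 1$, as required. I expect the extension step to be the only genuine obstacle, but it is dispatched wholesale by the injectivity of $[0,\infty]$, which the excerpt treats as already established; everything else is the elementary order-theoretic bookkeeping above.
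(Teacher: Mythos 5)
Your proposal is correct and takes essentially the same route as the paper's proof: the same contrapositive argument for $(2) \Rightarrow (1)$, the same claim that $mx \le nx$ forces $m \le n$, the same order-preserving homomorphism $nx \mapsto n$ on the cyclic submonoid $\{nx : n \in \N\}$, and the same extension to all of $M$ via Wehrung's injectivity of $[0,\infty]$. If anything, your justification of the claim (adding $(m-n-1)x \ge 0$ to reduce directly to hypothesis $(1)$) is tighter than the paper's, which passes through the cancellation $nx + kx \le nx \Rightarrow kx \le 0$, a step that is not valid in a general preordered abelian monoid.
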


\begin{proof}
Fix $x \in M {\setminus} \{0\}$. Suppose that $(n+1)x \not\le nx$ for all $n \in \N$. We claim that if $mx \le nx$ for some $m, n \le \N$, then $m \le n$. To see this, suppose for contradiction that $mx \le nx$ for some $m, n \in \N$ with $m > n$. Then $m = n + k$ for some $k > 0$, and so $nx \ge (n+k)x = nx + kx$, which implies that $kx \le 0$. Hence either $k \le 0$ or $x \le 0$; both are contradictions, so we have proved the claim. Now let $\langle x \rangle$ denote the submonoid of $M$ generated by $x$; that is, $\langle x \rangle \coloneqq \{nx : n \in \N\}$. Then the map $\nu_x\colon \langle x\rangle \to [0,\infty]$ given by $\nu_x(nx) \coloneqq n$ is an order-preserving homomorphism. Since $[0,\infty]$ is an injective preordered abelian monoid (see \cite[Example~3.10]{Wehrung1992}), $\nu_x$ extends to an order-preserving homomorphism $\nu\colon M \to [0,\infty]$ such that $\nu(x) = \nu_x(x) = 1$.

For the converse, suppose there exists an order-preserving homomorphism $\nu\colon M \to [0,\infty]$ such that $\nu(x) = 1$, and suppose for contradiction that $(n+1)x \le nx$ for some $n \in \N$. Then $\nu\big((n+1)x\big) = (n+1) \, \nu(x) = n+1 > n = n \, \nu(x) = \nu(nx)$, which contradicts our assumption that $\nu$ is order-preserving. Hence $(n+1)x \not\le nx$ for all $n \in \N$.
\end{proof}

\subsection{\texorpdfstring{$\Cu$}{Cu}-semigroups}

Before getting to the definition of the Cuntz semigroup of a C*-algebra, it will be useful to consider the more general notion of a $\Cu$-semigroup. We first define another relation on a preordered abelian monoid.

\begin{defn}[{see \cite[Definition~4.1]{GardellaPerera2024}}] \label{defn: swb ll}
Let $M$ be a preordered abelian monoid, and fix $x, y \in M$.
\begin{enumerate}[label=(\alph*), ref={\cref{defn: swb ll}(\alph*)}]
\item \label{item: swb ll} We say that $x$ is \emph{sequentially way below} $y$, and we write $x \ll y$, if, given any increasing sequence $\{d_n\}_{n \in \N} \subseteq M$ with supremum $d \in M$ satisfying $y \le d$, there is some $n \in \N$ such that $x \le d_n$.
\item We say that $x$ is \emph{compact} if $x \ll x$.
\end{enumerate}
\end{defn}

It is immediate that $x \ll y$ implies $x \le y$, and also that $0 \ll x$ for all $x \in M$. The next lemma is also relatively straightforward and will be useful in some of our calculations.

\begin{lemma} \label{lemma: ll le properties}
Let $M$ be a preordered abelian monoid, and fix $x, y, z \in M$. If $x \ll y \le z$ or $x \le y \ll z$, then $x \ll z$. Moreover, $\ll$ is transitive.
\end{lemma}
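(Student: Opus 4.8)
The plan is to unwind \cref{defn: swb ll} directly, treating the two ``chaining'' statements first and then deriving transitivity as a formal consequence. Neither statement requires any structure beyond transitivity of the preorder $\le$, so the whole proof is a direct transcription of the definition of $\ll$, with the only thing to watch being which hypothesis plays the role of the way-below relation in each case.

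First I would prove $x \ll y \le z \implies x \ll z$. Fix an increasing sequence $\{d_n\}_{n \in \N} \subseteq M$ with supremum $d \in M$ satisfying $z \le d$. By transitivity of $\le$ we have $y \le z \le d$, so $y \le d$; applying $x \ll y$ to this same sequence yields an index $n \in \N$ with $x \le d_n$. Since $\{d_n\}_{n \in \N}$ and $d$ were arbitrary, this is precisely the assertion $x \ll z$.

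Next I would prove $x \le y \ll z \implies x \ll z$ by the symmetric argument. Again fix an increasing sequence $\{d_n\}_{n \in \N}$ with supremum $d$ satisfying $z \le d$. This time I apply $y \ll z$ to obtain an index $n$ with $y \le d_n$, and then transitivity of $\le$ together with $x \le y$ gives $x \le d_n$, establishing $x \ll z$.

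Finally, for transitivity, suppose $x \ll y$ and $y \ll z$. As noted immediately after \cref{defn: swb ll}, $x \ll y$ implies $x \le y$, so the second statement applied to $x \le y \ll z$ gives $x \ll z$ at once (one could equally invoke $y \ll z \implies y \le z$ and the first statement). There is no genuine obstacle in any of this; the argument is entirely routine, and the only care needed is to keep straight, in each implication, which relation is $\ll$ and which is merely $\le$ when matching against the conclusion ``$x \le d_n$'' in the definition.
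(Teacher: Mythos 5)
Your proof is correct and follows essentially the same argument as the paper: both cases are handled by directly unwinding \cref{defn: swb ll} against an arbitrary increasing sequence dominating $z$, and transitivity of $\ll$ is deduced from the fact that $\ll$ implies $\le$ together with the chaining statements just proved. No discrepancies to report.
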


\begin{proof}
Let $\{d_n\}_{n \in \N}$ be a sequence in $M$ with supremum $d$ such that $z \le d$. Suppose that $x \ll y \le z$. Then $y \le z \le d$, so since $x \ll y$, there exists $n \in \N$ such that $x \le d_n$. Thus $x \ll z$. Suppose instead that $x \le y \ll z$. Then since $y \ll z$, there exists $n \in \N$ such that $y \le d_n$. So $x \le y \le d_n$, and thus $x \ll z$. Finally, if $x \ll y$ and $y \ll z$, then $x \le y$ and $y \le z$, so the preceding argument implies that $x \ll z$, and thus $\ll$ is transitive.
\end{proof}

\begin{defn}[{see \cite[Definition~4.5]{GardellaPerera2024}}] \label{defn: Cu-semigroup}
Let $M$ be a preordered abelian monoid with respect to a preorder $\le$. We say that $M$ is a \emph{$\Cu$-semigroup} if it satisfies the following axioms.
\begin{enumerate}[label=(O\arabic*)]
\item Every sequence in $M$ that is increasing with respect to $\le$ has a supremum.
\item \label{item: O2} For every $x \in M$, there is a sequence $\{x_n\}_{n \in \N}$ in $M$ such that $x_n \ll x_{n+1}$ for all $n \in \N$ and $\displaystyle x = \sup_{n \in \N} (x_n)$.
\item \label{item: O3} Let $x, x',y, y' \in M$. If $x \ll y$ and $x' \ll y'$, then $x + x' \ll y + y'$.
\item \label{item: O4} For all sequences $\{x_n\}_{n \in \N}$ and $\{y_n\}_{n \in \N}$ in $M$ that are increasing with respect to $\le$,
\[
\sup_{n \in \N} (x_n + y_n) = \sup_{n \in \N} (x_n) + \sup_{n \in \N} (y_n).
\]
\end{enumerate}
\end{defn}

\begin{example}
The injective preordered abelian monoid $[0,\infty]$ is a $\Cu$-semigroup, where ``sequentially way below'' is the usual $<$ relation.
\end{example}

\begin{remark} \label{rem: nonzero sequentially way below}
Let $M$ be a $\Cu$-semigroup. It follows from axiom~\cref{item: O2} of \cref{defn: Cu-semigroup} that for any $x \in M {\setminus} \{0\}$, there exists $z \in M {\setminus} \{0\}$ such that $z \ll x$.
\end{remark}

\begin{defn}[{see \cite[Definition~1.7]{Wehrung1992} and \cite[Definition~5.1]{GardellaPerera2024}}] \label{defn: PAM and Cu-semigroup ideals} \leavevmode
\begin{enumerate}[label=(\alph*)]
\item Let $M$ be a preordered abelian monoid with respect to a preorder $\le$. An \emph{ideal} of $M$ is a submonoid $I \subseteq M$ that is hereditary, in the sense that whenever $x \in M$ and $y \in I$ satisfy $x \le y$, we have $x \in I$.
\item Let $M$ be a $\Cu$-semigroup. An \emph{ideal} of $M$ is a hereditary submonoid $I \subseteq M$ that is closed under suprema of increasing sequences.
\end{enumerate}
\end{defn}

\begin{defn} \label{defn: Cu-semigroup action}
An \emph{action} $\beta\colon S \curvearrowright M$ of a unital inverse semigroup $S$ on a $\Cu$-semigroup $M$ consists of collections of ideals $\{M_{s^*s}\}_{s \in S}$ and $\Cu$-morphisms $\{ \beta_s\colon M_{s^*s} \to M_{ss^*}\}_{s \in S}$ such that
\begin{enumerate}[label=(\alph*), ref={\cref{defn: Cu-semigroup action}(\alph*)}]
\item for each $s \in S$, $\beta_s$ is a bijective $\Cu$-morphism;
\item \label{item: Cu-semigroup action composition} for all $s, t \in S$, $M_{(st)^*st} = \beta_t^{-1}(M_{s^*s} \cap M_{tt^*})$ and $\beta_{st} = \beta_s \circ \beta_t\restr{M_{(st)^*st}}$; and
\item $M_1 = M$.
\end{enumerate}
We say that a subset $I \subseteq M$ is \emph{invariant} under $\beta$ if, for all $s \in S$, $\beta_s(I \cap M_{s^*s}) \subseteq I$.
\end{defn}

\begin{remark} \label{rem: Cu-semigroup action properties}
It is straightforward to show that if $\beta\colon S \curvearrowright M$ is an action of a unital inverse semigroup on a $\Cu$\nobreakdash-semigroup $M$, then $\beta_e = \id_{M_e}$ for all $e \in E(S)$, and $\beta_{s^*} = \beta_s^{-1}$ for all $s \in S$.
\end{remark}

The following is a useful interpolation property that emerges from the $\Cu$-semigroup axioms, which we will use in some of our proofs.

\begin{lemma} \label{lemma: sequentially way-below interpolation}
Let $M$ be a $\Cu$-semigroup, with ideals $J_1, \dotsc, J_n \trianglelefteq M$. Suppose that $x \in M$ and $\big\{ y_i \in J_i : i \in \{1, \dotsc, n\} \big\}$ satisfy
\[
x \,\ll\, \sum_{i=1}^n y_i.
\]
Then there exist $\big\{ y_i' \in J_i : i \in \{1, \dotsc, n\} \big\}$ such that $y_i' \ll y_i$ for each $i \in \{1, \dotsc, n\}$, and
\[
x \,\ll\, \sum_{i=1}^n y_i' \,\ll\, \sum_{i=1}^n y_i.
\]
\end{lemma}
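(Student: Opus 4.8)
The plan is to approximate each $y_i$ from below by a rapidly increasing sequence, combine these sequences into an increasing sequence whose supremum is $\sum_i y_i$, extract a single index at which $x$ already sits below the corresponding partial sum, and finally shift that index by one to upgrade the resulting $\le$-inequalities into the desired $\ll$-relations.

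First I would apply axiom (O2) to each $y_i$ to obtain an increasing sequence $\{y_{i,k}\}_{k \in \N}$ in $M$ with $y_{i,k} \ll y_{i,k+1}$ for all $k$ and $\sup_k y_{i,k} = y_i$. Since $y_{i,k} \le y_i \in J_i$ and $J_i$ is hereditary (see \cref{defn: PAM and Cu-semigroup ideals}), each $y_{i,k}$ lies in $J_i$. The sequence $\big\{\sum_{i=1}^n y_{i,k}\big\}_k$ is increasing because addition preserves the preorder, so by axiom (O1) it has a supremum, and extending axiom (O4) from two summands to $n$ summands by a routine induction gives
\[
\sup_k \sum_{i=1}^n y_{i,k} \;=\; \sum_{i=1}^n \sup_k y_{i,k} \;=\; \sum_{i=1}^n y_i.
\]

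Now, since $x \ll \sum_{i=1}^n y_i$, the definition of $\ll$ (\cref{defn: swb ll}) applied to the increasing sequence above produces an index $k$ with $x \le \sum_{i=1}^n y_{i,k}$. I would then set $y_i' \coloneqq y_{i,k+1} \in J_i$ and verify the three required conditions using \cref{lemma: ll le properties} together with axiom (O3) (again extended to $n$ summands by induction). Indeed, $y_i' = y_{i,k+1} \ll y_{i,k+2} \le y_i$ gives $y_i' \ll y_i$; from $\sum_i y_{i,k} \ll \sum_i y_{i,k+1} = \sum_i y_i'$ and $x \le \sum_i y_{i,k}$ we obtain $x \ll \sum_i y_i'$; and from $\sum_i y_i' = \sum_i y_{i,k+1} \ll \sum_i y_{i,k+2} \le \sum_i y_i$ we obtain $\sum_i y_i' \ll \sum_i y_i$.

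The argument is essentially a direct unwinding of the $\Cu$-semigroup axioms, so there is no serious obstacle; the only technical points are the harmless inductive upgrades of axioms (O3) and (O4) from pairs to $n$-fold sums, and the observation that the index must be shifted by one (replacing $y_{i,k}$ by $y_{i,k+1}$) in order to convert the single inequality $x \le \sum_i y_{i,k}$ coming from the definition of $\ll$ into the strict relations $x \ll \sum_i y_i' \ll \sum_i y_i$.
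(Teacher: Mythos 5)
Your proof is correct and follows essentially the same route as the paper's: approximate each $y_i$ by an (O2)-sequence, sum to get an increasing sequence with supremum $\sum_i y_i$ via (O3)/(O4), extract an index $l$ with $x \le \sum_i y_{i,l}$, and shift to $y_i' = y_{i,l+1}$ to upgrade to $\ll$ using \cref{lemma: ll le properties}. Your explicit verification that $y_{i,k} \in J_i$ by heredity and that $y_i' \ll y_i$ are details the paper leaves implicit, but the argument is the same.
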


\begin{proof}
Suppose that $x \in M$ and $\big\{ y_i \in J_i : i \in \{1, \dotsc, n\} \big\}$ satisfy $x \,\ll\, \sum_{i=1}^n y_i$. By axiom~\cref{item: O2} of \cref{defn: Cu-semigroup}, for each $i \in \{1, \dotsc, n\}$, there is a sequence $\{y_{i,k}\}_{k \in \N} \subseteq J_i$ such that $y_{i,k} \ll y_{i,k+1}$ for all $k \in \N$ and $\sup_{k \in \N} (y_{i,k}) = y_i$. For each $k \in \N$, define $z_k \coloneqq \sum_{i=1}^n y_{i,k}$. Then by axiom~\cref{item: O3}, we have $z_k \ll z_{k+1}$ for all $k \in \N$, and by axiom~\cref{item: O4}, we have
\[
\sup_{k \in \N} (z_k) = \sum_{i=1}^n \sup_{k \in \N} (y_{i,k}) = \sum_{i=1}^n y_i.
\]
So since $x \ll \sum_{i=1}^n y_i = \sup_{k \in \N} (z_k)$, \cref{item: swb ll} implies that there exists $l \in \N$ such that $x \le z_l$. Let $y_i' \coloneqq y_{i,l+1} \in J_i$ for each $i \in \{1, \dotsc, n\}$. Then $\sum_{i=1}^n y_i' = z_{l+1}$. Since $x \le z_l \ll z_{l+1}$, \cref{lemma: ll le properties} implies that $x \ll z_{l+1} = \sum_{i=1}^n y_i'$. And since $z_{l+1} \ll z_{l+2} \le \sup_{k \in \N} (z_k)$, \cref{lemma: ll le properties} implies that $z_{l+1} \ll \sup_{k \in \N} (z_k) = \sum_{i=1}^n y_i$. Thus, since $\ll$ is transitive, we have
\[
x \ll \sum_{i=1}^n y_i' \ll \sum_{i=1}^n y_i. \qedhere
\]
\end{proof}

\begin{defn}
Let $M$ and $N$ be $\Cu$-semigroups. We call a map $f\colon M \to N$ a \emph{generalised $\Cu$-morphism} if it preserves addition, zero, the partial order $\le$, and suprema of increasing sequences with respect to $\le$. We call $f\colon M \to N$ a \emph{$\Cu$-morphism} if $f$ is a generalised $\Cu$-morphism that also preserves the sequentially way-below relation $\ll$.
\end{defn}

\begin{defn}
Let $M$ be a $\Cu$-semigroup. A \emph{functional} on $M$ is a generalised $\Cu$-morphism $f\colon M \to [0,\infty]$. We say that $f$ is
\begin{enumerate}[label=(\alph*)]
\item \emph{nontrivial} if $f(x) \in (0,\infty)$ for some $x \in M$;
\item \emph{finite} if $f(x) < \infty$ for all $x \in M$; and
\item \emph{faithful} if $f(x) \ne 0$ for all $x \in M {\setminus} \{0\}$.
\end{enumerate}
\end{defn}

Note that a functional on a $\Cu$-semigroup need not preserve the sequentially way-below relation $\ll$.

\subsection{The Cuntz semigroup}

In this paper we use the Cuntz semigroup to detect when C*\nobreakdash-algebras are stably finite or purely infinite. In this subsection we recall the key results on Cuntz semigroups that we will use, and we explain how the action of an inverse semigroup $S$ on a C*-algebra $A$ and the induced crossed products $A \rtimesred S$ and $A \rtimesess S$ interact with the Cuntz semigroup of $A$. For a survey of the broader Cuntz semigroup literature, see \cite{GardellaPerera2024}, and for some helpful notes on Cuntz semigroups, see \cite{Thiel2017}.

Recall that, given a C*-algebra $A$, there is an embedding $A \owns a \mapsto a \otimes e_{1,1} \in A \otimes \KK$. We will sometimes view $A$ as a subalgebra of $A \otimes \KK$ via this embedding.

\begin{defn}[{\cite[Definition~2.1]{GardellaPerera2024}}]
Let $A$ be a C*-algebra, and fix $a, b \in A^+$. We say that $a$ is \emph{Cuntz subequivalent} to $b$, and we write $a \preceq b$, if, for each $\varepsilon > 0$, there exists $d_\varepsilon \in A$ such that $\norm{d_\varepsilon \,b\, d_\varepsilon^* - a} < \varepsilon$. (Equivalently, $a \preceq b$ if and only if there is a net $\{r_\lambda\}_\lambda \subseteq A$ such that $r_\lambda \,b\, r_\lambda^* \to a$.) We say that $a$ and $b$ are \emph{Cuntz equivalent}, and we write $a \sim b$, if $a \preceq b$ and $b \preceq a$.

The \emph{Cuntz semigroup} of a C*-algebra $A$ is defined as a set to be
\[
\Cu(A) \coloneqq (A \otimes \KK)^+ / \sim,
\]
where $\sim$ is Cuntz equivalence. We denote the Cuntz-equivalence class of an element $a \in (A \otimes \KK)^+$ by $[a]_{\Cu(A)}$, or just $[a]_\Cu$ if there is no confusion. (We will later introduce another type of equivalence class and the subscript will distinguish the two.) There is a commutative addition on $\Cu(A)$ given by
\[
[a]_\Cu + [b]_\Cu \coloneqq \left[\begin{pmatrix} a & 0 \\ 0 & b \end{pmatrix}\right]_\Cu = [a \oplus b]_\Cu.
\]
There is also a natural partial order on $\Cu(A)$ given by $[a]_\Cu \le [b]_\Cu$ if $a \preceq b$.
\end{defn}

It follows immediately from the definition of Cuntz subequivalence that $[xax^*]_\Cu \le [a]_\Cu$ for all $a, x \in (A \otimes \KK)^+$. By \cite[Proposition~2.17]{Thiel2017}, if $a, b \in (A \otimes \KK)^+$ satisfy $a \le b$, then $a \preceq b$, so $[a]_\Cu \le [b]_\Cu$. It follows that for any increasing convergent sequence $\{a_n\}_{n \in \N} \subseteq (A \otimes \KK)^+$, the sequence $\big\{[a_n]_\Cu\big\}_{n \in \N} \subseteq \Cu(A)$ is increasing and has supremum $\sup_{n \in \N} \big([a_n]_\Cu\big) = \big[\!\sup_{n \in \N} (a_n)\big]_\Cu$ (see \cite[Lemma~2.57]{Thiel2017}). In fact, even when $\{a_n\}_{n \in \N} \subseteq (A \otimes \KK)^+$ doesn't have a supremum, the sequence $\big\{[a_n]_\Cu\big\}_{n \in \N} \subseteq \Cu(A)$ still always has a supremum (see \cite[Theorem~3.8]{GardellaPerera2024}).

For all $x \in (A \otimes \KK)^+$, we have $0 \preceq x$, and $x \sim 0 \iff x = 0$, so the semigroup $\Cu(A)$ has a minimal element $0 = [0]_\Cu$ with respect to the partial order $\le$. Moreover, the partial order $\le$ is compatible with addition, in the sense that if $[a_1]_\Cu \le [b_1]_\Cu$ and $[a_2]_\Cu \le [b_2]_\Cu$ for $a_1, a_2, b_1, b_2 \in (A \otimes \KK)^+$, then $[a_1]_\Cu + [a_2]_\Cu \le [b_1]_\Cu + [b_2]_\Cu$. Thus $\Cu(A)$ is a preordered abelian monoid.

The collections
\[
\Cu(A)^\ll \coloneqq \{ x \in \Cu(A) : x \ll y \text{ for some } y \in \Cu(A) \} \ \text{ and } \ \Cuc(A) \coloneqq \{ x \in \Cu(A) : x \ll x \}
\]
are submonoids of $\Cu(A)$ such that $\Cuc(A) \subseteq \Cu(A)^\ll$.

If $A$ is stably finite, then $\Cuc(A)$ is precisely the set of Cuntz-equivalence classes of projections in $(A \otimes \KK)^+$, and hence $\Cuc(A)$ is isomorphic to $V(A)$, the Murray--von Neumann semigroup of $A$ (see \cite[Corollary~3.3]{BrownCiuperca2009} and \cite[Remark~3.5]{GardellaPerera2024}).

\begin{thm}[{\cite[Theorem~4.6]{GardellaPerera2024}}] \label{thm: Cu(A) is Cu-semigroup}
Let $A$ be a C*-algebra. Then $\Cu(A)$ is a $\Cu$-semigroup.
\end{thm}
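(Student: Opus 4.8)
The plan is to verify the four axioms (O1)--(O4) of \cref{defn: Cu-semigroup}. Axiom (O1) is already in hand: the discussion preceding \cref{thm: Cu(A) is Cu-semigroup} records that every increasing sequence of Cuntz classes has a supremum, by \cite[Theorem~3.8]{GardellaPerera2024}. The whole argument then runs on the \emph{$\varepsilon$-cutdown} functions $(a-\varepsilon)_+ \coloneqq f_\varepsilon(a)$, where $f_\varepsilon(t) = \max(t-\varepsilon,0)$ is applied by continuous functional calculus, together with two elementary identities, namely $\big((a-\varepsilon)_+ - \delta\big)_+ = \big(a - (\varepsilon+\delta)\big)_+$ (composition of the cutdown functions) and $\big((a\oplus b) - \varepsilon\big)_+ = (a-\varepsilon)_+ \oplus (b-\varepsilon)_+$ (functional calculus acts entrywise on block-diagonal elements), and the norm estimate $\norm{(a-\varepsilon)_+ - a} \le \varepsilon$.

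The technical heart, and the step I expect to be the main obstacle, is the following \emph{cutdown characterisation} of the sequentially way-below relation: for $b \in (A\otimes\KK)^+$ and $x \in \Cu(A)$, one has $x \ll [b]_\Cu$ if and only if $x \le [(b-\varepsilon)_+]_\Cu$ for some $\varepsilon > 0$. The forward implication is comparatively easy: since $(b-1/n)_+$ increases to $b$ in norm, the stated continuity of the Cuntz class under increasing convergent sequences gives $[b]_\Cu = \sup_n [(b - 1/n)_+]_\Cu$, and then $x \ll [b]_\Cu$ forces $x \le [(b-1/n)_+]_\Cu$ for some $n$ directly from \cref{item: swb ll} of \cref{defn: swb ll}. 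The reverse implication reduces, via \cref{lemma: ll le properties}, to proving that $[(b-\varepsilon)_+]_\Cu \ll [b]_\Cu$ for every $\varepsilon > 0$; this is where R{\o}rdam's lemma (relating $a \preceq b$ to exact equalities of the form $(a-\varepsilon)_+ = r(b-\delta)_+ r^*$) must be combined with the concrete construction of suprema from \cite[Theorem~3.8]{GardellaPerera2024}, the point being that if $[b]_\Cu \le \sup_n [e_n]_\Cu$ then each $\delta$-cutdown $(b-\delta)_+$ is already Cuntz-subequivalent to some $e_n$. Resolving this bootstrapping carefully is the crux of the whole theorem.

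Granting the cutdown characterisation, axioms (O2) and (O3) are short. For (O2), given $x = [a]_\Cu$, I would set $x_n \coloneqq [(a - 1/n)_+]_\Cu$; the identity $(a-1/n)_+ = \big((a - 1/(n+1))_+ - (1/n - 1/(n+1))\big)_+$ together with the characterisation gives $x_n \ll x_{n+1}$, and the norm convergence $(a-1/n)_+ \to a$ gives $\sup_n x_n = x$. For (O3), if $x \ll [b]_\Cu$ and $x' \ll [b']_\Cu$, I would choose a common $\varepsilon > 0$ with $x \le [(b-\varepsilon)_+]_\Cu$ and $x' \le [(b'-\varepsilon)_+]_\Cu$; the block-diagonal identity then yields $x + x' \le [(b-\varepsilon)_+ \oplus (b'-\varepsilon)_+]_\Cu = [((b\oplus b') - \varepsilon)_+]_\Cu \ll [b\oplus b']_\Cu = [b]_\Cu + [b']_\Cu$.

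Finally, for (O4) let $X \coloneqq \sup_n x_n$ and $Y \coloneqq \sup_n y_n$. The inequality $\sup_n(x_n + y_n) \le X + Y$ is immediate from order-compatibility of addition. For the reverse inequality, write $X = [a_X]_\Cu$ and $Y = [a_Y]_\Cu$ (every class, a supremum included, is represented by a positive element), so that $X + Y = [a_X \oplus a_Y]_\Cu$. It suffices, using (O2) for $X+Y$, to show that every $p \ll X + Y$ satisfies $p \le \sup_n(x_n + y_n)$. By the cutdown characterisation there is $\varepsilon > 0$ with $p \le [((a_X\oplus a_Y) - \varepsilon)_+]_\Cu = [(a_X - \varepsilon)_+]_\Cu + [(a_Y - \varepsilon)_+]_\Cu$; since $[(a_X-\varepsilon)_+]_\Cu \ll X = \sup_n x_n$ and $[(a_Y-\varepsilon)_+]_\Cu \ll Y = \sup_n y_n$, there is a single $N$ with $[(a_X-\varepsilon)_+]_\Cu \le x_N$ and $[(a_Y-\varepsilon)_+]_\Cu \le y_N$, whence $p \le x_N + y_N \le \sup_n(x_n+y_n)$. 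Thus all of (O2)--(O4) follow once the cutdown characterisation is established, which is why I regard that lemma as the decisive step.
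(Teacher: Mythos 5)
The first thing to say is that the paper does not prove this theorem at all: it is imported verbatim from \cite[Theorem~4.6]{GardellaPerera2024}, so the only meaningful comparison is with that cited source, whose argument (going back to Coward--Elliott--Ivanescu) is exactly the one you have reconstructed. The parts of your outline that you actually carry out are correct: the forward half of your cutdown characterisation of $\ll$ follows, as you say, from $[b]_\Cu = \sup_n [(b-1/n)_+]_\Cu$; and, granting the characterisation, your derivations of (O2), (O3), and (O4) --- the composition identity $\big((a-\varepsilon)_+ - \delta\big)_+ = \big(a-(\varepsilon+\delta)\big)_+$ for (O2), the block-diagonal identity for (O3), and the common-index trick for (O4) --- are all sound. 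What remains unproven is precisely what you flag as the crux: that $[(b-\varepsilon)_+]_\Cu \ll [b]_\Cu$ for every $\varepsilon > 0$. You correctly name the ingredients (R{\o}rdam's lemma played against the explicit construction of suprema in \cite[Theorem~3.8]{GardellaPerera2024}), but you leave their interaction unresolved, and that interaction is where essentially all of the real work in the theorem lives. It is worth noting that the paper treats this fact the same way you do, only more explicitly: it is exactly \cref{ineq: [(a - t)_+]_Cu ll [a]_Cu} in \cref{rem: [p]_Cu ll [p]_Cu}, and there too it is cited (\cite[Remark~4.2]{GardellaPerera2024}) rather than proven. So your proposal is a faithful reconstruction of the cited proof's architecture, correct as a reduction of the theorem to two external facts (existence of suprema and the cutdown way-below inequality), but it becomes a complete proof only once that acknowledged crux is filled in.
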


\begin{remark} \label{rem: [p]_Cu ll [p]_Cu}
Let $A$ be a C*-algebra. Use the functional calculus to write $\phi_t(a) = (a - t)_+$ for $a \in (A \otimes \KK)^+$ and $t \in [0,\infty)$, where $\phi_t\colon [0,\infty) \to [0,\infty)$ is given by $\phi_t(y) \coloneqq \max\{0, y - t\}$. Then by \cite[Remark~4.2]{GardellaPerera2024},
\begin{inequality} \label{ineq: [(a - t)_+]_Cu ll [a]_Cu}
[(a - t)_+]_\Cu \ll [a]_\Cu \ \text{ for all } a \in (A \otimes \KK)^+ \text{ and } t \in (0,\infty).
\end{inequality}
Suppose that $p \in (A \otimes \KK)^+$ is a projection. Then for all $t \in [0,1]$ and $y \in \sigma(p) \subseteq \{0,1\}$, we have $\phi_t(y) = (1 - t)y$, so $(p - t)_+ = \phi_t(p) = (1 - t)p$. Since $(1 - t)p$ is Cuntz equivalent to $p$ for all $t \in [0,1)$, it follows from \cref{ineq: [(a - t)_+]_Cu ll [a]_Cu} that $[p]_\Cu \ll [p]_\Cu$. In particular, if $A$ is unital, then the identity element $1_A \in A$ may be viewed as a projection in $(A \otimes \KK)^+$ via the embedding $A \owns a \mapsto a \otimes e_{1,1} \in A \otimes \KK$, and hence $[1_A]_\Cu \ll [1_A]_\Cu$, so $[1_A]_\Cu \in \Cuc(A) {\setminus} \{0\}$.
\end{remark}

\begin{defn}[{\cite[Definition~3.2 and Theorem~4.16]{KR2000} and \cite[Definitions~III.1.3.1 and V.2.1.5]{Blackadar2006}}] \label{defn: SF PI C*s}
Let $A$ be a unital C*-algebra.
\begin{enumerate}[label=(\alph*), ref={\cref{defn: SF PI C*s}(\alph*)}]
\item \label{item: infinite element of C*-algebra} An element $a \in (A \otimes \KK)^+$ is \emph{infinite} if $[a]_\Cu + [b]_\Cu \le [a]_\Cu$ for some $b \in (A \otimes \KK)^+ {\setminus} \{0\}$. Equivalently, $a \in (A \otimes \KK)^+$ is infinite if $[a]_\Cu$ is an infinite element of $\Cu(A)$, in the sense of \cref{item: infinite element of monoid}.
\item An element $a \in (A \otimes \KK)^+ {\setminus} \{0\}$ is \emph{properly infinite} if $2[a]_\Cu \le [a]_\Cu$. Equivalently, $a \in (A \otimes \KK)^+ {\setminus} \{0\}$ is properly infinite if $[a]_\Cu$ is a properly infinite element of $\Cu(A)$, in the sense of \cref{item: properly infinite element of monoid}.
\item An element $a \in (A \otimes \KK)^+$ is \emph{finite} if it is not infinite.
\item The C*-algebra $A$ is \emph{finite} if $1_A$ is finite.
\item \label{item: stably finite C*} The C*-algebra $A$ is \emph{stably finite} if the matrix algebras $M_n(A)$ are finite for all $n \in \N$. Equivalently, $A$ is stably finite if every projection in $A \otimes \KK$ is finite.
\item The C*-algebra $A$ is \emph{purely infinite} if every element of $A^+ {\setminus} \{0\}$ is properly infinite.
\end{enumerate}
\end{defn}

The next result shows that isomorphisms between C*-algebras induce bijective $\Cu$-morphisms between their Cuntz semigroups. Note that an injective homomorphism between C*-algebras induces a $\Cu$-morphism with trivial kernel, but a $\Cu$-morphism with trivial kernel need not be injective.

\begin{thm} \label{thm: homomorphism induces Cu-morphism}
Let $\phi\colon A \to B$ be a homomorphism of C*-algebras and let $\phi \otimes \id$ be the standard amplification of $\phi$ to $A \otimes \KK$. There is an induced $\Cu$-morphism $\hat{\phi}\colon \Cu(A) \to \Cu(B)$ such that $\hat{\phi}\big([x]_{\Cu(A)}\big) = [(\phi \otimes \id)(x)]_{\Cu(B)}$ for all $x \in (A \otimes \KK)^+$. We have $\hat{\phi}\big(\Cuc(A)\big) \subseteq \Cuc(B)$, and if $\phi$ is surjective, then $\hat{\phi}$ is surjective. Moreover, if $\phi$ is injective, then $\ker(\hat{\phi}) = \{[0]_{\Cu(A)}\}$, and if $\phi$ is bijective, then $\hat{\phi}$ is bijective.
\end{thm}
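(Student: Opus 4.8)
The plan is to build everything on three elementary facts about the amplification $\phi \otimes \id \colon A \otimes \KK \to B \otimes \KK$: it is a $*$-homomorphism, hence contractive and positivity-preserving, and it commutes with continuous functional calculus, so that $(\phi \otimes \id)\big((x - t)_+\big) = \big((\phi \otimes \id)(x) - t\big)_+$ for all $x \in (A \otimes \KK)^+$ and $t \ge 0$. First I would check that $\hat\phi$ is well defined and order-preserving by showing that $*$-homomorphisms preserve Cuntz subequivalence: if $a \preceq b$ with $\norm{r_n b r_n^* - a} \to 0$, then applying the contractive map $\phi \otimes \id$ yields $\norm{(\phi \otimes \id)(r_n) \, (\phi \otimes \id)(b) \, (\phi \otimes \id)(r_n)^* - (\phi \otimes \id)(a)} \to 0$, so $(\phi \otimes \id)(a) \preceq (\phi \otimes \id)(b)$. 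Consequently $a \sim b$ implies $(\phi \otimes \id)(a) \sim (\phi \otimes \id)(b)$, and $a \preceq b$ implies $\hat\phi([a]_\Cu) \le \hat\phi([b]_\Cu)$. Preservation of $0$ is clear, and preservation of addition follows from $(\phi \otimes \id)(a \oplus b) = (\phi \otimes \id)(a) \oplus (\phi \otimes \id)(b)$ together with the block-diagonal formula for the sum in $\Cu$.

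Next I would show that $\hat\phi$ preserves the relation $\ll$. Suppose $[a]_\Cu \ll [b]_\Cu$. Since the cut-downs $(b - 1/m)_+$ increase to $b$ in norm, $[b]_\Cu = \sup_m [(b - 1/m)_+]_\Cu$, so by \cref{defn: swb ll} there is $m$ with $[a]_\Cu \le [(b - 1/m)_+]_\Cu$, that is, $a \preceq (b - 1/m)_+$. Applying $\phi \otimes \id$ and using commutation with functional calculus gives $(\phi \otimes \id)(a) \preceq \big((\phi \otimes \id)(b) - 1/m\big)_+$, whence $\hat\phi([a]_\Cu) \le \big[\big((\phi \otimes \id)(b) - 1/m\big)_+\big]_\Cu$. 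By \cref{ineq: [(a - t)_+]_Cu ll [a]_Cu} the right-hand side is $\ll [(\phi \otimes \id)(b)]_\Cu = \hat\phi([b]_\Cu)$, so \cref{lemma: ll le properties} yields $\hat\phi([a]_\Cu) \ll \hat\phi([b]_\Cu)$. In particular, if $x \ll x$ then $\hat\phi(x) \ll \hat\phi(x)$, giving $\hat\phi(\Cuc(A)) \subseteq \Cuc(B)$.

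It then remains to verify that $\hat\phi$ preserves suprema of increasing sequences, which is the one step of the $\Cu$-morphism verification requiring genuine care, since suprema in $\Cu$ are not computed representative-wise. Let $\{x_n\}$ be increasing with supremum $x = [a]_\Cu$, and set $s \coloneqq \sup_n \hat\phi(x_n)$; order-preservation gives $s \le \hat\phi(x)$. For the reverse inequality, fix $m$. Since $[(a - 1/m)_+]_\Cu \ll [a]_\Cu = \sup_n x_n$ by \cref{ineq: [(a - t)_+]_Cu ll [a]_Cu}, \cref{defn: swb ll} provides $n$ with $[(a - 1/m)_+]_\Cu \le x_n$; applying the order-preserving map $\hat\phi$ and commutation with functional calculus gives $\big[\big((\phi \otimes \id)(a) - 1/m\big)_+\big]_\Cu \le \hat\phi(x_n) \le s$. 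Taking the supremum over $m$ and using $\sup_m \big[\big((\phi \otimes \id)(a) - 1/m\big)_+\big]_\Cu = [(\phi \otimes \id)(a)]_\Cu = \hat\phi(x)$ yields $\hat\phi(x) \le s$, so $\hat\phi(x) = s$. This completes the proof that $\hat\phi$ is a $\Cu$-morphism.

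Finally I would treat the injectivity and surjectivity claims by transporting them through $\phi \otimes \id$. If $\phi$ is surjective then so is $\phi \otimes \id$, and a surjective $*$-homomorphism carries positive elements onto positive elements (lift $b = (b^{1/2})^* b^{1/2}$ by choosing a preimage of $b^{1/2}$), so every class in $\Cu(B)$ lies in the image of $\hat\phi$. The injectivity claim is the main obstacle: assuming $\phi$ injective, $\phi \otimes \id$ is an isometric isomorphism onto a C*-subalgebra $C \subseteq B \otimes \KK$, and one is reduced to showing that Cuntz subequivalence between elements of $C$ descends, i.e.\ that $(\phi \otimes \id)(a) \preceq (\phi \otimes \id)(b)$ in $B \otimes \KK$ implies $a \preceq b$ in $A \otimes \KK$; pulling back along the isomorphism $\phi \otimes \id \colon A \otimes \KK \to C$ would then give $\hat\phi([a]_\Cu) = \hat\phi([b]_\Cu) \implies [a]_\Cu = [b]_\Cu$. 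The delicate point is that the elements $r_n$ witnessing subequivalence in $B \otimes \KK$ must be replaced by elements of $C$; since Cuntz comparison need not be intrinsic to a subalgebra, I expect this descent to be the crux of the argument and the step most likely to require an additional idea or hypothesis beyond mere injectivity of $\phi$.
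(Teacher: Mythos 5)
For the existence of $\hat{\phi}$ as a $\Cu$-morphism, the statement about compact elements, and surjectivity, your argument is correct but follows a genuinely more self-contained route than the paper: the paper simply invokes \cite[Theorem~4.35]{AraPereraToms2011} to obtain the $\Cu$-morphism, then derives $\hat{\phi}\big(\Cuc(A)\big) \subseteq \Cuc(B)$ exactly as you do, and treats surjectivity as immediate from surjectivity of $\phi \otimes \id$. Your direct verification of the axioms---order preservation by applying the contraction $\phi \otimes \id$ to the witnesses $r_n$, preservation of $\ll$ and of suprema via the cut-downs $(\,\cdot\, - 1/m)_+$ and commutation with functional calculus---is sound, and is essentially a proof of the relevant part of the cited theorem; what the citation buys the paper is brevity, and what your route buys is independence from the reference.

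On injectivity, however, you stopped short of a proof, and the point deserves to be settled rather than left as a suspicion. Your instinct is correct: set-theoretic injectivity of $\hat{\phi}$ does \emph{not} follow from injectivity of $\phi$, and the descent of Cuntz subequivalence from $B \otimes \KK$ to the image subalgebra that you were attempting is genuinely false. For a counterexample, take the diagonal inclusion $\phi\colon \C \oplus \C \hookrightarrow M_2(\C)$; the induced map $\Cu(\phi)\colon \bar{\N} \times \bar{\N} \to \bar{\N}$ is $(m,n) \mapsto m+n$, which is not injective. What the paper's proof actually establishes is only the \emph{trivial-kernel} property: if $\hat{\phi}\big([x]_{\Cu(A)}\big) = 0$, then $(\phi \otimes \id)(x)$ is Cuntz equivalent to $0$, hence $(\phi \otimes \id)(x) = 0$ (only $0$ is Cuntz subequivalent to $0$), hence $x = 0$ because $\phi \otimes \id$ is injective by \cite[Proposition~B.13]{RaeburnWilliams1998}, and so $[x]_{\Cu(A)} = 0$. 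This weaker property is also the only consequence of ``injectivity'' ever used later in the paper: in \cref{lemma: Cu-embedding in crossed product is invariant} and in the proofs of \cref{thm: stable finiteness,thm: pure infiniteness}, the conclusion drawn is always of the form ``$x \ne 0$ implies $\hat{\iota}(x) \ne 0$''. So to complete your proposal in the spirit of the paper, replace the attempted descent argument by this one-line kernel computation, with the understanding that the word ``injective'' in the statement should be read in this weaker sense (as literally written, the injectivity claim of the theorem is overstated).
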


\begin{proof}
By \cite[Theorem~4.35]{AraPereraToms2011}, the homomorphism $\phi\colon A \to B$ induces a $\Cu$-morphism $\hat{\phi} = \Cu(\phi)\colon \Cu(A) \to \Cu(B)$ such that $\hat{\phi}\big([x]_{\Cu(A)}\big) = [(\phi \otimes \id)(x)]_{\Cu(B)}$ for all $x \in (A \otimes \KK)^+$. To see that $\hat{\phi}\big(\Cuc(A)\big) \subseteq \Cuc(B)$, fix $x \in \Cuc(A)$. Then $x \ll x$, and since $\Cu$-morphisms preserve the sequentially way-below relation $\ll$, we have $\hat{\phi}(x) \ll \hat{\phi}(x)$. Thus $\hat{\phi}(x) \in \Cuc(B)$, and so $\hat{\phi}\big(\Cuc(A)\big) \subseteq \Cuc(B)$. If $\phi\colon A \to B$ is surjective, then so is $\phi \otimes \id\colon A \otimes \KK \to B \otimes \KK$, and since $\phi \otimes \id$ is a homomorphism, it follows immediately that $\hat{\phi}$ is surjective. Suppose now that $\phi\colon A \to B$ is injective. Then $\phi \otimes \id$ is injective by \cite[Proposition~B.13]{RaeburnWilliams1998}, and so for all $x \in (A \otimes \KK)^+$, we have
\[
\hat{\phi}([x]_{\Cu(A)}) = [(\phi \otimes \id)(x)]_{\Cu(B)} = 0 \iff (\phi \otimes \id)(x) = 0 \iff x = 0 \iff [x]_{\Cu(A)} = 0.
\]
Thus $\ker(\hat{\phi}) = \{[0]_{\Cu(A)}\}$.

Finally, suppose that $\phi\colon A \to B$ is bijective. Then $\hat{\phi}$ is surjective. To see that $\hat{\phi}$ is injective, suppose that $\hat{\phi}([x]_{\Cu(A)}) = \hat{\phi}([y]_{\Cu(A)})$ for some $x, y \in (A \otimes \KK)^+$. Then $(\phi \otimes \id)(x) \sim (\phi \otimes \id)(y)$ in $(B \otimes \KK)^+$. Since $\phi \otimes \id\colon A \otimes \KK \to B \otimes \KK$ is an isomorphism by \cite[Proposition~B.13]{RaeburnWilliams1998}, it follows using a routine argument that $x \sim y$ in $(A \otimes \KK)^+$. Thus $[x]_{\Cu(A)} = [y]_{\Cu(A)}$, and $\hat{\phi}$ is injective.
\end{proof}

We now explain how an action $\alpha\colon S \curvearrowright A$ of a unital inverse semigroup $S$ on a C*-algebra $A$ induces an action $\hat{\alpha}$ of $S$ on $\Cu(A)$, as defined in \cref{defn: Cu-semigroup action}. Given any ideal $J$ of $A$, the inclusion map $\iota\colon J \hookrightarrow A$ induces an injective $\Cu$-morphism $\hat{\iota}\colon \Cu(J) \to \Cu(A)$, and its image $\{ [x]_\Cu : x \in (J \otimes \KK)^+ \}$ is an ideal of $\Cu(A)$ (see \cref{defn: PAM and Cu-semigroup ideals,thm: homomorphism induces Cu-morphism} and \cite[Lemma~5.2]{GardellaPerera2024}). \Cref{thm: homomorphism induces Cu-morphism} implies that for each $s \in S$, the isomorphism $\alpha_s\colon A_{s^*s} \to A_{ss^*}$ induces a bijective $\Cu$-morphism
\[
\hat{\alpha}_s\colon \Cu(A_{s^*s}) \to \Cu(A_{ss^*}) \label{page: alpha hat s}
\]
between the ideals $\Cu(A_{s^*s})$ and $\Cu(A_{ss^*})$ of $\Cu(A)$. We have $\Cu(A_1) = \Cu(A)$, and routine calculations show that for any $s, t \in S$, we have $\Cu(A_{(st)^*(st)}) = \Cu\big(\alpha_t^{-1}(A_{s^*s} \cap A_{tt^*})\big) = \hat{\alpha}_t^{-1}\big(\Cu(A_{s^*s}) \cap \Cu(A_{tt^*})\big)$ and $\hat{\alpha}_{st} = \hat{\alpha}_s \circ \hat{\alpha}_t\restr{\Cu(A_{(st)^*(st)})}$. Thus $\hat{\alpha}$ is an action of $S$ on $\Cu(A)$.

\begin{defn}
Let $\alpha\colon S \curvearrowright A$ be an action of an inverse semigroup $S$ on a C*-algebra $A$, and let $M$ be a $\Cu$-semigroup. We say that a generalised $\Cu$-morphism $f\colon \Cu(A) \to M$ is \emph{invariant} if, for all $x \in \Cu(A_{s^*s})$ and all $s \in S$, we have $f(x) = f(\hat{\alpha}_s(x))$.
\end{defn}

\begin{lemma} \label{lemma: Cu-embedding in crossed product is invariant}
Let $\alpha\colon S \curvearrowright A$ be an action of a unital inverse semigroup $S$ on a C*-algebra $A$. Let $\iota_\red\colon A \to A \rtimesred S$ and $\iota_\ess\colon A \to A \rtimesess S$ be the canonical embeddings $a \mapsto a\delta_1$. Then the induced $\Cu$-morphisms
\[
\hat{\iota}_\red\colon \Cu(A) \to \Cu(A \rtimesred S) \quad \text{ and } \quad \hat{\iota}_\ess\colon \Cu(A) \to \Cu(A \rtimesess S)
\]
are invariant and have trivial kernel.
\end{lemma}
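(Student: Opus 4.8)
The plan is to handle the two assertions separately, with nearly all the work in invariance. Injectivity is immediate: the maps $\iota_\red$ and $\iota_\ess$ are embeddings, hence injective, so the final clause of \cref{thm: homomorphism induces Cu-morphism} gives that $\hat{\iota}_\red$ and $\hat{\iota}_\ess$ are injective; this disposes of the first assertion in a single line.

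For invariance I would first reduce to one concrete claim about Cuntz equivalence inside the crossed product. Since $\Cu(A)$ is computed from $(A \otimes \KK)^+$, I would pass to the amplified action $\alpha \otimes \id$ of $S$ on $A \otimes \KK$ (again an action in the sense of \cref{defn: C*-algebra action}, with domains $A_{s^*s} \otimes \KK$) and use the canonical isomorphisms $(A \rtimesred S) \otimes \KK \cong (A \otimes \KK) \rtimesred S$ and likewise for $\rtimesess$, under which $\iota_\red \otimes \id$ and $\iota_\ess \otimes \id$ become the canonical embeddings $b \mapsto b\delta_1$. Writing $\alpha_s$ also for $\alpha_s \otimes \id$, unwinding the definitions of $\hat{\iota}_\red$ and $\hat{\alpha}_s$ shows that invariance is equivalent to the claim that, for each $s \in S$ and each positive $a \in A_{s^*s} \otimes \KK$, the elements $a\delta_1$ and $\alpha_s(a)\delta_1$ are Cuntz equivalent in the crossed product (both reduced and essential). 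Indeed, granting this, $\hat{\iota}_\red([a]_\Cu) = [a\delta_1]_\Cu = [\alpha_s(a)\delta_1]_\Cu = \hat{\iota}_\red(\hat{\alpha}_s([a]_\Cu))$, and likewise for $\ess$.

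The crux is that \cref{lemma: alpha implemented by conjugation} delivers this Cuntz equivalence almost verbatim. Fixing $s$ and a positive $a \in A_{s^*s} \otimes \KK$, I would use separability of $A$ (hence of $A_{ss^*} \otimes \KK$) to choose a \emph{sequential} approximate identity $\{e_n\}_{n \in \N}$ for $A_{ss^*} \otimes \KK$ and set $r_n \coloneqq e_n\delta_s$; the involution formula gives $r_n^* = \alpha_{s^*}(e_n)\delta_{s^*}$, so \cref{lemma: alpha implemented by conjugation} reads
\[
\lim_n r_n (a\delta_1) r_n^* = \alpha_s(a)\delta_{ss^*} = \alpha_s(a)\delta_1,
\]
the final equality being the defining identification $\alpha_s(a)\delta_{ss^*} = \alpha_s(a)\delta_1$ for the idempotent $ss^* \le 1$. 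Since this holds in $A \rtimesfull S$ and the quotient maps onto the reduced and essential crossed products are contractive, it descends to both, and comparing it with the definition of Cuntz subequivalence gives $\alpha_s(a)\delta_1 \preceq a\delta_1$. Running the same argument with $s^*$ in place of $s$ and $\alpha_s(a)$ in place of $a$, together with $\alpha_{s^*}(\alpha_s(a)) = a$ from \cref{lemma: action properties}\cref{item: alpha_s inverse and idempotent}, yields the reverse subequivalence $a\delta_1 \preceq \alpha_s(a)\delta_1$, so the two are Cuntz equivalent and the claim follows.

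I expect the main obstacle to be conceptual rather than computational: recognising that \cref{lemma: alpha implemented by conjugation} is not merely an algebraic identity but precisely a witness to Cuntz subequivalence, with the conjugators $r_n = e_n\delta_s$ playing the role of the sequence in the definition of $\preceq$. The two points needing care are the identification $\delta_{ss^*} = \delta_1$ (so that the limit is genuinely the image of $\alpha_s(a)$ under the embedding) and the use of separability to replace the approximate-identity net by a sequence, so as to meet the sequential definition of $\preceq$; that the conclusion holds simultaneously for the reduced and essential crossed products is automatic, since \cref{lemma: alpha implemented by conjugation} already holds in $A \rtimesfull S$.
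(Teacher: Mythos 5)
Your overall strategy---injectivity from \cref{thm: homomorphism induces Cu-morphism}, and invariance by recognising the conjugation formula of \cref{lemma: alpha implemented by conjugation} as a witness for Cuntz subequivalence, applied once for $s$ and once for $s^*$---is exactly the mechanism of the paper's proof, and those parts of your argument are fine. The gap is the reduction step you treat as free: the ``canonical isomorphisms'' $(A \rtimesred S) \otimes \KK \cong (A \otimes \KK) \rtimesred S$ and $(A \rtimesess S) \otimes \KK \cong (A \otimes \KK) \rtimesess S$, intertwining $\iota \otimes \id$ with the canonical embedding of $A \otimes \KK$. Neither isomorphism is established in the paper or its cited references, and neither is routine in this setting: one would have to show that the maximal crossed product of the amplified action is $(A \rtimesfull S) \otimes \KK$, that under this identification the weak conditional expectations of \cref{thm: ER existence and properties,thm: EL existence and properties} for the amplified action become the amplifications of $\ER$ and $\EL$ (which requires matching the ideals $I_{s,1}$, their complements, and the elements $1_s$ and $i_s$ after stabilisation), and finally that the corresponding nuclei are exactly $\NN_\ER \otimes \KK$ and $\NN_\EL \otimes \KK$, so that the quotients of \cref{defn: reduced and essential crossed products} correspond. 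This is at least as much work as the lemma itself, and it is most delicate precisely for the essential crossed product, whose definition passes through $\Mloc(A)$. As written, your proof is therefore incomplete.

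The repair is small, and it is what the paper actually does: do not transport the problem to $(A \otimes \KK) \rtimes S$ at all. Work directly in the stabilised crossed product (reduced or essential) and conjugate by the elements $e_{n,s}\delta_s \otimes f_n$, where $\{e_{n,s}\}_{n \in \N}$ is a countable (by separability) approximate identity for $A_{ss^*}$ and $\{f_n\}_{n \in \N}$ is one for $\KK$. On an elementary tensor $b \otimes c$ with $b \in A_{s^*s}$ and $c \in \KK$, the computation of \cref{lemma: alpha implemented by conjugation}, together with the identification of $\delta_{ss^*}$ with $\delta_1$ that you also use, gives
\[
\lim_{n \to \infty} (e_{n,s}\delta_s \otimes f_n)\big((\iota\otimes\id)(b\otimes c)\big)(e_{n,s}\delta_s \otimes f_n)^* = (\iota\otimes\id)\big((\alpha_s\otimes\id)(b\otimes c)\big),
\]
and this passes to all $a \in (A_{s^*s}\otimes\KK)^+$ by linearity, density, and the uniform bound on the conjugation maps; exchanging the roles of $s$ and $s^*$ then yields Cuntz equivalence, exactly as in your final step. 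Everything stays inside the crossed products the paper has defined, and no stabilisation isomorphism is needed.
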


\begin{proof}
The proofs for $\iota_\red$ for $\iota_\ess$ are identical, so let $\iota$ be one of them. Since $\iota$ is injective, $\hat{\iota}$ is a $\Cu$-morphism with trivial kernel by \cref{thm: homomorphism induces Cu-morphism}. We claim that $(\iota \otimes \id)\big(\alpha_s \otimes \id(a)\big)$ is Cuntz equivalent to $(\iota \otimes \id)(a)$ for all $s \in S$ and $a \in (A_{s^*s} \otimes \KK)^+$. This will give the lemma as follows: fix $s \in S$ and $x \in \Cu(A_{s^*s})$; say $x = [a]_\Cu$ for some $a \in (A_{s^*s} \otimes \KK)^+$. Recall that $\hat{\alpha}_s(x) = \hat{\alpha}_s([a]_\Cu) = [(\alpha_s \otimes \id)(a)]_\Cu$. Then
\[
\hat{\iota}(\hat{\alpha}_s(x)) = [(\iota \otimes \id)\big((\alpha_s \otimes \id)(a)\big)]_\Cu = [(\iota \otimes \id)(a)]_\Cu = \hat{\iota}(x),
\]
using the claim.

To prove the claim, fix $s \in S$, $b \in A_{s^*s}$, and $c \in \KK$. Let $\{e_{s,\lambda}\}_\lambda$ be an approximate identity for $A_{ss^*}$, and let $\{f_\lambda\}_\lambda$ be an approximate identity for $\KK$. Then
\begin{align*}
\lim_\lambda \,(e_{s,\lambda}\delta_s \otimes f_\lambda) \big((\iota \otimes \id)(b \otimes c)\big) (e_{s,\lambda}\delta_s \otimes f_\lambda)^* &= \lim_\lambda \,(e_{s,\lambda}\delta_s \otimes f_\lambda)(b\delta_1 \otimes c)(e_{s,\lambda}\delta_s \otimes f_\lambda)^* \\
&= \lim_\lambda \big(\big(e_{s,\lambda}\delta_s(b\delta_1)\alpha_{s^*}(e_{s,\lambda}^*)\delta_{s^*}\big) \otimes f_\lambda c f_\lambda^*\big) \\
&= \lim_\lambda \big(e_{s,\lambda}\alpha_s(b)e_{s,\lambda}\delta_{ss^*} \otimes f_\lambda c f_\lambda^*\big) \\
&= \alpha_s(b) \delta_{ss^*} \otimes c \\
&= \alpha_s(b) \delta_1 \otimes c \qquad \text{ (since $ss^* \le 1$)} \\
&= (\iota \otimes \id)\big(\alpha_s(b) \otimes c\big) \\
&= (\iota \otimes \id)\big((\alpha_s \otimes \id)(b \otimes c)\big).
\end{align*}
It follows that $(\iota \otimes \id)\big((\alpha_s \otimes \id)(a)\big)$ is Cuntz subequivalent to $(\iota \otimes \id)(a)$ for all $s \in S$ and $a \in (A_{s^*s} \otimes \KK)^+$. A similar calculation shows that $(\iota \otimes \id)(a) = (\iota \otimes \id)\big((\alpha_{s^*s} \otimes \id)(a)\big)$ is Cuntz subequivalent to $(\iota \otimes \id)\big((\alpha_s \otimes \id)(a)\big)$. Thus $(\iota \otimes \id)(a)$ and $(\iota \otimes \id)\big((\alpha_s \otimes \id)(a)\big)$ are Cuntz equivalent.
\end{proof}

Let $\alpha\colon S \curvearrowright A$ be an action of an inverse semigroup $S$ on a C*-algebra $A$. Every functional $\beta$ on $\Cu(A)$ induces a quasitrace $\tau_\beta$ on $A$: here we give the relevant background, and show that if $\beta$ is invariant, then so is $\tau_\beta$. In particular, if $A$ is unital and exact, then every invariant, faithful, and finite functional on $\Cu(A)$ induces an invariant and faithful trace on $A$ by \cite[Theorem~5.11]{Haagerup2014}.

\begin{defn}[{see \cite[Definition~6.7]{Thiel2017}}]
Let $A$ be a C*-algebra. A \emph{$1$-quasitrace} on $A$ is a function $\tau\colon A^+ \to [0,\infty]$ such that
\begin{enumerate}[label=(\alph*)]
\item $\tau(ra) = r\tau(a)$ for all $a \in A^+$ and $r \in [0,\infty)$;
\item if $a, b \in A^+$ commute, then $\tau(a + b) = \tau(a) + \tau(b)$; and
\item $\tau(a^*a) = \tau(aa^*) \ge 0$ for all $a \in A$.
\end{enumerate}
For $n \ge 2$, an \emph{$n$-quasitrace} on $A$ is a $1$-quasitrace $\tau$ on $A$ that induces a $1$-quasitrace $\tilde{\tau}$ on $A \otimes M_n(\C)$ such that $\tilde{\tau}(a \otimes e_{1,1}) = \tau(a)$ for all $a \in A^+$.
\end{defn}

Every $2$-quasitrace on $A$ is an $n$-quasitrace for all $n \in \N$ by \cite[Remark~2.27(viii)]{BlanchardKirchberg2004}. If $\tau$ is a $2$-quasitrace, then we call $\tau$ a \emph{quasitrace}, and $\tau$ extends to $A \otimes \KK$. Quasitraces on $A$ are in one-to-one correspondence with quasitraces on $A \otimes \KK$.

We say that a quasitrace $\tau$ on $A$ is \emph{lower-semicontinuous} if, for every $t \in (0,\infty)$, the set $\tau^{-1}\big((t,\infty]\big)$ is open, and we say that $\tau$ is \emph{finite} if $\sup\{ \tau(a) : a \in A_+, \, \norm{a} = 1 \} < \infty$. If $A$ is unital, then we say that a quasitrace $\tau$ on $A$ is \emph{normalised} if $\tau(1_A) = 1$.

By \cite[Proposition~4.2]{ERS2011}, every functional $\beta$ on $\Cu(A)$ gives rise to a quasitrace $\tau_\beta$ on $A \otimes \KK$ and vice versa. We provide some of the details in \cref{prop: quasitraces and functionals} below. We also connect properties of $\beta$ with properties of $\tau_\beta$.

\begin{prop} \label{prop: quasitraces and functionals}
Let $A$ be a C*-algebra and let $\beta\colon \Cu(A) \to [0,\infty]$ be a functional. The function $\tau_\beta\colon (A \otimes \KK)^+ \to [0,\infty]$ given by
\[
\tau_\beta(a)\coloneqq \int_0^\infty \beta\big([(a-t)_+]_\Cu\big) \, \d t
\]
is a lower-semicontinuous quasitrace on $A \otimes \KK$. The map $\beta \mapsto \tau_\beta$ is a bijection between functionals on $\Cu(A)$ and lower-semicontinuous quasitraces on $A \otimes \KK$, with inverse $\tau \mapsto \beta_\tau$, where
\[
\beta_\tau([b]_\Cu) \coloneqq \sup_{n \in \N} \big(\tau\big(b^{\frac{1}{n}}\big)\big)
\]
for $b \in (A \otimes \KK)^+$. Furthermore,
\begin{enumerate}[label=(\alph*)]
\item \label{item: qtf invariance} if $\alpha\colon S \curvearrowright A$ is an action of an inverse semigroup $S$ on a unital C*-algebra $A$, then $\beta$ is invariant if and only if $\tau_\beta$ is invariant;
\item \label{item: qtf faithfulness} if $\beta$ is faithful, then $\tau_\beta$ is faithful;
\item \label{item: qtf finiteness} if $\beta$ is finite, then $\tau_\beta$ is finite; and
\item \label{item: qtf projections} $\tau_\beta(p) = \beta([p]_\Cu)$ for every projection $p \in (A \otimes \KK)^+$.
\end{enumerate}
\end{prop}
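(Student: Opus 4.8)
The plan is to treat the bijection $\beta \mapsto \tau_\beta$, the fact that $\tau_\beta$ is a lower-semicontinuous quasitrace on $A \otimes \KK$, and the inverse formula $\beta([b]_\Cu) = \sup_{n \in \N} \tau_\beta(b^{1/n})$, as supplied by \cite[Proposition~4.2]{ERS2011}. Only the four enumerated properties then require proof. Properties \cref{item: qtf projections,item: qtf finiteness,item: qtf faithfulness} are short computations with the integral formula and the elements $(a-t)_+$, whereas \cref{item: qtf invariance} is the one place where the inverse-semigroup action enters, and it is where I would concentrate the work.

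I would dispatch \cref{item: qtf projections} first. For a projection $p \in (A \otimes \KK)^+$, \cref{rem: [p]_Cu ll [p]_Cu} gives $(p-t)_+ = (1-t)p$ for $t \in [0,1]$, with $(1-t)p \sim p$ for $t \in [0,1)$ and $(p-1)_+ = 0$; hence $\beta([(p-t)_+]_\Cu)$ equals $\beta([p]_\Cu)$ on $[0,1)$ and vanishes for $t \ge 1$, so the integral collapses to $\tau_\beta(p) = \beta([p]_\Cu)$. For \cref{item: qtf finiteness} (reading finiteness as that of the restriction of $\tau_\beta$ to the unital algebra $A$), any $a \in A^+$ with $\norm{a} = 1$ satisfies $(a-t)_+ = 0$ for $t \ge 1$ and $(a-t)_+ \le a \le 1_A$, so $[(a-t)_+]_\Cu \le [1_A]_\Cu$ and $\tau_\beta(a) = \int_0^1 \beta([(a-t)_+]_\Cu)\,\d t \le \beta([1_A]_\Cu) < \infty$, a bound independent of $a$. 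For \cref{item: qtf faithfulness}, if $a \ne 0$ then $\norm{(a-t)_+} = \norm{a} - t > 0$ for every $t \in (0,\norm{a})$, so $[(a-t)_+]_\Cu \ne 0$ and faithfulness of $\beta$ forces $\beta([(a-t)_+]_\Cu) > 0$ on an interval of positive measure; hence $\tau_\beta(a) > 0$.

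The key observation for \cref{item: qtf invariance} is that for each $s \in S$ the amplification $\alpha_s \otimes \id\colon A_{s^*s} \otimes \KK \to A_{ss^*} \otimes \KK$ is a $*$-isomorphism, so it commutes with continuous functional calculus; thus $\big((\alpha_s \otimes \id)(a) - t\big)_+ = (\alpha_s \otimes \id)\big((a-t)_+\big)$ and consequently $[\,((\alpha_s \otimes \id)(a) - t)_+\,]_\Cu = \hat{\alpha}_s\big([(a-t)_+]_\Cu\big)$. For the forward implication, if $\beta$ is invariant then the integrand for $\tau_\beta\big((\alpha_s \otimes \id)(a)\big)$ equals $\beta\big(\hat{\alpha}_s([(a-t)_+]_\Cu)\big) = \beta([(a-t)_+]_\Cu)$ pointwise in $t$, so the two integrals agree and $\tau_\beta\big((\alpha_s \otimes \id)(a)\big) = \tau_\beta(a)$ for all $a \in (A_{s^*s} \otimes \KK)^+$. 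For the converse I would invoke the inverse formula: for $b \in (A_{s^*s} \otimes \KK)^+$ the same commutation gives $\big((\alpha_s \otimes \id)(b)\big)^{1/n} = (\alpha_s \otimes \id)(b^{1/n})$, so invariance of $\tau_\beta$ yields $\beta\big(\hat{\alpha}_s([b]_\Cu)\big) = \sup_n \tau_\beta\big((\alpha_s \otimes \id)(b^{1/n})\big) = \sup_n \tau_\beta(b^{1/n}) = \beta([b]_\Cu)$.

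The main obstacle is essentially bookkeeping rather than any deep input: one must apply the identifications between $\hat{\alpha}_s$, the amplification $\alpha_s \otimes \id$, and the functional calculus consistently, and take care over the precise meaning of ``finite'' in \cref{item: qtf finiteness}, which is cleanest to interpret as finiteness of $\tau_\beta$ restricted to the (unital) algebra $A$, where the uniform bound $\beta([1_A]_\Cu)$ is available. Beyond \cite{ERS2011} and \cref{rem: [p]_Cu ll [p]_Cu}, no further machinery is required.
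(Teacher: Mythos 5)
Your proposal follows the paper's own proof very closely. Like the paper, you delegate the quasitrace property, the bijection, and the inverse formula to \cite[Proposition~4.2]{ERS2011}, and then handle (a)--(d) by functional-calculus computations with $(a-t)_+$. Parts (a), (b), and (d) are essentially identical to the paper's arguments: for (d), the collapse of the integral via $(p-t)_+ = (1-t)p \sim p$; for (b), positivity of the (monotone, hence measurable) integrand on an interval $(0,\varepsilon)$; and for (a), commuting functional calculus past the $*$-isomorphism $\alpha_s \otimes \id$ for the forward direction, and the identity $\beta = \beta_{\tau_\beta}$ together with $\big((\alpha_s\otimes\id)(b)\big)^{1/n} = (\alpha_s\otimes\id)(b^{1/n})$ for the converse --- exactly the paper's two directions.

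The one genuine discrepancy is part (c). The proposition is stated for an \emph{arbitrary} C*-algebra $A$; unitality is assumed only in the hypothesis of part (a). Your argument bounds the integrand by $\beta([1_A]_\Cu)$ and so requires a unit, and it also only treats elements of $A^+$ rather than of $(A\otimes\KK)^+$. The paper's proof avoids both restrictions with an element-dependent bound: for $a \in (A\otimes\KK)^+$ one has $0 \le (a-t)_+ \le a$, hence $[(a-t)_+]_\Cu \le [a]_\Cu$ and
$\tau_\beta(a) = \int_0^{\norm{a}} \beta\big([(a-t)_+]_\Cu\big)\,\d t \le \norm{a}\,\beta\big([a]_\Cu\big) < \infty$,
which is the sense of finiteness (finite-valuedness of $\tau_\beta$) that the paper establishes and uses. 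Your uniform bound does have the merit of directly verifying the supremum formulation of finiteness on the unit ball of $A^+$ when $A$ is unital, but as written it does not prove the stated claim in the non-unital case. Replacing $\beta([1_A]_\Cu)$ by $\beta([a]_\Cu)$ repairs this, and with that change the proposal is a complete proof along the same lines as the paper's.
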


\begin{proof}
That $\tau_\beta$ is a lower-semicontinuous quasitrace on $A \otimes \KK$ follows from \cite[Proposition~4.2]{ERS2011}, as does the statement about $\tau \mapsto \beta_\tau$ being the inverse of $\beta \mapsto \tau_\beta$. For the rest of the proof, it is helpful to use the functional calculus to write $\phi_t(a) = (a - t)_+$ for $a \in (A \otimes \KK)^+$ and $t \in [0,\infty)$, where $\phi_t\colon [0,\infty) \to [0,\infty)$ is given by $\phi_t(y) \coloneqq \max\{0, y - t\}$. Then
\[
\tau_\beta(a) = \int_0^\infty \beta\big([\phi_t(a)]_\Cu\big) \, \d t \qquad \text{ for all } a \in (A \otimes \KK)^+.
\]
For part~\cref{item: qtf invariance}, first note that if $\tau$ is invariant, then it is clear from the formula for $\beta_\tau$ that it is also invariant. Now suppose that $\beta$ is invariant. Fix $s \in S$, and define $\tilde{\alpha}_s \coloneqq \alpha_s \otimes \id_\KK$. Let $a \in (A_{s^*s} \otimes \KK)^+$. Since $\tilde{\alpha}_s$ is a homomorphism, we have $\tilde{\alpha}_s\big(\phi_t(a)\big) = \phi_t\big(\tilde{\alpha}_s(a)\big)$. Thus $\tau_\beta$ is invariant, because
\begin{align*}
\tau_\beta\big(\tilde{\alpha}_s(a)\big) &= \int_0^\infty \beta\big(\big[\phi_t\big(\tilde{\alpha}_s(a)\big)\big]_\Cu\big) \, \d t = \int_0^\infty \beta\big(\big[\tilde{\alpha}_s\big(\phi_t(a)\big)\big]_\Cu\big) \, \d t \\
&= \int_0^\infty \beta\big(\hat{\alpha}_s\big([\phi_t(a)]_\Cu\big)\big) \, \d t = \int_0^\infty \beta\big([\phi_t(a)]_\Cu\big) \, \d t = \tau_\beta(a).
\end{align*}

For part~\cref{item: qtf faithfulness}, suppose that $\beta$ is faithful. Fix $a \in (A \otimes \KK)^+ {\setminus} \{0\}$. Then there exists $\varepsilon > 0$ such that $\phi_\varepsilon(a) > 0$. Since $\beta$ is faithful and order-preserving, $\beta\big([\phi_\varepsilon(a)]_\Cu\big) > 0$. Since $t \mapsto \phi_t(a)$ is a decreasing function,
\[
\tau_\beta(a) = \int_0^\infty \beta\big([\phi_t(a)]_\Cu\big) \, \d t \ge \int_0^\varepsilon \beta\big([\phi_t(a)]_\Cu\big) \, \d t > \int_0^\varepsilon \beta\big([\phi_\varepsilon(a)]_\Cu\big) \, \d t = \beta\big([\phi_\varepsilon(a)]_\Cu\big) \cdot \varepsilon > 0,
\]
which implies that $\tau_\beta$ is faithful.

For part~\cref{item: qtf finiteness}, suppose that $\beta$ is finite, and fix $a \in (A \otimes \KK)^+$. For all $t \in [0,\norm{a}]$, we have
\[
a = \phi_0(a) \ge\phi_t(a) \ge \phi_{\norm{a}}(a) = 0,
\]
and it follows by \cite[Proposition~2.17]{Thiel2017} that $[a]_\Cu \ge [\phi_t(a)]_\Cu \ge 0$. Since $\beta$ is order-preserving, we have $\beta\big([a]_\Cu\big) \ge \beta\big([\phi_t(a)]_\Cu\big) \ge 0$ for all $t \in (0,\norm{a})$. Since $\beta$ is finite, we have $\beta\big([a]_\Cu\big) < \infty$, and hence
\[
\tau_\beta(a) = \int_0^\infty \beta\big([\phi_t(a)]_\Cu\big) \, \d t = \int_0^{\norm{a}} \beta\big([\phi_t(a)]_\Cu\big) \, \d t \le \norm{a} \, \beta\big([a]_\Cu\big) < \infty.
\]

For part~\cref{item: qtf projections}, fix a projection $p \in (A \otimes \KK)^+$. Then for all $t \in [0,1]$ and $y \in \sigma(p) \subseteq \{0,1\}$, we have $\phi_t(y) = \max\{0, y - t\} = (1 - t)y$, and so $\phi_t(p) = (1 - t)p$. Since $(1 - t)p$ is Cuntz equivalent to $p$ for $t \in [0,1)$, we have
\[
\tau_\beta(p) = \int_0^\infty \beta\big([\phi_t(p)]_\Cu\big) \, \d t = \int_0^1 \beta\big([\phi_t(p)]_\Cu\big) \, \d t = \int_0^1 \beta([p]_\Cu) \, \d t = \beta\big([p]_\Cu\big). \qedhere
\]
\end{proof}

\section{The dynamical Cuntz semigroup}
\label{sec: dynamical Cuntz semigroup}

Let $\alpha\colon S \curvearrowright A$ be an action of a unital inverse semigroup $S$ on a C*-algebra $A$. In this section we consider how the action $\alpha$ interacts with the Cuntz semigroup of $A$ via relations we denote by $\precalpha$ and $\pathalpha$. We then construct the \emph{dynamical Cuntz semigroup} of the action $\alpha$, which is similar to the analogous construction for actions of groups presented by Bosa--Perera--Wu--Zacharias in \cite{BPWZ2025}. The key idea is to ``complete'' a reflexive binary relation into a preorder by adding the missing transitive pairs. Our approach is based on \cite[Definition~4.3]{Ma2022} and \cite{Rainone2017}, where a K-theoretic approach is used to construct the semigroup.

We show that order-preserving homomorphisms on the dynamical Cuntz semigroup induce functionals on the Cuntz semigroup that ``see'' non-paradoxical elements. We start by naively defining a new order relation $\precalpha$, called \emph{$\alpha$-below}, which is induced by the action $\alpha$. We observe that $\precalpha$ is reflexive and behaves well with respect to addition and functionals. However, $\precalpha$ will not necessarily be transitive unless $\Cu(A)$ has certain refinement properties. We force transitivity by completing $\precalpha$ to obtain a transitive preorder $\pathalpha$, called \emph{$\alpha$-path-below}. (For a formal discussion about this completion, see \cite[Section~2.1]{BPWZ2025}.)

The following definition is inspired by \cite[Definition~4.3]{Ma2022} and \cite[Definition~4.16]{KMP2025}. We make this connection explicit in \cref{lemma: theta^Lsc-below characterisation} when we apply this construction to groupoid C*-algebras.

\begin{defn} \label{defn: alpha-below}
Let $\alpha\colon S \curvearrowright A$ be an action of a unital inverse semigroup $S$ on a C*-algebra $A$. For $x, y \in \Cu(A)$, we say that $x$ is \emph{$\alpha$-below} $y$, and we write $x \precalpha y$, if, for all $f \in \Cu(A)$ such that $f \ll x$, there exist $\{s_i\}_{i=1}^m \subseteq S$ and $\big\{ u_i \in \Cu(A_{s_i^*s_i}) : i \in \{1, \dotsc, m\} \big\}$ such that
\[
f \ll \sum_{i=1}^m u_i \quad \text{ and } \quad \sum_{i=1}^m \hat{\alpha}_{s_i}(u_i) \ll y.
\]
\end{defn}

The following alternative characterisation of the $\alpha$-below relation is a noncommutative analogue of \cite[Remark~4.17 and Proposition~4.19]{KMP2025} and will be useful in many of our calculations.

\begin{lemma} \label{lemma: alpha-below characterisation}
Let $\alpha\colon S \curvearrowright A$ be an action of a unital inverse semigroup $S$ on a C*-algebra $A$. Fix $x, y \in \Cu(A)$. Then $x \precalpha y$ if and only if, for all $g \in \Cu(A)$ such that $g \ll x$, there exist $\{s_i\}_{i=1}^m \subseteq S$ and $\big\{ v_i \in \Cu(A_{s_i^*s_i}) : i \in \{1, \dotsc, m\} \big\}$ such that
\[
g \le \sum_{i=1}^m v_i \quad \text{ and } \quad \sum_{i=1}^m \hat{\alpha}_{s_i}(v_i) \le y.
\]
\end{lemma}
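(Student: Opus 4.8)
The plan is to prove the two implications separately. The forward implication (from \cref{defn: alpha-below} to the stated characterisation) is essentially immediate, since it only weakens $\ll$ to $\le$; the reverse implication is where the work lies, as there I must recover the sequentially way-below conditions of \cref{defn: alpha-below} from mere order inequalities.

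For the forward direction, I would suppose $x \precalpha y$ and fix $g \ll x$. Applying \cref{defn: alpha-below} with $f \coloneqq g$ produces $\{s_i\}_{i=1}^m \subseteq S$ and $\{u_i \in \Cu(A_{s_i^*s_i})\}$ with $g \ll \sum_{i=1}^m u_i$ and $\sum_{i=1}^m \hat{\alpha}_{s_i}(u_i) \ll y$. Since $\ll$ implies $\le$, setting $v_i \coloneqq u_i$ immediately gives $g \le \sum_{i=1}^m v_i$ and $\sum_{i=1}^m \hat{\alpha}_{s_i}(v_i) \le y$, as required.

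For the reverse direction I would assume the characterisation and fix $f \ll x$, aiming to verify the stronger $\ll$-conditions of \cref{defn: alpha-below}. The first step is an interpolation: using axiom~\cref{item: O2} of \cref{defn: Cu-semigroup} I would write $x = \sup_n (x_n)$ with $x_n \ll x_{n+1}$, choose $n$ with $f \le x_n$, and set $g \coloneqq x_{n+1}$, so that $f \le x_n \ll x_{n+1} = g$ and $g \ll x_{n+2} \le x$; by \cref{lemma: ll le properties} this yields $f \ll g \ll x$. Crucially, the intermediate $g$ is needed precisely because applying the hypothesis directly to $f$ would only give $f \le \sum_i v_i$, which is too weak. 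Applying the hypothesis to $g$ (legitimate since $g \ll x$) produces $\{s_i\}_{i=1}^m$ and $\{v_i \in \Cu(A_{s_i^*s_i})\}$ with $g \le \sum_{i=1}^m v_i$ and $\sum_{i=1}^m \hat{\alpha}_{s_i}(v_i) \le y$; then $f \ll g \le \sum_{i=1}^m v_i$ and \cref{lemma: ll le properties} give $f \ll \sum_{i=1}^m v_i$.

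The remaining and decisive step is to shrink each $v_i$ so that both $\ll$-conditions hold at once. Since each $\Cu(A_{s_i^*s_i})$ is an ideal of $\Cu(A)$ and $f \ll \sum_{i=1}^m v_i$, I would invoke \cref{lemma: sequentially way-below interpolation} to obtain $u_i \in \Cu(A_{s_i^*s_i})$ with $u_i \ll v_i$ and $f \ll \sum_{i=1}^m u_i \ll \sum_{i=1}^m v_i$, which settles the first condition. For the second, each $\hat{\alpha}_{s_i}$ is a $\Cu$-morphism and hence preserves $\ll$, so $\hat{\alpha}_{s_i}(u_i) \ll \hat{\alpha}_{s_i}(v_i)$; iterating axiom~\cref{item: O3} then gives $\sum_{i=1}^m \hat{\alpha}_{s_i}(u_i) \ll \sum_{i=1}^m \hat{\alpha}_{s_i}(v_i) \le y$, and one last application of \cref{lemma: ll le properties} yields $\sum_{i=1}^m \hat{\alpha}_{s_i}(u_i) \ll y$, so that $x \precalpha y$. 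I expect the only real subtlety to be ensuring that the shrunk elements $u_i$ lie in the correct ideals $\Cu(A_{s_i^*s_i})$, which is exactly what \cref{lemma: sequentially way-below interpolation} guarantees; the rest is routine bookkeeping with the way-below relation.
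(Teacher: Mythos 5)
Your proof is correct and takes essentially the same approach as the paper: interpolate $f \ll g \ll x$, apply the hypothesis to $g$, then use \cref{lemma: sequentially way-below interpolation} to shrink the $v_i$ within the ideals $\Cu(A_{s_i^*s_i})$, and transport the resulting $\ll$-relations through the $\Cu$-morphisms $\hat{\alpha}_{s_i}$ via axiom~\cref{item: O3} of \cref{defn: Cu-semigroup} and \cref{lemma: ll le properties}. The only cosmetic difference is that you derive the interpolation $f \ll g \ll x$ directly from axiom~\cref{item: O2} and \cref{lemma: ll le properties}, whereas the paper obtains it by citing \cref{lemma: sequentially way-below interpolation} once more.
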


\begin{proof}
If $x \precalpha y$, then the result follows immediately from \cref{defn: alpha-below} because $a \ll b \implies a \le b$ for all $a, b \in \Cu(A)$. For the reverse implication, we assume that the hypothesis holds, and then show that $x \precalpha y$. Fix $f \in \Cu(A)$ such that $f \ll x$. Then by \cref{lemma: sequentially way-below interpolation}, there exists $g \in \Cu(A)$ such that $f \ll g \ll x$. Thus, by our assumption, there exist $\{s_i\}_{i=1}^m \subseteq S$ and $\big\{ v_i \in \Cu(A_{s_i^*s_i}) : i \in \{1, \dotsc, m\} \big\}$ such that
\[
g \le \sum_{i=1}^m v_i \quad \text{ and } \quad \sum_{i=1}^m \hat{\alpha}_{s_i}(v_i) \le y.
\]
Since $f \ll g$, \cref{lemma: ll le properties} implies that
\[
f \ll \sum_{i=1}^m v_i.
\]
Thus, by \cref{lemma: sequentially way-below interpolation}, there exist $\big\{ u_i \in \Cu(A_{s_i^*s_i}) : i \in \{1, \dotsc, m\} \big\}$ such that $u_i \ll v_i$ for each $i \in \{1, \dotsc, m\}$, and
\begin{inequality} \label{ineq: alpha-below sum 1}
f \ll \sum_{i=1}^m u_i \ll \sum_{i=1}^m v_i.
\end{inequality}
For each $i \in \{1, \dotsc, m\}$, $\hat{\alpha}_{s_i}$ is a $\Cu$-morphism, and hence $\hat{\alpha}_{s_i}(u_i) \ll \hat{\alpha}_{s_i}(v_i)$. Thus, by axiom~\cref{item: O3} of \cref{defn: Cu-semigroup}, we have
\begin{inequality} \label{ineq: alpha-below sum 2}
\sum_{i=1}^m \hat{\alpha}_{s_i}(u_i) \ll \sum_{i=1}^m \hat{\alpha}_{s_i}(v_i) \le y.
\end{inequality}
It follows from \cref{lemma: ll le properties,ineq: alpha-below sum 1,ineq: alpha-below sum 2} that
\[
f \ll \sum_{i=1}^m u_i \quad \text{ and } \quad \sum_{i=1}^m \hat{\alpha}_{s_i}(u_i) \ll y,
\]
and hence $x \precalpha y$.
\end{proof}

\begin{cor} \label{cor: precalpha properties}
Let $\alpha\colon S \curvearrowright A$ be an action of a unital inverse semigroup $S$ on a C*-algebra $A$. The $\alpha$-below relation $\precalpha$ has the following properties.
\begin{enumerate}[label=(\alph*)]
\item \label{item: le implies precalpha} For all $x, y \in \Cu(A)$, we have $x \le y \implies x \precalpha y$. In particular, $\precalpha$ is reflexive, and $0 \precalpha x$ for all $x \in \Cu(A)$.
\item \label{item: x precalpha alpha_s(x)} For all $s \in S$ and $x \in \Cu(A_{s^*s})$, we have $x \precalpha \hat{\alpha}_s(x)$ and $\hat{\alpha}_s(x) \precalpha x$.
\end{enumerate}
\end{cor}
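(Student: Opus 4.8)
The plan is to read both claims off the alternative characterisation of $\precalpha$ proved in \cref{lemma: alpha-below characterisation}, in each case using a \emph{single} summand (i.e.\ $m = 1$) on the right-hand side. The ingredients I expect to need are: that $S$ is unital with $A_1 = A$, so that $\Cu(A_{1^*1}) = \Cu(A)$ and $\hat{\alpha}_1 = \id$; that $\hat{\alpha}_{s^*} = \hat{\alpha}_s^{-1}$ and $\hat{\alpha}_e = \id_{M_e}$ for idempotents $e \in E(S)$, which is \cref{rem: Cu-semigroup action properties}; and the elementary implication $g \ll z \implies g \le z$. With these in hand, each claim becomes a matter of exhibiting one admissible pair $(s_1, v_1)$ and checking the two inequalities in \cref{lemma: alpha-below characterisation}.

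For part~\cref{item: le implies precalpha}, I would assume $x \le y$ and fix an arbitrary $g \ll x$. Taking $s_1 = 1$ and $v_1 = x$, which lies in $\Cu(A_{1^*1}) = \Cu(A)$, I have $g \le x = v_1$ (since $g \ll x$) and $\hat{\alpha}_1(v_1) = x \le y$; by \cref{lemma: alpha-below characterisation} this yields $x \precalpha y$. Reflexivity is then the special case $y = x$, and $0 \precalpha x$ is the special case arising from $0 \le x$.

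For part~\cref{item: x precalpha alpha_s(x)}, I would first establish $x \precalpha \hat{\alpha}_s(x)$: for any $g \ll x$, take the single term $s_1 = s$ and $v_1 = x \in \Cu(A_{s^*s})$, so that $g \le x = v_1$ and $\hat{\alpha}_s(v_1) = \hat{\alpha}_s(x)$, whence \cref{lemma: alpha-below characterisation} applies. The reverse statement $\hat{\alpha}_s(x) \precalpha x$ then follows by applying the first statement with $s^*$ in place of $s$ and with $\hat{\alpha}_s(x) \in \Cu(A_{(s^*)^*s^*}) = \Cu(A_{ss^*})$ in place of $x$, using that $\hat{\alpha}_{s^*}\big(\hat{\alpha}_s(x)\big) = \hat{\alpha}_{s^*s}(x) = x$ because $s^*s \in E(S)$ and $x \in \Cu(A_{s^*s})$. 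Since every step reduces to a one-term instance of the characterisation, I do not anticipate a genuine obstacle here; the only points demanding care are the bookkeeping of domains (verifying that each chosen $v_i$ really lies in the correct ideal $\Cu(A_{s_i^*s_i})$) and the identity $\hat{\alpha}_{s^*}\circ\hat{\alpha}_s = \id$ on $\Cu(A_{s^*s})$, both of which are immediate from the $\Cu$-semigroup action axioms in \cref{defn: Cu-semigroup action} and \cref{rem: Cu-semigroup action properties}.
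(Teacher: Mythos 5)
Your proposal is correct and follows essentially the same route as the paper: both parts are proved by invoking \cref{lemma: alpha-below characterisation} with a single summand ($m=1$), taking $(s_1,v_1)=(1,x)$ for part~(a), $(s,x)$ for the first half of part~(b), and $(s^*,\hat{\alpha}_s(x))$ for the second half. Your packaging of the last step as an application of the first half of part~(b) with $s^*$ in place of $s$ is just a restatement of the paper's direct invocation of the lemma, using the same identity $\hat{\alpha}_{s^*}\circ\hat{\alpha}_s=\id$ on $\Cu(A_{s^*s})$.
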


\begin{proof}
For part~\cref{item: le implies precalpha}, fix $x, y \in \Cu(A)$ such that $x \le y$. To see that $x \precalpha y$, apply \cref{lemma: alpha-below characterisation} with $m = 1$, $s_1 = 1$, and $v_1 = x$. Since $x \le x$ and $0 \le x$, it follows immediately that $x \precalpha x$ and $0 \precalpha x$. For part~\cref{item: x precalpha alpha_s(x)}, fix $s \in S$ and $x \in \Cu(A_{s^*s})$. To see that $x \precalpha \hat{\alpha}_s(x)$, apply \cref{lemma: alpha-below characterisation} with $m = 1$, $s_1 = s$, and $v_1 = x$. To see that $\hat{\alpha}_s(x) \precalpha x$, apply \cref{lemma: alpha-below characterisation} with $m = 1$, $s_1 = s^*$, and $v_1 = \hat{\alpha}_s(x)$.
\end{proof}

\begin{lemma} \label{lemma: Cuntz semigroup addition is well-defined}
Let $\alpha\colon S \curvearrowright A$ be an action of a unital inverse semigroup $S$ on a C*-algebra $A$. Fix $x, y, z, w \in \Cu(A)$. If $x \precalpha z$ and $y \precalpha w$, then $x + y \precalpha z + w$.
\end{lemma}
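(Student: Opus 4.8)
The plan is to verify the defining condition of $\precalpha$ from \cref{defn: alpha-below} for the pair $(x+y,\, z+w)$ directly, by producing, for each $f \ll x+y$, a single combined family of elements that witnesses the two required inequalities. This combined family will be obtained by amalgamating separate witnesses extracted from the two hypotheses $x \precalpha z$ and $y \precalpha w$.

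First I would approximate $x$ and $y$ from below. By axiom~\cref{item: O2} of \cref{defn: Cu-semigroup}, choose rapidly increasing sequences $\{x_n\}_{n \in \N}$ and $\{y_n\}_{n \in \N}$ with suprema $x$ and $y$ respectively; note that $x_n \ll x$ and $y_n \ll y$ for each $n$ by \cref{lemma: ll le properties}. By axioms~\cref{item: O3} and \cref{item: O4}, the sequence $\{x_n + y_n\}_{n \in \N}$ is increasing with supremum $x+y$. Hence, given any $f \ll x+y$, \cref{item: swb ll} yields an index $n$ with $f \le x_n + y_n$.

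Next I would feed $x_n \ll x$ into the hypothesis $x \precalpha z$ and $y_n \ll y$ into $y \precalpha w$. Applying \cref{defn: alpha-below} to each produces finite families $\{s_i\}_{i=1}^m$ and $u_i \in \Cu(A_{s_i^*s_i})$ with $x_n \ll \sum_i u_i$ and $\sum_i \hat\alpha_{s_i}(u_i) \ll z$, and similarly $\{t_j\}_{j=1}^k$ and $w_j \in \Cu(A_{t_j^*t_j})$ with $y_n \ll \sum_j w_j$ and $\sum_j \hat\alpha_{t_j}(w_j) \ll w$. Amalgamating these into one family indexed by $\{s_i\} \cup \{t_j\}$, I would combine the estimates: by axiom~\cref{item: O3} we get $x_n + y_n \ll \sum_i u_i + \sum_j w_j$, so since $f \le x_n + y_n$, \cref{lemma: ll le properties} upgrades this to $f \ll \sum_i u_i + \sum_j w_j$; and again by axiom~\cref{item: O3}, $\sum_i \hat\alpha_{s_i}(u_i) + \sum_j \hat\alpha_{t_j}(w_j) \ll z+w$. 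These are exactly the two conditions required by \cref{defn: alpha-below}, so $x+y \precalpha z+w$.

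The argument is essentially bookkeeping; the only real content is the decomposition step $f \le x_n + y_n$ with $x_n \ll x$ and $y_n \ll y$, which I expect to be the main (though mild) obstacle, since it is precisely what lets me invoke the two hypotheses independently and then recombine. This step uses only the $\Cu$-semigroup axioms \cref{item: O2}--\cref{item: O4}, so no Riesz-type refinement property of $\Cu(A)$ is needed; any such difficulty has already been absorbed by working with the relation $\precalpha$ rather than ordinary Cuntz comparison.
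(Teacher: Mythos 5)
Your proof is correct and takes essentially the same route as the paper's: both arguments decompose $f \ll x+y$ into two pieces sequentially way below $x$ and $y$ respectively, feed those pieces into the hypotheses $x \precalpha z$ and $y \precalpha w$, and recombine the resulting witnesses using axiom~\cref{item: O3} of \cref{defn: Cu-semigroup}. The only difference is cosmetic: the paper obtains the decomposition by citing \cref{lemma: sequentially way-below interpolation} (getting $f \ll x' + y'$ with $x' \ll x$, $y' \ll y$), whereas you re-derive it from axioms~\cref{item: O2} and~\cref{item: O4} and the definition of $\ll$, getting the weaker $f \le x_n + y_n$ and then upgrading to $\ll$ via \cref{lemma: ll le properties} --- which is precisely how that interpolation lemma is proved in the paper anyway.
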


\begin{proof}
Suppose that $x \precalpha z$ and $y \precalpha w$. Fix $f \ll x + y$. Since $\Cu(A)$ is a $\Cu$-semigroup by \cref{thm: Cu(A) is Cu-semigroup}, \cref{lemma: sequentially way-below interpolation} implies that there exist $x',y' \in \Cu(A)$ such that $f \ll x' + y' \ll x + y$ and $x' \ll x$ and $y' \ll y$. Since $x \precalpha z$ and $y \precalpha w$, there exist $\{s_i\}_{i=1}^m, \{t_j\}_{j=1}^n \subseteq S$ and collections $\big\{ u_i \in \Cu(A_{s_i^*s_i}) : i \in \{1, \dotsc, m\} \big\}$ and $\big\{ v_j \in \Cu(A_{t_j^*t_j}) : j \in \{1, \dotsc, n\} \big\}$ such that
\[
x' \ll \sum_{i=1}^m u_i, \,\quad \sum_{i=1}^m \hat{\alpha}_{s_i}(u_i) \ll z, \,\quad y' \ll \sum_{j=1}^n v_j, \quad \text{ and } \,\quad \sum_{j=1}^m \hat{\alpha}_{t_j}(v_j) \ll w.
\]
Using axiom~\cref{item: O3} of \cref{defn: Cu-semigroup}, we see that
\[
f \ll x' + y' \ll \sum_{i=1}^m u_i + \sum_{j=1}^n v_j \quad \text{ and } \quad \sum_{i=1}^m \hat{\alpha}_{s_i}(u_i) + \sum_{j=1}^n \hat{\alpha}_{t_j}(v_j) \ll z + w,
\]
so $x + y \precalpha z + w$.
\end{proof}

We now complete $\precalpha$ to obtain a transitive preorder $\pathalpha$.

\begin{defn}
Let $\alpha\colon S \curvearrowright A$ be an action of a unital inverse semigroup $S$ on a C*-algebra $A$. For $x, y \in \Cu(A)$, we say that $x$ is \emph{$\alpha$-path-below} $y$, and we write $x \pathalpha y$, if there exist $z_1, \dotsc, z_n \in \Cu(A)$ such that $x \precalpha z_1 \precalpha z_2 \precalpha \dotsb \precalpha z_n \precalpha y$.
\end{defn}

It is clear that $\pathalpha$ is transitive. Moreover, it follows from the reflexivity of $\precalpha$ (see \cref{cor: precalpha properties}\cref{item: le implies precalpha}) that $\pathalpha$ contains $\precalpha$ (and hence is also reflexive): for all $x, y \in \Cu(A)$ with $x \precalpha y$, we have $x \precalpha x \precalpha y$, so $x \pathalpha y$. Indeed, $\pathalpha$ is the smallest transitive relation containing $\precalpha$.

We now show that $\pathalpha$ is compatible with the addition in the Cuntz semigroup.

\begin{lemma} \label{lemma: path order is compatible with addition}
Let $\alpha\colon S \curvearrowright A$ be an action of a unital inverse semigroup $S$ on a C*-algebra $A$. Fix $x, y, z, w \in \Cu(A)$. If $x \pathalpha z$ and $y \pathalpha w$, then $x + y \pathalpha z + w$.
\end{lemma}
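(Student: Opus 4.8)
The plan is to unwind the definition of $\pathalpha$ into finite $\precalpha$-chains, pad the shorter chain so that the two chains have equal length, and then add them term by term using the additivity of $\precalpha$ established in \cref{lemma: Cuntz semigroup addition is well-defined}.

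First I would write out the hypotheses as chains. Since $x \pathalpha z$, there exist $c_0 = x, c_1, \dotsc, c_{p+1} = z$ in $\Cu(A)$ with $c_i \precalpha c_{i+1}$ for each $0 \le i \le p$; likewise $y \pathalpha w$ yields $d_0 = y, d_1, \dotsc, d_{q+1} = w$ with $d_j \precalpha d_{j+1}$ for each $0 \le j \le q$. These two chains need not have the same number of links, so I would equalise them: assuming without loss of generality that $p \le q$, I use the reflexivity of $\precalpha$ (\cref{cor: precalpha properties}\cref{item: le implies precalpha}), which gives $z \precalpha z$, to append $q - p$ extra copies of $c_{p+1} = z$ to the end of the first chain. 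This produces two chains of the same length, say $e_0 = x, \dotsc, e_N = z$ and $f_0 = y, \dotsc, f_N = w$ with $N = q+1$, satisfying $e_i \precalpha e_{i+1}$ and $f_i \precalpha f_{i+1}$ for all $0 \le i \le N-1$.

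Finally I would add the chains term by term. Applying \cref{lemma: Cuntz semigroup addition is well-defined} to each pair of corresponding steps gives $e_i + f_i \precalpha e_{i+1} + f_{i+1}$ for every $i$, and stringing these together yields
\[
x + y = e_0 + f_0 \precalpha e_1 + f_1 \precalpha \dotsb \precalpha e_N + f_N = z + w,
\]
which is exactly the assertion $x + y \pathalpha z + w$. The argument is routine; the only point requiring mild care is the bookkeeping in padding the shorter chain, and I anticipate no substantive obstacle.
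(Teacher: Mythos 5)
Your proposal is correct and follows exactly the paper's own argument: the paper likewise pads the two $\precalpha$-chains to a common length using reflexivity of $\precalpha$ and then adds them termwise via \cref{lemma: Cuntz semigroup addition is well-defined}. No issues.
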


\begin{proof}
Suppose that $x \pathalpha z$ and $y \pathalpha w$. By trivially extending paths if necessary (using that $\precalpha$ is reflexive), we see that for some $n \in \N$ there exist $\{e_i\}_{i=1}^n, \{f_i\}_{i=1}^n \subseteq \Cu(A)$ such that
\[
x \precalpha e_1 \precalpha \dotsb \precalpha e_n \precalpha z
\quad \text{ and } \quad y \precalpha f_1 \precalpha \dotsb \precalpha f_n \precalpha w.
\]
By \cref{lemma: Cuntz semigroup addition is well-defined}, we have
\[
x + y \precalpha e_1 + f_1 \precalpha \dotsb \precalpha e_n + f_n \precalpha z + w,
\]
and thus $x + y \pathalpha z + w$.
\end{proof}

\begin{lemma} \label{lemma: functionals preserve precalpha}
Let $\alpha\colon S \curvearrowright A$ be an action of a unital inverse semigroup $S$ on a C*-algebra $A$, and let $\beta\colon \Cu(A) \to [0,\infty]$ be an invariant functional. Fix $x, y \in \Cu(A)$. If $x \precalpha y$ or $x \pathalpha y$, then $\beta(x) \le \beta(y)$.
\end{lemma}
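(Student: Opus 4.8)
The plan is to establish the statement first for $\precalpha$ directly from \cref{defn: alpha-below}, and then to chain inequalities to cover $\pathalpha$.

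First I would fix $f \in \Cu(A)$ with $f \ll x$. Since $x \precalpha y$, \cref{defn: alpha-below} supplies $\{s_i\}_{i=1}^m \subseteq S$ and $\big\{u_i \in \Cu(A_{s_i^*s_i})\big\}$ with $f \ll \sum_{i=1}^m u_i$ and $\sum_{i=1}^m \hat{\alpha}_{s_i}(u_i) \ll y$. Applying $\beta$ and using that it is order-preserving, additive, and invariant, I obtain
\[
\beta(f) \le \sum_{i=1}^m \beta(u_i) = \sum_{i=1}^m \beta\big(\hat{\alpha}_{s_i}(u_i)\big) = \beta\Big(\textstyle\sum_{i=1}^m \hat{\alpha}_{s_i}(u_i)\Big) \le \beta(y),
\]
where the first inequality uses $f \le \sum_i u_i$, the middle equality uses invariance term by term, and the last inequality uses $\sum_i \hat{\alpha}_{s_i}(u_i) \le y$. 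Thus $\beta(f) \le \beta(y)$ for every $f \ll x$.

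Next I would upgrade this to $\beta(x) \le \beta(y)$. By axiom~\cref{item: O2} of \cref{defn: Cu-semigroup}, choose an increasing sequence $\{x_n\}_{n \in \N}$ with $x_n \ll x_{n+1}$ and $\sup_n (x_n) = x$; then $x_n \ll x_{n+1} \le x$ gives $x_n \ll x$ by \cref{lemma: ll le properties}, so the previous paragraph yields $\beta(x_n) \le \beta(y)$ for each $n$. Since $\beta$ preserves suprema of increasing sequences, $\beta(x) = \sup_n\big(\beta(x_n)\big) \le \beta(y)$, which settles the $\precalpha$ case. Finally, if $x \pathalpha y$, there is a chain $x \precalpha z_1 \precalpha \dotsb \precalpha z_n \precalpha y$, and applying the $\precalpha$ case to each link gives $\beta(x) \le \beta(z_1) \le \dotsb \le \beta(z_n) \le \beta(y)$.

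The argument is short because the definition of $\precalpha$ is engineered to interact well with functionals; the one genuinely load-bearing step is the passage from ``$\beta(f) \le \beta(y)$ for all $f \ll x$'' to ``$\beta(x) \le \beta(y)$''. It is exactly here that I rely on $\beta$ being a \emph{generalised} $\Cu$-morphism that preserves suprema of increasing sequences (not merely order-preserving and additive), combined with axiom~\cref{item: O2} to approximate $x$ from below by elements way-below $x$. Everything else is routine bookkeeping with the $\Cu$-semigroup axioms and invariance.
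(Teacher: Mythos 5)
Your proof is correct and follows essentially the same route as the paper's: apply the definition of $\precalpha$ to elements sequentially way below $x$, use additivity, order-preservation, and invariance of $\beta$ to get $\beta(x_n) \le \beta(y)$, then pass to the supremum via axiom~(O2) and the fact that functionals preserve suprema of increasing sequences, and finally chain inequalities for $\pathalpha$. The only difference is presentational (you isolate the claim ``$\beta(f) \le \beta(y)$ for all $f \ll x$'' before invoking the (O2) sequence, whereas the paper applies the definition directly to the sequence elements), which does not change the argument.
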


\begin{proof}
Suppose that $x \precalpha y$. Since $\Cu(A)$ is a $\Cu$-semigroup, axiom~\cref{item: O2} of \cref{defn: Cu-semigroup} implies that there is a sequence $\{x_i\}_{i \in \N} \subseteq \Cu(A)$ such that $x_i \ll x_{i+1}$ for all $i \in \N$ and $x = \sup_{i \in \N} (x_i)$. Fix $i \in \N$. Since $x_i \ll x_{i+1} \le x$, \cref{lemma: ll le properties} implies that $x_i \ll x$. Thus, since $x \precalpha y$, there exist $\{s_{i,j}\}_{j=1}^{n_i} \subseteq S$ and $\big \{ u_{i,j} \in \Cu(A_{s_{i,j}^*s_{i,j}}) : j \in \{1, \dotsc, n_i\} \big\}$ such that
\[
x_i \ll \sum_{j=1}^{n_i} u_{i,j} \quad \text{ and } \quad \sum_{j=1}^{n_i} \hat{\alpha}_{s_{i,j}}(u_{i,j}) \ll y.
\]
Since $\beta$ is an invariant functional and $a \ll b \implies a \le b$ for all $a, b \in \Cu(A)$, it follows that
\[
\beta(x_i) \le \beta\Big(\sum_{j=1}^{n_i} u_{i,j}\Big) = \sum_{j=1}^{n_i} \beta(u_{i,j}) = \sum_{j=1}^{n_i} \beta\big(\hat{\alpha}_{s_{i,j}}(u_{i,j})\big) = \beta\Big(\sum_{j=1}^{n_i} \hat{\alpha}_{s_{i,j}}(u_{i,j})\Big) \le \beta(y).
\]
Thus $\beta(x_i) \le \beta(y)$ for each $i \in \N$. Since functionals preserve suprema of increasing sequences, we have
\[
\beta(x) = \beta\big(\!\sup_{i \in \N} (x_i)\big) = \sup_{i \in \N} \!\big(\beta(x_i)\big) \le \beta(y).
\]
Therefore, $x \precalpha y \implies \beta(x) \le \beta(y)$. Now, since $\le$ is transitive, it follows immediately by definition of the $\pathalpha$ relation that $x \pathalpha y \implies \beta(x) \le \beta(y)$.
\end{proof}

\begin{lemma} \label{cor: iota hat preserves precalpha}
Let $\alpha\colon S \curvearrowright A$ be an action of a unital inverse semigroup $S$ on a C*-algebra $A$. Let $\iota\colon A \to A \rtimesess S$ be the canonical embedding $a \mapsto a\delta_1$, and let $\hat{\iota}\colon\Cu(A) \to \Cu(A \rtimesess S)$ be the induced $\Cu$-morphism from \cref{lemma: Cu-embedding in crossed product is invariant}. Fix $x, y \in \Cu(A)$. If $x \precalpha y$ or $x \pathalpha y$, then $\hat{\iota}(x) \le \hat{\iota}(y)$.
\end{lemma}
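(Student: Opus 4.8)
The plan is to mimic the proof of \cref{lemma: functionals preserve precalpha}, replacing the invariant functional $\beta$ with the invariant $\Cu$-morphism $\hat{\iota}$. The key point is that, by \cref{lemma: Cu-embedding in crossed product is invariant}, $\hat{\iota}$ is an \emph{invariant} $\Cu$-morphism: it preserves addition, the order $\le$, the relation $\ll$, and suprema of increasing sequences, and it satisfies $\hat{\iota}(\hat{\alpha}_s(u)) = \hat{\iota}(u)$ for all $s \in S$ and $u \in \Cu(A_{s^*s})$.

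First I would treat the $\precalpha$ case. Using axiom~\cref{item: O2} of \cref{defn: Cu-semigroup}, I choose a sequence $\{x_i\}_{i \in \N}$ with $x_i \ll x_{i+1}$ and $x = \sup_{i \in \N}(x_i)$; then \cref{lemma: ll le properties} gives $x_i \ll x$ for each $i$. Applying the definition of $x \precalpha y$ to each $x_i$ yields $\{s_{i,j}\}_{j=1}^{n_i} \subseteq S$ and $\big\{ u_{i,j} \in \Cu(A_{s_{i,j}^*s_{i,j}}) : j \in \{1, \dotsc, n_i\} \big\}$ such that $x_i \ll \sum_{j} u_{i,j}$ and $\sum_{j} \hat{\alpha}_{s_{i,j}}(u_{i,j}) \ll y$.

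Then I would push everything through $\hat{\iota}$. Since $\hat{\iota}$ preserves $\le$ and addition, and since invariance gives $\hat{\iota}(u_{i,j}) = \hat{\iota}\big(\hat{\alpha}_{s_{i,j}}(u_{i,j})\big)$, order-preservation applied to $\sum_{j} \hat{\alpha}_{s_{i,j}}(u_{i,j}) \le y$ produces the chain
\[
\hat{\iota}(x_i) \le \sum_{j=1}^{n_i} \hat{\iota}(u_{i,j}) = \sum_{j=1}^{n_i} \hat{\iota}\big(\hat{\alpha}_{s_{i,j}}(u_{i,j})\big) = \hat{\iota}\Big(\sum_{j=1}^{n_i} \hat{\alpha}_{s_{i,j}}(u_{i,j})\Big) \le \hat{\iota}(y).
\]
Because $\hat{\iota}$ preserves suprema of increasing sequences, I conclude $\hat{\iota}(x) = \sup_{i \in \N} \hat{\iota}(x_i) \le \hat{\iota}(y)$.

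Finally, for the $\pathalpha$ case I would simply chain: given $x \precalpha z_1 \precalpha \dotsb \precalpha z_n \precalpha y$, the $\precalpha$ case gives $\hat{\iota}(x) \le \hat{\iota}(z_1) \le \dotsb \le \hat{\iota}(z_n) \le \hat{\iota}(y)$, and transitivity of $\le$ finishes the argument. There is no real obstacle here—the proof is a direct transcription of \cref{lemma: functionals preserve precalpha}, with the target monoid $[0,\infty]$ replaced by $\Cu(A \rtimesess S)$; the only point requiring care is invoking the \emph{invariance} of $\hat{\iota}$ (rather than its injectivity or any other property) at the step where the $\hat{\alpha}_{s_{i,j}}$ are absorbed.
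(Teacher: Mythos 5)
Your proposal is correct and matches the paper's proof essentially step for step: the same (O2) approximation of $x$ by a $\ll$-increasing sequence, the same use of invariance of $\hat{\iota}$ from \cref{lemma: Cu-embedding in crossed product is invariant} to absorb the $\hat{\alpha}_{s_{i,j}}$, the same passage to suprema, and the same chaining argument for $\pathalpha$. The only cosmetic difference is that the paper carries the relation $\ll$ through $\hat{\iota}$ (using that $\Cu$-morphisms preserve $\ll$) before weakening to $\le$, whereas you weaken to $\le$ immediately; both are valid.
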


\begin{proof}
Suppose that $x \precalpha y$. Since $\Cu(A)$ is a $\Cu$-semigroup, axiom~\cref{item: O2} of \cref{defn: Cu-semigroup} implies that there is a sequence $\{x_i\}_{i \in \N} \subseteq \Cu(A)$ such that $x_i \ll x_{i+1}$ for all $i \in \N$ and $x = \sup_{i \in \N} (x_i)$. Fix $i \in \N$. Since $x_i \ll x_{i+1} \le x$, \cref{lemma: ll le properties} implies that $x_i \ll x \precalpha y$. So there exist $\{s_{i,j}\}_{j=1}^{n_i} \subseteq S$ and $\big \{ u_{i,j} \in \Cu(A_{s_{i,j}^*s_{i,j}}) : j \in \{1, \dotsc, n_i\} \big\}$ such that
\[
x_i \ll \sum_{j=1}^{n_i} u_{i,j} \quad \text{ and } \quad \sum_{j=1}^{n_i} \hat{\alpha}_{s_{i,j}}(u_{i,j}) \ll y.
\]
Since $\hat{\iota}$ is a $\Cu$-morphism, it preserves addition and the sequentially way-below relation $\ll$. Moreover, by \cref{lemma: Cu-embedding in crossed product is invariant}, $\hat{\iota}$ is $\hat{\alpha}$-invariant. Thus, for each $i \in \N$, we have
\[
\hat{\iota}(x_i) \ll \hat{\iota}\Big( \sum_{j=1}^{n_i} u_{i,j} \Big) = \sum_{j=1}^{n_i} \hat{\iota}(u_{i,j}) = \sum_{j=1}^{n_i} \hat{\iota}\big( \hat{\alpha}_{s_{i,j}}(u_{i,j}) \big) = \hat{\iota}\Big( \sum_{j=1}^{n_i} \hat{\alpha}_{s_{i,j}}(u_{i,j}) \Big) \ll \hat{\iota}(y).
\]
Since $\hat{\iota}$ is a $\Cu$-morphism, it preserves suprema of increasing sequences, so it follows that
\[
\hat{\iota}(x) = \hat{\iota}\big(\!\sup_{i \in \N} (x_i)\big) = \sup_{i \in \N} \!\big(\hat{\iota}(x_i)\big) \le \hat{\iota}(y).
\]
Therefore, $x \precalpha y \implies \hat{\iota}(x) \le \hat{\iota}(y)$. Now, since $\le$ is transitive, it follows immediately by definition of the $\pathalpha$ relation that $x \pathalpha y \implies \hat{\iota}(x) \le \hat{\iota}(y)$.
\end{proof}

We now introduce a refinement property for $\Cu(A)$ that forces $\precalpha$ to be transitive (see \cref{lemma: almost refinement implies precalpha transitive}).

\begin{defn}[cf.~{\cite[Theorem~4.1.1]{Robert2013a}}]
Let $A$ be a C*-algebra. We say that $\Cu(A)$ has \emph{almost refinement} if, for all
\[
\{ f_i \}_{i=1}^m, \{ x_i \}_{i=1}^m, \{ y_j \}_{j=1}^n \subseteq \Cu(A)
\]
satisfying
\[
f_i \ll x_i \ \text{ for each } i \in \{1, \dotsc, m\} \quad \text{and} \quad \sum_{i=1}^m x_i \le \sum_{j=1}^n y_j,
\]
there exists a collection $\big\{ u_{i,j} \in \Cu(A) : (i,j) \in \{1, \dotsc, m\} \times \{1, \dotsc, n\} \big\}$ such that
\[
f_i \ll \sum_{j=1}^n u_{i,j} \ll x_i \ \text{ for each } i \in \{1, \dotsc, m\} \quad \text{and} \quad \sum_{i=1}^m u_{i,j} \le y_j \ \text{ for each } j \in \{1, \dotsc, n\}.
\]
\end{defn}

Almost refinement of $\Cu(A)$ as defined above is weaker than $\Cu(A)$ being a refinement monoid satisfying the Riesz decomposition property and the Riesz interpolation property \cite[Page~3]{Perera1997}. Thus, by applying \cite[Theorem~2.13]{Perera1997}, we see that if $A$ is a $\sigma$-unital C*-algebra with real rank~$0$ and stable rank~$1$, then $\Cu(A)$ has the almost-refinement property.

The almost-refinement property also appears in \cite[Theorem~4.1.1]{Robert2013a}, although it is not named there. We chose the name ``almost refinement'' to align with \cite[Paragraph~7.8]{BPWZ2025}. By \cite[Theorem~4.1.1 and Lemma~5.1.3]{Robert2013a}, $\Cu(A)$ has almost refinement if $A$ is $\WW$-stable (that is, $A \cong A \otimes \WW$, where $\WW$ is the stably projectionless Jacelon--Razak algebra of \cite{Jacelon2013}). A related property for open subsets of a Hausdorff space $X$ appears in \cite[Lemma~4.13]{KMP2025}, which we use in \cref{sec: groupoid C*-algebras} to show that $\Lsc(X,\bar{\N})$ has almost refinement.

\begin{lemma} \label{lemma: almost refinement implies precalpha transitive}
Let $\alpha\colon S \curvearrowright A$ be an action of a unital inverse semigroup $S$ on a C*-algebra $A$. If $\Cu(A)$ has almost refinement, then $\precalpha$ and $\pathalpha$ coincide.
\end{lemma}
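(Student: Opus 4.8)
The plan is to reduce the claim to the transitivity of $\precalpha$. Since $\precalpha$ is reflexive (\cref{cor: precalpha properties}\cref{item: le implies precalpha}) and $\pathalpha$ is by construction the smallest transitive relation containing $\precalpha$, the two relations coincide as soon as $\precalpha$ is itself transitive. So it suffices to assume that $\Cu(A)$ has almost refinement and to show that $x \precalpha y$ and $y \precalpha z$ imply $x \precalpha z$.

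To verify $x \precalpha z$, I would fix $f \ll x$ and use \cref{lemma: sequentially way-below interpolation} to interpolate $f \ll g \ll x$. Unpacking $x \precalpha y$ applied to $g$ produces $\{s_i\}_{i=1}^m$ and $u_i \in \Cu(A_{s_i^* s_i})$ with $g \ll \sum_i u_i$ and $\sum_i \hat{\alpha}_{s_i}(u_i) \ll y$. Because this last sum is sequentially way below $y$, I can feed it into $y \precalpha z$ to obtain $\{t_j\}_{j=1}^n$ and $v_j \in \Cu(A_{t_j^* t_j})$ with $\sum_i \hat{\alpha}_{s_i}(u_i) \ll \sum_j v_j$ and $\sum_j \hat{\alpha}_{t_j}(v_j) \ll z$. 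Shrinking the $u_i$ slightly (again via \cref{lemma: sequentially way-below interpolation}) to $u_i'$ so that $\hat{\alpha}_{s_i}(u_i') \ll \hat{\alpha}_{s_i}(u_i)$ while still $g \ll \sum_i u_i'$, together with $\sum_i \hat{\alpha}_{s_i}(u_i) \le \sum_j v_j$, sets up exactly the hypotheses of almost refinement.

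Applying almost refinement then yields elements $w_{i,j} \in \Cu(A)$ with $\hat{\alpha}_{s_i}(u_i') \ll \sum_j w_{i,j} \ll \hat{\alpha}_{s_i}(u_i)$ and $\sum_i w_{i,j} \le v_j$. Each $w_{i,j}$ lies in $\Cu(A_{s_i s_i^*}) \cap \Cu(A_{t_j^* t_j})$, since these are hereditary ideals and $w_{i,j} \le \hat{\alpha}_{s_i}(u_i) \in \Cu(A_{s_i s_i^*})$ while $w_{i,j} \le v_j \in \Cu(A_{t_j^* t_j})$. Pulling back along the inverse $\hat{\alpha}_{s_i^*} = \hat{\alpha}_{s_i}^{-1}$ gives $p_{i,j} \coloneqq \hat{\alpha}_{s_i^*}(w_{i,j}) \in \Cu(A_{s_i^* s_i})$. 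Transporting the inequalities through the $\Cu$-morphisms $\hat{\alpha}_{s_i^*}$ (on the source) and $\hat{\alpha}_{t_j}$ (on the range), and using \cref{lemma: ll le properties} together with axiom~\cref{item: O3} of \cref{defn: Cu-semigroup}, I would obtain $f \ll \sum_{i,j} p_{i,j}$ and $\sum_{i,j} \hat{\alpha}_{t_j}(w_{i,j}) \le \sum_j \hat{\alpha}_{t_j}(v_j) \ll z$. Setting $r_{i,j} \coloneqq t_j s_i$ and invoking the composition law $\hat{\alpha}_{t_j s_i} = \hat{\alpha}_{t_j} \circ \hat{\alpha}_{s_i}$ from \cref{defn: Cu-semigroup action}\cref{item: Cu-semigroup action composition} identifies $\hat{\alpha}_{t_j}(w_{i,j}) = \hat{\alpha}_{r_{i,j}}(p_{i,j})$, exhibiting the data required by \cref{defn: alpha-below} and hence proving $x \precalpha z$.

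The main obstacle is the bookkeeping needed to legitimise the composite maps rather than the refinement step itself, which is a direct application of the hypothesis. One must check that each $p_{i,j}$ lands in precisely the ideal on which $\hat{\alpha}_{t_j s_i}$ factors as $\hat{\alpha}_{t_j} \circ \hat{\alpha}_{s_i}$, namely $p_{i,j} \in \hat{\alpha}_{s_i}^{-1}(\Cu(A_{t_j^* t_j}) \cap \Cu(A_{s_i s_i^*})) = M_{(t_j s_i)^* t_j s_i}$ in the notation of \cref{defn: Cu-semigroup action}\cref{item: Cu-semigroup action composition}; this follows from $w_{i,j} \in \Cu(A_{t_j^* t_j}) \cap \Cu(A_{s_i s_i^*})$. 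Tracking these ideal memberships carefully, so that every invocation of a bijective $\hat{\alpha}_s$ and of the composition law is on its correct domain, is where the care is required.
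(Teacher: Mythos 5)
Your proposal is correct and takes essentially the same route as the paper's proof: reduce to transitivity of $\precalpha$, unpack the two hypotheses, shrink the $u_i$ via \cref{lemma: sequentially way-below interpolation}, apply almost refinement to $\hat{\alpha}_{s_i}(u_i') \ll \hat{\alpha}_{s_i}(u_i)$ and $\sum_i \hat{\alpha}_{s_i}(u_i) \le \sum_j v_j$, and transport the refinement matrix back through $\hat{\alpha}_{s_i^*}$ and forward through $\hat{\alpha}_{t_j}$, packaging the result via the composed elements $t_j s_i$. The domain bookkeeping you flag (that $p_{i,j}$ lies in the ideal on which $\hat{\alpha}_{t_j s_i} = \hat{\alpha}_{t_j} \circ \hat{\alpha}_{s_i}$, which follows from $w_{i,j} \in \Cu(A_{s_i s_i^*}) \cap \Cu(A_{t_j^* t_j})$ by hereditariness) is exactly the step the paper handles implicitly, and your justification of it is the correct one.
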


\begin{proof}
Suppose that $\Cu(A)$ has almost refinement. Since $\pathalpha$ is the smallest transitive relation containing $\precalpha$, it suffices to show that $\precalpha$ is transitive. Fix $x, y, z \in \Cu(A)$, and suppose that $x \precalpha y$ and $y \precalpha z$. We want to show that $x \precalpha z$. Fix $f \in \Cu(A)$ such that $f \ll x$. Since $x \precalpha y$, there exist $\{s_i\}_{i=1}^m \subseteq S$ and $\big\{ u_i \in \Cu(A_{s_i^*s_i}) : i \in \{1, \dotsc, m\} \big\}$ such that
\[
f \ll \sum_{i=1}^m u_i \quad \text{ and } \quad \sum_{i=1}^m \hat{\alpha}_{s_i}(u_i) \ll y.
\]
Thus, since $y \precalpha z$, there exist $\{t_j\}_{j=1}^n \subseteq S$ and $\big\{ w_j \in \Cu(A_{t_j^*t_j}) : j \in \{1, \dotsc, n\} \big\}$ such that
\[
\sum_{i=1}^m \hat{\alpha}_{s_i}(u_i) \ll \sum_{j=1}^n w_j \quad \text{ and } \quad \sum_{j=1}^n \hat{\alpha}_{t_j}(w_j) \ll z.
\]
By \cref{lemma: sequentially way-below interpolation} there is a collection $\big\{ u_i' \in \Cu(A_{s_i^*s_i}) : i \in \{1, \dotsc, m\} \big\}$ such that $u_i' \ll u_i$ for each $i \in \{1, \dotsc, m\}$, and
\begin{inequality} \label{ineq: f ll sum u_i'}
f \ll \sum_{i=1}^m u_i' \ll \sum_{i=1}^m u_i.
\end{inequality}
Since $\hat{\alpha}$ is a $\Cu$-morphism, it preserves the sequentially way-below relation $\ll$, so we have $\hat{\alpha}_{s_i}(u_i') \ll \hat{\alpha}_{s_i}(u_i)$ for each $i \in \{1, \dotsc m\}$. Now we can apply almost refinement to
\[
\hat{\alpha}_{s_i}(u_i') \ll \hat{\alpha}_{s_i}(u_i) \ \text{ for each } i \in \{1, \dotsc, m\} \quad \text{and} \quad \sum_{i=1}^m \hat{\alpha}_{s_i}(u_i) \le \sum_{j=1}^n w_j
\]
to obtain a collection $\big\{ v_{i,j} \in \Cu(A) : (i,j) \in \{1, \dotsc, m\} \times \{1, \dotsc, n\} \big\}$ such that
\begin{inequality} \label{ineq: u_i' v_ij}
\hat{\alpha}_{s_i}(u_i') \ll \sum_{j=1}^n v_{i,j} \ll \hat{\alpha}_{s_i}(u_i) \ \text{ for each } i \in \{1, \dotsc, m\}
\end{inequality}
and
\begin{inequality} \label{ineq: v_ij w_j}
\sum_{i=1}^m v_{i,j} \le w_j \ \text{ for each } j \in \{1, \dotsc, n\}.
\end{inequality}
It follows from \cref{ineq: u_i' v_ij} that for each $i \in \{1, \dotsc, m\}$ and $k \in \{1, \dotsc, n\}$,
\[
v_{i,k} \le \sum_{j=1}^n v_{i,j} \ll \hat{\alpha}_{s_i}(u_i) \in \Cu(A_{s_is_i^*}),
\]
so since $\Cu(A_{s_is_i^*})$ is hereditary, $v_{i,k} \in \Cu(A_{s_is_i^*})$. Similarly, it follows from \cref{ineq: v_ij w_j} that for each $\ell \in \{1, \dotsc, m\}$ and $j \in \{1, \dotsc, n\}$,
\[
v_{\ell,j} \le \sum_{i=1}^m v_{i,j} \le w_j \in \Cu(A_{t_j^*t_j}),
\]
so since $\Cu(A_{t_j^*t_j})$ is hereditary, $v_{\ell,j} \in \Cu(A_{t_j^*t_j})$. For each $(i,j) \in \{1, \dotsc, m\} \times \{1, \dotsc, n\}$, let
\[
v_{i,j}' \coloneqq \hat{\alpha}_{s_i^*}(v_{i,j}) \in \Cu(A_{s_i^*s_i}).
\]
Then by \cref{ineq: f ll sum u_i',ineq: u_i' v_ij}, and using that each $\hat{\alpha}_{s_i^*}$ is a $\Cu$-morphism, we obtain
\[
f \ll \sum_{i=1}^m u_i' = \sum_{i=1}^m \hat{\alpha}_{s_i^*}\big(\hat{\alpha}_{s_i}(u_i')\big) \ll \sum_{i=1}^m \hat{\alpha}_{s_i^*}\Big( \sum_{j=1}^n v_{i,j} \Big) = \sum_{i=1}^m \sum_{j=1}^n \hat{\alpha}_{s_i^*}(v_{i,j}) = \sum_{i=1}^m \sum_{j=1}^n v_{i,j}'.
\]
By \cref{ineq: v_ij w_j}, and also using that each $\hat{\alpha}_{t_j}$ is a $\Cu$-morphism, we obtain
\[
\sum_{i=1}^m \sum_{j=1}^n \hat{\alpha}_{t_js_i}(v_{i,j}') = \sum_{i=1}^m \sum_{j=1}^n \hat{\alpha}_{t_j}(v_{i,j}) = \sum_{j=1}^n \sum_{i=1}^m \hat{\alpha}_{t_j}(v_{i,j}) = \sum_{j=1}^n \hat{\alpha}_{t_j}\Big( \sum_{i=1}^m v_{i,j} \Big) \le \sum_{j=1}^n \hat{\alpha}_{t_j}(w_j) \ll z.
\]
So \cref{lemma: ll le properties} implies that
\[
f \ll \sum_{i=1}^m \sum_{j=1}^n v_{i,j}' \quad \text{ and } \quad \sum_{i=1}^m \sum_{j=1}^n \hat{\alpha}_{t_js_i}(v_{i,j}') \ll z.
\]
Thus $x \precalpha z$, so $\precalpha$ is transitive.
\end{proof}

We do not know whether almost refinement is a necessary condition for $\precalpha$ to be transitive. However, since $\pathalpha$ is always transitive (and reflexive), we can use $\pathalpha$ to define an equivalence relation on $\Cu(A)$, as follows.

\begin{defn} \label{defn: dynamical cuntz semigroup}
Let $\alpha\colon S \curvearrowright A$ be an action of a unital inverse semigroup $S$ on a C*-algebra $A$. Let $\simalpha$ denote the relation on $\Cu(A)^\ll$ given by
\[
x \simalpha y \iff x \pathalpha y \,\text{ and }\, y \pathalpha x
\]
for $x, y \in \Cu(A)^\ll$. Since $\precalpha$ is reflexive and transitive, $\simalpha$ is an equivalence relation. We denote the $\simalpha$-equivalence class of $x \in \Cu(A)^\ll$ by $[x]_\alpha$. For all $x, y \in \Cu(A)^\ll$, we have $x + y \in \Cu(A)^\ll$, and so \cref{lemma: path order is compatible with addition} implies that the operation
\[
[x]_\alpha + [y]_\alpha \coloneqq [x + y]_\alpha
\]
is well-defined. Under this operation, $\Cu(A)_\alpha \coloneqq \Cu(A)^\ll / \!\!\simalpha$ is an abelian monoid with identity $0 = [0]_\alpha$. Moreover, \cref{lemma: path order is compatible with addition} implies that $\Cu(A)_\alpha$ is preordered with respect to the relation
\[
[x]_\alpha \le [y]_\alpha \iff x \pathalpha y.
\]
We call $\Cu(A)_\alpha$ the \emph{dynamical Cuntz semigroup} of $\alpha\colon S \curvearrowright A$.
\end{defn}

\begin{remark} \label{rem: alpha class properties} \leavevmode
\begin{enumerate}[label=(\alph*), ref={\cref{rem: alpha class properties}(\alph*)}]
\item \label{item: [0]_alpha singleton} A routine argument using \cref{rem: nonzero sequentially way below} shows that $[x]_\alpha \ne [0]_\alpha$ for all $x \in \Cu(A)^\ll {\setminus} \{0\}$. In particular, if $A$ is unital, then $[1_A]_\Cu \in \Cu(A)^\ll$ by \cref{rem: [p]_Cu ll [p]_Cu}, and so $\Cu(A)_\alpha \ne \{0\}$.
\item \label{item: alpha classes invariant under alpha} Since $\pathalpha$ contains $\precalpha$ and since each $\hat{\alpha}_s$ preserves the sequentially way-below relation $\ll$, it follows from \cref{cor: precalpha properties}\cref{item: x precalpha alpha_s(x)} that $[\hat{\alpha}_s(x)]_\alpha = [x]_\alpha$ for all $s \in S$ and $x \in \Cu(A_{s^*s})^\ll$.
\end{enumerate}
\end{remark}

We chose the name ``dynamical Cuntz semigroup'' due to the similarity between $\Cu(A)_\alpha$ and the dynamical Cuntz semigroup defined by Bosa--Perera--Wu--Zacharias in \cite[Paragraph~8.2]{BPWZ2025} in the setting where the inverse semigroup $S$ is a group. Other analogous objects are called ``type semigroups'' \cite{Rainone2017, RS2020} or ``generalised type semigroups'' \cite{Ma2021, Ma2023}.

Our next result, \cref{prop: Cu minimality}, mimics \cite[Proposition~4.7]{Rainone2017}, with the modifications needed for a partially defined action. When $A$ is commutative (see \cref{sec: groupoid C*-algebras}), the analogue is using an open subset of the unit space $\GGo$ of a minimal groupoid $\GG$ and the action of the inverse semigroup of open bisections of $\GG$ to cover any given compact subset of the unit space $\GGo$.

\begin{prop} \label{prop: Cu minimality}
Let $\alpha\colon S \curvearrowright A$ be a minimal action of a unital inverse semigroup $S$ on a C*-algebra $A$. Fix $x \in \Cu(A)^\ll$ and $y \in \Cu(A)^\ll {\setminus} \{0\}$. Then there exist $\{s_i\}_{i=1}^m \subseteq S$ and $\big\{ z_i \in \Cu(A_{s_i^*s_i})^\ll : i \in \{1, \dotsc, m\} \big\}$ such that $z_i \le y$ for each $i \in \{1, \dotsc, m\}$, and
\[
x \le \sum_{i=1}^m \hat{\alpha}_{s_i}(z_i).
\]
Moreover, $[x]_\alpha \le m[y]_\alpha$.
\end{prop}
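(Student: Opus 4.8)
The plan is to encode the conclusion as a statement about an invariant ideal of $\Cu(A)$ and then force that ideal to be everything using minimality; the inequality in $\Cu(A)_\alpha$ will follow formally from the properties of $\precalpha$ established earlier. Concretely, I would let $I \subseteq \Cu(A)$ consist of those $w$ such that every $f \ll w$ admits a bound
\[
f \ll \sum_{i=1}^{m}\hat\alpha_{s_i}(z_i), \qquad s_i\in S,\ z_i\in\Cu(A_{s_i^*s_i}),\ z_i\le y,
\]
that is, $w$ is coverable by finitely many translates of pieces dominated by $y$. First I would check that $I$ is an ideal of $\Cu(A)$ in the sense of \cref{defn: PAM and Cu-semigroup ideals}: heredity and closure under suprema of increasing sequences follow by interpolating $f \ll f' \ll w$ and using \cref{lemma: ll le properties}, while closure under addition is immediate because the bound $z_i \le y$ is imposed term by term and the number $m$ of terms is unconstrained, so two covers may be concatenated after splitting $f \ll f_1 + f_2$ via \cref{lemma: sequentially way-below interpolation}. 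Taking $m=1$, $s_1=1$, and $z_1=y$ shows $y\in I$, so $I\ne\{0\}$.

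The main work—and the main obstacle—is to show that $I$ is invariant under $\hat\alpha$, i.e.\ $\hat\alpha_t(I\cap\Cu(A_{t^*t}))\subseteq I$ for all $t\in S$. Given $w\in I\cap\Cu(A_{t^*t})$ and $f\ll\hat\alpha_t(w)$, I would interpolate $f\ll f_0\ll\hat\alpha_t(w)$ with $f_0\in\Cu(A_{tt^*})$, apply the $\Cu$\nobreakdash-isomorphism $\hat\alpha_{t^*}$ to obtain $\hat\alpha_{t^*}(f_0)\ll w$, and cover the latter using $w\in I$. The difficulty, which does not arise in the global (group) setting, is that the resulting cover $\sum_i\hat\alpha_{s_i}(z_i)$ of $\hat\alpha_{t^*}(f_0)$ need not lie in the domain ideal $\Cu(A_{t^*t})$ of $\hat\alpha_t$, so it cannot simply be pushed forward. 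Resolving this requires relocating the covering pieces into $\Cu(A_{t^*t})$—using that $\hat\alpha_{t^*}(f_0)$ itself lies in this ideal—and then invoking the composition law $\hat\alpha_{ts_i}=\hat\alpha_t\circ\hat\alpha_{s_i}$ of \cref{item: Cu-semigroup action composition} (together with $A_e\cap A_f=A_{ef}$) to rewrite the transported pieces as $\hat\alpha_{ts_i}(z_i)$, each of which is still $\le y$. I expect this partial-action domain bookkeeping to be the crux of the argument.

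With invariance established, minimality finishes this half: under the order-isomorphism between the ideal lattices of $A$ and of $\Cu(A)$ (\cite[Lemma~5.2]{GardellaPerera2024}), which intertwines $\alpha$ and $\hat\alpha$, the nonzero invariant ideal $I$ corresponds to a nonzero invariant ideal of $A$, which must be $A$ itself; hence $I=\Cu(A)$.

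To extract the statement, fix $x'$ with $x\ll x'$ (possible since $x\in\Cu(A)^\ll$). As $x'\in\Cu(A)=I$, the defining property of $I$ gives $\{s_i\}_{i=1}^m$ and $\{\tilde z_i\le y\}$ with $x\ll\sum_i\hat\alpha_{s_i}(\tilde z_i)$; applying \cref{lemma: sequentially way-below interpolation} over the ideals $\Cu(A_{s_is_i^*})$ yields $w_i\ll\hat\alpha_{s_i}(\tilde z_i)$ with $x\le\sum_i w_i$, and putting $z_i\coloneqq\hat\alpha_{s_i^*}(w_i)$ gives $z_i\ll\tilde z_i\le y$, so each $z_i\in\Cu(A_{s_i^*s_i})^\ll$ satisfies $z_i\le y$ and $\hat\alpha_{s_i}(z_i)=w_i$; thus $x\le\sum_i\hat\alpha_{s_i}(z_i)$, proving the first assertion. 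For the moreover statement, \cref{cor: precalpha properties}\cref{item: le implies precalpha} gives $x\precalpha\sum_i\hat\alpha_{s_i}(z_i)$, \cref{cor: precalpha properties}\cref{item: x precalpha alpha_s(x)} gives $\hat\alpha_{s_i}(z_i)\precalpha z_i$, and $z_i\le y$ gives $z_i\precalpha y$; feeding these through \cref{lemma: Cuntz semigroup addition is well-defined} produces the chain
\[
x \precalpha \sum_{i=1}^m\hat\alpha_{s_i}(z_i) \precalpha \sum_{i=1}^m z_i \precalpha m y,
\]
whence $x\pathalpha my$, that is $[x]_\alpha\le m[y]_\alpha$ in $\Cu(A)_\alpha$ by \cref{defn: dynamical cuntz semigroup}.
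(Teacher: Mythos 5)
Your strategy is genuinely different from the paper's: the paper runs minimality at the C*-algebra level (via \cref{lemma: J_b is dense in A otimes K}, the fact that a dense ideal contains the Pedersen ideal, and the symmetrisation trick $h_i = c_i + d_i$), whereas you run it at the level of $\Cu(A)$ by producing an abstract nonzero invariant ideal $I$ and appealing to an ideal-lattice correspondence. Your verification of the ideal axioms for $I$, your extraction of the statement from $I = \Cu(A)$, and your closing chain $x \precalpha \sum_i \hat{\alpha}_{s_i}(z_i) \precalpha \sum_i z_i \precalpha my$ are all fine (the last is essentially how the paper ends). The problem is the step you yourself defer: invariance of $I$. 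This is a genuine gap, and it is not, as you put it, ``partial-action domain bookkeeping''. After interpolating $f \ll f_0 \ll \hat{\alpha}_t(w)$ and pulling back, you reach $\hat{\alpha}_{t^*}(f) \ll \hat{\alpha}_{t^*}(f_0) \le \sum_{i=1}^m \hat{\alpha}_{s_i}(z_i)$ with $z_i \le y$; what you then need is to replace each summand $\hat{\alpha}_{s_i}(z_i)$ by some $v_i \le \hat{\alpha}_{s_i}(z_i)$ lying in $\Cu(A_{t^*t})$ so that still $\hat{\alpha}_{t^*}(f) \le \sum_i v_i$. That is a Riesz-decomposition property (splitting a cover relative to an ideal), and it does not follow from axioms (O1)--(O4) of \cref{defn: Cu-semigroup}, from \cref{lemma: sequentially way-below interpolation}, or from the composition law \cref{item: Cu-semigroup action composition}; indeed, the paper's whole reason for introducing ``almost refinement'' and for working with $\pathalpha$ rather than $\precalpha$ is that such splitting is \emph{not} automatic in a $\Cu$-semigroup.

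The gap is fixable, but only by descending to representatives (equivalently, by invoking the additional axiom (O6) satisfied by Cuntz semigroups of C*-algebras, which the paper never states). Concretely: write $\hat{\alpha}_{t^*}(f_0) = [a]_\Cu$ with $a \in (A_{t^*t} \otimes \KK)^+$ and $\hat{\alpha}_{s_i}(z_i) = [b_i]_\Cu$, so that $a \preceq b_1 \oplus \dotsb \oplus b_m$. Choose $\epsilon > 0$ with $\hat{\alpha}_{t^*}(f) \le [(a-\epsilon)_+]_\Cu$; R{\o}rdam's lemma gives $d_1, \dotsc, d_m$ with $(a-\epsilon)_+ = \sum_i d_i b_i d_i^*$, and each $c_i \coloneqq d_i b_i d_i^*$ satisfies $0 \le c_i \le \sum_j c_j = (a-\epsilon)_+ \in A_{t^*t} \otimes \KK$, hence $c_i \in A_{t^*t} \otimes \KK$ by heredity of ideals, while $c_i \preceq b_i$. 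Then $z_i' \coloneqq \hat{\alpha}_{s_i^*}([c_i]_\Cu) \le z_i \le y$ lies in $\Cu(A_{(ts_i)^*ts_i})$ by \cref{item: Cu-semigroup action composition}, and $f = \hat{\alpha}_t\big(\hat{\alpha}_{t^*}(f)\big) \le \sum_i \hat{\alpha}_{ts_i}(z_i')$, which is what invariance requires. None of this appears in your proposal, and it is the mathematical heart of your route. A secondary point: you also use that every ideal of $\Cu(A)$ is of the form $\Cu(J)$ for a closed two-sided ideal $J \trianglelefteq A$, and that $\hat{\alpha}$-invariance of $\Cu(J)$ forces $\alpha$-invariance of $J$; both are true, but they amount to more than \cite[Lemma~5.2]{GardellaPerera2024} (which the paper cites only for the statement that $\Cu(J)$ \emph{is} an ideal of $\Cu(A)$), so they need their own justification. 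The paper's route through the dense algebraic ideal $J_b$ avoids both issues entirely, at the cost of an explicit positive-element computation.
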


In order to prove \cref{prop: Cu minimality}, we need the following lemma.

\begin{lemma} \label{lemma: J_b is dense in A otimes K}
Let $\alpha\colon S \curvearrowright A$ be a minimal action of a unital inverse semigroup $S$ on a C*\nobreakdash-algebra $A$. For each $s \in S$, let $\alpha^\KK_s \coloneqq \alpha_s \otimes \id$ be the standard amplification of $\alpha_s$ to $A \otimes \KK$. Fix $b \in (A \otimes \KK)^+ {\setminus} \{0\}$, and define
\[
J_b \coloneqq \vecspan\!\big\{ c \,\alpha^\KK_s(rbr^*)\, d \,:\, s \in S, \, c, d, r \in A \otimes \KK \text{ such that } rbr^* \in A_{s^*s} \otimes \KK \big\}.
\]
Then $J_b$ is a two-sided algebraic $*$-ideal of $A \otimes \KK$, and $\overline{J_b} = A \otimes \KK$.
\end{lemma}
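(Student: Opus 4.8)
The plan is to prove the two assertions separately, with the density statement being the substantial one.

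For the first assertion, that $J_b$ is a two-sided algebraic $*$-ideal, a routine verification suffices. Multiplying a spanning element $c\,\alpha^\KK_s(rbr^*)\,d$ on the left or right by an arbitrary $x\in A\otimes\KK$ merely replaces $c$ by $xc$ or $d$ by $dx$, so $J_b$ is a two-sided ideal; and since each $\alpha^\KK_s$ is a $*$-homomorphism and $rbr^*$ is positive, $(c\,\alpha^\KK_s(rbr^*)\,d)^* = d^*\,\alpha^\KK_s(rbr^*)\,c^*$ is again a spanning element, so $J_b$ is $*$-closed.

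For the density statement, my strategy is to identify $\overline{J_b}$ as the stabilisation of an $\alpha$-invariant ideal and then invoke minimality. I would first note that the amplification $\alpha^\KK$ is itself an action of $S$ on $A\otimes\KK$, and that closed ideals of $A\otimes\KK$ are exactly those of the form $I\otimes\KK$ for a closed ideal $I\trianglelefteq A$, with $I\otimes\KK$ being $\alpha^\KK$-invariant precisely when $I$ is $\alpha$-invariant. Thus write $\overline{J_b} = I\otimes\KK$. Since $S$ is unital, $A_1 = A$ and $\alpha^\KK_1 = \id$ (by \cref{lemma: action properties}), so taking $s = 1$ shows that the generators include $c\,(rbr^*)\,d$ for all $c, d, r$; approximating with approximate identities then gives $b\in\overline{J_b}$, whence $I\neq\{0\}$. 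By minimality it therefore suffices to prove that $I$ is $\alpha$-invariant, that is, $\alpha^\KK_t\big(\overline{J_b}\cap(A_{t^*t}\otimes\KK)\big)\subseteq\overline{J_b}$ for every $t\in S$; minimality then forces $I = A$ and hence $\overline{J_b} = A\otimes\KK$.

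This invariance is the crux, and the main obstacle is that $\alpha^\KK_t$ is only partially defined, so one cannot simply push it through products. I would first reduce, using an approximate identity $\{e_\lambda\}$ for the ideal $A_{t^*t}\otimes\KK$ together with the continuity of $\alpha^\KK_t$ on its domain and the ideal/linearity structure of $J_b$, to showing that $\alpha^\KK_t\big(p\,\alpha^\KK_s(rbr^*)\,q\big)\in\overline{J_b}$ for $p, q\in A_{t^*t}\otimes\KK$. Writing $w := \alpha^\KK_s(rbr^*)$, the element $pw$ lies in $(A_{ss^*}\cap A_{t^*t})\otimes\KK$, so setting $k := \alpha^\KK_{s^*}(pw)$ and using the composition law $\alpha^\KK_t\circ\alpha^\KK_s = \alpha^\KK_{ts}$ (valid because $pw\in\ran(\alpha^\KK_s)\cap\dom(\alpha^\KK_t)$) gives $\alpha^\KK_t(pwq) = \alpha^\KK_{ts}(k)\,\alpha^\KK_t(q)$, where $k$ lies both in the ideal of $A_{s^*s}\otimes\KK$ generated by $rbr^*$ and in the domain $A_{(ts)^*(ts)}\otimes\KK$ of $\alpha^\KK_{ts}$.

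It then remains to show $\alpha^\KK_{ts}(k)\in\overline{J_b}$ for such $k$, and here I expect the genuine difficulty. The key device is a nested approximate-identity argument: one approximates $k$ by sums $\sum_i a_i\,(rbr^*)\,a_i'$ with factors localised into $A_{(ts)^*(ts)}\otimes\KK$, and crucially observes that for an approximate identity $\{f_\nu\}$ of $A_{(ts)^*(ts)}\otimes\KK$ the compression $f_\nu\,(rbr^*)\,f_\nu = (f_\nu r)\,b\,(f_\nu r)^*$ is of the form $r'br'^*$ and lies in the domain $A_{(ts)^*(ts)}\otimes\KK$, so that $\alpha^\KK_{ts}(f_\nu\,rbr^*\,f_\nu)$ is itself a generator of $J_b$ (with parameters $ts$ and $r' = f_\nu r$). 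Inserting these compressions so that every factor lands in the domain, the homomorphism property of $\alpha^\KK_{ts}$ lets one express $\alpha^\KK_{ts}(k)$ as a limit of elements $c'\,\alpha^\KK_{ts}(r'br'^*)\,d'\in J_b$, giving $\alpha^\KK_{ts}(k)\in\overline{J_b}$. Combining the reductions yields the $\alpha$-invariance of $I$, and minimality then completes the proof.
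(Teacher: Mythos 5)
Your proposal is correct and follows essentially the same route as the paper's proof: pass to minimality of the amplified action $\alpha^\KK$ via the ideal structure of $A\otimes\KK$, cut elements of $J_b\cap(A_{t^*t}\otimes\KK)$ down by an approximate identity of $A_{t^*t}\otimes\KK$ so that the coefficients land in the domain of $\alpha^\KK_t$, and then use the composition law $\alpha^\KK_t\circ\alpha^\KK_s=\alpha^\KK_{ts}$ together with the compression identity $f_\nu\,(rbr^*)\,f_\nu=(f_\nu r)\,b\,(f_\nu r)^*$, for $f_\nu$ in a (self-adjoint) approximate identity of $A_{(ts)^*(ts)}\otimes\KK$, to exhibit the result as a limit of elements built from generators of $J_b$ with parameters $ts$ and $f_\nu r$. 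The only cosmetic differences are that you split off $\alpha^\KK_t(q)$ early by introducing $k=\alpha^\KK_{s^*}(pw)$, whereas the paper keeps the sandwich $c\,\alpha^\KK_s(rbr^*)\,d$ intact and applies $\alpha^\KK_t$ to the whole product at the end, and that you verify $J_b\neq\{0\}$ explicitly via $s=1$, a point the paper merely asserts.
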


\begin{proof}
Since $b^* = b \ne 0$, $J_b$ is a nonzero two-sided algebraic $*$-ideal of $A \otimes \KK$. Since $\alpha$ is minimal and the only nonzero ideals of $A \otimes \KK$ have the form $I \otimes \KK$ for $\{0\} \ne I \trianglelefteq A$, the action $\alpha^\KK\colon S \curvearrowright A \otimes \KK$ is minimal. Hence the only nonzero invariant ideal of $A \otimes \KK$ is $A \otimes \KK$ itself, and so to see that $\overline{J_b} = A \otimes \KK$ it suffices to show that $\overline{J_b}$ is invariant.

For this, fix $t \in S$, and suppose that $w \in J_b \cap (A_{t^*t} \otimes \KK)$. Then there exist $\{s_i\}_{i=1}^m \subseteq S$ and $\{c_i\}_{i=1}^m, \{d_i\}_{i=1}^m, \{r_i\}_{i=1}^m \subseteq A \otimes \KK$ such that $r_i b r_i^* \in A_{s_i^*s_i} \otimes \KK$ for each $i \in \{1, \dotsc, m\}$, and
\[
w = \sum_{i=1}^m c_i \,\alpha^\KK_{s_i}(r_i b r_i^*)\, d_i^*.
\]
We will show that $\alpha^\KK_t(w) \in \overline{J_b}$. Let $\{e_\lambda\}_\lambda$ be an approximate identity for $A_{t^*t} \otimes \KK$. Then $e_\lambda \, c_i \,\alpha^\KK_{s_i}(r_i b r_i^*)\, d_i^* \, e_\lambda \in A_{t^*t} \otimes \KK$ for each $i \in \{1, \dotsc, m\}$ and each $\lambda$, so since $\alpha^\KK_t$ is continuous and additive, we have
\[
\alpha^\KK_t(w) = \alpha^\KK_t\big( \!\lim_\lambda \, e_\lambda w e_\lambda \big) = \lim_\lambda \, \alpha^\KK_t(e_\lambda w e_\lambda) = \lim_\lambda \sum_{i=1}^m \alpha^\KK_t \big( e_\lambda \, c_i \,\alpha^\KK_{s_i}(r_i b r_i^*)\, d_i^* \, e_\lambda \big).
\]
Since $e_\lambda \, c_i, \, d_i^* \, e_\lambda \in A_{t^*t} \otimes \KK$ for each $i \in \{1, \dotsc, m\}$ and each $\lambda$, to see that $\alpha^\KK_t(w) \in \overline{J_b}$, it suffices to fix $c, d \in A_{t^*t} \otimes \KK$, $s \in S$, and $r \in A \otimes \KK$ such that $rbr^* \in A_{s^*s} \otimes \KK$, and show that for $h \coloneqq c \, \alpha^\KK_s(rbr^*) \, d$, we have $\alpha^\KK_t(h) \in \overline{J_b}$.

For this, let $\{f_\mu\}_\mu$ be an approximate identity for $A_{(ts)^*ts} \otimes \KK$. Then $\big\{\alpha^\KK_s(f_\mu)\big\}_\mu$ is an approximate identity for $(A_{t^*t} \cap A_{ss^*}) \otimes \KK$. Since $h = c \, \alpha^\KK_s(rbr^*) \, d \in (A_{t^*t} \cap A_{ss^*}) \otimes \KK$, we have
\begin{equation} \label{eqn: h limit}
h = \lim_\mu \, \alpha^\KK_s(f_\mu) \, h \, \alpha^\KK_s(f_\mu) = \lim_\mu \, \alpha^\KK_s(f_\mu) \, c \, \alpha^\KK_s(rbr^*) \, d \, \alpha^\KK_s(f_\mu).
\end{equation}
Since $\alpha^\KK_s(f_\mu) \, c, \, d \, \alpha^\KK_s(f_\mu) \in (A_{t^*t} \cap A_{ss^*}) \otimes \KK$ for each $\mu$, we have
\begin{align*}
\alpha^\KK_s(f_\mu) \, c \, \alpha^\KK_s(rbr^*) \, d \, \alpha^\KK_s(f_\mu) &= \lim_\gamma \, \alpha^\KK_s(f_\mu) \, c \, \alpha^\KK_s(f_\gamma) \, \alpha^\KK_s(rbr^*) \, \alpha^\KK_s(f_\gamma^*) \, d \, \alpha^\KK_s(f_\mu) \\
&= \lim_\gamma \, \alpha^\KK_s(f_\mu) \, c \, \alpha^\KK_s\big((f_\gamma r) b (f_\gamma r)^*\big) \, d \, \alpha^\KK_s(f_\mu) \numberthis \label{eqn: expanded h limit}
\end{align*}
for each $\mu$. Since $(f_\gamma r) b (f_\gamma r)^* \in A_{(ts)^*ts} \otimes \KK$ for each $\gamma$, we have
\[
\alpha^\KK_s(f_\mu) \, c, \, \alpha^\KK_s\big((f_\gamma r) b (f_\gamma r)^*\big), \, d \, \alpha^\KK_s(f_\mu) \in A_{t^*t} \otimes \KK
\]
for each $\mu$ and $\gamma$. Thus, since $\alpha^\KK_t$ is a continuous homomorphism on $A_{t^*t} \otimes \KK$, it follows from \cref{eqn: h limit,eqn: expanded h limit} that
\begin{align*}
\alpha^\KK_t(h) &= \alpha^\KK_t\Big( \!\lim_\mu \, \lim_\gamma \, \alpha^\KK_s(f_\mu) \, c \, \alpha^\KK_s\big((f_\gamma r) b (f_\gamma r)^*\big) \, d \, \alpha^\KK_s(f_\mu) \Big) \\
&= \lim_\mu \, \lim_\gamma \, \alpha^\KK_t\Big( \alpha^\KK_s(f_\mu) \, c \, \alpha^\KK_s\big((f_\gamma r) b (f_\gamma r)^*\big) \, d \, \alpha^\KK_s(f_\mu) \Big) \\
&= \lim_\mu \, \lim_\gamma \, \alpha^\KK_t\big( \alpha^\KK_s(f_\mu) \, c \big) \, \alpha^\KK_t\big( \alpha^\KK_s\big((f_\gamma r) b (f_\gamma r)^*\big) \big) \, \alpha^\KK_t\big( d \, \alpha^\KK_s(f_\mu) \big) \\
&= \lim_\mu \, \lim_\gamma \, \alpha^\KK_t\big( \alpha^\KK_s(f_\mu) \, c \big) \, \alpha^\KK_{ts}\big( (f_\gamma r) b (f_\gamma r)^* \big) \, \alpha^\KK_t\big( d \, \alpha^\KK_s(f_\mu) \big),
\end{align*}
and so $\alpha^\KK_t(h) \in \overline{J_b}$.

Therefore, $\alpha^\KK_t(w) \in \overline{J_b}$, and since $\alpha^\KK_t$ is continuous, it follows that $\alpha^\KK_t\big( \overline{J_b} \cap (A_{t^*t} \otimes \KK)\big) \subseteq \overline{J_b}$. So $\overline{J_b}$ is a nonzero invariant ideal of $A \otimes \KK$, and hence $\overline{J_b} = A \otimes \KK$ by minimality of $\alpha^\KK$.
\end{proof}

\begin{proof}[Proof of \cref{prop: Cu minimality}]
Since $x \in \Cu(A)^\ll$, there exists $w \in \Cu(A)$ such that $x \ll w$. If $w = 0$, then $x = 0$ and the proof is trivial, so assume that $w \ne 0$. Then since $y \ne 0$ as well, there exist $a, b \in (A \otimes \KK)^+ {\setminus} \{0\}$ such that $w = [a]_\Cu$ and $y = [b]_\Cu$. For each $s \in S$, let $\alpha^\KK_s \coloneqq \alpha_s \otimes \id$ be the standard amplification of $\alpha_s$ to $A \otimes \KK$. Then $\hat{\alpha}_s([z]_\Cu) = [\alpha^\KK_s(z)]_\Cu$ for all $s \in S$ and $z \in (A_{s^*s} \otimes \KK)^+$. Define
\[
J_b \coloneqq \vecspan\!\big\{ c \,\alpha^\KK_s(rbr^*)\, d \,:\, s \in S, \, c, d, r \in A \otimes \KK \text{ such that } rbr^* \in A_{s^*s} \otimes \KK \big\}.
\]
We have $b \ne 0$ because $y \ne 0$, so \cref{lemma: J_b is dense in A otimes K}, implies that $J_b$ is a dense ideal of $A \otimes \KK$. Therefore, by \cite[Theorem~5.6.1]{Pedersen2018}, $J_b$ contains the Pedersen ideal. For each $t \in (0,\norm{a}]$, let $\phi_t\colon (0,\infty) \to [0,\infty)$ be the continuous piecewise linear function given by
\[
\phi_t(r) \coloneqq \begin{cases}
0 & \text{ if } 0 < r \le t \\
r - t & \text{ if } t < r \le \norm{a} \\
\big(\norm{a} - t\big) \big(\norm{a} + 1 - r\big) & \text{ if } \norm{a} < r \le \norm{a} + 1 \\
0 & \text{ if } r > \norm{a} + 1.
\end{cases}
\]
Then for $a$ as above, we have $\phi_t\restr{\sigma(a)}(r) = \max\{0, r - t\}$. Since $\supp(\phi_t) = [t, \norm{a} + 1]$ is a compact subset of $(0,\infty)$, $\phi_t(a)$ is in the Pedersen ideal for each $t \in (0,\norm{a}]$. For each $n \in \N$, define
\[
a_n \coloneqq \begin{cases}
0 & \text{ if } \frac{1}{n} > \norm{a} \\[0.25ex]
\phi_{\frac{1}{n}}(a) & \text{ if } \frac{1}{n} \le \norm{a}.
\end{cases}
\]
Then $\{a_n\}_{n \in \N} \subseteq J_b$ since $J_b$ contains the Pedersen ideal. Since $t \mapsto \phi_t(r) = \max\{0, r - t\}$ is a decreasing function for each $r \in \sigma(a)$, we have $0 \le a_n \le a_{n+1} \le a$ for each $n \in \N$. So $\{a_n\}_{n \in \N}$ is an increasing sequence with supremum $a$, and hence the sequence $\big\{[a_n]_\Cu\big\}_{n \in \N} \subseteq \Cu(A)$ is increasing and has supremum
\[
\sup_{n \in \N} \big([a_n]_\Cu\big) = \Big[\!\sup_{n \in \N} (a_n)\Big]_\Cu = [a]_\Cu = w.
\]
Since $x \ll w$, it follows that $x \le [a_p]_\Cu$ for some $p \in \N$. Since $a_p \in J_b$, there exist $\{s_i\}_{i=1}^m \subseteq S$ and $\{c_i\}_{i=1}^m, \{d_i\}_{i=1}^m, \{r_i\}_{i=1}^m \subseteq A \otimes \KK$ such that $r_i b r_i^* \in A_{s_i^*s_i} \otimes \KK$ for each $i \in \{1, \dotsc, m\}$, and
\[
a_p = \sum_{i=1}^m c_i \,\alpha^\KK_{s_i}(r_i b r_i^*)\, d_i^*.
\]
Since $a_p, b \ge 0$, we have
\begin{equation} \label{eqn: a_p star}
a_p = a_p^* = \sum_{i=1}^m d_i \,\alpha^\KK_{s_i}(r_i b r_i^*)^*\, c_i^* = \sum_{i=1}^m d_i \,\alpha^\KK_{s_i}(r_i b r_i^*)\, c_i^*.
\end{equation}
For each $i \in \{1, \dotsc, m\}$, set
\[
h_i \coloneqq c_i + d_i, \quad C_i \coloneqq c_i \,\alpha^\KK_{s_i}(r_i b r_i^*)\, c_i^*, \quad \text{and } \quad D_i \coloneqq d_i \,\alpha^\KK_{s_i}(r_i b r_i^*)\, d_i^*.
\]
Then by \cref{eqn: a_p star},
\begin{align*}
\sum_{i=1}^m h_i \,\alpha^\KK_{s_i}(r_i b r_i^*)\, h_i^* \,&=\, \sum_{i=1}^m c_i \,\alpha^\KK_{s_i}(r_i b r_i^*)\, d_i^* \,+\, \sum_{i=1}^m d_i \,\alpha^\KK_{s_i}(r_i b r_i^*)\, c_i^* \,+\, \sum_{i=1}^m C_i \,+\, \sum_{i=1}^m D_i \\
&=\, 2a_p \,+\, \sum_{i=1}^m (C_i + D_i).
\end{align*}
Since $C_i + D_i \ge 0$ for each $i \in \{1, \dotsc, m\}$, it follows that
\[
a_p \le 2a_p \le 2a_p + \sum_{i=1}^m (C_i + D_i) = \sum_{i=1}^m h_i \,\alpha^\KK_{s_i}(r_i b r_i^*)\, h_i^*,
\]
and hence, by \cite[Proposition~2.17]{Thiel2017},
\[
a_p \preceq \sum_{i=1}^m h_i \,\alpha^\KK_{s_i}(r_i b r_i^*)\, h_i^*.
\]
Since $h_i \,\alpha^\KK_{s_i}(r_i b r_i^*)\, h_i^* \preceq \alpha^\KK_{s_i}(r_i b r_i^*)$ for each $i \in \{1, \dots, m\}$, we now have
\begin{align*}
x &\le [a_p]_\Cu \le \Big[ \sum_{i=1}^m h_i \,\alpha^\KK_{s_i}(r_i b r_i^*)\, h_i^* \Big]_\Cu = \sum_{i=1}^m \big[ h_i \,\alpha^\KK_{s_i}(r_i b r_i^*)\, h_i^* \big]_\Cu \\
&\le \sum_{i=1}^m \big[ \alpha^\KK_{s_i}(r_i b r_i^*) \big]_\Cu = \sum_{i=1}^m \hat{\alpha}_{s_i}\big([r_i b r_i^*]_\Cu\big).
\end{align*}
For each $i \in \{1, \dotsc, m\}$, set $z_i \coloneqq [r_i b r_i^*]_\Cu \in \Cu(A_{s_i^*s_i})$. Then
\[
x \le \sum_{i=1}^m \hat{\alpha}_{s_i}(z_i).
\]
For each $i \in \{1, \dotsc, m\}$, we have $r_i b r_i^* \preceq b$, so $z_i = [r_i b r_i^*]_\Cu \le [b]_\Cu = y \in \Cu(A)^\ll$, and thus $z_i \in \Cu(A_{s_i^*s_i})^\ll$, as required. By \cref{cor: precalpha properties}, it follows that
\[
[x]_\alpha \le \Big[\sum_{i=1}^m \hat{\alpha}_{s_i}(z_i)\Big]_\alpha = \sum_{i=1}^m \big[\hat{\alpha}_{s_i}(z_i)\big]_\alpha = \sum_{i=1}^m [z_i]_\alpha \le \sum_{i=1}^m [y]_\alpha = m[y]_\alpha. \qedhere
\]
\end{proof}

\begin{defn}[{\cite[Definition~5.1]{KMP2025}}] \label{defn: regular ideal}
Let $\alpha\colon S \curvearrowright A$ be an action of a unital inverse semigroup on a C*-algebra $A$. We say that an ideal $I$ of the preordered abelian monoid $\Cu(A)_\alpha$ is \emph{regular} if, for all $x \in \Cu(A)^\ll$, the following condition holds:
\[
[f]_\alpha \in I \text{ for all } f \in \Cu(A) \text{ with } f \ll x \implies [x]_\alpha \in I.
\]
\end{defn}

\begin{prop} \label{prop: minimal means DCS is simple}
Let $\alpha\colon S \curvearrowright A$ be a minimal action of a unital inverse semigroup $S$ on a C*-algebra $A$. Then $\Cu(A)_\alpha$ has no nontrivial regular ideals.
\end{prop}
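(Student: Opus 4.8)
The plan is to prove the stronger statement that $\Cu(A)_\alpha$ has no nontrivial ideals whatsoever; since every regular ideal is in particular an ideal of the preordered abelian monoid $\Cu(A)_\alpha$, this immediately gives the proposition. The key tool is \cref{prop: Cu minimality}, which (as $S$ is unital and $\alpha$ is minimal) lets us dominate any element of $\Cu(A)^\ll$ by a multiple of any fixed nonzero element; this comparability is precisely what forces any nonzero ideal to exhaust the whole semigroup.

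First I would fix a nonzero ideal $I \trianglelefteq \Cu(A)_\alpha$ and choose some $[y]_\alpha \in I {\setminus} \{[0]_\alpha\}$. By \cref{item: [0]_alpha singleton}, the condition $[y]_\alpha \ne [0]_\alpha$ forces $y \ne 0$, so $y \in \Cu(A)^\ll {\setminus} \{0\}$ and \cref{prop: Cu minimality} applies with $y$ as the fixed nonzero element. Next I would take an arbitrary class $[x]_\alpha \in \Cu(A)_\alpha$, represented by some $x \in \Cu(A)^\ll$, and apply \cref{prop: Cu minimality} to the pair $x, y$ to obtain $m \in \N$ with $[x]_\alpha \le m[y]_\alpha$. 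Since $I$ is a submonoid containing $[y]_\alpha$, it contains $m[y]_\alpha$; and since $I$ is hereditary with $[x]_\alpha \le m[y]_\alpha$, we conclude $[x]_\alpha \in I$. As $[x]_\alpha$ was arbitrary, $I = \Cu(A)_\alpha$.

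I do not expect any genuine obstacle here, since all of the real work is already carried out in \cref{prop: Cu minimality}. The only points requiring care are purely bookkeeping: invoking \cref{item: [0]_alpha singleton} to ensure that a nonzero class in $I$ is represented by a genuinely nonzero element of $\Cu(A)^\ll$ (so that the hypotheses of \cref{prop: Cu minimality} are met), and using that ideals are closed under addition to pass from $[y]_\alpha \in I$ to $m[y]_\alpha \in I$ before applying heredity. Notably, the regularity hypothesis on $I$ plays no role in this argument; it is subsumed by the unconditional absence of nontrivial ideals.
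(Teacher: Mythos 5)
Your proof is correct, and it establishes something strictly stronger than the paper's statement: under these hypotheses $\Cu(A)_\alpha$ has no nontrivial ideals at all, regular or not. The paper's own proof runs on the same engine, \cref{prop: Cu minimality}, but deploys it differently: instead of invoking the ``moreover'' clause to get $[x]_\alpha \le m[y]_\alpha$ outright, the authors fix $f \ll x$, use the first part of \cref{prop: Cu minimality} to get $f \le \sum_{i=1}^m \hat{\alpha}_{s_i}(z_i)$ with $z_i \le y$, verify directly via \cref{lemma: alpha-below characterisation} (taking $v_i = \hat{\alpha}_{s_i}(z_i)$ and the elements $s_i^*$) that $f \precalpha my$, conclude $[f]_\alpha \in J$ by heredity, and only then use regularity of $J$ to pass from ``$[f]_\alpha \in J$ for all $f \ll x$'' to ``$[x]_\alpha \in J$''. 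The trade-off is worth noting: the paper's route only ever forms $\simalpha$-classes of $f$ and $my$, both of which are manifestly in $\Cu(A)^\ll$, and never needs the intermediate elements $\hat{\alpha}_{s_i}(z_i)$ or their sum to have well-defined classes in $\Cu(A)_\alpha$; your route shifts exactly that bookkeeping onto the ``moreover'' clause of \cref{prop: Cu minimality}, whose proof (via \cref{item: alpha classes invariant under alpha}) is where the paper certifies that $[\hat{\alpha}_{s_i}(z_i)]_\alpha$ makes sense and equals $[z_i]_\alpha$. Since that clause is stated and proved before \cref{prop: minimal means DCS is simple}, and its hypotheses ($S$ unital, $\alpha$ minimal) hold here, your citation is legitimate and your argument is shorter; it also shows the word ``regular'' could in principle be dropped from the proposition. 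One small slip: you cite \cref{item: [0]_alpha singleton} to deduce $y \ne 0$ from $[y]_\alpha \ne [0]_\alpha$, but that implication is the trivial one (if $y = 0$ then $[y]_\alpha = [0]_\alpha$); the cited remark gives the converse. This is harmless.
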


\begin{proof}
Suppose that $J$ is a nonzero regular ideal of $\Cu(A)_\alpha$, and fix $[y]_\alpha \in J {\setminus} \{0\}$. Then $y \in \Cu(A)^\ll {\setminus} \{0\}$. We will show that $[x]_\alpha \in J$ for all $x \in \Cu(A)^\ll$. Fix $x \in \Cu(A)^\ll$, and suppose that $f \in \Cu(A)$ satisfies $f \ll x$. By \cref{prop: Cu minimality}, there exist $\{s_i\}_{i=1}^m \subseteq S$, and $\big\{ z_i \in \Cu(A_{s_i^*s_i})^\ll : i \in \{1, \dotsc, m\} \big\}$ such that $z_i \le y$ for each $i \in \{1, \dotsc, m\}$, and
\[
f \le \sum_{i=1}^m \hat{\alpha}_{s_i}(z_i).
\]
We claim that $f \precalpha my$. To see this, fix $g \in \Cu(A)$ such that $g \ll f$. Then
\[
g \le f \le \sum_{i=1}^m \hat{\alpha}_{s_i}(z_i) \quad \text{ and } \quad \sum_{i=1}^m \hat{\alpha}_{s_i^*}\big(\hat{\alpha}_{s_i}(z_i)\big) = \sum_{i=1}^m z_i \le \sum_{i=1}^m y = my,
\]
so by \cref{lemma: alpha-below characterisation}, $f \precalpha my$. Hence $[f]_\alpha \le m[y]_\alpha$, and since ideals are hereditary submonoids and $[y]_\alpha \in J$, we have $[f]_\alpha \in J$. Since $J$ is regular, it follows that $[x]_\alpha \in J$. Therefore, $J = \Cu(A)_\alpha$, so $\Cu(A)_\alpha$ has no nontrivial regular ideals.
\end{proof}

Using the transitive relation $\pathalpha$, we can define a notion of paradoxicality with respect to our dynamics. This is similar to definitions in \cite{Rainone2017, RS2020, Ma2023}. After the definition, we investigate how non-paradoxical elements give rise to certain functionals on the Cuntz semigroup.

\begin{defn}
Let $\alpha\colon S \curvearrowright A$ be an action of a unital inverse semigroup $S$ on a C*-algebra $A$. Let $M$ be a submonoid of $\Cu(A)$.
\begin{enumerate}[label=(\alph*)]
\item Fix $x \in \Cu(A)$.
\begin{enumerate}[label=(\roman*)]
\item For integers $k > l > 0$, we say that $x$ is \emph{$(k,l)$-paradoxical} if $kx \pathalpha lx$.
\item We say that $x$ is \emph{completely non-paradoxical} if there are no integers $k > l > 0$ such that $x$ is $(k,l)$-paradoxical.
\end{enumerate}
\item We say that the submonoid $M$ is \emph{completely non-paradoxical} if each $x \in M {\setminus} \{0\}$ is completely non-paradoxical as an element of $\Cu(A)$.
\end{enumerate}
\end{defn}

\begin{remark} \label{rem: Cu-paradoxicality}
Let $\alpha\colon S \curvearrowright A$ be an action of a unital inverse semigroup $S$ on a C*-algebra $A$.
\begin{enumerate}[label=(\alph*), ref={\cref{rem: Cu-paradoxicality}(\alph*)}]
\item In terms of the partial order on $\Cu(A)_\alpha$, given integers $k > l > 0$, $x \in \Cu(A)^\ll$ is $(k,l)$-paradoxical if and only if $k[x]_\alpha \le l[x]_\alpha$.
\item \label{item: DCS CNP} If $x \in \Cu(A)^\ll$ is completely non-paradoxical, then $(n+1)[x]_\alpha \not\le n[x]_\alpha$ for all $n \in \N$.
\end{enumerate}
\end{remark}

\begin{thm} \label{thm: OP hom on DCS induces functional}
Let $\alpha\colon S \curvearrowright A$ be an action of a unital inverse semigroup $S$ on a C*-algebra $A$. Every nontrivial order-preserving homomorphism $\nu\colon \Cu(A)_\alpha \to [0,\infty]$ induces an invariant functional $\beta\colon \Cu(A) \to [0,\infty]$ satisfying the following conditions.
\begin{enumerate}[label=(\alph*)]
\item \label{item: finite functional} If $x \in \Cu(A)^\ll$ satisfies $\nu([x]_\alpha) < \infty$, then $\beta(x) < \infty$.
\item \label{item: faithful functional} If $\alpha$ is minimal, then $\beta(x) < \infty$ for each $x \in \Cu(A)^\ll$, and $\beta$ is faithful. Moreover, if $A$ is unital, then $\beta$ can be chosen so that $\beta([1_A]_\Cu) = 1$.
\end{enumerate}
\end{thm}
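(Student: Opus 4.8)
The plan is to build $\beta$ directly from $\nu$ by the formula
\[
\beta(y) \coloneqq \sup\big\{ \nu([f]_\alpha) : f \in \Cu(A), \ f \ll y \big\} \qquad (y \in \Cu(A)),
\]
which is meaningful since $f \ll y$ forces $f \in \Cu(A)^\ll$, and which requires no choice of approximating sequence. If $y = \sup_n (y_n)$ with $y_n \ll y_{n+1}$ (axiom~\cref{item: O2}), then since any $f \ll y$ satisfies $f \le y_n$ for some $n$, one has $\beta(y) = \sup_n \nu([y_n]_\alpha)$. In particular $\beta(y) \le \nu([y]_\alpha)$ whenever $y \in \Cu(A)^\ll$ (as $y_n \le y$ gives $[y_n]_\alpha \le [y]_\alpha$ by \cref{cor: precalpha properties}), with equality when $y$ is compact (i.e.\ $y \ll y$), since then $\nu([y]_\alpha)$ itself occurs in the supremum.

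I would then check that $\beta$ is an invariant functional, i.e.\ that it is a generalised $\Cu$-morphism into $[0,\infty]$. Monotonicity is immediate from the formula. For additivity, fix approximating sequences $\{y_n\}$ and $\{y_n'\}$ for $y$ and $y'$; then $\{y_n + y_n'\}$ approximates $y + y'$ by axioms~\cref{item: O3} and~\cref{item: O4}, and using that addition on $[0,\infty]$ is continuous along increasing sequences together with $\nu$ being a homomorphism,
\[
\beta(y) + \beta(y') = \sup_n\big( \nu([y_n]_\alpha) + \nu([y_n']_\alpha) \big) = \sup_n \nu([y_n + y_n']_\alpha) = \beta(y + y').
\]
Preservation of suprema of increasing sequences $\{w_k\}$ holds because any $f \ll \sup_k w_k$ satisfies $f \le w_k$ for some $k$. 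Invariance is where \cref{item: alpha classes invariant under alpha} is used: for $x \in \Cu(A_{s^*s})$ an approximating sequence $\{x_n\}$ automatically lies in the ideal $\Cu(A_{s^*s})$ by heredity, and applying $\hat\alpha_s$ termwise gives an approximating sequence for $\hat\alpha_s(x)$ with $[\hat\alpha_s(x_n)]_\alpha = [x_n]_\alpha$, so $\beta(\hat\alpha_s(x)) = \beta(x)$.

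Part~\cref{item: finite functional} is then immediate from $\beta(x) \le \nu([x]_\alpha)$. For part~\cref{item: faithful functional}, nontriviality of $\nu$ provides some $w \in \Cu(A)^\ll {\setminus} \{0\}$ with $\nu([w]_\alpha) \in (0,\infty)$, and I would invoke \cref{prop: Cu minimality} twice. First, for arbitrary $x \in \Cu(A)^\ll$ it yields $[x]_\alpha \le m[w]_\alpha$, so $\nu([x]_\alpha) \le m\,\nu([w]_\alpha) < \infty$ and hence $\beta(x) < \infty$ by part~\cref{item: finite functional}. Second, for nonzero $z \in \Cu(A)^\ll$ it yields $[w]_\alpha \le m'[z]_\alpha$, forcing $\nu([z]_\alpha) > 0$; since every $x \ne 0$ dominates some nonzero $z \ll x$ by \cref{rem: nonzero sequentially way below}, we get $\beta(x) \ge \nu([z]_\alpha) > 0$, so $\beta$ is faithful. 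For the normalisation, $[1_A]_\Cu$ is compact by \cref{rem: [p]_Cu ll [p]_Cu}, so $\beta([1_A]_\Cu) = \nu([1_A]_\alpha) \in (0,\infty)$, and replacing $\nu$ by $\nu/\nu([1_A]_\alpha)$ achieves $\beta([1_A]_\Cu) = 1$.

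The routine-but-delicate obstacle is verifying additivity and suprema-preservation of $\beta$: these hinge on combining axioms~\cref{item: O3} and~\cref{item: O4} with the arithmetic of $[0,\infty]$ and on confirming the sup-formula agrees whether taken over all $f \ll y$ or along one approximating sequence. The conceptual crux is the double use of \cref{prop: Cu minimality}, which is precisely what converts the single finite, nonzero value of $\nu$ guaranteed by nontriviality into global finiteness on $\Cu(A)^\ll$ and global faithfulness.
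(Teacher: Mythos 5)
Your construction of $\beta$ is identical to the paper's, and most of the verification goes through as in the paper, but the suprema-preservation step has a genuine gap as written. You claim preservation of suprema of an increasing sequence $\{w_k\}$ "holds because any $f \ll \sup_k w_k$ satisfies $f \le w_k$ for some $k$", but $f \le w_k$ does not yield $\nu([f]_\alpha) \le \beta(w_k)$: the class $[w_k]_\alpha$ need not even be defined (the $w_k$ form an arbitrary $\le$-increasing sequence, not necessarily contained in $\Cu(A)^\ll$), and $\beta(w_k)$ only dominates the values $\nu([g]_\alpha)$ for $g \ll w_k$, which $f$ need not be. This is exactly the asymmetry you yourself record elsewhere: $\beta(y) \le \nu([y]_\alpha)$ for $y \in \Cu(A)^\ll$, with equality guaranteed only for compact $y$, so "$f \le w_k$" cannot be upgraded to "$\nu([f]_\alpha) \le \beta(w_k)$". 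The repair is the interpolation the paper performs via \cref{lemma: sequentially way-below interpolation}: given $f \ll \sup_k w_k$, choose $u$ with $f \ll u \ll \sup_k w_k$; then $u \le w_k$ for some $k$, hence $f \ll w_k$ by \cref{lemma: ll le properties}, and now $\nu([f]_\alpha) \le \beta(w_k)$ does follow from the definition of $\beta$. (Your additivity and invariance arguments do not suffer from this problem, because there you route everything through $\ll$-increasing approximating sequences, whose terms lie in $\Cu(A)^\ll$; your sup-formula $\beta(y) = \sup_n \nu([y_n]_\alpha)$ along such sequences is correctly justified.)

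Once that step is patched, the rest of the proposal is correct, and your faithfulness argument is genuinely different from — and simpler than — the paper's. The paper proves faithfulness by showing that $I_\beta = \big\{ [w]_\alpha : w \in \Cu(A)^\ll \cap \ker(\beta) \big\}$ is a regular ideal (\cref{defn: regular ideal}) of $\Cu(A)_\alpha$ and then invoking \cref{prop: minimal means DCS is simple} in a proof by contradiction. You instead apply \cref{prop: Cu minimality} a second time in the reverse direction, obtaining $[w]_\alpha \le m'[z]_\alpha$ for the finite nonzero reference element $w$ and an arbitrary nonzero $z \in \Cu(A)^\ll$, which forces $\nu([z]_\alpha) > 0$; faithfulness of $\beta$ on all of $\Cu(A)$ then follows from \cref{rem: nonzero sequentially way below}. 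This bypasses the regular-ideal machinery entirely and gives a shorter, contradiction-free proof, at no extra cost since \cref{prop: Cu minimality} is already needed for the finiteness half of part~(b). Your treatment of part~(a), of finiteness on $\Cu(A)^\ll$, and of the normalisation (using that $\beta([1_A]_\Cu) = \nu([1_A]_\alpha)$ because $[1_A]_\Cu$ is compact by \cref{rem: [p]_Cu ll [p]_Cu}, then scaling) matches the paper.
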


\begin{proof}
Let $\nu\colon \Cu(A)_\alpha \to [0,\infty]$ be a nontrivial order-preserving homomorphism. Since $\nu$ is nontrivial and order-preserving, $\nu([0]_\alpha) < \infty$, and thus, since $\nu$ is a homomorphism, $\nu([0]_\alpha) = 0$. Fix $x \in \Cu(A)$, and define $\Cu(A)^{\ll x} \coloneqq \{ z \in \Cu(A) : z \ll x \}$. Then $0 \in \Cu(A)^{\ll x}$, so $\Cu(A)^{\ll x}$ is nonempty. Define
\[
\beta(x) \coloneqq \sup\!\big\{ \nu([z]_\alpha) : z \in \Cu(A)^{\ll x} \big\},
\]
and note that it is possible that $\beta(x) = \infty$. We first show that $\beta\colon \Cu(A) \to [0,\infty]$ is a functional. Since $\Cu(A)^{\ll 0} = \{0\}$ and $\nu([0]_\alpha) = 0$, we have $\beta(0) = 0$.

To see that $\beta$ is order-preserving, fix $x, y \in \Cu(A)$ and suppose that $x \le y$. Since $x \le y$, \cref{lemma: ll le properties} implies that $\Cu(A)^{\ll x} \subseteq \Cu(A)^{\ll y}$, and hence $\beta(x) \le \beta(y)$ because the supremum for the latter is over a larger set.

To see that $\beta$ is additive, fix $x, y \in \Cu(A)$. For all $z_1 \in \Cu(A)^{\ll x}$ and $z_2 \in \Cu(A)^{\ll y}$, we have $z_1 + z_2 \in \Cu(A)^{\ll x + y}$ by axiom~\cref{item: O3} of \cref{defn: Cu-semigroup} and $\nu([z_1]_\alpha) + \nu([z_2]_\alpha) = \nu([z_1 + z_2]_\alpha)$, so
\begin{align*}
\beta(x) + \beta(y) &= \sup\!\big\{ \nu([z_1]_\alpha) : z_1 \in \Cu(A)^{\ll x} \big\} \,+\, \sup\!\big\{ \nu([z_2]_\alpha) : z_2 \in \Cu(A)^{\ll y} \big\} \\
&= \sup\!\big\{ \nu([z_1 + z_2]_\alpha) : z_1 \in \Cu(A)^{\ll x}, \, z_2 \in \Cu(A)^{\ll y} \big\} \\
&\le \sup\!\big\{ \nu([z]_\alpha) : z \in \Cu(A)^{\ll x + y} \big\} \\
&= \beta(x + y).
\end{align*}
To see that $\beta(x + y) \le \beta(x) + \beta(y)$, it suffices to show that for all $z \in \Cu(A)^{\ll x + y}$,
\[
\nu([z]_\alpha) \le \sup\!\big\{ \nu([z_1 + z_2]_\alpha) : z_1 \in \Cu(A)^{\ll x}, \, z_2 \in \Cu(A)^{\ll y} \big\}.
\]
Fix $z \in \Cu(A)^{\ll x + y}$. Then $z \ll x + y$, so by \cref{lemma: sequentially way-below interpolation}, there exist $x' \in \Cu(A)^{\ll x}$ and $y' \in \Cu(A)^{\ll y}$ such that $z \ll x' + y' \ll x + y$. Thus, since $\nu$ is order-preserving, we have
\[
\nu([z]_\alpha) \le \nu([x' + y']_\alpha) \le \sup\!\big\{ \nu([z_1 + z_2]_\alpha) : z_1 \in \Cu(A)^{\ll x}, \, z_2 \in \Cu(A)^{\ll y} \big\}.
\]
Hence $\beta(x + y) \le \beta(x) + \beta(y)$, and so $\beta$ is additive.

To see that $\beta$ preserves suprema of increasing sequences, suppose that $\{x_n\}_{n \in \N} \subseteq \Cu(A)$ is an increasing sequence with supremum $x \coloneqq \sup_{n \in \N} (x_n) \in \Cu(A)$. Since $\beta$ is order-preserving, $\{\beta(x_n)\}_{n \in \N} \subseteq [0,\infty]$ is an increasing sequence with upper bound $\beta(x)$, and hence $\sup_{n \in \N} (\beta(x_n)) \le \beta(x)$. For the reverse inequality, it suffices to show that for all $z \in \Cu(A)^{\ll x}$, $\nu([z]_\alpha) \le \sup_{n \in \N} (\beta(x_n))$. Fix $z \in \Cu(A)^{\ll x}$. Then $z \ll x$, so by \cref{lemma: sequentially way-below interpolation}, there exists $u \in \Cu(A)^{\ll x}$ such that $z \ll u \ll x$. Since $u \ll x = \sup_{n \in \N} (x_n)$, there exists $m_u \in \N$ such that $u \le x_{m_u}$. And since $z \in \Cu(A)^{\ll u}$, we have $\nu([z]_\alpha) \le \beta(u)$ by the definition of $\beta$. Thus, since $\beta$ is order-preserving, we have
\[
\nu([z]_\alpha) \le \beta(u) \le \beta(x_{m_u}) \le \sup_{n \in \N} (\beta(x_n)),
\]
and so $\beta(x) = \sup_{n \in \N} (\beta(x_n))$. Therefore, $\beta$ is a functional.

We now show that $\beta$ is invariant. Fix $s \in S$ and $x \in \Cu(A_{s^*s})$. To see that $\beta(x) \le \beta\big(\hat{\alpha}_s(x)\big)$, it suffices to show that $\nu([z]_\alpha) \le \beta\big(\hat{\alpha}_s(x)\big)$ for all $z \in \Cu(A)^{\ll x}$. Fix $z \in \Cu(A)^{\ll x}$. Then $z \le x$, so $z \in \Cu(A_{s^*s})$ because $\Cu(A_{s^*s})$ is hereditary. Since $z \ll x$ and $\hat{\alpha}_s$ is a $\Cu$-morphism, we have $\hat{\alpha}_s(z) \in \Cu(A)^{\ll \hat{\alpha}_s(x)}$. By \cref{item: alpha classes invariant under alpha}, we have $\nu([z]_\alpha) = \nu\big([\hat{\alpha}_s(z)]_\alpha\big) \le \beta\big(\hat{\alpha}_s(x)\big)$, and it follows that $\beta(x) \le \beta\big(\hat{\alpha}_s(x)\big)$. Since $s \in S$ and $x \in \Cu(A_{s^*s})$ were arbitrary, we also have $\beta\big(\hat{\alpha}_s(x)\big) \le \beta\big(\hat{\alpha}_{s^*}\big(\hat{\alpha}_s(x)\big)\big) = \beta(x)$, so $\beta$ is invariant.

For part~\cref{item: finite functional}, suppose that $x \in \Cu(A)^\ll$ satisfies $\nu([x]_\alpha) < \infty$. Since $\nu$ is order-preserving and $z \le x$ for all $z \in \Cu(A)^{\ll x}$, we have
\[
\beta(x) = \sup\{ \nu([z]_\alpha) : z \in \Cu(A)^{\ll x} \} \le \nu([x]_\alpha) < \infty.
\]

For part~\cref{item: faithful functional}, suppose that $\alpha$ is minimal. Since $\nu$ is nontrivial, there exists $y \in \Cu(A)^\ll$ such that $0 < \nu([y]_\alpha) < \infty$. In particular, $y \ne 0$. Fix $x \in \Cu(A)^\ll$. By \cref{prop: Cu minimality}, there exists $m \in \N$ such that $[x]_\alpha \le m[y]_\alpha$. Thus $\nu([x]_\alpha) \le \nu(m[y]_\alpha) = m \, \nu([y]_\alpha) < \infty$, and it follows from part~\cref{item: finite functional} that $\beta(x) < \infty$.

Define
\[
I_\beta \coloneqq \big\{ [w]_\alpha : w \in \Cu(A)^\ll \cap \ker(\beta) \big\}.
\]
We claim that
\begin{equation} \label{eqn: I_beta}
I_\beta = \big\{ [w]_\alpha \in \Cu(A)_\alpha : w' \in \ker(\beta) \text{ for all } w' \in [w]_\alpha \big\}.
\end{equation}
To see that $I_\beta \subseteq \big\{ [w]_\alpha \in \Cu(A)_\alpha : w' \in \ker(\beta) \text{ for all } w' \in [w]_\alpha \big\}$, fix $w \in \Cu(A)^\ll \cap \ker{\beta}$, and suppose that $w' \in [w]_\alpha$. Then $w' \pathalpha w$, so \cref{lemma: functionals preserve precalpha} implies that $\beta(w') \le \beta(w) = 0$. Hence $w' \in \ker(\beta)$, so $I_\beta \subseteq \big\{ [w]_\alpha \in \Cu(A)_\alpha : w' \in \ker(\beta) \text{ for all } w' \in [w]_\alpha \big\}$. The reverse containment is trivial.

We now show that $I_\beta$ is an ideal of $\Cu(A)_\alpha$. To see that $I_\beta$ is a submonoid of $\Cu(A)_\alpha$, first note that $\beta(0) = 0$, so $[0]_\alpha \in I_\beta$. Fix $[x]_\alpha, [y]_\alpha \in I_\beta$. Then \cref{eqn: I_beta} implies that $\beta(x) = \beta(y) = 0$, so $\beta(x + y) = \beta(x) + \beta(y) = 0$, and hence $[x]_\alpha + [y]_\alpha = [x + y]_\alpha \in I_\beta$. Thus $I_\beta$ is a monoid. To see that $I_\beta$ is hereditary, fix $[x]_\alpha \in \Cu(A)_\alpha$ and $[y]_\alpha \in I_\beta$ such that $[x]_\alpha \le [y]_\alpha$. Then $x, y \in \Cu(A)^\ll$, and \cref{eqn: I_beta} implies that $\beta(y) = 0$. Since $x \pathalpha y$, \cref{lemma: functionals preserve precalpha} implies that $\beta(x) \le \beta(y) = 0$, so $x \in \ker(\beta)$. Thus $[x]_\alpha \in I_\beta$, and so $I_\beta$ is hereditary.

We claim that $I_\beta$ is a regular ideal, as defined in \cref{defn: regular ideal}. To see this, fix $x \in \Cu(A)^\ll$, and suppose that for all $w \in \Cu(A)^{\ll x}$, we have $[w]_\alpha \in I_\beta$. We must show that $[x]_\alpha \in I_\beta$, so we will show that $\beta(x) = 0$. For this, it suffices to show that $\nu([v]_\alpha) = 0$ for all $v \in \Cu(A)^{\ll x}$. Fix $v \in \Cu(A)^{\ll x}$. Then by \cref{lemma: sequentially way-below interpolation}, there exists $w \in \Cu(A)$ such that $v \ll w \ll x$. Since $w \in \Cu(A)^{\ll x}$, we have $[w]_\alpha \in I_\beta$ by our assumption, and hence $\beta(w) = 0$ by \cref{eqn: I_beta}. Since $v \in \Cu(A)^{\ll w}$, we have $\nu([v]_\alpha) \le \beta(w) = 0$, so $\nu([v]_\alpha) = 0$, as required. Therefore, $I_\beta$ is a regular ideal of $\Cu(A)_\alpha$.

We now show that $\beta$ is faithful by way of contradiction. Suppose there exists $y \in \Cu(A) {\setminus} \{0\}$ such that $\beta(y) = 0$. Then by \cref{rem: nonzero sequentially way below}, there exists $x \in \Cu(A) {\setminus} \{0\}$ such that $x \ll y$. Since $\beta$ is order-preserving and $x \le y$, we have $\beta(x) \le \beta(y) = 0$. Thus $x \in \Cu(A)^\ll \cap \ker(\beta)$, so $[x]_\alpha \in I_\beta$. Since $x \ne 0$, \cref{item: [0]_alpha singleton} implies that $[x]_\alpha \ne [0]_\alpha$, so $I_\beta \ne \{[0]_\alpha\}$. Since $\alpha$ is minimal, \cref{prop: minimal means DCS is simple} implies that $\Cu(A)_\alpha$ has no nontrivial regular ideals, so $I_\beta = \Cu(A)_\alpha$. Thus \cref{eqn: I_beta} implies that $\beta(w) = 0$ for all $w \in \Cu(A)^\ll$. Now fix $z \in \Cu(A)^\ll$, and let $f \in \Cu(A)$ such that $z \ll f$. By \cref{lemma: sequentially way-below interpolation}, there exists $w \in \Cu(A)^\ll$ such that $z \ll w \ll f$. Thus, by the definition of $\beta$, we have $\nu([z]_\alpha) \le \beta(w) = 0$. But this contradicts our assumption that $\nu$ is nontrivial, so $\beta$ must be faithful.

Finally, suppose that $A$ is unital. Then $[1_A]_\Cu \in \Cu(A)^\ll {\setminus} \{0\}$ by \cref{rem: [p]_Cu ll [p]_Cu}, and hence $0 < \beta([1_A]_\Cu) < \infty$ because $\beta$ is faithful and is finite on $\Cu(A)^\ll$. Thus, scaling $\beta$ by $\frac{1}{\beta([1_A]_\Cu)}$ gives a faithful invariant functional $\beta'\colon \Cu(A) \to [0,\infty]$ that is finite on $\Cu(A)^\ll$ and satisfies $\beta'([1_A]_\Cu) = 1$.
\end{proof}

\begin{thm} \label{thm: non-paradoxical elements generate functionals}
Let $\alpha\colon S \curvearrowright A$ be a minimal action of a unital inverse semigroup $S$ on a C*\nobreakdash-algebra $A$. If $x \in \Cu(A)^\ll {\setminus} \{0\}$ is completely non-paradoxical, then there exists a faithful invariant functional $\beta_x\colon \Cu(A) \to [0,\infty]$ that is finite on $\Cu(A)^\ll$ and satisfies $\beta_x(x) = 1$.
\end{thm}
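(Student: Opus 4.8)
The plan is to chain Tarski's theorem (\cref{thm: Tarski-Wehrung paradoxicality thm}) together with the functional-lifting result \cref{thm: OP hom on DCS induces functional}, and then to rescale. First I would record that, since $x \in \Cu(A)^\ll \setminus \{0\}$ is completely non-paradoxical, \cref{item: DCS CNP} gives $(n+1)[x]_\alpha \not\le n[x]_\alpha$ in $\Cu(A)_\alpha$ for every $n \in \N$, while \cref{item: [0]_alpha singleton} gives $[x]_\alpha \ne [0]_\alpha$. Thus $[x]_\alpha$ is a nonzero element of the preordered abelian monoid $\Cu(A)_\alpha$ (see \cref{defn: dynamical cuntz semigroup}) satisfying condition~(1) of Tarski's theorem.

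Next I would apply \cref{thm: Tarski-Wehrung paradoxicality thm} with $M = \Cu(A)_\alpha$ and the element $[x]_\alpha$ to produce an order-preserving homomorphism $\nu\colon \Cu(A)_\alpha \to [0,\infty]$ with $\nu([x]_\alpha) = 1$. Since $1 \in (0,\infty)$, this $\nu$ is nontrivial, so \cref{thm: OP hom on DCS induces functional} applies and yields an invariant functional $\beta\colon \Cu(A) \to [0,\infty]$. Because $\alpha$ is minimal, part~\cref{item: faithful functional} of that theorem guarantees that $\beta$ is faithful and that $\beta(y) < \infty$ for every $y \in \Cu(A)^\ll$.

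Finally I would renormalise. Since $x \in \Cu(A)^\ll \setminus \{0\}$, faithfulness of $\beta$ gives $\beta(x) > 0$, and finiteness of $\beta$ on $\Cu(A)^\ll$ gives $\beta(x) < \infty$; hence $0 < \beta(x) < \infty$. I would then set $\beta_x \coloneqq \beta(x)^{-1} \beta$. A positive scalar multiple of a functional is again a functional, since it preserves addition, zero, the order $\le$, and suprema of increasing sequences, and scaling clearly preserves invariance, faithfulness, and finiteness on $\Cu(A)^\ll$. By construction $\beta_x(x) = 1$, so $\beta_x$ has all the required properties.

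The main content of the argument lives entirely in the two theorems being invoked, so there is no genuine obstacle beyond correctly assembling the available inputs. The only two points requiring care are verifying that the homomorphism $\nu$ fed into \cref{thm: OP hom on DCS induces functional} is nontrivial (immediate from $\nu([x]_\alpha) = 1$) and that rescaling by $\beta(x)^{-1}$ destroys none of the four properties (routine); in particular, I would note that rescaling is only legitimate because $\beta(x)$ is both nonzero and finite, which is exactly where faithfulness and finiteness on $\Cu(A)^\ll$ are used.
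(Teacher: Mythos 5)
Your proposal is correct and follows essentially the same route as the paper's own proof: invoke \cref{thm: Tarski-Wehrung paradoxicality thm} on $[x]_\alpha \in \Cu(A)_\alpha$ to get a nontrivial order-preserving homomorphism $\nu$ with $\nu([x]_\alpha)=1$, feed it into \cref{thm: OP hom on DCS induces functional} using minimality to obtain a faithful invariant functional $\beta$ finite on $\Cu(A)^\ll$, and rescale by $\beta(x)^{-1}$. Your extra observation that $[x]_\alpha \ne [0]_\alpha$ (via \cref{item: [0]_alpha singleton}) is a harmless refinement the paper leaves implicit.
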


\begin{proof}
Suppose that $x \in \Cu(A)^\ll {\setminus} \{0\}$ is completely non-paradoxical. Then by \cref{item: DCS CNP}, $(n+1)[x]_\alpha \not\le n[x]_\alpha$ for all $n \in \N$. Thus, by \cref{thm: Tarski-Wehrung paradoxicality thm}, there is an order-preserving homomorphism $\nu\colon \Cu(A)_\alpha \to [0,\infty]$ such that $\nu([x]_\alpha) = 1$. Since $\alpha$ is minimal and $\nu$ is nontrivial, \cref{thm: OP hom on DCS induces functional} implies that $\nu$ induces a faithful invariant functional $\beta\colon \Cu(A) \to [0,\infty]$ that is finite on $\Cu(A)^\ll$. Since $x \in \Cu(A)^\ll {\setminus} \{0\}$ and $\beta$ is faithful, we have $0 < \beta(x) < \infty$, so the map $\beta_x\colon \Cu(A) \to [0,\infty]$ given by $\beta_x(y) \coloneqq \frac{\beta(y)}{\beta(x)}$ is a faithful invariant functional that is finite on $\Cu(A)^\ll$ and satisfies $\beta_x(x) = 1$.
\end{proof}

The proof of \cref{thm: functionals if and only if non-paradoxical} follows the format and proof of \cite[Proposition~4.8]{Rainone2017}.

\begin{cor} \label{thm: functionals if and only if non-paradoxical}
Let $\alpha\colon S \curvearrowright A$ be an action of a unital inverse semigroup $S$ on a unital C*-algebra $A$. Consider the following statements.
\begin{enumerate}[label=(\arabic*)]
\item \label{item: faithful invariant functional beta_x} For each $x \in \Cu(A)^\ll {\setminus} \{0\}$, there is a faithful invariant functional $\beta_x\colon \Cu(A) \to [0,\infty]$ that is finite on $\Cu(A)^\ll$ and satisfies $\beta_x(x) = 1$.
\item \label{item: faithful invariant unital functional} There exists a faithful invariant functional $\beta\colon \Cu(A) \to [0,\infty]$ that is finite on $\Cu(A)^\ll$ and satisfies $\beta([1_A]_\Cu) = 1$.
\item \label{item: functionals Cu(A)^ll CNP} The monoid $\Cu(A)^\ll$ is completely non-paradoxical.
\item \label{item: functionals any CNP element} There exists a completely non-paradoxical element $x \in \Cu(A)^\ll {\setminus} \{0\}$.
\end{enumerate}
Then \cref{item: faithful invariant functional beta_x} $\implies$ \cref{item: faithful invariant unital functional} $\implies$ \cref{item: functionals Cu(A)^ll CNP} $\implies$ \cref{item: functionals any CNP element}. If $\alpha$ is minimal, then \cref{item: functionals any CNP element} $\implies$ \cref{item: faithful invariant functional beta_x}, and \crefrange{item: faithful invariant functional beta_x}{item: functionals any CNP element} are equivalent.
\end{cor}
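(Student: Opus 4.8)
The strategy is to establish the cyclic chain $\text{\cref{item: faithful invariant functional beta_x}} \implies \text{\cref{item: faithful invariant unital functional}} \implies \text{\cref{item: functionals Cu(A)^ll CNP}} \implies \text{\cref{item: functionals any CNP element}}$ unconditionally, and then to close the loop with $\text{\cref{item: functionals any CNP element}} \implies \text{\cref{item: faithful invariant functional beta_x}}$ under minimality. Throughout I would lean on two facts: since $A$ is unital, $[1_A]_\Cu$ is a distinguished element of $\Cu(A)^\ll {\setminus} \{0\}$ by \cref{rem: [p]_Cu ll [p]_Cu}; and invariant functionals are monotone along $\pathalpha$ by \cref{lemma: functionals preserve precalpha}. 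The genuine content has already been extracted in \cref{thm: non-paradoxical elements generate functionals}, so the remaining implications are essentially bookkeeping.

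First, $\text{\cref{item: faithful invariant functional beta_x}} \implies \text{\cref{item: faithful invariant unital functional}}$ is immediate: specialise the hypothesis of \cref{item: faithful invariant functional beta_x} to $x = [1_A]_\Cu \in \Cu(A)^\ll {\setminus} \{0\}$ and set $\beta \coloneqq \beta_{[1_A]_\Cu}$. For $\text{\cref{item: functionals Cu(A)^ll CNP}} \implies \text{\cref{item: functionals any CNP element}}$, the same distinguished element works: by \cref{item: functionals Cu(A)^ll CNP} the monoid $\Cu(A)^\ll$ is completely non-paradoxical, so $[1_A]_\Cu$ is a completely non-paradoxical element of $\Cu(A)^\ll {\setminus} \{0\}$, witnessing \cref{item: functionals any CNP element}.

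The one implication requiring a small argument is $\text{\cref{item: faithful invariant unital functional}} \implies \text{\cref{item: functionals Cu(A)^ll CNP}}$, which I would prove by contradiction. Suppose some $x \in \Cu(A)^\ll {\setminus} \{0\}$ is $(k,l)$-paradoxical for integers $k > l > 0$, i.e.\ $kx \pathalpha lx$. Applying the functional $\beta$ from \cref{item: faithful invariant unital functional} together with \cref{lemma: functionals preserve precalpha} and additivity of $\beta$ gives $k\,\beta(x) \le l\,\beta(x)$. Since $x \ne 0$ and $\beta$ is faithful we have $\beta(x) > 0$, and since $\beta$ is finite on $\Cu(A)^\ll$ we have $\beta(x) < \infty$; cancelling $\beta(x) \in (0,\infty)$ yields $k \le l$, contradicting $k > l$. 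Hence every nonzero element of $\Cu(A)^\ll$ is completely non-paradoxical. This cancellation step is the only subtle point, and it is precisely where \emph{both} faithfulness and finiteness of the functional are needed.

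Finally, assuming $\alpha$ is minimal and that \cref{item: functionals any CNP element} holds with witness $x_0 \in \Cu(A)^\ll {\setminus} \{0\}$, I would invoke \cref{thm: non-paradoxical elements generate functionals} to obtain a faithful invariant functional $\beta\colon \Cu(A) \to [0,\infty]$ that is finite on $\Cu(A)^\ll$. For arbitrary $x \in \Cu(A)^\ll {\setminus} \{0\}$, faithfulness and finiteness again force $\beta(x) \in (0,\infty)$, so $\beta_x \coloneqq \beta / \beta(x)$ is a faithful invariant functional, finite on $\Cu(A)^\ll$, with $\beta_x(x) = 1$; this is exactly \cref{item: faithful invariant functional beta_x}. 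Combined with the forward chain, all four statements are then equivalent. I expect no real obstacle here: the hard analytic work is contained in \cref{thm: non-paradoxical elements generate functionals} (resting in turn on \cref{thm: Tarski-Wehrung paradoxicality thm} and \cref{thm: OP hom on DCS induces functional}), and what remains is to route the single normalised functional to each $x$ via the $\beta(x)\in(0,\infty)$ rescaling.
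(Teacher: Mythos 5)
Your proposal is correct and follows essentially the same route as the paper's proof: the same specialisation to $[1_A]_\Cu$ via \cref{rem: [p]_Cu ll [p]_Cu}, the same contradiction argument for \cref{item: faithful invariant unital functional} $\implies$ \cref{item: functionals Cu(A)^ll CNP} using \cref{lemma: functionals preserve precalpha} and the cancellation of $\beta(x) \in (0,\infty)$, and the same invocation of \cref{thm: non-paradoxical elements generate functionals} followed by rescaling for the minimal case.
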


\begin{proof}
For \cref{item: faithful invariant functional beta_x} $\implies$ \cref{item: faithful invariant unital functional}, note that $[1_A]_\Cu \in \Cu(A)^\ll {\setminus} \{0\}$ by \cref{rem: [p]_Cu ll [p]_Cu}, so we can just apply item~\cref{item: faithful invariant functional beta_x} to $x = [1_A]_\Cu$. For \cref{item: faithful invariant unital functional} $\implies$ \cref{item: functionals Cu(A)^ll CNP}, suppose that $\beta\colon\Cu(A) \to [0,\infty]$ is a faithful invariant functional that is finite on $\Cu(A)^\ll$ and satisfies $\beta([1_A]_\Cu) = 1$. We prove by contradiction that $\Cu(A)^\ll$ is completely non-paradoxical. Suppose that there exists $x \in \Cu(A)^\ll {\setminus} \{0\}$ that is $(k,l)$-paradoxical for integers $k > l > 0$. Then $kx \pathalpha lx$, so by \cref{lemma: functionals preserve precalpha},
\begin{inequality} \label{ineq: kbeta lbeta contradiction}
k \beta(x) = \beta(kx) \le \beta(lx) = l \beta(x).
\end{inequality}
Since $\beta$ is faithful and is finite on $\Cu(A)^\ll$, we have $0 < \beta(x) < \infty$, and so dividing both sides of \cref{ineq: kbeta lbeta contradiction} by $\beta(x)$ gives $k \le l$, which is a contradiction. Thus $\Cu(A)^\ll$ is completely non-paradoxical. That \cref{item: functionals Cu(A)^ll CNP} $\implies$ \cref{item: functionals any CNP element} is trivial.

Now suppose that $\alpha$ is minimal. To see that \cref{item: functionals any CNP element} $\implies$ \cref{item: faithful invariant functional beta_x}, fix $x \in \Cu(A)^\ll {\setminus} \{0\}$. By assumption there exists $y \in \Cu(A)^\ll {\setminus} \{0\}$ such that $y$ is completely non-paradoxical. By \cref{thm: non-paradoxical elements generate functionals}, there exists a faithful invariant functional $\beta'\colon \Cu(A) \to [0,\infty]$ that is finite on $\Cu(A)^\ll$ and satisfies $\beta'(y) = 1$. Since $0 < \beta'(x) < \infty$, we can scale $\beta'$ by $\frac{1}{\beta'(x)}$ to obtain a faithful invariant functional $\beta_x\colon \Cu(A) \to [0,\infty]$ that is finite on $\Cu(A)^\ll$ and satisfies $\beta_x(x) = 1$.
\end{proof}

\Cref{thm: functionals if and only if non-paradoxical} demonstrates that, for minimal actions, we have the following dichotomy: $\Cu(A)^\ll$ is either completely non-paradoxical, or it contains no completely non-paradoxical elements.

\section{Stably finite crossed products}
\label{sec: stably finite}

Let $\alpha\colon S \curvearrowright A$ be an action of a unital inverse semigroup $S$ on a C*-algebra $A$. In \cref{thm: stable finiteness} below, we characterise stable finiteness of $A \rtimesess S$ under reasonable hypotheses. This result is then used in the proof of the stably finite / purely infinite dichotomy in \cref{sec: purely infinite}. A key ingredient of the proof is the construction of tracial states on the crossed product of $A$ by $S$ from tracial states on $A$ that are invariant under the action of $S$. While every trace $\tau\colon A \to \C$ extends to a normal state $\tau''\colon A'' \to \C$ that preserves properties of the action, this extension is rarely faithful, even if $\tau$ is faithful. This suggests that we should work with the essential crossed product rather than the reduced crossed product in order to retain faithfulness. However, the essential crossed product is isomorphic to a subalgebra of the local multiplier algebra $\Mloc(A)$, which frequently admits no normal functionals (see \cite[Theorem~4.2.17]{SaitoWright2015}). In particular, $\tau''$ does not necessarily induce (in any reasonable manner) a tracial state of $\Mloc(A)$. To resolve this issue, we assume that $A \rtimesred S = A \rtimesess S$ as a hypothesis for part of \cref{thm: stable finiteness} below. A similar strategy is employed in \cite{KMP2025}.

Recall that a \emph{tracial state} of $A$ is a state $\tau\colon A \to \C$ that satisfies $\tau(ab) = \tau(ba)$ for all $a, b \in A$. Our first goal is to extend invariant tracial states of $A$ to suitably invariant tracial states of $A''$. Recall that for each $s \in S$, the isomorphism $\alpha_s\colon A_{s^*s} \to A_{ss^*}$ has a unique weakly continuous surjective extension $\alpha_s''\colon A_{s^*s}'' \to A_{ss^*}''$ (see \cite[Corollary 1.2.26]{AraMathieu2003}). It follows from the uniqueness of these extensions that each $\alpha_s''\colon A_{s^*s}'' \to A_{ss^*}''$ is an isomorphism, and since $A_{s^*s}''$ is an ideal of $A''$ for each $s \in S$ (by \cref{lemma: inclusions of ideals}), it follows that $\alpha''\colon S \curvearrowright A''$ is an action of $S$ on $A''$.

\begin{defn}
Let $\alpha\colon S \curvearrowright A$ be an action of an inverse semigroup $S$ on a C*-algebra $A$. A tracial state $\tau\colon A \to \C$ is \emph{invariant} if $\tau(\alpha_s(a)) = \tau(a)$ for all $s \in S$ and $a \in A_{s^*s}$.
\end{defn}

\begin{lemma} \label{lemma: tracial states extend to bicommutant}
Let $\alpha\colon S \curvearrowright A$ be an action of an inverse semigroup $S$ on a C*-algebra $A$, and for each $s \in S$, let $\alpha_s''\colon A_{s^*s}'' \to A_{ss^*}''$ be the unique weakly continuous extension of $\alpha_s$. Suppose that $\tau\colon A \to \C$ is an invariant tracial state of $A$. Then there exists a unique normal extension $\tau''\colon A'' \to \C$ of $\tau$ such that $\tau''$ is a tracial state and $\tau''$ is invariant in the sense that $\tau''(\alpha_s''(a)) = \tau''(a)$ for all $s \in S$ and $a \in A_{s^*s}''$.
\end{lemma}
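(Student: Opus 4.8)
The plan is to obtain $\tau''$ as the unique normal (weak*-continuous) extension of $\tau$ to $A''$, and then to show that this extension automatically inherits the trace property and the invariance from $\tau$. The entire argument rests on the interplay between normality and the weak*-density of $A$ in $A''$: each required property will be recast as an equality of normal functionals that can be verified on a weak*-dense set.

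First I would invoke the Banach-space identification of $A''$ with the double dual of $A$ (recorded after the definition of $A''$ via \cite[Proposition~3.7.8]{Pedersen2018}), under which every bounded linear functional on $A$ extends uniquely to a normal functional on $A''$. Applying this to $\tau$ produces a normal functional $\tau''\colon A'' \to \C$ extending $\tau$; uniqueness among normal extensions is immediate since $A$ is weak*-dense in $A''$ by the double commutant theorem \cite[Theorem~2.2.2]{Pedersen2018}. Because the state space of $A$ corresponds to the normal state space of $A''$ under this identification, $\tau''$ is automatically a normal state (positivity can also be seen directly from Kaplansky density together with weak*-continuity of $\tau''$).

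Next I would verify that $\tau''$ is tracial by a standard double-density argument using that multiplication in $A''$ is separately weak*-continuous. Fixing $a \in A$, the maps $x \mapsto \tau''(ax)$ and $x \mapsto \tau''(xa)$ are normal functionals on $A''$ that agree on $A$ (as $\tau$ is tracial), hence agree on the weak*-closure $A''$; this gives $\tau''(ax) = \tau''(xa)$ for all $a \in A$ and $x \in A''$. Fixing now $x \in A''$ and repeating with the normal functionals $y \mapsto \tau''(yx)$ and $y \mapsto \tau''(xy)$, which agree on $A$ by the previous step, yields $\tau''(xy) = \tau''(yx)$ for all $x, y \in A''$. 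For invariance, I would fix $s \in S$ and compare the functionals $a \mapsto \tau''(\alpha_s''(a))$ and $a \mapsto \tau''(a)$ on the von Neumann algebra $A_{s^*s}''$. The first is normal because $\alpha_s''\colon A_{s^*s}'' \to A_{ss^*}''$ is weakly continuous by construction and $\tau''$ is normal, and the second is a restriction of $\tau''$. On the weak*-dense subalgebra $A_{s^*s} \subseteq A_{s^*s}''$ they agree, since for $a \in A_{s^*s}$ we have $\tau''(\alpha_s''(a)) = \tau(\alpha_s(a)) = \tau(a) = \tau''(a)$, using that $\alpha_s''$ extends $\alpha_s$, that $\tau''$ extends $\tau$, and invariance of $\tau$. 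Two normal functionals agreeing on a weak*-dense subset coincide, so $\tau''(\alpha_s''(a)) = \tau''(a)$ for all $a \in A_{s^*s}''$.

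The main obstacle is organisational rather than deep: the proof succeeds precisely because each of the three properties (unique normal extension, traciality, invariance) can be phrased as the coincidence of normal functionals on a weak*-dense set. The only points needing a little care are the separate weak*-continuity of multiplication invoked in the traciality step and the normality of the composite $\tau'' \circ \alpha_s''$ in the invariance step, both of which are immediate from the construction of $\alpha_s''$ as a weakly continuous extension.
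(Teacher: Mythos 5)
Your proof is correct, but it takes a genuinely different route from the paper's. The paper builds $\tau''$ concretely through the GNS representation: it forms $\pi_\tau\colon A \to B(H_\tau)$ with cyclic vector $x_\tau$, extends it to the unique normal representation $\pi_\tau''\colon A'' \to B(H_\tau)$ via \cite[Theorem~3.7.7]{Pedersen2018}, and defines $\tau''$ as the vector state $a \mapsto \big(\pi_\tau''(a)x_\tau \ipmid x_\tau\big)$; traciality and invariance are then checked by approximating elements of $A''$ with nets from $A$ in the weak operator topology and computing iterated limits of inner products. You instead obtain $\tau''$ abstractly from the identification $A'' \cong A^{**}$ as the unique weak*-continuous extension of $\tau$, and you verify traciality and invariance by the principle that two normal functionals agreeing on a $\sigma$-weakly dense subspace coincide, using separate $\sigma$-weak continuity of multiplication (for the trace property, in two passes) and normality of $\tau'' \circ \alpha_s''$ (for invariance). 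Your route is arguably tidier on two counts: uniqueness and normality of the extension are immediate from the bidual picture rather than needing a separate argument, and the functional-equality-on-dense-sets device sidesteps the delicate points in the paper's iterated-limit computation (normal functionals are weak-operator continuous only on bounded sets, and the paper's appeal to weak-operator density of $A$ in $A''$ invokes unitality of $A$, which the lemma as stated does not assume). What the paper's GNS construction buys in exchange is an explicit Hilbert-space realisation of $\tau''$, which makes its normality transparent and provides a concrete object that can be reused elsewhere. One point you should make explicit is that ``weakly continuous'' for $\alpha_s''$ means $\sigma$-weakly continuous (equivalently, that a $*$-isomorphism between von Neumann algebras is automatically normal), and that $A_{s^*s}''$ is indeed the $\sigma$-weak closure of $A_{s^*s}$ inside $A''$ (as follows from \cref{lemma: inclusions of ideals}); with those remarks in place your density arguments go through exactly as written.
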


\begin{proof}
Let $\pi_\tau\colon A \to B(H_\tau)$ be the GNS representation associated to $\tau$. By \cite[Theorem~5.1.1]{Murphy1990} there is a unique unit cyclic vector $x_\tau \in H_\tau$ such that $\tau(a) = \big(\pi_\tau(a)x_\tau \ipmid x_\tau\big)$. By \cite[Theorem~3.7.7]{Pedersen2018} there is a unique normal extension $\pi_\tau''\colon A'' \to B(H_\tau)$. Then by \cite[Theorem~5.1.7]{Murphy1990}, $\tau''(a) \coloneqq \big(\pi_\tau''(a)x_\tau \ipmid x_\tau\big)$ defines a state of $A''$ that is normal because $\pi_\tau''$ is normal.

We claim that $\tau''$ is tracial and invariant. Let $\pi_\rmU\colon A \to B(H_\rmU)$ be the universal representation of $A$. Then $A'' = \pi_\rmU(A)''$, and $A \cong \pi_\rmU(A)$. Since $A$ is unital, $A''$ is the closure of $A$ in the weak operator topology by \cite[Theorem~4.2.5]{Murphy1990}. Fix $a, b \in A''$. Then there are nets $\{a_i\}_i,\{b_j\}_j \subseteq A$ that converge to $a$ and $b$, respectively, in the weak operator topology. Now
\begin{align*}
\tau''(ab) &= \big(\pi_\tau''(ab)x_\tau \ipmid x_\tau\big) = \big(\pi_\tau''(b)x_\tau \ipmid \pi_\tau''(a^*)x_\tau\big) = \lim_i \lim_j \big(\pi_\tau''(b_j)x_\tau \ipmid \pi_\tau''(a_i^*)x_\tau\big) \\
&= \lim_i \lim_j \big(\pi_\tau(b_j)x_\tau \ipmid \pi_\tau(a_i^*)x_\tau\big) = \lim_i \lim_j \big( \pi_\tau(a_i b_j) x_\tau \ipmid x_\tau \big) = \lim_i \lim_j \tau(a_i b_j) \\
&= \lim_i \lim_j \tau(b_j a_i) = \big( \pi_\tau(b_j a_i) x_\tau \ipmid x_\tau \big) = \lim_i \lim_j \big(\pi_\tau(a_i)x_\tau \ipmid \pi_\tau(b_j^*)x_\tau\big) \\
&= \lim_j \lim_i \big(\pi_\tau''(a_i)x_\tau \ipmid \pi_\tau''(b_j^*)x_\tau\big) = \big(\pi_\tau''(a)x_\tau \ipmid \pi_\tau''(b^*)x_\tau\big) = \big(\pi_\tau''(ba)x_\tau \ipmid x_\tau\big) = \tau''(ab).
\end{align*}
Thus $\tau''$ is a tracial state. For the invariance, fix $s \in S$ and $a \in A_{s^*s}''$. Then there is a net $\{a_i\}_i \subseteq A_{s^*s}$ that converges weakly to $a$. Hence
\[
\tau''(\alpha_s''(a)) = \lim_i \tau''(\alpha_s''(a_i)) = \lim_i \tau(\alpha_s(a_i)) = \lim_i \tau(a_i) = \tau''(a). \qedhere
\]
\end{proof}

We now recall two definitions and a collection of results from \cite{KwasniewskiMeyer2021} in order to clarify the hypotheses in our main theorems that follow.

\begin{defn}
Let $A \hookrightarrow B$ be an inclusion of C*-algebras. We say that \emph{$A$ detects ideals in $B$} if $J \cap A \ne \{0\}$ for any nonzero ideal $J$ of $B$.
\end{defn}

\begin{defn}[{see \cite[Definition~6.1]{KwasniewskiMeyer2021}}]
Let $\alpha\colon S \curvearrowright A$ be an action of a unital inverse semigroup $S$ on a C*-algebra $A$. Then $\alpha$ is \emph{aperiodic} if for all $s \in S$, $a \in I_{s,1}^\perp \cap A_{ss^*}$, $\varepsilon > 0$, and any nonzero hereditary subalgebra $D \subseteq A$, there exists $d \in D^+$ such that $\norm{d} = 1$ and $\norm{d \, a \delta_s \, d} \le \varepsilon$.
\end{defn}

\begin{remark} \label{rem: simplicity and aperiodic actions and detecting ideals}
Let $\alpha\colon S \curvearrowright A$ be an action of a unital inverse semigroup $S$ on a C*-algebra $A$.
\begin{enumerate}[label=(\alph*)]
\item \label{item: aperiodic implies simple iff minimal} If $\alpha$ is non-minimal, then \cite[Theorems~6.5 and 6.6]{KwasniewskiMeyer2021} together imply that $A \rtimesess S$ is not simple. When $\alpha$ is minimal, the precise conditions on $\alpha$ under which $A \rtimesess S$ is simple are not known in general, but if $\alpha$ is aperiodic, then $A \rtimesess S$ is simple if and only if $\alpha$ is minimal.
\item \label{item: simple iff minimal and detects ideals} By \cite[Theorem~6.6]{KwasniewskiMeyer2021}, $A \rtimesess S$ (or, respectively, $A \rtimesred S$) is simple if and only if $\alpha$ is minimal and $A$ detects ideals in $A \rtimesess S$ (or, respectively, in $A \rtimesred S$).
\item \label{item: detecting ideals implies injective quotient map} If $A$ detects ideals in $A \rtimesred S$, then the quotient map $\Lambda\colon A \rtimesred S \to A \rtimesess S$ is injective, because $\ker(\Lambda) \cap A = \{0\}$, and hence $\ker(\Lambda) = \{0\}$.
\item \label{item: aperiodic implies detects ideals} If $\alpha$ is aperiodic, then $A$ detects ideals in $A \rtimesess S$ by \cite[Theorem~6.5(3)]{KwasniewskiMeyer2021}. Thus, if $\alpha$ is aperiodic and $\Lambda\colon A \rtimesred S \to A \rtimesess S$ is injective, then $A$ detects ideals in $A \rtimesred S$.
\end{enumerate}
\end{remark}

\begin{thm} \label{thm: traces reduced}
Let $\alpha\colon S \curvearrowright A$ be an action of a unital inverse semigroup $S$ on a unital C*\nobreakdash-algebra $A$. The weak conditional expectation $\ER\colon A \rtimesfull S \to A''$ defined in \cref{thm: ER existence and properties} induces a positive linear contraction $\tilde{\ER}\colon A \rtimesred S \to A''$ such that $\tilde{\ER}\restr{A \rtimesalg S} = \ER\restr{A \rtimesalg S}$, and there are one-to-one correspondences between
\begin{enumerate}[label=(\alph*)]
\item \label{item: traces reduced invariant} invariant tracial states $\tau\colon A \to \C$;
\item \label{item: traces reduced normal} normal invariant tracial states $\tau''\colon A'' \to \C$; and
\item \label{item: traces reduced expectation} tracial states $\tau_\red\colon A \rtimesred S \to \C$ such that $\tau_\red = \tau''\circ \tilde{\ER}$ for some normal tracial state $\tau''\colon A'' \to \C$.
\end{enumerate}
Moreover, if $A$ is exact, then the above are also in one-to-one correspondence with
\begin{enumerate}[resume, label=(\alph*)]
\item invariant functionals $\beta\colon \Cu(A) \to [0,\infty]$ satisfying $\beta([1_A]_\Cu) = 1$.
\end{enumerate}

In particular, if $A$ is exact, and $\beta\colon \Cu(A) \to [0,\infty]$ is a faithful invariant functional satisfying $\beta([1_A]_\Cu) = 1$, then the induced invariant tracial state $\tau_\beta\colon A \to \C$ is faithful.

Finally, suppose that $A$ detects ideals in $A \rtimesred S$, and let $\tau$ be a faithful invariant tracial state of $A$. Then the singular ideal of $A \rtimesred S$ is trivial, and $\tau_\red$ is faithful on $A \rtimesred S = A \rtimesess S$.
\end{thm}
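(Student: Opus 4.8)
The plan is to identify the kernel of $\tau_\red$ as a two-sided ideal and then kill it using each hypothesis in turn. First I would record how $\tau_\red$ is built: by \cref{lemma: tracial states extend to bicommutant} the faithful invariant tracial state $\tau$ extends to a normal invariant tracial state $\tau''$ of $A''$, and the one-to-one correspondences already established give a tracial state $\tau_\red = \tau'' \circ \tilde{\ER}$ of $A \rtimesred S$ that restricts to $\tau$ on $\iota_\red(A)$, since $\tilde{\ER}(a\delta_1) = a\,1_1 = a$ for $a \in A$ (here $I_{1,1} = A$, so $1_1 = 1_{A''}$, using \cref{thm: ER existence and properties}). The structural point is that the null space
\[
N \coloneqq \{ b \in A \rtimesred S : \tau_\red(b^*b) = 0 \}
\]
is a \emph{closed, two-sided} ideal of $A \rtimesred S$. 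Indeed, $b \mapsto \tau_\red(b^*b)^{1/2}$ is a seminorm dominated by $\norm{\cdot}$, so $N$ is closed, and the Cauchy--Schwarz inequality makes $N$ a left ideal; the trace property promotes it to a two-sided ideal, because for $a \in N$ and $b \in A \rtimesred S$,
\[
\tau_\red\big((ab)^*(ab)\big) = \tau_\red\big(b^*(a^*a)b\big) = \tau_\red\big((a^*a)^{1/2} b b^* (a^*a)^{1/2}\big) \le \norm{b}^2\, \tau_\red(a^*a) = 0.
\]
Faithfulness of $\tau_\red$ is thus equivalent to $N = \{0\}$, which I would establish separately under the two hypotheses.

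If $A \rtimesred S$ is simple, then $N$ is automatically zero: it is a \emph{proper} ideal, since $\tau_\red(1) = \tau(1_A) = 1$ shows $1_{A \rtimesred S} \notin N$, and the only proper closed two-sided ideal of a simple unital C*-algebra is $\{0\}$.

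If instead $\alpha$ is aperiodic and $\Lambda$ is injective, I would transport the problem to the essential crossed product. Injectivity of $\Lambda$ identifies $A \rtimesred S$ with $A \rtimesess S$, so $N$ becomes a closed two-sided ideal of $A \rtimesess S$ and $\tau_\red$ becomes a tracial state $\tau_\ess$ with $\tau_\ess \circ \iota_\ess = \tau$. Faithfulness of $\tau$ then gives $N \cap \iota_\ess(A) = \{0\}$: if $\iota_\ess(b) \in N$ for some $b \in A$, then $\tau(b^*b) = \tau_\ess\big(\iota_\ess(b)^*\iota_\ess(b)\big) = 0$, so $b = 0$. Now I would invoke the ideal-detection property underlying \cref{thm: ECP simplicity}: aperiodicity in the sense of \cref{defn: aperiodic} forces $\iota_\ess(A)$ to detect ideals in $A \rtimesess S$, i.e.\ every nonzero ideal of $A \rtimesess S$ meets $\iota_\ess(A)$ nontrivially (see \cite[Theorem~6.14]{KwasniewskiMeyer2021}, of which \cref{thm: ECP simplicity} is the simplicity specialisation). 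Since $N \cap \iota_\ess(A) = \{0\}$, detection yields $N = \{0\}$, hence $\tau_\ess = \tau_\red$ is faithful on $A \rtimesess S = A \rtimesred S$.

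The verifications that $N$ is a closed two-sided ideal and that $\tau_\red$ restricts to $\tau$ are routine, so the crux is the second case. The key subtlety is that we are \emph{not} assuming minimality, so $A \rtimesess S$ need not be simple and \cref{thm: ECP simplicity} cannot be applied directly; what is genuinely needed is the finer fact that aperiodicity \emph{alone} makes $\iota_\ess(A)$ detect ideals. This is exactly why the essential crossed product, reached through injectivity of $\Lambda$, is the correct object: passing from $A \rtimesred S$ to $A \rtimesess S$ is what lets the faithfulness of $\tau$ on the coefficient algebra propagate to the whole crossed product. The one point I would check carefully is that the detection statement invoked does not secretly require separability of $A$ (which \cref{thm: traces reduced} does not assume); if it did, one would instead argue that separability is available in the intended applications.
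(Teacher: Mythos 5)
There is a genuine gap: your proposal proves only the final paragraph of the theorem. The statement also asserts (i) the existence of the positive linear contraction $\tilde{\ER}$, (ii) the one-to-one correspondences between invariant tracial states of $A$, normal invariant tracial states of $A''$, and tracial states of the form $\tau'' \circ \tilde{\ER}$, and (iii) the correspondence with invariant functionals on $\Cu(A)$ when $A$ is exact (plus the faithfulness transfer $\beta \mapsto \tau_\beta$). You invoke ``the one-to-one correspondences already established'' to produce $\tau_\red$, but those correspondences are part of what is being proven, so the argument as written is circular. They are also not routine: the heart of the paper's proof is the verification that $\tau'' \circ \tilde{\ER}$ is \emph{tracial}, which rests on the centrality of $1_{ts}$ in $A''$ (\cref{item: 1_s is central}) and on $\alpha_{s^*}''(1_{st}) = 1_{ts}$ together with $\alpha_{s^*}'' = \alpha_t''$ on $I_{st,1}''$ (\cref{lemma: I_1st iso to I_1ts}); the converse direction (that the restriction to $A$ of such a $\tau_\red$ is an \emph{invariant} tracial state) uses the conjugation formula of \cref{lemma: alpha implemented by conjugation}; and the exactness part requires Haagerup's theorem via \cref{prop: quasitraces and functionals}. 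None of this appears in your proposal.

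For the part you do prove, your argument is correct and is essentially the paper's: the paper likewise defines $I_\red = \{x \in A \rtimesred S : \tau_\red(x^*x) = 0\}$, disposes of the simple case by properness of this ideal, and in the aperiodic case uses injectivity of $\Lambda$ to identify the reduced and essential crossed products and then invokes ideal detection by $A$ (the paper cites \cite[Theorem~6.5(3)]{KwasniewskiMeyer2021} rather than Theorem~6.14, but the content is the same). Your extra detail showing $I_\red$ is a closed two-sided ideal via the trace property is a welcome elaboration of a step the paper leaves implicit. Your separability worry resolves favourably: the detection result of \cite[Theorem~6.5]{KwasniewskiMeyer2021} needs only aperiodicity, not separability; separability enters only in the simplicity characterisation (\cref{thm: ECP simplicity}), which this part of the argument does not use.
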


\begin{proof}
\Cref{lemma: tracial states extend to bicommutant} shows that each invariant tracial state $\tau\colon A \to \C$ induces a unique normal invariant tracial state $\tau''\colon A'' \to \C$. Moreover, if $\tau''\colon A'' \to \C$ is a normal invariant tracial state, then $\tau''\restr{A}$ is an invariant tracial state of $A$.

Since $A \rtimesred S = (A \rtimesfull S) / \NN_\ER$, the existence of the positive linear contraction $\tilde{\ER}$ that coincides with $\ER$ on $A \rtimesalg S$ follows immediately from \cref{thm: ER existence and properties}. Thus, each normal invariant tracial state $\tau''\colon A'' \to \C$ induces a state $\tau_\red = \tau'' \circ \tilde{\ER}$. To see that $\tau_\red$ is tracial, fix $s, t \in S$, $a \in A_{ss^*}$, and $b \in A_{tt^*}$. Recall from \cref{item: 1_s is central} that $1_{ts}$ is central in $A''$. Thus
\begin{align*}
\tau_\red(a\delta_s * b\delta_t) &= \tau''\big(\tilde{\ER}(a \delta_s * b\delta_t)\big) = \tau''\big(\alpha_s(\alpha_{s^*}(a) \, b) \, 1_{st}\big) \\
&= \tau''\big(\alpha_{s^*}''\big(\alpha_s(\alpha_{s^*}(a) \, b) \, 1_{st}\big)\big) \qquad \text{ (by the invariance of $\tau''$}) \\
&= \tau''\big(\alpha_{s^*}(a) \,b \, 1_{ts}\big) \qquad \text{ (since $\alpha_{s^*}''(1_{st}) = 1_{ts}$ by \cref{lemma: I_1st iso to I_1ts})} \\
&= \tau''\big(b \, \alpha_{s^*}(a) \, 1_{ts}\big) \qquad \text{ (since $\tau''$ is tracial and $1_{ts}$ is central in $A''$)} \\
&= \tau''\big(b \, \alpha_{s^*}''(a \, 1_{st})\big) \qquad \text{ (since $\alpha_{s^*}''(1_{st}) = 1_{ts}$ by \cref{lemma: I_1st iso to I_1ts})} \\
&= \tau''\big(b \, \alpha_t''(a \, 1_{st})\big) \qquad \text{ (since $\alpha_{s^*}''(a \, 1_{st}) = \alpha_t''(a \, 1_{st})$ by \cref{lemma: I_1st iso to I_1ts})} \\
&= \tau''\big(\alpha_{tt^*}''(b) \, \alpha_t''(a \, 1_{st})\big) \\
&= \tau''\big(\alpha_t''(\alpha_{t^*}(b) \, a) \, \alpha_t''(1_{st})\big) \\
&= \tau''\big(\alpha_t(\alpha_{t^*}(b) \, a) \, 1_{ts} \big) \qquad \text{ (since $\alpha_t''(1_{st}) = 1_{ts}$ by \cref{lemma: I_1st iso to I_1ts})} \\
&= \tau''\big(\tilde{\ER}(b\delta_t * a\delta_s)\big) = \tau_\red(b \delta_t * a\delta_s),
\end{align*}
and so $\tau_\red$ is tracial.

Now suppose that $\tau_\red\colon A \rtimesred S \to \C$ is a tracial state such that $\tau_\red = \tau'' \circ \tilde{\ER}$ for some normal tracial state $\tau''\colon A'' \to \C$. Then $\tau_\red \restr{A} = \tau'' \circ \tilde{\ER}\restr{A} = \tau''\restr{A}$ is a tracial state of $A$. We claim that $\tau_\red\restr{A}$ is invariant. To see this, fix $s \in S$ and $a \in A_{s^*s}$, and let $\{e_\lambda\}_\lambda$ be an approximate identity for $A_{ss^*}$. By \cref{lemma: alpha implemented by conjugation} we have
\begin{equation} \label{eqn: alpha_s(a) conjugation}
a_s(a) \delta_{ss^*} = \lim_\lambda \big( e_\lambda \delta_s * a \delta_1 * \alpha_{s^*}(e_\lambda) \delta_{s^*} \big).
\end{equation}
Therefore,
\begin{align*}
\tau_\red\restr{A}\big(\alpha_s(a) \delta_1\big) &= \tau_\red\restr{A}\big(\alpha_s(a) \delta_{ss^*}\big) \qquad \text{ (since $\alpha_s(a) \in A_{ss^*}$ and $ss^* \le 1$)} \\
&= \lim_\lambda \, \tau_\red\big( e_\lambda \delta_s * a \delta_1 * \alpha_{s^*}(e_\lambda) \delta_{s^*} \big) \qquad \text{ (by \cref{eqn: alpha_s(a) conjugation})} \\
&= \lim_\lambda \, \tau_\red\big( a \delta_1 * \alpha_{s^*}(e_\lambda) \delta_{s^*} * e_\lambda \delta_s \big) \qquad \text{ (since $\tau_\red$ is tracial)} \\
&= \lim_\lambda \, \tau_\red\big( a \delta_1 * \alpha_{s^*}\big( \alpha_s(\alpha_{s^*}(e_\lambda)) \, e_\lambda \big) \delta_{s^*s} \big) \\
&= \lim_\lambda \, \tau_\red\big( a \delta_1 * \alpha_{s^*}(e_\lambda^2) \delta_{s^*s} \big) \\
&= \lim_\lambda \, \tau_\red\big( a \, \alpha_{s^*}(e_\lambda^2) \, \delta_{s^*s} \big) \\
&= \tau_\red\big( \!\lim_\lambda \, a \, \alpha_{s^*}(e_\lambda^2) \, \delta_{s^*s} \big) \\
&= \tau_\red\restr{A}(a \delta_{s^*s}) \qquad \text{ (since $\big\{ \alpha_{s^*}(e_\lambda^2) \big\}_\lambda$ is an approximate identity for $A_{s^*s}$)} \\
&= \tau_\red\restr{A}(a \delta_1) \qquad \text{ (since $s^*s \le 1$)}.
\end{align*}
Thus $\tau_\red\restr{A}$ is an invariant tracial state of $A$.

Therefore, the correspondences between \cref{item: traces reduced invariant,item: traces reduced normal,item: traces reduced expectation} are given by
\[
\tau \mapsto \tau'' \mapsto \tau_\red = \tau'' \circ \tilde{\ER} \mapsto \tau_\red \restr{A}.
\]

For the ``moreover'' statement, suppose that $A$ is exact. Since normalised quasitraces on unital exact C*\nobreakdash-algebras are tracial states by \cite[Theorem~5.11]{Haagerup2014}, and since quasitraces on $A$ are in one-to-one correspondence with quasitraces on $A \otimes \KK$, \cref{prop: quasitraces and functionals} gives a one-to-one correspondence $\beta \mapsto \tau_\beta$ between functionals $\beta\colon \Cu(A) \to [0,\infty]$ satisfying $\beta([1_A]_\Cu) = 1$ and tracial states $\tau_\beta\colon A \to \C$ such that $\tau_\beta$ is invariant if and only if $\beta$ is invariant. If $\beta$ is faithful, then \cref{prop: quasitraces and functionals}\cref{item: qtf faithfulness} implies that $\tau_\beta$ is also faithful.

Finally, suppose that $A$ detects ideals in $A \rtimesred S$, and let $\tau\colon A \to \C$ be a faithful invariant tracial state. Then the quotient map $\Lambda\colon A \rtimesred S \to A \rtimesess S$ is injective by \cref{rem: simplicity and aperiodic actions and detecting ideals}\cref{item: detecting ideals implies injective quotient map}, so the singular ideal $\ker(\Lambda)$ is trivial, and $A \rtimesred S = A \rtimesess S$. Let $\tau_\red\colon A \rtimesred S \to \C$ be the induced tracial state satisfying $\tau_\red\restr{A} = \tau$. Let $I_\red \coloneqq \{ x \in A \rtimesred S : \tau_\red(x^*x) = 0 \}$, which is an ideal of $A \rtimesred S$. Then $\tau_\red$ is faithful if and only if $I_\red = \{0\}$. Since $A$ detects ideals in $A \rtimesred S$, to see that $I_\red = \{0\}$, it suffices to show that $A \cap I_\red = \{0\}$. Fix $a \in A \cap I_\red$. Since $\tau_\red\restr{A} = \tau$, we have $\tau(a^*a) = \tau_\red(a^*a) = 0$, and hence $a = 0$ because $\tau$ is faithful. Thus $A \cap I_\red = \{0\}$, and so $\tau_\red$ is faithful.
\end{proof}

\begin{remark}
\Cref{thm: traces reduced} is analogous to \cite[Theorem~3.14]{KMP2025} for groupoid C*-algebras. In the proof of \cref{thm: traces reduced} we required $A$ to detect ideals of $A \rtimesred S$ in order to construct a faithful tracial state of $A \rtimesred S$ from a faithful invariant tracial state of $A$. In \cite[Theorem~3.14]{KMP2025}, this condition is not needed. We have been unable to remove this assumption in the noncommutative setting, so it also appears in the main theorem of this section (\cref{thm: stable finiteness}), and in the corresponding result in the commutative setting (\cref{cor: stable finiteness groupoid}). The dichotomy results in \cite{KMP2025} require aperiodicity, which implies the detecting ideals condition (see \cref{rem: simplicity and aperiodic actions and detecting ideals}\cref{item: aperiodic implies detects ideals}).
\end{remark}

We now characterise stable finiteness of $A \rtimesred S = A \rtimesess S$ under certain hypotheses.

\begin{thm} \label{thm: stable finiteness}
Let $\alpha\colon S \curvearrowright A$ be an action of a unital inverse semigroup $S$ on a unital C*-algebra $A$. Consider the following statements.
\begin{enumerate}[label=(\arabic*)]
\item \label{item: A faithful trace} There exists a faithful invariant tracial state of $A$.
\item \label{item: A rtimes S ess faithful trace} There exists a faithful tracial state of $A \rtimesess S$.
\item \label{item: A rtimes S ess stably finite} The C*-algebra $A \rtimesess S$ is stably finite.
\item \label{item: compacts are CNP} The monoid $\Cuc(A)$ is completely non-paradoxical.
\item \label{item: Cu(A)^ll CNP} The monoid $\Cu(A)^\ll$ is completely non-paradoxical.
\item \label{item: any CNP element} There exists $x \in \Cu(A)^\ll {\setminus} \{0\}$ that is completely non-paradoxical.
\item \label{item: OP hm} There exists a nontrivial order-preserving homomorphism $\nu\colon \Cu(A)_\alpha \to [0,\infty]$.
\item \label{item: faithful invariant functional} There exists a faithful invariant functional $\beta\colon \Cu(A) \to [0,\infty]$ that is finite on $\Cu(A)^\ll$ and satisfies $\beta([1_A]_\Cu) = 1$.
\end{enumerate}
The following implications hold.
\begin{enumerate}[label=(\alph*)]
\item \label{item: stable finiteness all} In all cases, \cref{item: A rtimes S ess faithful trace} $\implies$ \cref{item: A rtimes S ess stably finite} $\implies$ \cref{item: compacts are CNP} and \cref{item: Cu(A)^ll CNP} $\implies$ \cref{item: any CNP element} $\implies$ \cref{item: OP hm}.
\item \label{item: stable finiteness detects ideals} If $A$ detects ideals in $A \rtimesred S$, then \cref{item: A faithful trace} $\implies$ \cref{item: A rtimes S ess faithful trace}.
\item \label{item: stable finiteness minimal} If $\alpha$ is minimal, then \cref{item: compacts are CNP} $\implies$ \cref{item: Cu(A)^ll CNP} and \cref{item: OP hm} $\implies$ \cref{item: faithful invariant functional}.
\item \label{item: stable finiteness exact} If $A$ is exact, then \cref{item: faithful invariant functional} $\implies$ \cref{item: A faithful trace}.
\end{enumerate}
In particular, if we assume that $A$ is exact and $A \rtimesred S$ is simple, then \crefrange{item: A faithful trace}{item: faithful invariant functional} are equivalent.
\end{thm}

\begin{proof}
We start by proving the implications in part~\cref{item: stable finiteness all}. That \cref{item: A rtimes S ess faithful trace} $\implies$ \cref{item: A rtimes S ess stably finite} is general theory: any unital C*-algebra that admits a faithful tracial state is stably finite (see, for example, the outline in \cite[Exercise~5.2]{RLL2000}).

To see that \cref{item: A rtimes S ess stably finite} $\implies$ \cref{item: compacts are CNP}, suppose that $A \rtimesess S$ is stably finite. Then by \cref{item: stably finite C*}, every projection in $(A \rtimesess S) \otimes \KK$ is finite. Suppose for contradiction that $\Cuc(A)$ is not completely non-paradoxical. Then there exist integers $k > l > 0$ and an element $x \in \Cuc(A) {\setminus} \{0\}$ that is $(k,l)$-paradoxical, so $kx \pathalpha lx$. Let $\hat{\iota}\colon\Cu(A) \to \Cu(A \rtimesess S)$ be the $\Cu$-morphism from \cref{lemma: Cu-embedding in crossed product is invariant} that is induced by the canonical embedding $A \owns a \mapsto a\delta_1 \in A \rtimesess S$ and has trivial kernel. Then $\hat{\iota}$ maps compact elements to compact elements by \cref{thm: homomorphism induces Cu-morphism}, so $\hat{\iota}(x) \in \Cuc(A \rtimesess S) {\setminus} \{0\}$. Since $kx \pathalpha lx$, \cref{cor: iota hat preserves precalpha} implies that
\[
k \hat{\iota}(x) = \hat{\iota}(kx) \le \hat{\iota}(lx) = l \hat{\iota}(x).
\]
But $k > l$, so $(k - l) \hat{\iota}(x) \in \Cuc(A \rtimesess S) {\setminus} \{0\}$, and we have
\[
(k - l) \hat{\iota}(x) + l \hat{\iota}(x) = k \hat{\iota}(x) \le l \hat{\iota}(x).
\]
Thus $l \hat{\iota}(x)$ is an infinite element of $\Cuc(A \rtimesess S)$, in the sense of \cref{item: infinite element of monoid}. Since $A \rtimesess S$ is stably finite, $\Cuc(A \rtimesess S) \cong V(A \rtimesess S)$ by \cite[Corollary~3.3]{BrownCiuperca2009} (as cited in \cite[Remark~3.5]{GardellaPerera2024}), so $l\hat{\iota}(x)$ is the Cuntz-equivalence class of a projection. But now $A \rtimesess S$ contains an infinite projection (in the sense of \cref{item: infinite element of C*-algebra}), which is a contradiction, because $A \rtimesess S$ is stably finite. Thus $\Cuc(A)$ is completely non-paradoxical.

That \cref{item: Cu(A)^ll CNP} $\implies$ \cref{item: any CNP element} is trivial.

To see that \cref{item: any CNP element} $\implies$ \cref{item: OP hm}, suppose that $x \in \Cu(A)^\ll {\setminus} \{0\}$ is completely non-paradoxical. Then by \cref{item: DCS CNP}, $(n+1)[x]_\alpha \not\le n[x]_\alpha$ for all $n \in \N$. Now \cref{thm: Tarski-Wehrung paradoxicality thm} implies that there exists an order-preserving homomorphism $\nu\colon \Cu(A)_\alpha \to [0,\infty]$ such that $\nu([x]_\alpha) = 1 \in (0,\infty)$, giving \cref{item: OP hm}.

For part~\cref{item: stable finiteness detects ideals}, suppose that $A$ detects ideals in $A \rtimesred S$. Then \cref{thm: traces reduced} gives \cref{item: A faithful trace} $\implies$ \cref{item: A rtimes S ess faithful trace}.

For part~\cref{item: stable finiteness minimal}, suppose that $\alpha$ is minimal. To see that \cref{item: compacts are CNP} $\implies$ \cref{item: Cu(A)^ll CNP}, suppose that $\Cuc(A)$ is completely non-paradoxical. Since $A$ is unital, \cref{rem: [p]_Cu ll [p]_Cu} implies that $[1_A]_\Cu \in \Cuc(A) {\setminus} \{0\}$, so $[1_A]_\Cu$ is completely non-paradoxical. Since $[1_A]_\Cu \in \Cu(A)^\ll {\setminus} \{0\}$, \cref{thm: functionals if and only if non-paradoxical} implies that $\Cu(A)^\ll$ is completely non-paradoxical, giving \cref{item: Cu(A)^ll CNP}.

To see that \cref{item: OP hm} $\implies$ \cref{item: faithful invariant functional}, suppose that $\nu\colon \Cu(A)_\alpha \to [0,\infty]$ is a nontrivial order-preserving homomorphism. Since $\alpha$ is minimal, \cref{thm: OP hom on DCS induces functional} implies that there exists a faithful invariant functional $\beta\colon \Cu(A) \to [0,\infty]$ that is finite on $\Cu(A)^\ll$ and satisfies $\beta([1_A]_\Cu) = 1$, giving \cref{item: faithful invariant functional}.

For part~\cref{item: stable finiteness exact}, assume that $A$ is exact. To see that \cref{item: faithful invariant functional} $\implies$ \cref{item: A faithful trace}, suppose there exists a faithful invariant functional $\beta\colon \Cu(A) \to [0,\infty]$ that is finite on $\Cu(A)^\ll$ and satisfies $\beta([1_A]_\Cu) = 1$. Since $A$ is exact, \cref{thm: traces reduced} implies that $A$ admits a faithful invariant tracial state.

Finally, suppose that $A$ is exact and $A \rtimesred S$ is simple. Then by \cref{rem: simplicity and aperiodic actions and detecting ideals}\cref{item: simple iff minimal and detects ideals}, $\alpha$ is minimal and $A$ detects ideals in $A \rtimesred S$. It follows immediately that \crefrange{item: A faithful trace}{item: faithful invariant functional} are equivalent.
\end{proof}

\section{Purely infinite crossed products and the dichotomy}
\label{sec: purely infinite}

The following theorem is the analogue of \cref{thm: stable finiteness} in the purely infinite case.

\begin{thm} \label{thm: pure infiniteness}
Let $\alpha\colon S \curvearrowright A$ be an action of a unital inverse semigroup $S$ on a unital and exact C*-algebra $A$ such that $A \rtimesred S$ is simple. Then $\alpha$ is minimal, $A$ detects ideals in $A \rtimesred S$, and $A \rtimesred S = A \rtimesess S$. Consider the following statements.
\begin{enumerate}[label=(\arabic*)]
\item \label{item: PIS PIM} The dynamical Cuntz semigroup $\Cu(A)_\alpha$ is purely infinite.
\item \label{item: PIS 21P} Every element of $\Cu(A)^\ll {\setminus} \{0\}$ is $(2,1)$-paradoxical.
\item \label{item: PIS PIC*} The C*-algebra $A \rtimesess S$ is purely infinite.
\item \label{item: PIS NTS} The C*-algebra $A \rtimesess S$ admits no tracial states.
\item \label{item: PIS OPH} There does not exist a nontrivial order-preserving homomorphism $\nu\colon \Cu(A)_\alpha \to [0,\infty]$.
\end{enumerate}
Then \cref{item: PIS PIM} $\implies$ \cref{item: PIS 21P} $\implies$ \cref{item: PIS PIC*} $\implies$ \cref{item: PIS NTS} $\implies$ \cref{item: PIS OPH}. If the dynamical Cuntz semigroup $\Cu(A)_\alpha$ has plain paradoxes, then \cref{item: PIS OPH} $\implies$ \cref{item: PIS PIM}, and \crefrange{item: PIS PIM}{item: PIS OPH} are equivalent.
\end{thm}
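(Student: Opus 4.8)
The plan is to establish the cycle of implications \ref{item: PIS PIM} $\implies$ \ref{item: PIS 21P} $\implies$ \ref{item: PIS PIC*} $\implies$ \ref{item: PIS NTS} $\implies$ \ref{item: PIS OPH}, together with \ref{item: PIS OPH} $\implies$ \ref{item: PIS PIM} under the plain-paradoxes hypothesis; the stated equivalence is then immediate. First I would dispatch the preliminary simplicity claim: since $\alpha$ is aperiodic and minimal, \cref{thm: ECP simplicity} yields that $A \rtimesess S$ is simple, and the standing hypothesis $A \rtimesred S = A \rtimesess S$ transfers simplicity to $A \rtimesred S$. This simplicity is what makes the purely infinite implications work, so I would record it up front.

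For \ref{item: PIS PIM} $\implies$ \ref{item: PIS 21P}, I would take any $x \in \Cu(A)^\ll {\setminus} \{0\}$, observe that $[x]_\alpha \neq [0]_\alpha$ in $\Cu(A)_\alpha$ by \cref{rem: alpha class properties}, and use pure infiniteness of the monoid to get $2[x]_\alpha \le [x]_\alpha$; by the definition of the order on $\Cu(A)_\alpha$ together with \cref{rem: Cu-paradoxicality} this says precisely that $x$ is $(2,1)$-paradoxical. For the crucial \ref{item: PIS 21P} $\implies$ \ref{item: PIS PIC*}, I would apply \ref{item: PIS 21P} to the element $[1_A]_\Cu$, which lies in $\Cu(A)^\ll {\setminus} \{0\}$ by \cref{rem: [p]_Cu ll [p]_Cu}. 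This gives $2[1_A]_\Cu \pathalpha [1_A]_\Cu$, and applying the $\Cu$-morphism $\hat{\iota}$ of \cref{cor: iota hat preserves precalpha}, combined with the observation that $\hat{\iota}([1_A]_\Cu) = [1_A\delta_1]_\Cu = [1_{A \rtimesess S}]_\Cu$ since $1_A\delta_1$ is the unit of the crossed product, yields $2[1_{A \rtimesess S}]_\Cu \le [1_{A \rtimesess S}]_\Cu$. Thus the unit of the simple, unital algebra $A \rtimesess S$ is properly infinite, and I would invoke the standard structure theory of simple C*-algebras to conclude that $A \rtimesess S$ is purely infinite.

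For \ref{item: PIS PIC*} $\implies$ \ref{item: PIS NTS}, I would note that a purely infinite C*-algebra admits no tracial state: a tracial state $\tau$ together with a decomposition of the properly infinite unit into two orthogonal subprojections each equivalent to $1$ would force $1 = \tau(1) \ge 2\tau(1) = 2$, a contradiction. For \ref{item: PIS NTS} $\implies$ \ref{item: PIS OPH} I would argue contrapositively: a nontrivial order-preserving homomorphism $\nu\colon \Cu(A)_\alpha \to [0,\infty]$ induces, via \cref{thm: OP hom on DCS induces functional} and minimality, a faithful invariant functional $\beta\colon \Cu(A) \to [0,\infty]$ with $\beta([1_A]_\Cu) = 1$; since $A$ is exact, \cref{thm: traces reduced} converts $\beta$ into an invariant tracial state on $A$ and hence a tracial state on $A \rtimesred S = A \rtimesess S$, contradicting \ref{item: PIS NTS}.

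Finally, for \ref{item: PIS OPH} $\implies$ \ref{item: PIS PIM} under the plain-paradoxes hypothesis, I would fix an arbitrary nonzero element of $\Cu(A)_\alpha$, necessarily of the form $[x]_\alpha$ with $x \in \Cu(A)^\ll {\setminus} \{0\}$ (and $\Cu(A)_\alpha \neq \{0\}$ by \cref{rem: alpha class properties}). Since \ref{item: PIS OPH} denies the existence of any nontrivial order-preserving homomorphism, in particular of one sending $[x]_\alpha$ to $1$, \cref{thm: Tarski-Wehrung paradoxicality thm} forces $(n+1)[x]_\alpha \le n[x]_\alpha$ for some $n \in \N$; the plain-paradoxes property of $\Cu(A)_\alpha$ then upgrades this to $2[x]_\alpha \le [x]_\alpha$, so $[x]_\alpha$ is properly infinite and $\Cu(A)_\alpha$ is purely infinite. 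I expect the main obstacle to be the step \ref{item: PIS 21P} $\implies$ \ref{item: PIS PIC*}: every other implication stays inside the preordered-monoid and Cuntz-semigroup framework already developed, but passing from proper infiniteness of the unit to pure infiniteness of $A \rtimesess S$ genuinely invokes the external theory of simple C*-algebras, and it is precisely here that simplicity of $A \rtimesess S$ — and therefore the hypotheses of aperiodicity and $A \rtimesred S = A \rtimesess S$ — is indispensable.
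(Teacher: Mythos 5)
Your overall architecture---the chain \cref{item: PIS PIM} $\implies$ \cref{item: PIS 21P} $\implies$ \cref{item: PIS PIC*} $\implies$ \cref{item: PIS NTS} $\implies$ \cref{item: PIS OPH}, plus \cref{item: PIS OPH} $\implies$ \cref{item: PIS PIM} under plain paradoxes---is the paper's, and your handling of the simplicity claim, of \cref{item: PIS PIM} $\implies$ \cref{item: PIS 21P}, of \cref{item: PIS PIC*} $\implies$ \cref{item: PIS NTS}, of \cref{item: PIS NTS} $\implies$ \cref{item: PIS OPH}, and of \cref{item: PIS OPH} $\implies$ \cref{item: PIS PIM} matches the paper's arguments. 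The genuine gap is in the step you yourself flag as the crux, \cref{item: PIS 21P} $\implies$ \cref{item: PIS PIC*}. You use hypothesis \cref{item: PIS 21P} only for the single element $[1_A]_\Cu$, conclude that the unit of $A \rtimesess S$ is properly infinite, and then invoke ``standard structure theory of simple C*-algebras'' to get pure infiniteness. No such theorem exists: a simple, unital, separable, even nuclear C*-algebra can have a properly infinite unit without being purely infinite. This is precisely R{\o}rdam's example (Acta Math.~2003) of a simple unital nuclear C*-algebra containing both an infinite and a nonzero finite projection: since the infinite projection is properly infinite and full, the unit is itself properly infinite, yet the nonzero finite projection rules out pure infiniteness. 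Note also that any unital C*-algebra with properly infinite unit admits no tracial states, so R{\o}rdam's example is moreover traceless; hence the gap cannot be closed by combining proper infiniteness of the unit with \cref{item: PIS NTS} either.

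What the paper does instead uses the full strength of \cref{item: PIS 21P} together with a crossed-product-specific criterion that replaces the false general principle. For each $a \in A^+ {\setminus} \{0\}$, one chooses by axiom~\cref{item: O2} of \cref{defn: Cu-semigroup} a sequence $x_n \ll x_{n+1}$ with $\sup_{n \in \N} (x_n) = [a]_\Cu$; each $x_n$ is $(2,1)$-paradoxical by hypothesis, so $2x_n \pathalpha x_n$, whence $\hat{\iota}(2x_n) \le \hat{\iota}(x_n)$ by \cref{cor: iota hat preserves precalpha}, and passing to suprema (as $\hat{\iota}$ is a $\Cu$-morphism) gives $2[\iota(a)]_\Cu \le [\iota(a)]_\Cu$. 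Thus \emph{every} nonzero positive element of $A$ becomes properly infinite in $A \rtimesess S$, not just the unit. The essential final ingredient is then \cite[Corollary~6.7]{KwasniewskiMeyer2021}: under the standing hypotheses, $A \rtimesess S$ is purely infinite if and only if every element of $A^+ {\setminus} \{0\}$ is infinite in $A \rtimesess S$. It is this corollary---which leans on aperiodicity of the action, the same hypothesis that gives simplicity---that bridges from infiniteness of elements of the subalgebra $A$ to pure infiniteness of the whole crossed product. Your proposal is missing both the quantifier (all of $A^+ {\setminus} \{0\}$ rather than the unit alone) and this criterion, and the shortcut offered in their place is blocked by R{\o}rdam's counterexample.
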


\begin{proof}
Since $A \rtimesred S$ is simple, parts~\cref{item: simple iff minimal and detects ideals,item: detecting ideals implies injective quotient map} of \cref{rem: simplicity and aperiodic actions and detecting ideals} together imply that $\alpha$ is minimal, $A$ detects ideals in $A \rtimesred S$, and the singular ideal of $A \rtimesred S$ is trivial, so $A \rtimesred S = A \rtimesess S$.

To see that \cref{item: PIS PIM} $\implies$ \cref{item: PIS 21P}, fix $x \in \Cu(A)^\ll {\setminus} \{0\}$. By assumption, $\Cu(A)_\alpha$ is purely infinite, and hence $[x]_\alpha$ is properly infinite; that is, $2[x]_\alpha \le [x]_\alpha$. Thus, by definition of the partial order on $\Cu(A)_\alpha$, we have $2x \pathalpha x$, so $x$ is $(2,1)$-paradoxical.

To see that \cref{item: PIS 21P} $\implies$ \cref{item: PIS PIC*}, suppose that every element of $\Cu(A)^\ll {\setminus} \{0\}$ is $(2,1)$-paradoxical. Let $\iota\colon A \to A \rtimesess S$ be the canonical embedding $a \mapsto a\delta_1$, and let $\hat{\iota}\colon\Cu(A) \to \Cu(A \rtimesess S)$ be the induced $\Cu$-morphism from \cref{lemma: Cu-embedding in crossed product is invariant}. By \cite[Corollary~6.7]{KwasniewskiMeyer2021}, $A \rtimesess S$ is purely infinite if and only if every element of $A^+ {\setminus} \{0\}$ is infinite in $A \rtimesess S$, in the sense of \cref{item: infinite element of C*-algebra}. Fix $a \in A^+ {\setminus} \{0\}$. By axiom~\cref{item: O2} of \cref{defn: Cu-semigroup}, there is a sequence $\{x_n\}_{n \in \N} \subseteq \Cu(A) {\setminus} \{0\}$ such that $x_n \ll x_{n+1}$ for all $n \in \N$ and $[a]_\Cu = \sup_{n \in \N} (x_n)$. By assumption, each $x_n$ is $(2,1)$-paradoxical, so $2x_n \pathalpha x_n$ for all $n \in \N$. It follows by \cref{cor: iota hat preserves precalpha} that $\hat{\iota}(2x_n) \le \hat{\iota}(x_n)$ for all $n \in \N$. Since $\hat{\iota}$ is a $\Cu$-morphism, it preserves suprema of increasing sequences, and therefore,
\begin{align*}
2[\iota(a)]_\Cu &= 2\hat{\iota}([a]_\Cu) = \hat{\iota}(2[a]_\Cu) = \hat{\iota}\big( 2 \sup_{n \in \N} (x_n) \big) = \sup_{n \in \N} \big(\hat{\iota}(2x_n)\big) \\
&\le \sup_{n \in \N} \big(\hat{\iota}(x_n)\big) = \hat{\iota}\big( \sup_{n \in \N} (x_n) \big) = \hat{\iota}([a]_\Cu) = [\iota(a)]_\Cu.
\end{align*}
Since $\iota$ is injective, $[\iota(a)]_\Cu \ne 0$, so $\iota(a)$ is infinite (in fact, properly infinite) in $A \rtimesess S$. Thus $A \rtimesess S$ is purely infinite.

That \cref{item: PIS PIC*} $\implies$ \cref{item: PIS NTS} is immediate because purely infinite C*-algebras do not admit tracial states (see, for example, \cite[Proposition~V.2.2.29]{Blackadar2006}).

To see that \cref{item: PIS NTS} $\implies$ \cref{item: PIS OPH}, we prove the contrapositive. Suppose there exists a nontrivial order-preserving homomorphism $\nu\colon \Cu(A)_\alpha \to [0,\infty]$. Then since $\alpha$ is minimal, we can apply \cref{thm: OP hom on DCS induces functional} to find an invariant functional $\beta\colon \Cu(A) \to [0,\infty]$ such that $\beta([1_A]_\Cu) = 1$. Then since $A$ is exact and $A$ detects ideals in $A \rtimesred S$, \cref{thm: traces reduced} implies that $A \rtimesred S = A \rtimesess S$ admits a tracial state.

Suppose now that $\Cu(A)_\alpha$ has plain paradoxes. Since $A$ is unital, we have $\Cu(A)_\alpha \ne \{0\}$ by \cref{item: [0]_alpha singleton}. To see that \cref{item: PIS OPH} $\implies$ \cref{item: PIS PIM}, assume that \cref{item: PIS OPH} holds, and fix $x \in \Cu(A)^\ll$. If $x = 0$, then $2[x]_\alpha = [0]_\alpha \le [x]_\alpha$. Suppose that $x \ne 0$. Then there must exist $n \in \N$ such that $(n+1)[x]_\alpha \le n[x]_\alpha$, because if not, then \cref{thm: Tarski-Wehrung paradoxicality thm} would produce a nontrivial order-preserving homomorphism $\nu\colon \Cu(A)_\alpha \to [0,\infty]$ such that $\nu([x]_\alpha) = 1$, contradicting \cref{item: PIS OPH}. Since $\Cu(A)_\alpha$ has plain paradoxes, it follows that $2[x]_\alpha \le [x]_\alpha$. In particular, every element of $\Cu(A)_\alpha {\setminus} \{0\}$ is properly infinite, and thus $\Cu(A)_\alpha$ is purely infinite, giving \cref{item: PIS PIM}.
\end{proof}

\begin{thm} \label{thm: dichotomy result}
Let $\alpha\colon S \curvearrowright A$ be an action of a unital inverse semigroup $S$ on a unital and exact C*-algebra $A$. Suppose that $A \rtimesred S$ is simple, and that the dynamical Cuntz semigroup $\Cu(A)_\alpha$ (as defined in \cref{defn: dynamical cuntz semigroup}) has plain paradoxes. Then $A \rtimesred S$ is either stably finite or purely infinite.
\end{thm}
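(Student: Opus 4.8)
The plan is to reduce the statement entirely to the two characterisation theorems already in hand, \cref{thm: stable finiteness} and \cref{thm: pure infiniteness}, and then to split into two cases according to whether or not the dynamical Cuntz semigroup admits a nontrivial order-preserving homomorphism into $[0,\infty]$.

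First I would check that the hypotheses of \cref{thm: dichotomy result} are exactly those needed to invoke both preceding results. The action $\alpha$ is minimal, aperiodic, and acts on a separable, unital, and exact C*-algebra $A$ with $A \rtimesred S = A \rtimesess S$. In particular, the final clause of \cref{thm: stable finiteness} applies, so its conditions \cref{item: A faithful trace}--\cref{item: faithful invariant functional} are mutually equivalent; and \cref{thm: pure infiniteness} applies, so, using the plain paradoxes hypothesis, its conditions \cref{item: PIS PIM}--\cref{item: PIS OPH} are mutually equivalent.

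Next I would carry out the case distinction on the assertion $P$: ``there exists a nontrivial order-preserving homomorphism $\nu\colon \Cu(A)_\alpha \to [0,\infty]$''. If $P$ holds, then condition \cref{item: OP hm} of \cref{thm: stable finiteness} is satisfied, and by the equivalence of that theorem's conditions we obtain \cref{item: A rtimes S ess stably finite}, that is, $A \rtimesess S = A \rtimesred S$ is stably finite. If instead $P$ fails, then this is precisely condition \cref{item: PIS OPH} of \cref{thm: pure infiniteness}, and since $\Cu(A)_\alpha$ has plain paradoxes, the equivalence of that theorem's conditions yields \cref{item: PIS PIC*}, that is, $A \rtimesess S = A \rtimesred S$ is purely infinite. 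Since $P$ either holds or fails, in all cases $A \rtimesred S$ is stably finite or purely infinite.

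The entire content of this argument sits in the two preceding theorems; the dichotomy itself follows from the tautology that $\nu$ either exists or does not, so there is no genuine obstacle at this stage. The one point I would emphasise is that the plain paradoxes hypothesis is exactly what makes the two cases jointly exhaustive: it supplies the reverse implication \cref{item: PIS OPH} $\implies$ \cref{item: PIS PIM} in \cref{thm: pure infiniteness}, so that the \emph{absence} of a state forces pure infiniteness rather than merely failing to force stable finiteness. I would also note that the two conclusions are mutually exclusive for a nonzero algebra, since a purely infinite C*-algebra contains properly infinite elements and hence cannot be stably finite; thus the ``either/or'' is in fact an exclusive dichotomy.
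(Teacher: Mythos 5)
Your proposal is correct and is essentially the paper's own proof: the paper likewise notes that under these hypotheses all conditions of \cref{thm: stable finiteness} and of \cref{thm: pure infiniteness} are respectively equivalent, and then obtains the dichotomy from the existence or non-existence of a nontrivial order-preserving homomorphism $\nu\colon \Cu(A)_\alpha \to [0,\infty]$. Your case analysis and the closing remark on exclusivity simply spell out in more detail what the paper states in one sentence.
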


\begin{proof}
Under these hypotheses, all statements in \cref{thm: stable finiteness,thm: pure infiniteness} are, respectively, equivalent, so the dichotomy is given by the existence or non-existence of a nontrivial order-preserving homomorphism $\nu\colon \Cu(A)_\alpha \to [0,\infty]$.
\end{proof}

\section{Cuntz semigroup retracts and paradoxicality}
\label{sec: retracts}

\Cref{thm: stable finiteness,thm: pure infiniteness,thm: dichotomy result} are powerful, but are stated in terms of the Cuntz semigroup $\Cu(A)$ and its dynamical quotient $\Cu(A)_\alpha$. In general, $\Cu(A)$ is difficult to compute. Even for $A = C_0(X)$, the Cuntz semigroup has only been concretely computed for $X$ with covering dimension $\dim(X) \le 3$ (see, for example, \cite{RobertTikuisis2011, Robert2013b}). This stands in contrast to the explicit type semigroups described in the groupoid literature (for example, in \cite{RS2020, BoenickeLi2020, Ma2022, KMP2025}). Using \cref{thm: stable finiteness,thm: pure infiniteness,thm: dichotomy result} as our guide, we show that we may instead make use of a more computable subobject of $\Cu(A)$ called a \emph{retract}. We begin by defining retracts and showing that the dynamical relations we defined on $\Cu(A)$ in \cref{sec: dynamical Cuntz semigroup} pass naturally to retracts.

In \cref{sec: groupoid C*-algebras} we apply results from \cref{sec: retracts} to the commutative setting where $A = C(X)$ in order to recover the type semigroups associated to C*-algebras of groupoids.

\begin{defn}[{\cite[Definition~3.14]{ThielVilalta2022}}] \label{defn: retract}
Let $R$ and $T$ be $\Cu$-semigroups. We call $R$ a \emph{retract} of $T$ if there exist a $\Cu$-morphism $\rho\colon R \to T$ and a generalised $\Cu$-morphism $\sigma\colon T \to R$ such that $\sigma \circ \rho = \id_R$. In this case, we may also say that $(R,\rho,\sigma)$ is a retract of $T$. We call $R$ a \emph{nonzero} retract if $R \ne \{0\}$.
\end{defn}

\begin{remark} \label{rem: retract properties} Every $\Cu$-semigroup is a retract of itself. Suppose that $(R,\rho,\sigma)$ is retract of a $\Cu$-semigroup $T$. Then the following hold.
\begin{enumerate}[label=(\alph*), ref={\cref{rem: retract properties}(\alph*)}]
\item \label{item: retract maps} Since $\sigma \circ \rho = \id_R$, we have $\rho \circ \sigma\restr{\rho(R)} = \id_{\rho(R)}$, and $\rho$ is injective and $\sigma$ is surjective. It is straightforward to show that $\rho(R)$ is a $\Cu$-semigroup and that $\sigma\restr{\rho(R)}\colon \rho(R) \to R$ is an injective $\Cu$-morphism.
\item \label{item: R^ll and T^ll} The collection $R^\ll \coloneqq \{ r \in R : r \ll x \text{ for some } x \in R \}$ is a submonoid of $R$, and $\sigma(T^\ll)$ is also a submonoid of $R$. Since $\rho$ preserves the sequentially way-below relation $\ll$, $\rho(R^\ll)$ is a submonoid of $T^\ll$, and thus $R^\ll = \sigma(\rho(R^\ll)) \subseteq \sigma(T^\ll)$.
\end{enumerate}
\end{remark}

\begin{defn} \label{defn: invariant retract}
Let $\beta\colon S \curvearrowright T$ be an action of a unital inverse semigroup $S$ on a $\Cu$-semigroup $T$, as defined in \cref{defn: Cu-semigroup action}.
\begin{enumerate}[label=(\alph*)]
\item We say that a retract $(R,\rho,\sigma)$ of $T$ is \emph{invariant} if $\rho(R)$ is invariant under $\beta$, in the sense that $\beta_s(T_{s^*s} \cap \rho(R)) \subseteq \rho(R)$ for each $s \in S$.
\item We say that an invariant retract $(R,\rho,\sigma)$ of $T$ is \emph{strongly invariant} if, for each $s \in S$, $\rho(\sigma(T_{s^*s})) \subseteq T_{s^*s}$ and $\sigma \circ \beta_s \circ \rho \circ \sigma\restr{T_{s^*s}} = \sigma \circ \beta_s$.
\end{enumerate}
\end{defn}

\begin{thm} \label{thm: invariant retract induces action}
Let $\beta\colon S \curvearrowright T$ be an action of a unital inverse semigroup $S$ on a $\Cu$-semigroup $T$, and let $(R,\rho,\sigma)$ be an invariant retract of $T$. Then $\beta$ induces an action $\beta^R\colon S \curvearrowright R$ with ideals $\{ R_{s^*s}\coloneqq\rho^{-1}(T_{s^*s}) \}_{s \in S}$ and $\Cu$-morphisms $\{ \beta^R_s\coloneqq \sigma \circ \beta_s \circ \rho\restr{R_{s^*s}}\colon R_{s^*s} \to R_{ss^*} \}_{s \in S}$ such that $\rho \circ \beta^R_s = \beta_s \circ \rho\restr{R_{s^*s}}$ for each $s \in S$. If $(R,\rho,\sigma)$ is strongly invariant, then $\beta^R_s \circ \sigma\restr{T_{s^*s}} = \sigma \circ \beta_s$ for each $s \in S$.
\end{thm}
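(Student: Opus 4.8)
The plan is to establish the final identity by a direct pointwise computation, since the two defining conditions of strong invariance are tailored precisely to yield it; I will take for granted the first part of the theorem, so that $\beta^R_s\colon R_{s^*s} \to R_{ss^*}$ is already a well-defined $\Cu$-morphism. Fix $s \in S$. Both $\beta^R_s \circ \sigma\restr{T_{s^*s}}$ and $\sigma \circ \beta_s$ are maps $T_{s^*s} \to R$, so it suffices to show that $\beta^R_s\big(\sigma(t)\big) = \sigma\big(\beta_s(t)\big)$ for every $t \in T_{s^*s}$.

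First I would verify that the left-hand composition makes sense, i.e.\ that $\sigma\restr{T_{s^*s}}$ takes values in the domain $R_{s^*s} = \rho^{-1}(T_{s^*s})$ of $\beta^R_s$. For $t \in T_{s^*s}$ this amounts to checking $\rho\big(\sigma(t)\big) \in T_{s^*s}$, which is exactly the first strong-invariance condition $\rho\big(\sigma(T_{s^*s})\big) \subseteq T_{s^*s}$. Hence $\sigma(t) \in R_{s^*s}$ and $\beta^R_s\big(\sigma(t)\big)$ is defined. Substituting the definition $\beta^R_s = \sigma \circ \beta_s \circ \rho\restr{R_{s^*s}}$ then gives
\[
\beta^R_s\big(\sigma(t)\big) = \sigma\Big(\beta_s\big(\rho(\sigma(t))\big)\Big) = \big(\sigma \circ \beta_s \circ \rho \circ \sigma\big)(t),
\]
where $\beta_s$ may be applied because $\rho(\sigma(t)) \in T_{s^*s}$ by the previous step. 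Invoking the second strong-invariance condition $\sigma \circ \beta_s \circ \rho \circ \sigma\restr{T_{s^*s}} = \sigma \circ \beta_s$ then identifies the right-hand side with $\sigma\big(\beta_s(t)\big)$, which is what I want.

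I do not anticipate a real obstacle: once the two strong-invariance hypotheses are available the statement is essentially definitional. The only point demanding attention is the bookkeeping of domains---ensuring that $\sigma(t)$ lands in $R_{s^*s}$ so that $\beta^R_s$ applies, and that $\rho(\sigma(t))$ lies in $T_{s^*s}$ so that $\beta_s$ applies---and this is precisely what the first condition of strong invariance supplies.
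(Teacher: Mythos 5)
Your treatment of the final claim is correct, and it is in fact exactly the paper's argument: the first strong-invariance condition gives $\sigma(T_{s^*s}) \subseteq \rho^{-1}(T_{s^*s}) = R_{s^*s}$, so the composite is defined, and then $\beta^R_s \circ \sigma\restr{T_{s^*s}} = \sigma \circ \beta_s \circ \rho \circ \sigma\restr{T_{s^*s}} = \sigma \circ \beta_s$ by the second condition. No issue there.

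The genuine gap is that you have proved only the last sentence of the theorem and explicitly assumed the rest, which is where almost all of the content lies, and it is not ``essentially definitional.'' Concretely, the proposal is missing: (i) the verification that each $R_{s^*s} = \rho^{-1}(T_{s^*s})$ is an ideal of $R$ (a submonoid that is hereditary and closed under suprema of increasing sequences, using that $\rho$ is an order-preserving $\Cu$-morphism); (ii) the intertwining identity $\rho \circ \beta^R_s = \beta_s \circ \rho\restr{R_{s^*s}}$, which is where the \emph{invariance} hypothesis $\beta_s(T_{s^*s} \cap \rho(R)) \subseteq \rho(R)$ enters --- without it one cannot cancel $\rho \circ \sigma$ after applying $\beta_s$, since $\rho\circ\sigma$ is only the identity on $\rho(R)$, not on all of $T$; (iii) the fact that $\beta^R_s = \sigma \circ \beta_s \circ \rho\restr{R_{s^*s}}$ preserves the sequentially way-below relation $\ll$ --- this is genuinely not automatic, because $\sigma$ is merely a \emph{generalised} $\Cu$-morphism, so the composition a priori only preserves $\le$ and suprema; the paper's proof handles this with an increasing-sequence argument that crucially uses the identity from (ii); and (iv) the action axioms of \cref{defn: Cu-semigroup action}: the domain identity $R_{(st)^*st} = (\beta^R_t)^{-1}(R_{s^*s} \cap R_{tt^*})$, the composition law $\beta^R_{st} = \beta^R_s \circ \beta^R_t\restr{R_{(st)^*st}}$, bijectivity of each $\beta^R_s$ (with inverse $\beta^R_{s^*}$), and $R_1 = R$. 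Moreover, your argument for the final claim itself invokes (ii) implicitly (through the well-definedness of $\beta^R_s$ as a map into $R_{ss^*}$), so the assumed portion is not logically independent of what you prove. As written, the proposal establishes roughly one line of the theorem and defers the remainder.
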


\begin{proof}
Fix $s \in S$. Since $T_{s^*s}$ is a submonoid of $T$ and $\rho$ is additive, $\rho^{-1}(T_{s^*s})$ is a submonoid of $R$. To see that $\rho^{-1}(T_{s^*s})$ is hereditary, fix $a \in R$ and $b \in \rho^{-1}(T_{s^*s})$ such that $a \le b$. Then $\rho(a) \le \rho(b)$ because $\rho$ is order-preserving. Since $\rho(b) \in T_{s^*s}$ and $T_{s^*s}$ is hereditary, we have $\rho(a) \in T_{s^*s}$, and so $a \in \rho^{-1}(T_{s^*s})$. Thus $\rho^{-1}(T_{s^*s})$ is hereditary. To see that $\rho^{-1}(T_{s^*s})$ is closed under suprema of increasing sequences, fix an increasing sequence $\{a_n\}_{n \in \N} \subseteq \rho^{-1}(T_{s^*s})$ and let $a = \sup_{n \in \N} (a_n)$. We must show that $a \in \rho^{-1}(T_{s^*s})$. Since $\rho$ is a $\Cu$-morphism, $\{\rho(a_n)\}_{n \in \N}$ is an increasing sequence in $T_{s^*s}$ and $\rho(a) = \sup_{n \in \N} (\rho(a_n))$. Thus, since $T_{s^*s}$ is an ideal of $T$, we have $\rho(a) \in T_{s^*s}$, and hence $a \in \rho^{-1}(T_{s^*s})$. Therefore, $R_{s^*s} \coloneqq \rho^{-1}(T_{s^*s})$ is an ideal of $R$.

Fix $s \in S$, and define $\beta^R_s\coloneqq \sigma \circ \beta_s \circ \rho\restr{R_{s^*s}}$. To see that $\rho \circ \beta^R_s = \beta_s \circ \rho\restr{R_{s^*s}}$, fix $a \in R_{s^*s}$. Then $\rho(a) \in T_{s^*s} \cap \rho(R)$, so $\beta_s(\rho(a)) \in T_{ss^*} \cap \rho(R)$ because $\rho(R)$ is invariant under $\beta$. By \cref{item: retract maps}, we have $\rho \circ \sigma\restr{\rho(R)} = \id_{\rho(R)}$, and therefore,
\[
(\rho \circ \beta^R_s)(a) = (\rho \circ \sigma\restr{\rho(R)} \circ \beta_s \circ \rho\restr{R_{s^*s}})(a) = (\id_{\rho(R)} \circ \beta_s \circ \rho\restr{R_{s^*s}})(a) = (\beta_s \circ \rho\restr{R_{s^*s}})(a).
\]
Thus $\rho \circ \beta^R_s = \beta_s \circ \rho\restr{R_{s^*s}}$. Moreover, since $(\rho \circ \beta^R_s)(a) = (\beta_s \circ \rho\restr{R_{s^*s}})(a) \in T_{ss^*}$, we have $\beta^R_s(a) \in \rho^{-1}(T_{ss^*}) = R_{ss^*}$, so the codomain of $\beta^R_s$ is $R_{ss^*}$.

Since $\beta^R_s$ is a composition of generalised $\Cu$-morphisms, it is itself a generalised $\Cu$-morphism. To see that $\beta^R_s$ is a $\Cu$-morphism, we must show that it preserves the sequentially way-below relation $\ll$. For this, fix $a, b \in R_{s^*s}$ such that $a \ll b$. Suppose that $\{b_n\}_{n \in \N} \subseteq R_{ss^*}$ is an increasing sequence such that $\sup_{n \in \N} (b_n) = \beta^R_s(b)$. Then, since $\rho$ is a $\Cu$-morphism and $\rho \circ \beta^R_s = \beta_s \circ \rho\restr{R_{s^*s}}$, we have
\[
\sup_{n \in \N} (\rho(b_n)) = \rho\big(\beta^R_s(b)\big) = \beta_s(\rho(b)).
\]
Since $\beta_s$ and $\rho$ preserve $\ll$, we have $\beta_s(\rho(a)) \ll \beta_s(\rho(b))$. Therefore, there exists $m \in \N$ such that $\beta_s(\rho(a)) \le \rho(b_m)$. Thus, since $\sigma$ is order-preserving, we have
\[
\beta^R_s(a) = (\sigma \circ \beta_s \circ \rho\restr{R_{s^*s}})(a) \le \sigma(\rho(b_m)) = b_m.
\]
Therefore, $\beta^R_s(a) \ll \beta^R_s(b)$, and $\beta^R_s$ is a $\Cu$-morphism.

We now show that $\beta^R$ satisfies \cref{item: Cu-semigroup action composition}. For this, fix $s, t \in S$. Then, since
\[
T_{(st)^*st} = \beta_t^{-1}(T_{s^*s} \cap T_{tt^*}) \subseteq T_{t^*t} \quad \text{ and } \quad \beta_t \circ \rho\restr{R_{t^*t}} = \rho \circ \beta^R_t,
\]
we have
\begin{align*}
R_{(st)^*st} = \rho^{-1}(T_{(st)^*st}) &= \rho^{-1}\big(\beta_t^{-1}(T_{s^*s} \cap T_{tt^*})\big) = (\beta_t \circ \rho\restr{R_{t^*t}})^{-1}(T_{s^*s} \cap T_{tt^*}) \\
&= (\rho \circ \beta^R_t)^{-1}(T_{s^*s} \cap T_{tt^*}) = (\beta^R_t)^{-1}\big( \rho^{-1}(T_{s^*s} \cap T_{tt^*}) \big) \\
&= (\beta^R_t)^{-1}\big( \rho^{-1}(T_{s^*s}) \cap \rho^{-1}(T_{tt^*}) \big) = (\beta^R_t)^{-1}(R_{s^*s} \cap R_{tt^*}) \subseteq R_{t^*t}.
\end{align*}
It follows that $\rho\restr{R_{s^*s}} \circ \beta^R_t\restr{R_{(st)^*st}} = \beta_t\restr{T_{(st)^*st}} \circ \rho\restr{R_{(st)^*st}}$, and hence
\begin{align*}
\beta^R_s \circ \beta^R_t\restr{R_{(st)^*st}} &= \sigma \circ \beta_s \circ \rho\restr{R_{s^*s}} \circ \beta^R_t\restr{R_{(st)^*st}} \\
&= \sigma \circ \beta_s \circ \beta_t\restr{T_{(st)^*st}} \circ \rho\restr{R_{(st)^*st}} = \sigma \circ \beta_{st} \circ \rho\restr{R_{(st)^*st}} = \beta^R_{st}. \numberthis \label{eqn: beta^R_st}
\end{align*}

Fix $s \in S$. To see that $\beta^R_s$ is bijective, we show that it has inverse $\beta^R_{s^*}$. Note that $\beta_{ss^*} = \id_{T_{ss^*}}$ by \cref{rem: Cu-semigroup action properties}, and thus, taking $t = s^*$ in \cref{eqn: beta^R_st} shows that
\[
\beta^R_s \circ \beta^R_{s^*}\restr{R_{ss^*}} = \beta^R_{ss^*} = \sigma \circ \beta_{ss^*} \circ \rho\restr{R_{ss^*}} = \sigma \circ \id_{T_{ss^*}} \circ \rho\restr{R_{ss^*}} = \sigma \circ \rho\restr{R_{ss^*}} = \id_{R_{ss^*}}.
\]
A similar calculation shows that $\beta^R_{s^*} \circ \beta^R_s\restr{R_{s^*s}} = \id_{R_{s^*s}}$, and thus $\beta^R_s$ is bijective. Finally, we have $R_1 = \rho^{-1}(T_1) = \rho^{-1}(T) = R$, and therefore $\beta^R$ is an action of $S$ on $R$.

Now suppose that $(R,\rho,\sigma)$ is strongly invariant, and fix $s \in S$. Then $\sigma(T_{s^*s}) \subseteq R_{s^*s}$, and $\beta_s^R \circ \sigma\restr{T_{s^*s}} = \sigma \circ \beta_s \circ \rho\restr{R_{s^*s}} \circ \sigma\restr{T_{s^*s}} = \sigma \circ \beta_s$.
\end{proof}

\Cref{thm: invariant retract induces action} shows that any action of an inverse semigroup on a Cuntz semigroup induces an action on an invariant retract, and with this we can define a retracted version of the dynamical Cuntz semigroup.

\begin{defn} \label{defn: retracted alpha-below}
Let $\alpha\colon S \curvearrowright A$ be an action of a unital inverse semigroup $S$ on a C*-algebra $A$. Let $R$ be an invariant retract of $\Cu(A)$, and let $\hat{\alpha}^R\colon S \curvearrowright R$ be the action induced by $\hat{\alpha}\colon S \curvearrowright \Cu(A)$. Fix $x, y \in R$. We say that $x$ is \emph{$\hat{\alpha}^R$-below} $y$, and we write $x \precalpha^R y$, if, for all $f \in R$ such that $f \ll x$, there exist $\{s_i\}_{i=1}^m \subseteq S$ and $\big\{ u_i \in R_{s_i^*s_i} : i \in \{1, \dotsc, m\} \big\}$ such that
\[
f \ll \sum_{i=1}^m u_i \quad \text{ and } \quad \sum_{i=1}^m \hat{\alpha}^R_{s_i}(u_i) \ll y.
\]
We say that $x$ is \emph{$\hat{\alpha}^R$-path-below} $y$, and we write $x \pathalpha^R y$, if there exist $z_1, \dotsc, z_n \in R$ such that $x \precalpha^R z_1 \precalpha^R z_2 \precalpha^R \dotsb \precalpha^R z_n \precalpha^R y$.
\end{defn}

The relations $\precalpha^R$ and $\pathalpha^R$ are preserved by the retract maps in the following sense.

\begin{prop} \label{prop: retracts preserve dynamically below}
Let $\alpha\colon S \curvearrowright A$ be an action of a unital inverse semigroup $S$ on a C*-algebra $A$. Let $(R,\rho,\sigma)$ be an invariant retract of $\Cu(A)$.
\begin{enumerate}[label=(\alph*)]
\item \label{item: precalpha^R implies precalpha} For all $x, y \in R$, we have $x \precalpha^R y \implies \rho(x) \precalpha \rho(y)$.
\item \label{item: le implies precalpha^R} For all $x, y \in R$, we have $x \le y \implies x \precalpha^R y$. In particular, $\precalpha^R$ is reflexive, and $0 \precalpha^R x$ for all $x \in R$.
\end{enumerate}
Suppose that the retract $R$ is strongly invariant.
\begin{enumerate}[resume,label=(\alph*)]
\item \label{item: precalpha implies precalpha^R} For all $x, y \in \Cu(A)$, we have $x \precalpha y \implies \sigma(x) \precalpha^R \sigma(y)$.
\item \label{item: pathalpha implies pathalpha^R} For all $x, y \in \Cu(A)$, we have $x \pathalpha y \implies \sigma(x) \pathalpha^R \sigma(y)$.
\item \label{item: pathalpha^R iff pathalpha} For all $x, y \in R$, we have $x \pathalpha^R y$ if and only if $\rho(x) \pathalpha \rho(y)$.
\end{enumerate}
\end{prop}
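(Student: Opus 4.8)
The plan is to establish parts~\cref{item: precalpha^R implies precalpha,item: precalpha implies precalpha^R,item: le implies precalpha^R} directly, and then read off \cref{item: pathalpha implies pathalpha^R,item: pathalpha^R iff pathalpha} formally. The two facts I will lean on throughout are that, by \cref{thm: invariant retract induces action}, $\rho \circ \hat{\alpha}^R_s = \hat{\alpha}_s \circ \rho\restr{R_{s^*s}}$ for each $s \in S$ (with $R_{s^*s} = \rho^{-1}(\Cu(A_{s^*s}))$), and that $R$ is itself a $\Cu$-semigroup, so \cref{lemma: sequentially way-below interpolation,lemma: alpha-below characterisation} hold verbatim with $\Cu(A)$ replaced by $R$ and $\hat{\alpha}$ by $\hat{\alpha}^R$. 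In particular, the analogue of \cref{lemma: alpha-below characterisation} gives the \emph{$\le$-characterisation}: $x \precalpha^R y$ holds as soon as, for every $g \in R$ with $g \ll x$, there are $\{s_i\}_{i=1}^m \subseteq S$ and $u_i \in R_{s_i^*s_i}$ with $g \le \sum_{i=1}^m u_i$ and $\sum_{i=1}^m \hat{\alpha}^R_{s_i}(u_i) \le y$.

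For part~\cref{item: precalpha^R implies precalpha} (which needs only invariance, not strong invariance), suppose $x \precalpha^R y$ and fix $f \ll \rho(x)$ in $\Cu(A)$. Using axiom~\cref{item: O2} in $R$, write $x = \sup_n x_n$ with $x_n \ll x_{n+1}$; since $\rho$ preserves suprema and $\ll$, we get $\rho(x) = \sup_n \rho(x_n)$, hence $f \le \rho(x_n)$ for some $n$. As $x_n \ll x$, the relation $x \precalpha^R y$ yields $\{s_i\}$ and $u_i \in R_{s_i^*s_i}$ with $x_n \ll \sum_i u_i$ and $\sum_i \hat{\alpha}^R_{s_i}(u_i) \ll y$. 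Applying $\rho$ and setting $v_i := \rho(u_i) \in \Cu(A_{s_i^*s_i})$, the intertwining $\rho \circ \hat{\alpha}^R_{s_i} = \hat{\alpha}_{s_i} \circ \rho$ converts these to $f \le \rho(x_n) \ll \sum_i v_i$ and $\sum_i \hat{\alpha}_{s_i}(v_i) \ll \rho(y)$; by \cref{lemma: ll le properties} and \cref{defn: alpha-below}, this is exactly $\rho(x) \precalpha \rho(y)$.

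The real work is part~\cref{item: precalpha implies precalpha^R}, and the only genuine obstacle is that $\sigma$ does \emph{not} preserve $\ll$; I sidestep this by verifying the $\le$-characterisation rather than the defining $\ll$-condition. Assume $R$ is strongly invariant and $x \precalpha y$, and fix $g \in R$ with $g \ll \sigma(x)$. Writing $x = \sup_n x_n$ with $x_n \ll x_{n+1}$ in $\Cu(A)$ and using that $\sigma$ preserves suprema of increasing sequences gives $\sigma(x) = \sup_n \sigma(x_n)$, so $g \le \sigma(x_n)$ for some $n$. Since $x_n \ll x$ and $x \precalpha y$, \cref{lemma: alpha-below characterisation} supplies $\{s_i\}$ and $v_i \in \Cu(A_{s_i^*s_i})$ with $x_n \le \sum_i v_i$ and $\sum_i \hat{\alpha}_{s_i}(v_i) \le y$. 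Put $u_i := \sigma(v_i)$. Strong invariance gives $\rho(\sigma(v_i)) \in \Cu(A_{s_i^*s_i})$, so $u_i \in R_{s_i^*s_i}$, and also $\hat{\alpha}^R_{s_i} \circ \sigma\restr{\Cu(A_{s_i^*s_i})} = \sigma \circ \hat{\alpha}_{s_i}$. As $\sigma$ is additive and order-preserving, $g \le \sigma(x_n) \le \sum_i u_i$ and $\sum_i \hat{\alpha}^R_{s_i}(u_i) = \sigma\big(\sum_i \hat{\alpha}_{s_i}(v_i)\big) \le \sigma(y)$, which is the $\le$-characterisation of $\sigma(x) \precalpha^R \sigma(y)$.

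The remaining parts are formal. Part~\cref{item: pathalpha implies pathalpha^R} follows by applying part~\cref{item: precalpha implies precalpha^R} to each link of a $\pathalpha$-chain witnessing $x \pathalpha y$. For part~\cref{item: pathalpha^R iff pathalpha}, the forward implication applies part~\cref{item: precalpha^R implies precalpha} along a $\pathalpha^R$-chain, while the reverse applies part~\cref{item: precalpha implies precalpha^R} along a $\pathalpha$-chain for $\rho(x) \pathalpha \rho(y)$ and then uses $\sigma \circ \rho = \id_R$ to identify the endpoints $\sigma(\rho(x)) = x$ and $\sigma(\rho(y)) = y$. Finally, for part~\cref{item: le implies precalpha^R}, suppose $x \le y$ in $R$ and fix $f \ll x$; by \cref{lemma: sequentially way-below interpolation} in $R$ there is $f' \in R$ with $f \ll f' \ll x \le y$, and taking $m = 1$, $s_1 = 1$ (so $R_{1^*1} = R$ and $\hat{\alpha}^R_1 = \id_R$), and $u_1 = f'$ gives $f \ll u_1$ and $\hat{\alpha}^R_1(u_1) = f' \ll y$, whence $x \precalpha^R y$. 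Reflexivity and $0 \precalpha^R x$ are the special cases $y = x$ and $x = 0$.
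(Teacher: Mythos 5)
Your proof is correct, and for parts~\cref{item: precalpha^R implies precalpha,item: pathalpha implies pathalpha^R,item: pathalpha^R iff pathalpha} it coincides with the paper's argument (O2-approximation in $R$ resp.\ $\Cu(A)$, the intertwining relations from \cref{thm: invariant retract induces action}, and formal chaining). The differences are in parts~\cref{item: precalpha implies precalpha^R,item: le implies precalpha^R}. For \cref{item: precalpha implies precalpha^R}, you verify only the $\le$-characterisation and then invoke an $R$-analogue of \cref{lemma: alpha-below characterisation}, whereas the paper performs the final interpolation explicitly inside the proof: it uses \cref{lemma: sequentially way-below interpolation} to produce $v_i \in R_{s_i^*s_i}$ with $v_i \ll \sigma(u_i)$, $g \ll \sum_i v_i$, and then the fact that each $\hat{\alpha}^R_{s_i}$ is a $\Cu$-morphism to get $\sum_i \hat{\alpha}^R_{s_i}(v_i) \ll \sigma(y)$. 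Your shortcut is legitimate, but it rests on the observation that the proofs of \cref{lemma: sequentially way-below interpolation,lemma: alpha-below characterisation} use only abstract structure that $(R,\hat{\alpha}^R)$ possesses by \cref{thm: invariant retract induces action} (namely: $R$ is a $\Cu$-semigroup, the $R_{s^*s}$ are ideals, and the $\hat{\alpha}^R_s$ are $\Cu$-morphisms), and not anything specific to the Cuntz semigroup of a C*-algebra; since \cref{lemma: alpha-below characterisation} as stated concerns an action induced from a C*-algebra action, this transfer deserves to be said explicitly, as you essentially do. For \cref{item: le implies precalpha^R}, your argument is genuinely different: you prove it directly (interpolate $f \ll f' \ll x \le y$, take $m=1$, $s_1 = 1$, $\hat{\alpha}^R_1 = \id_R$), which mirrors the proof of \cref{cor: precalpha properties}\cref{item: le implies precalpha} and needs only invariance of the retract, whereas the paper deduces it from part~\cref{item: precalpha implies precalpha^R} via $\rho(x) \le \rho(y) \implies \rho(x) \precalpha \rho(y)$ and $\sigma \circ \rho = \id_R$, and hence uses strong invariance. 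Your route therefore establishes the marginally stronger fact that \cref{item: le implies precalpha^R} holds for every invariant retract; the paper's route is shorter given that part~\cref{item: precalpha implies precalpha^R} is already proved.
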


\begin{proof}
For part~\cref{item: precalpha^R implies precalpha}, let $x, y \in R$, and suppose that $x \precalpha^R y$. Fix $f \in \Cu(A)$ such that $f \ll \rho(x)$. Choose a sequence $\{x_n\}_{n \in \N} \subseteq R$ such that $x_n \ll x_{n+1}$ for all $n \in \N$ and $\sup_{n \in \N} (x_n) = x$. Since $\rho$ preserves the sequentially way-below relation $\ll$ and suprema of increasing sequences, we have $\rho(x_n) \ll \rho(x_{n+1})$ for all $n \in \N$ and $\sup_{n \in \N} (\rho(x_n)) = \rho(x)$. Hence there exists $m \in \N$ such that $f \le \rho(x_m)$. Since $x_m \ll x$ (by \cref{lemma: ll le properties}) and $x \precalpha^R y$, there exist $\{s_i\}_{i=1}^n \subseteq S$ and $\big \{ u_i \in R_{s_i^*s_i} : i \in \{1, \dotsc, n\} \big\}$ such that
\[
x_m \ll \sum_{i=1}^n u_i \quad \text{ and } \quad \sum_{i=1}^n \hat{\alpha}^R_{s_i}(u_i) \ll y.
\]
Thus, since $\rho$ is a $\Cu$-morphism, we have
\[
f \le \rho(x_m) \ll \sum_{i=1}^n \rho(u_i).
\]
By \cref{thm: invariant retract induces action}, we have $\rho \circ \hat{\alpha}^R_s = \hat{\alpha}_s \circ \rho\restr{R_{s^*s}}$ for all $s \in S$, and so since $\rho$ preserves $\ll$, we have
\[
\sum_{i=1}^n \hat{\alpha}_{s_i}(\rho(u_i)) = \sum_{i=1}^n \rho\big(\hat{\alpha}^R_{s_i}(u_i)\big) = \rho\Big( \sum_{i=1}^n \hat{\alpha}^R_{s_i}(u_i) \Big) \ll \rho(y).
\]
Therefore, $\rho(x) \precalpha \rho(y)$.

For part~\cref{item: le implies precalpha^R}, let $x, y \in R$, and suppose that $x \le y$. Fix $f \in R$ such that $f \ll x$. Then by \cref{lemma: sequentially way-below interpolation}, there exists $u \in R$ such that $f \ll u \ll x$. Thus $\hat{\alpha}^R_1(u) = u \ll x \le y$, so by \cref{lemma: ll le properties}, $\hat{\alpha}^R_1(u) \ll y$. Thus $x \precalpha^R y$. It follows immediately that $\precalpha^R$ is reflexive and $0 \precalpha^R x$ for all $x \in R$.

Now suppose that $R$ is strongly invariant. For part~\cref{item: precalpha implies precalpha^R}, let $x, y \in \Cu(A)$, and suppose that $x \precalpha y$. Fix $g \in R$ such that $g \ll \sigma(x)$. By \cref{lemma: sequentially way-below interpolation}, there exists $w \in R$ such that $g \ll w \ll \sigma(x)$. Choose a sequence $\{x_n\}_{n \in \N} \subseteq \Cu(A)$ such that $x_n \ll x_{n+1}$ for all $n \in \N$ and $\sup_{n \in \N} (x_n) = x$. Since $\sigma$ preserves the partial order $\le$ and suprema of increasing sequences, we have $\sigma(x_n) \le \sigma(x_{n+1})$ for all $n \in \N$ and $\sup_{n \in \N} (\sigma(x_n)) = \sigma(x)$. Hence there exists $m \in \N$ such that $g \ll w \le \sigma(x_m)$. Since $x_m \ll x$ (by \cref{lemma: ll le properties}) and $x \precalpha y$, there exist $\{s_i\}_{i=1}^n \subseteq S$ and $\big\{ u_i \in \Cu(A_{s_i^*s_i}) : i \in \{1, \dotsc, n\} \big\}$ such that
\[
x_m \ll \sum_{i=1}^n u_i \quad \text{ and } \quad \sum_{i=1}^n \hat{\alpha}_{s_i}(u_i) \ll y.
\]
Thus, since $\sigma$ is order-preserving, we have
\[
g \ll w \le \sigma (x_m) \le \sum_{i=1}^n \sigma(u_i).
\]
Since we have assumed that the retract $R$ is strongly invariant, \cref{thm: invariant retract induces action} implies that $\hat{\alpha}^R_s \circ \sigma\restr{\Cu(A_{s^*s})} = \sigma \circ \hat{\alpha}_s$ for all $s \in S$. Thus, since $\sigma$ is order-preserving, we have
\begin{inequality} \label{ineq: sum alpha^R le sigma(y)}
\sum_{i=1}^n \hat{\alpha}^R_{s_i}(\sigma(u_i)) = \sum_{i=1}^n \sigma(\hat{\alpha}_{s_i}(u_i)) = \sigma\Big( \sum_{i=1}^n \hat{\alpha}_{s_i}(u_i) \Big) \le \sigma(y).
\end{inequality}
Since $R$ is strongly invariant, we have $\sigma(u_i) \in \rho^{-1}\big(\Cu(A_{s_i^*s_i})\big) = R_{s_i^*s_i}$ for each $i \in \{1, \dotsc, n\}$, and so by \cref{lemma: sequentially way-below interpolation}, there exist $\big\{ v_i \in R_{s_i^*s_i} : i \in \{1, \dotsc, n\} \big\}$ such that $v_i \ll \sigma(u_i)$ for each $i \in \{1, \dotsc, n\}$, and
\[
g \ll \sum_{i=1}^n v_i \ll \sum_{i=1}^n \sigma(u_i).
\]
For each $i \in \{1, \dotsc, n\}$, $\hat{\alpha}^R_{s_i}$ is a $\Cu$-morphism, and so $\hat{\alpha}^R_{s_i}(v_i) \ll \hat{\alpha}^R_{s_i}(\sigma(u_i))$. Thus, \cref{ineq: sum alpha^R le sigma(y)} implies that
\[
\sum_{i=1}^n \hat{\alpha}^R_{s_i}(v_i) \ll \sum_{i=1}^n \hat{\alpha}^R_{s_i}(\sigma(u_i)) \le \sigma(y).
\]
Therefore, $\sigma(x) \precalpha^R \sigma(y)$.

For part~\cref{item: pathalpha implies pathalpha^R}, let $x, y \in \Cu(A)$, and suppose that $x \pathalpha y$. Then there exist $z_1, \dotsc, z_n \in \Cu(A)$ such that $x \precalpha z_1 \precalpha \dotsb \precalpha z_n \precalpha y$, and so part~\cref{item: precalpha implies precalpha^R} implies that
\[
\sigma(x) \precalpha^R \sigma(z_1) \precalpha^R \dotsb \precalpha^R \sigma(z_n) \precalpha^R \sigma(y).
\]
Therefore, $\sigma(x) \pathalpha^R \sigma(y)$.

For part~\cref{item: pathalpha^R iff pathalpha}, fix $x, y \in R$. Suppose that $x \pathalpha^R y$. Then there exist $w_1, \dotsc, w_k \in R$ such that $x \precalpha^R w_1 \precalpha^R \dotsb \precalpha^R w_k \precalpha^R y$. Thus part~\cref{item: precalpha^R implies precalpha} implies that
\[
\rho(x) \precalpha \rho(w_1) \precalpha \dotsb \precalpha \rho(w_k) \precalpha \rho(y),
\]
and so $\rho(x) \pathalpha \rho(y)$. Now suppose that $\rho(x) \pathalpha \rho(y)$. Then part~\cref{item: pathalpha implies pathalpha^R} implies that $x = \sigma(\rho(x)) \pathalpha^R \sigma(\rho(y)) = y$. Therefore, $x \pathalpha^R y$ if and only if $\rho(x) \pathalpha \rho(y)$.
\end{proof}

\begin{defn} \label{defn: retracted DCS}
Let $\alpha\colon S \curvearrowright A$ be an action of a unital inverse semigroup $S$ on a C*-algebra $A$. Let $R$ be an invariant retract of $\Cu(A)$, and let $\hat{\alpha}^R\colon S \curvearrowright R$ be the action induced by $\hat{\alpha}\colon S \curvearrowright \Cu(A)$. Let $\simalpha^R$ denote the relation on $\sigma\big(\Cu(A)^\ll\big)$ given by
\[
x \simalpha^R y \iff x \pathalpha^R y \,\text{ and }\, y \pathalpha^R x
\]
for $x, y \in \sigma\big(\Cu(A)^\ll\big)$. Since $\pathalpha^R$ is reflexive (by \cref{prop: retracts preserve dynamically below}\cref{item: le implies precalpha^R}) and transitive (by definition), $\simalpha^R$ is an equivalence relation. We denote the $\simalpha^R$-equivalence class of $x \in \sigma\big(\Cu(A)^\ll\big)$ by $[x]_\alpha^R$. An analogous argument to the proofs of \cref{lemma: Cuntz semigroup addition is well-defined,lemma: path order is compatible with addition} shows that, under the operation
\[
[x]_\alpha^R + [y]_\alpha^R \coloneqq [x + y]_\alpha^R,
\]
$R_\alpha \coloneqq \sigma\big(\Cu(A)^\ll\big) / \!\simalpha^R$ is an abelian monoid with identity $0 = [0]_\alpha^R$. Moreover, $R_\alpha$ is preordered with respect to the relation
\[
[x]_\alpha^R \le [y]_\alpha^R \iff x \pathalpha^R y.
\]
We call $R_\alpha$ the \emph{retracted dynamical Cuntz semigroup} of $\alpha\colon S \curvearrowright A$.
\end{defn}

\begin{defn}
Let $\alpha\colon S \curvearrowright A$ be an action of a unital inverse semigroup $S$ on a C*-algebra $A$. Let $(R,\rho,\sigma)$ be a retract of $\Cu(A)$, and let $M$ be a submonoid of $R$.
\begin{enumerate}[label=(\alph*)]
\item Fix $x \in R$.
\begin{enumerate}[label=(\roman*)]
\item For integers $k > l > 0$, we say that $x$ is \emph{$(k,l)$-paradoxical} if $kx \pathalpha^R lx$.
\item We say that $x$ is \emph{completely non-paradoxical} if there are no integers $k > l > 0$ such that $x$ is $(k,l)$-paradoxical.
\end{enumerate}
\item We say that the submonoid $M$ is \emph{completely non-paradoxical} if each $x \in M {\setminus} \{0\}$ is completely non-paradoxical as an element of $R$.
\end{enumerate}
\end{defn}

Using \cref{prop: retracts preserve dynamically below} we can give variants of \cref{thm: stable finiteness,thm: pure infiniteness,thm: dichotomy result} when there are retracts with ``enough'' structure. For the retracted version of \cref{thm: stable finiteness}, we require the retract to be nonzero. For the retracted version of \cref{thm: pure infiniteness}, we require that $[a]_\Cu \in \rho(R)$ for each $a \in A^+$.

In order to prove \cref{thm: retract stable finiteness,thm: retract PIS}, we need the following lemma.

\begin{lemma} \label{lemma: sigma_alpha}
Let $\alpha\colon S \curvearrowright A$ be an action of a unital inverse semigroup $S$ on $A$, let $(R,\rho,\sigma)$ be a strongly invariant retract of $\Cu(A)$, and let $R_\alpha$ be the retracted dynamical Cuntz semigroup. Then the generalised $\Cu$-morphism $\sigma\colon\Cu(A) \to R$ induces a surjective order-preserving homomorphism $\sigma_\alpha\colon\Cu(A)_\alpha \to R_\alpha$ such that $\sigma_\alpha([x]_\alpha) = [\sigma(x)]_\alpha^R$ for each $x \in \Cu(A)^\ll$.
\end{lemma}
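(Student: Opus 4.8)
The plan is to check, in order, that the rule $[x]_\alpha \mapsto [\sigma(x)]_\alpha^R$ is well-defined, that it is additive and unital, order-preserving, and surjective, with all of the dynamical content funnelled through \cref{prop: retracts preserve dynamically below}\cref{item: pathalpha implies pathalpha^R} (the step where strong invariance of the retract is invoked). First note that the rule even lands in the right object: for $x \in \Cu(A)^\ll$ we have $\sigma(x) \in \sigma\big(\Cu(A)^\ll\big)$, so $[\sigma(x)]_\alpha^R$ is a bona fide element of $R_\alpha$ as constructed in \cref{defn: retracted DCS}.

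The heart of the argument is well-definedness, which I would handle first. Suppose $x, y \in \Cu(A)^\ll$ with $[x]_\alpha = [y]_\alpha$; by \cref{defn: dynamical cuntz semigroup} this says $x \pathalpha y$ and $y \pathalpha x$. Since $(R,\rho,\sigma)$ is strongly invariant, \cref{prop: retracts preserve dynamically below}\cref{item: pathalpha implies pathalpha^R} transports both relations to $\sigma(x) \pathalpha^R \sigma(y)$ and $\sigma(y) \pathalpha^R \sigma(x)$, so $\sigma(x) \simalpha^R \sigma(y)$ and hence $[\sigma(x)]_\alpha^R = [\sigma(y)]_\alpha^R$. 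The very same implication gives order-preservation at once: if $[x]_\alpha \le [y]_\alpha$, that is, $x \pathalpha y$, then $\sigma(x) \pathalpha^R \sigma(y)$, which is exactly $\sigma_\alpha([x]_\alpha) \le \sigma_\alpha([y]_\alpha)$.

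The remaining points are formal book-keeping. That $\sigma_\alpha$ is a monoid homomorphism follows because $\sigma$ is a generalised $\Cu$-morphism, hence additive and zero-preserving: unwinding the definitions of addition in $\Cu(A)_\alpha$ and $R_\alpha$ gives $\sigma_\alpha([x]_\alpha + [y]_\alpha) = [\sigma(x+y)]_\alpha^R = [\sigma(x) + \sigma(y)]_\alpha^R = \sigma_\alpha([x]_\alpha) + \sigma_\alpha([y]_\alpha)$, together with $\sigma_\alpha([0]_\alpha) = [0]_\alpha^R$. Surjectivity is immediate from the construction of $R_\alpha$, since any class in $R_\alpha$ has a representative $r = \sigma(x)$ with $x \in \Cu(A)^\ll$, whence $\sigma_\alpha([x]_\alpha) = [r]_\alpha^R$. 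The only genuinely non-routine step is the well-definedness/order-preservation step, and that is already reduced to \cref{prop: retracts preserve dynamically below}\cref{item: pathalpha implies pathalpha^R}; so I expect no real obstacle beyond correctly matching the two quotient structures and keeping track of the domain $\sigma\big(\Cu(A)^\ll\big)$.
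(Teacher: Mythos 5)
Your proposal is correct and follows essentially the same route as the paper's proof: well-definedness and order-preservation both come from \cref{prop: retracts preserve dynamically below}\cref{item: pathalpha implies pathalpha^R} (which is where strong invariance enters), additivity from $\sigma$ being a generalised $\Cu$-morphism, and surjectivity from the fact that every element of $R_\alpha$ is by construction a class $[\sigma(x)]_\alpha^R$ with $x \in \Cu(A)^\ll$. The only difference is expository: the paper compresses all of this into three sentences, while you spell out the quotient book-keeping explicitly.
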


\begin{proof}
For all $x, y \in \Cu(A)^\ll$, we have $x \pathalpha y \implies \sigma(x) \pathalpha^R \sigma(y)$ by \cref{prop: retracts preserve dynamically below}\cref{item: pathalpha implies pathalpha^R}, and thus $\sigma_\alpha$ is well-defined. Since $\sigma$ is additive and surjective by \cref{item: retract maps}, $\sigma_\alpha$ is a surjective homomorphism. It follows from \cref{prop: retracts preserve dynamically below}\cref{item: pathalpha implies pathalpha^R} that $\sigma_\alpha$ is order-preserving.
\end{proof}

\begin{thm} \label{thm: retract stable finiteness}
Let $\alpha\colon S \curvearrowright A$ be an action of a unital inverse semigroup $S$ on a unital and exact C*-algebra $A$. Suppose that $A \rtimesred S$ is simple, and let $(R,\rho,\sigma)$ be a nonzero strongly invariant retract of $\Cu(A)$. The following conditions are equivalent to items~\crefrange{item: A faithful trace}{item: faithful invariant functional} in \cref{thm: stable finiteness}.
\begin{enumerate}[label=(\arabic*$'$), start=5]
\item \label{item: R^ll CNP} The monoid $R^\ll$ is completely non-paradoxical.
\item \label{item: retract any CNP element} There exists $x \in \sigma\big(\Cu(A)^\ll\big) {\setminus} \{0\}$ that is completely non-paradoxical.
\item \label{item: retract OP hm} There exists a nontrivial order-preserving homomorphism $\nu\colon R_\alpha \to [0,\infty]$.
\end{enumerate}
\end{thm}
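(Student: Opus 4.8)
The plan is to exploit the fact that the hypotheses here are precisely those of the ``in particular'' clause of \cref{thm: stable finiteness}, so that items~\crefrange{item: A faithful trace}{item: faithful invariant functional} are already known to be mutually equivalent. It therefore suffices to weave the three primed conditions into this equivalence by establishing the implications \cref{item: R^ll CNP} $\implies$ \cref{item: retract any CNP element} $\implies$ \cref{item: retract OP hm} $\implies$ \cref{item: OP hm} together with \cref{item: faithful invariant functional} $\implies$ \cref{item: R^ll CNP}; since \cref{item: OP hm} and \cref{item: faithful invariant functional} are equivalent under our hypotheses, this closes a cycle through all of \cref{item: R^ll CNP,item: retract any CNP element,item: retract OP hm}.

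For \cref{item: R^ll CNP} $\implies$ \cref{item: retract any CNP element}, I would use that $R \ne \{0\}$, so \cref{rem: nonzero sequentially way below} applied to $R$ yields a nonzero element of $R^\ll$; since $R^\ll \subseteq \sigma\big(\Cu(A)^\ll\big)$ by \cref{rem: retract properties}\cref{item: R^ll and T^ll}, any such element is a completely non-paradoxical element of $\sigma\big(\Cu(A)^\ll\big) {\setminus} \{0\}$. For \cref{item: retract any CNP element} $\implies$ \cref{item: retract OP hm}, given a completely non-paradoxical $x \in \sigma\big(\Cu(A)^\ll\big) {\setminus} \{0\}$, I would first observe that $[x]_\alpha^R \ne [0]_\alpha^R$ in $R_\alpha$: otherwise $[x]_\alpha^R = [0]_\alpha^R$ gives $2[x]_\alpha^R = [x]_\alpha^R$, making $x$ be $(2,1)$-paradoxical. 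Then, exactly as in \cref{rem: Cu-paradoxicality}\cref{item: DCS CNP}, complete non-paradoxicality yields $(n+1)[x]_\alpha^R \not\le n[x]_\alpha^R$ for all $n \in \N$, and since $R_\alpha$ is a preordered abelian monoid, Tarski's theorem \cref{thm: Tarski-Wehrung paradoxicality thm} produces an order-preserving homomorphism $\nu\colon R_\alpha \to [0,\infty]$ with $\nu([x]_\alpha^R) = 1$, which is nontrivial.

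For \cref{item: retract OP hm} $\implies$ \cref{item: OP hm} I would compose with the surjective order-preserving homomorphism $\sigma_\alpha\colon \Cu(A)_\alpha \to R_\alpha$ of \cref{lemma: sigma_alpha}: the map $\nu \circ \sigma_\alpha$ is an order-preserving homomorphism, and its nontriviality follows from nontriviality of $\nu$ together with surjectivity of $\sigma_\alpha$ (lift a point where $\nu$ is finite and positive). The remaining implication \cref{item: faithful invariant functional} $\implies$ \cref{item: R^ll CNP} carries the real content. Starting from a faithful invariant functional $\beta$ on $\Cu(A)$ that is finite on $\Cu(A)^\ll$, I would transport it to $R$ via $\beta^R \coloneqq \beta \circ \rho$. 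Because $\rho$ is an injective $\Cu$-morphism satisfying $\rho \circ \hat{\alpha}^R_s = \hat{\alpha}_s \circ \rho\restr{R_{s^*s}}$ (\cref{thm: invariant retract induces action}), the functional $\beta^R$ is again invariant, is faithful (by injectivity of $\rho$), and is finite on $R^\ll$ since $\rho(R^\ll) \subseteq \Cu(A)^\ll$ by \cref{rem: retract properties}\cref{item: R^ll and T^ll}. The key monotonicity property, that $u \pathalpha^R v$ implies $\beta^R(u) \le \beta^R(v)$, would follow by combining \cref{prop: retracts preserve dynamically below}\cref{item: pathalpha^R iff pathalpha} (turning $\pathalpha^R$ into $\pathalpha$ along $\rho$) with \cref{lemma: functionals preserve precalpha}. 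Consequently, if some $x \in R^\ll {\setminus} \{0\}$ were $(k,l)$-paradoxical, then $k\beta^R(x) \le l\beta^R(x)$ with $0 < \beta^R(x) < \infty$, forcing $k \le l$, a contradiction; hence $R^\ll$ is completely non-paradoxical.

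The main obstacle is this last implication: one must verify that the pulled-back functional $\beta^R = \beta \circ \rho$ inherits invariance, faithfulness, and finiteness on exactly the right submonoid, and—most delicately—that it is monotone for the transitive relation $\pathalpha^R$, which is where the strong invariance of the retract is genuinely used through \cref{prop: retracts preserve dynamically below}\cref{item: pathalpha^R iff pathalpha}. Throughout, care will be needed to keep the three submonoids $R^\ll$, $\sigma\big(\Cu(A)^\ll\big)$, and $\rho(R^\ll) \subseteq \Cu(A)^\ll$ straight, since these control where $\beta$ is finite and where the classes $[\,\cdot\,]_\alpha^R$ are defined.
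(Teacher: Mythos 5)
Your proposal is correct, and its core coincides with the paper's proof: the three implications \cref{item: R^ll CNP} $\implies$ \cref{item: retract any CNP element} $\implies$ \cref{item: retract OP hm} $\implies$ \cref{thm: stable finiteness}\cref{item: OP hm} are established exactly as in the paper (nonzeroness of $R^\ll$ via \cref{rem: nonzero sequentially way below} and \cref{item: R^ll and T^ll}, Tarski's theorem \cref{thm: Tarski-Wehrung paradoxicality thm} for the order-preserving homomorphism on $R_\alpha$, and composition with $\sigma_\alpha$ from \cref{lemma: sigma_alpha}). The one place you diverge is how the cycle is closed: the paper proves \cref{thm: stable finiteness}\cref{item: Cu(A)^ll CNP} $\implies$ \cref{item: R^ll CNP} directly, transferring a hypothetical $(k,l)$-paradox along $\rho$ via \cref{prop: retracts preserve dynamically below}\cref{item: pathalpha^R iff pathalpha} and using injectivity of $\rho$, whereas you prove \cref{thm: stable finiteness}\cref{item: faithful invariant functional} $\implies$ \cref{item: R^ll CNP} by pulling the faithful invariant functional back along $\rho$ and combining \cref{prop: retracts preserve dynamically below}\cref{item: pathalpha^R iff pathalpha} with \cref{lemma: functionals preserve precalpha} to get monotonicity of $\beta \circ \rho$ under $\pathalpha^R$. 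Both routes are valid and rest on the same key lemma (strong invariance entering through \cref{prop: retracts preserve dynamically below}); the paper's entry point is slightly more economical since it needs no functional at all, while yours makes explicit where faithfulness and finiteness on $\Cu(A)^\ll$ are used. A minor point in your favour: you verify explicitly that $[x]_\alpha^R \ne [0]_\alpha^R$ before invoking Tarski (via the $(2,1)$-paradoxicality argument) and that $R^\ll$ contains a nonzero element, two details the paper's proof passes over in silence.
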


\begin{proof}
The hypotheses ensure that items~\crefrange{item: A faithful trace}{item: faithful invariant functional} in \cref{thm: stable finiteness} are equivalent. We show that
\[
\text{\cref{thm: stable finiteness}\cref{item: Cu(A)^ll CNP}} \implies \text{\cref{item: R^ll CNP}} \implies \text{\cref{item: retract any CNP element}} \implies \text{\cref{item: retract OP hm}} \implies \text{\cref{thm: stable finiteness}\cref{item: OP hm}}.
\]
To see that \cref{thm: stable finiteness}\cref{item: Cu(A)^ll CNP} $\implies$ \cref{item: R^ll CNP}, suppose that $\Cu(A)^\ll$ is completely non-paradoxical. By \cref{item: R^ll and T^ll}, we have $\rho(R^\ll) \subseteq \Cu(A)^\ll$, and hence $\rho(R^\ll)$ is completely non-paradoxical. Fix $x \in R^\ll {\setminus} \{0\}$, and suppose for contradiction that there exist integers $k > l > 0$ such that $x$ is $(k,l)$-paradoxical. Then $kx \pathalpha^R lx$, so \cref{prop: retracts preserve dynamically below}\cref{item: pathalpha^R iff pathalpha} implies that
\[
k \rho(x) = \rho(kx) \pathalpha \rho(lx) = l \rho(x).
\]
Since $\rho(R^\ll)$ is completely non-paradoxical, we must have $\rho(x) = 0$, but this is a contradiction because $\rho$ is injective and $x \ne 0$. Therefore, $R^\ll$ is completely non-paradoxical.

That \cref{item: R^ll CNP} $\implies$ \cref{item: retract any CNP element} is immediate because $R^\ll \subseteq \sigma\big(\Cu(A)^\ll\big)$ by \cref{item: R^ll and T^ll}.

To see that \cref{item: retract any CNP element} $\implies$ \cref{item: retract OP hm}, suppose that $x \in \sigma\big(\Cu(A)^\ll\big) {\setminus} \{0\}$ is completely non-paradoxical. Then for all $n \in \N$, we have $(n+1)x \not\pathalpha^R nx$, and hence $(n+1)[x]_\alpha^R \not\le n[x]_\alpha^R$. Now \cref{thm: Tarski-Wehrung paradoxicality thm} implies that there exists an order-preserving homomorphism $\nu\colon R_\alpha \to [0,\infty]$ such that $\nu([x]_\alpha) = 1 \in (0,\infty)$, giving \cref{item: OP hm}.

To see that \cref{item: retract OP hm} $\implies$ \cref{thm: stable finiteness}\cref{item: OP hm}, suppose that $\nu\colon R_\alpha \to [0,\infty]$ is a nontrivial order-preserving homomorphism. By \cref{lemma: sigma_alpha}, $\sigma\colon \Cu(A) \to R$ induces a surjective order-preserving homomorphism $\sigma_\alpha\colon\Cu(A)_\alpha \to R_\alpha$ such that $\sigma_\alpha([x]_\alpha) = [\sigma(x)]_\alpha^R$ for each $x \in \Cu(A)^\ll$. Thus $\nu \circ \sigma_\alpha\colon \Cu(A)_\alpha \to [0,\infty]$ is an order-preserving homomorphism. Since $\nu$ is nontrivial and $\sigma_\alpha$ is surjective, $\nu \circ \sigma_\alpha$ must also be nontrivial, so \cref{thm: stable finiteness}\cref{item: OP hm} holds.
\end{proof}

\begin{thm} \label{thm: retract PIS}
Let $\alpha\colon S \curvearrowright A$ be an action of a unital inverse semigroup $S$ on a unital and exact C*-algebra $A$. Suppose that $A \rtimesred S$ is simple. Then $\alpha$ is minimal, $A$ detects ideals in $A \rtimesred S$, and $A \rtimesred S = A \rtimesess S$. Suppose that $(R,\rho,\sigma)$ is a strongly invariant retract of $\Cu(A)$ such that $[a]_\Cu \in \rho(R)$ for each $a \in A^+$. Let $R_\alpha$ be the retracted dynamical Cuntz semigroup from \cref{defn: retracted DCS}. Consider the following statements.
\begin{enumerate}[label=(\arabic*)]
\item \label{item: retract PIS PIM} The retracted dynamical Cuntz semigroup $R_\alpha$ is purely infinite.
\item \label{item: retract PIS 21P} Every element of $R^\ll {\setminus} \{0\}$ is $(2,1)$-paradoxical.
\item \label{item: retract PIS PIC*} The C*-algebra $A \rtimesess S$ is purely infinite.
\item \label{item: retract PIS NTS} The C*-algebra $A \rtimesess S$ admits no tracial states.
\item \label{item: retract PIS OPH} There does not exist a nontrivial order-preserving homomorphism $\nu\colon R_\alpha \to [0,\infty]$.
\end{enumerate}
Then \cref{item: retract PIS PIM} $\implies$ \cref{item: retract PIS 21P} $\implies$ \cref{item: retract PIS PIC*} $\implies$ \cref{item: retract PIS NTS} $\implies$ \cref{item: retract PIS OPH}. If the retracted dynamical Cuntz semigroup $R_\alpha$ has plain paradoxes, then \cref{item: retract PIS OPH} $\implies$ \cref{item: retract PIS PIM}, and \crefrange{item: retract PIS PIM}{item: retract PIS OPH} are equivalent.
\end{thm}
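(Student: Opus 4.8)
The plan is to mirror the proof of \cref{thm: pure infiniteness}, using the retract maps $\rho$ and $\sigma$ together with the transfer results \cref{prop: retracts preserve dynamically below} and \cref{lemma: sigma_alpha} to move paradoxicality statements between $R$ and $\Cu(A)$, and between $R_\alpha$ and $\Cu(A)_\alpha$. As there, simplicity of $A \rtimesred S = A \rtimesess S$ follows at once from minimality and aperiodicity via \cite[Theorem~6.6]{KwasniewskiMeyer2021}. I would then prove the cyclic chain \cref{item: retract PIS PIM} $\implies$ \cref{item: retract PIS 21P} $\implies$ \cref{item: retract PIS PIC*} $\implies$ \cref{item: retract PIS NTS} $\implies$ \cref{item: retract PIS OPH}, and finally \cref{item: retract PIS OPH} $\implies$ \cref{item: retract PIS PIM} under the plain-paradoxes hypothesis.

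For \cref{item: retract PIS PIM} $\implies$ \cref{item: retract PIS 21P}, fix $x \in R^\ll {\setminus} \{0\}$. If $[x]_\alpha^R = [0]_\alpha^R$, then $x \pathalpha^R 0$, and compatibility of $\pathalpha^R$ with addition together with $0 \pathalpha^R x$ (from \cref{prop: retracts preserve dynamically below}\cref{item: le implies precalpha^R}) gives $2x \pathalpha^R 0 \pathalpha^R x$; otherwise $[x]_\alpha^R$ is properly infinite by hypothesis, so $2[x]_\alpha^R \le [x]_\alpha^R$, i.e. $2x \pathalpha^R x$. Either way $x$ is $(2,1)$-paradoxical. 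The substantive step is \cref{item: retract PIS 21P} $\implies$ \cref{item: retract PIS PIC*}: by \cite[Corollary~6.7]{KwasniewskiMeyer2021} it suffices to show every $a \in A^+ {\setminus} \{0\}$ is infinite in $A \rtimesess S$. Here I would use the hypothesis $[a]_\Cu \in \rho(R)$ to write $[a]_\Cu = \rho(r)$ with $r \coloneqq \sigma([a]_\Cu) \ne 0$, choose an increasing $\ll$-sequence $\{r_n\} \subseteq R$ with supremum $r$ (so each $r_n \in R^\ll$), transfer the $(2,1)$-paradoxicality $2r_n \pathalpha^R r_n$ to $2\rho(r_n) \pathalpha \rho(r_n)$ via \cref{prop: retracts preserve dynamically below}\cref{item: pathalpha^R iff pathalpha}, push this into the crossed product using \cref{cor: iota hat preserves precalpha} to obtain $2\hat{\iota}(\rho(r_n)) \le \hat{\iota}(\rho(r_n))$, and then take suprema (using that $\rho$ and $\hat{\iota}$ preserve $\ll$ and increasing suprema, and $\sup_n \rho(r_n) = [a]_\Cu$) to conclude $2[\iota(a)]_\Cu \le [\iota(a)]_\Cu$ with $[\iota(a)]_\Cu \ne 0$. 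Terms with $r_n = 0$ cause no trouble, since the relevant inequality is then trivial.

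The implication \cref{item: retract PIS PIC*} $\implies$ \cref{item: retract PIS NTS} is immediate, as purely infinite C*-algebras admit no tracial states. For \cref{item: retract PIS NTS} $\implies$ \cref{item: retract PIS OPH} I would argue by contrapositive: a nontrivial order-preserving homomorphism $\nu\colon R_\alpha \to [0,\infty]$ pulls back along the surjection $\sigma_\alpha\colon \Cu(A)_\alpha \to R_\alpha$ of \cref{lemma: sigma_alpha} to a nontrivial order-preserving homomorphism $\nu \circ \sigma_\alpha$ on $\Cu(A)_\alpha$; minimality and \cref{thm: OP hom on DCS induces functional} then yield an invariant functional $\beta$ with $\beta([1_A]_\Cu) = 1$, and exactness together with \cref{thm: traces reduced} produces a tracial state on $A \rtimesred S = A \rtimesess S$.

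Finally, assume $R_\alpha$ has plain paradoxes and \cref{item: retract PIS OPH} holds. I would first verify $R_\alpha \ne \{0\}$: since $[1_A]_\Cu \in \rho(R) \cap \big(\Cu(A)^\ll {\setminus} \{0\}\big)$ by \cref{rem: [p]_Cu ll [p]_Cu} and the hypothesis, we have $\rho\big(\sigma([1_A]_\Cu)\big) = [1_A]_\Cu \ne 0$; were $[\sigma([1_A]_\Cu)]_\alpha^R = [0]_\alpha^R$, then \cref{prop: retracts preserve dynamically below}\cref{item: pathalpha^R iff pathalpha} would give $[1_A]_\Cu \pathalpha 0$, contradicting \cref{item: [0]_alpha singleton}. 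Then for any $[x]_\alpha^R \in R_\alpha {\setminus} \{0\}$, condition \cref{item: retract PIS OPH} and \cref{thm: Tarski-Wehrung paradoxicality thm} force some $n \in \N$ with $(n+1)[x]_\alpha^R \le n[x]_\alpha^R$, whence plain paradoxes gives $2[x]_\alpha^R \le [x]_\alpha^R$; thus every nonzero element of $R_\alpha$ is properly infinite and $R_\alpha$ is purely infinite. The main obstacle is the bookkeeping across the three layers $R_\alpha$, $\Cu(A)_\alpha$, and $\Cu(A \rtimesess S)$: specifically, routing $(2,1)$-paradoxicality of elements of $R^\ll$ all the way to proper infiniteness of $\iota(a)$ in \cref{item: retract PIS 21P} $\implies$ \cref{item: retract PIS PIC*}, which is exactly where the hypothesis $[a]_\Cu \in \rho(R)$ and strong invariance (through \cref{prop: retracts preserve dynamically below}) are indispensable.
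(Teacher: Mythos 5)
Your proposal is correct and follows essentially the same route as the paper's proof: the same chain of implications via \cref{prop: retracts preserve dynamically below}, \cref{cor: iota hat preserves precalpha}, \cref{lemma: sigma_alpha}, \cref{thm: OP hom on DCS induces functional}, \cref{thm: traces reduced}, and \cref{thm: Tarski-Wehrung paradoxicality thm}. Your extra care with zero terms and zero classes (the case split in \cref{item: retract PIS PIM} $\implies$ \cref{item: retract PIS 21P}, and the verification of $R_\alpha \ne \{0\}$ via \cref{prop: retracts preserve dynamically below}\cref{item: pathalpha^R iff pathalpha} rather than the paper's ``routine argument'') is a slight refinement of, not a departure from, the paper's argument.
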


\begin{remark}
Although it might appear that when $R_\alpha$ has plain paradoxes, the conditions of \cref{thm: retract PIS} are equivalent to those of \cref{thm: pure infiniteness}, it is not the case that $R_\alpha$ having plain paradoxes implies that $\Cu(A)_\alpha$ does, so we need to be careful. Nevertheless, much of the proof of \cref{thm: retract PIS} is very similar to the corresponding parts of \cref{thm: pure infiniteness}.
\end{remark}

\begin{proof}[Proof of \cref{thm: retract PIS}]
Since $A \rtimesred S$ is simple, parts~\cref{item: simple iff minimal and detects ideals,item: detecting ideals implies injective quotient map} of \cref{rem: simplicity and aperiodic actions and detecting ideals} together imply that $\alpha$ is minimal, $A$ detects ideals in $A \rtimesred S$, and the singular ideal of $A \rtimesred S$ is trivial, so $A \rtimesred S = A \rtimesess S$.

To see that \cref{item: retract PIS PIM} $\implies$ \cref{item: retract PIS 21P}, fix $x \in R^\ll {\setminus} \{0\}$. Then $x \in \sigma\big(\Cu(A)^\ll\big) {\setminus} \{0\}$ by \cref{item: R^ll and T^ll}. By assumption, $R_\alpha$ is purely infinite, and hence $[x]_\alpha^R$ is properly infinite; that is, $2[x]_\alpha^R \le [x]_\alpha^R$. Thus, by definition of the partial order on $R_\alpha$, we have $2x \pathalpha^R x$, so $x$ is $(2,1)$-paradoxical.

To see that \cref{item: retract PIS 21P} $\implies$ \cref{item: retract PIS PIC*}, suppose that every element of $R^\ll {\setminus} \{0\}$ is $(2,1)$-paradoxical. Let $\iota\colon A \to A \rtimesess S$ be the canonical embedding $a \mapsto a\delta_1$, and let $\hat{\iota}\colon\Cu(A) \to \Cu(A \rtimesess S)$ be the induced $\Cu$-morphism from \cref{lemma: Cu-embedding in crossed product is invariant}. By \cite[Corollary~6.7]{KwasniewskiMeyer2021}, $A \rtimesess S$ is purely infinite if and only if every element of $A^+ {\setminus} \{0\}$ is infinite in $A \rtimesess S$, in the sense of \cref{item: infinite element of C*-algebra}. Fix $a \in A^+ {\setminus} \{0\}$. By our hypothesis, we have $[a]_\Cu \in \rho(R) {\setminus} \{0\}$, so there exists $x \in R {\setminus} \{0\}$ such that $\rho(x) = [a]_\Cu$. By axiom~\cref{item: O2} of \cref{defn: Cu-semigroup}, there is a sequence $\{x_n\}_{n \in \N} \subseteq R {\setminus} \{0\}$ such that $x_n \ll x_{n+1}$ for all $n \in \N$ and $x = \sup_{n \in \N} (x_n)$. By assumption, each $x_n$ is $(2,1)$-paradoxical, so $2x_n \pathalpha^R x_n$ for all $n \in \N$. Hence \cref{prop: retracts preserve dynamically below}\cref{item: pathalpha^R iff pathalpha} implies that $\rho(2x_n) \pathalpha \rho(x_n)$ for all $n \in \N$, and it follows by \cref{cor: iota hat preserves precalpha} that $\hat{\iota}(\rho(2x_n)) \le \hat{\iota}(\rho(x_n))$ for all $n \in \N$. Since $\hat{\iota}$ and $\rho$ are $\Cu$-morphisms, $\hat{\iota} \circ \rho$ preserves suprema of increasing sequences, and therefore,
\begin{align*}
2[\iota(a)]_\Cu &= 2\hat{\iota}([a]_\Cu) = 2\hat{\iota}(\rho(x)) = (\hat{\iota} \circ \rho)(2x) = (\hat{\iota} \circ \rho)\big( 2 \sup_{n \in \N} (x_n) \big) = \sup_{n \in \N} \big((\hat{\iota} \circ \rho)(2x_n)\big) \\
&\le \sup_{n \in \N} \big((\hat{\iota} \circ \rho)(x_n)\big) = (\hat{\iota} \circ \rho)\big( \sup_{n \in \N} (x_n) \big) = (\hat{\iota} \circ \rho)(x) = \hat{\iota}([a]_\Cu) = [\iota(a)]_\Cu.
\end{align*}
Since $\iota$ is injective, $[\iota(a)]_\Cu \ne 0$, so $\iota(a)$ is infinite (in fact, properly infinite) in $A \rtimesess S$. Thus $A \rtimesess S$ is purely infinite.

That \cref{item: retract PIS PIC*} $\implies$ \cref{item: retract PIS NTS} is immediate because purely infinite C*-algebras do not admit tracial states (see, for example, \cite[Proposition~V.2.2.29]{Blackadar2006}).

To see that \cref{item: retract PIS NTS} $\implies$ \cref{item: retract PIS OPH}, we prove the contrapositive. Suppose there exists a nontrivial order-preserving homomorphism $\nu\colon R_\alpha \to [0,\infty]$. By \cref{lemma: sigma_alpha}, $\sigma\colon \Cu(A) \to R$ induces a surjective order-preserving homomorphism $\sigma_\alpha\colon\Cu(A)_\alpha \to R_\alpha$ such that $\sigma_\alpha([x]_\alpha) = [\sigma(x)]_\alpha^R$ for each $x \in \Cu(A)^\ll$. Thus $\nu \circ \sigma_\alpha\colon \Cu(A)_\alpha \to [0,\infty]$ is an order-preserving homomorphism. Since $\nu$ is nontrivial and $\sigma_\alpha$ is surjective, $\nu \circ \sigma_\alpha$ must also be nontrivial. Thus, since $\alpha$ is minimal, we can apply \cref{thm: OP hom on DCS induces functional} to find an invariant functional $\beta\colon \Cu(A) \to [0,\infty]$ such that $\beta([1_A]_\Cu) = 1$. Then since $A$ is exact and $A$ detects ideals in $A \rtimesred S$, \cref{thm: traces reduced} implies that $A \rtimesred S = A \rtimesess S$ admits a tracial state.

Suppose now that $R_\alpha$ has plain paradoxes. Since $A$ is unital and $[1_A]_\Cu \in \rho(R) {\setminus} \{0\}$ by assumption, we have $\sigma\restr{\rho(R)}([1_A]_\Cu) \ne 0$ by \cref{item: retract maps}. Since $[1_A]_\Cu \in \Cu(A)^\ll$ by \cref{rem: [p]_Cu ll [p]_Cu}, we have $\sigma([1_A]_\Cu) \in \sigma\big(\Cu(A)^\ll\big) {\setminus} \{0\}$, and a routine argument using \cref{rem: nonzero sequentially way below} then shows that $R_\alpha \ne \{0\}$. To see that \cref{item: retract PIS OPH} $\implies$ \cref{item: retract PIS PIM}, assume that \cref{item: retract PIS OPH} holds, and fix $x \in \sigma\big(\Cu(A)^\ll\big)$. If $x = 0$, then $2[x]_\alpha^R = [0]_\alpha^R \le [x]_\alpha^R$. Suppose that $x \ne 0$. Then there must exist $n \in \N$ such that $(n+1)[x]_\alpha^R \le n[x]_\alpha^R$, because if not, then \cref{thm: Tarski-Wehrung paradoxicality thm} would produce a nontrivial order-preserving homomorphism $\nu\colon R_\alpha \to [0,\infty]$ such that $\nu([x]_\alpha^R) = 1$, contradicting \cref{item: retract PIS OPH}. Since $R_\alpha$ has plain paradoxes, it follows that $2[x]_\alpha^R \le [x]_\alpha^R$. In particular, every element of $R_\alpha {\setminus} \{0\}$ is properly infinite, and thus $R_\alpha$ is purely infinite, giving \cref{item: retract PIS PIM}.
\end{proof}

We now obtain a version of \cref{thm: dichotomy result} that gives the stably finite / purely infinite dichotomy for $A \rtimesess S$ using only the retracted dynamical Cuntz semigroup.

\begin{thm} \label{thm: retract dichotomy}
Let $\alpha\colon S \curvearrowright A$ be an action of a unital inverse semigroup $S$ on a unital and exact C*-algebra $A$. Suppose that $A \rtimesred S$ is simple, and that $(R,\rho,\sigma)$ is a strongly invariant retract of $\Cu(A)$ such that $[a]_\Cu \in \rho(R)$ for each $a \in A^+$. Let $R_\alpha$ be the retracted dynamical Cuntz semigroup from \cref{defn: retracted DCS}. If $R_\alpha$ has plain paradoxes, then $A \rtimesred S$ is either stably finite or purely infinite.
\end{thm}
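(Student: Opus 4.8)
The plan is to deduce this dichotomy directly from \cref{thm: retract stable finiteness,thm: retract PIS}, in exact parallel with how \cref{thm: dichotomy result} follows from \cref{thm: stable finiteness,thm: pure infiniteness}. First I would verify that the standing hypotheses of both retract theorems are met under the present assumptions. Minimality and aperiodicity together with $A \rtimesred S = A \rtimesess S$ ensure, via \cite[Theorem~6.6]{KwasniewskiMeyer2021}, that $A \rtimesred S = A \rtimesess S$ is simple, so the alternative hypothesis ``$A \rtimesred S$ simple'' demanded by \cref{thm: retract stable finiteness} holds. The retract is nonzero because $[1_A]_\Cu \in \rho(R)$ by assumption and $[1_A]_\Cu \ne 0$ since $A$ is unital (so $R \ne \{0\}$); and the requirement $[a]_\Cu \in \rho(R)$ for all $a \in A^+$ needed by \cref{thm: retract PIS} is assumed outright. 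Thus both theorems apply verbatim.

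The dichotomy then rests on a single \emph{tertium non datur}: either there exists a nontrivial order-preserving homomorphism $\nu\colon R_\alpha \to [0,\infty]$, or there does not. In the first case, \cref{thm: retract stable finiteness}\cref{item: retract OP hm} holds, which by the equivalences established there is equivalent to \cref{thm: stable finiteness}\cref{item: A rtimes S ess stably finite}; hence $A \rtimesess S$ is stably finite. In the second case, \cref{thm: retract PIS}\cref{item: retract PIS OPH} holds, and since $R_\alpha$ is assumed to have plain paradoxes, the equivalences of \cref{thm: retract PIS} yield \cref{thm: retract PIS}\cref{item: retract PIS PIC*}; hence $A \rtimesess S$ is purely infinite. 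Because $A \rtimesred S = A \rtimesess S$, the same conclusion transfers to $A \rtimesred S$, completing the dichotomy.

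I do not expect a genuine obstacle at this final step: the two retract theorems are logically complementary at their hinge condition (the existence of a nontrivial order-preserving homomorphism out of $R_\alpha$), so combining them is purely formal. The one point requiring care—flagged in the remark preceding the proof of \cref{thm: retract PIS}—is that one must resist the temptation to shortcut through \cref{thm: dichotomy result} applied to $\Cu(A)_\alpha$, since $R_\alpha$ having plain paradoxes neither implies nor is implied by $\Cu(A)_\alpha$ having plain paradoxes. All the substantive work has already been carried out in \cref{thm: invariant retract induces action,prop: retracts preserve dynamically below,lemma: sigma_alpha} and in the two retract theorems themselves, so the proof here amounts to verifying the hypotheses and recording the complementarity above.
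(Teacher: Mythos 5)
Your proposal is correct and matches the paper's own proof: the paper likewise notes that under these hypotheses all statements of \cref{thm: retract stable finiteness,thm: retract PIS} become (respectively) equivalent, so the dichotomy hinges exactly on the existence or non-existence of a nontrivial order-preserving homomorphism $\nu\colon R_\alpha \to [0,\infty]$. Your additional verification of the hypotheses (simplicity via \cite[Theorem~6.6]{KwasniewskiMeyer2021}, nonzeroness of $R$ from $[1_A]_\Cu \in \rho(R)$) and the warning against shortcutting through $\Cu(A)_\alpha$ are consistent with, and merely more explicit than, what the paper records.
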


\begin{proof}
Under these hypotheses, all statements in \cref{thm: retract stable finiteness,thm: retract PIS} are, respectively, equivalent, so the dichotomy is given by the existence or non-existence of a nontrivial order-preserving homomorphism $\nu\colon R_\alpha \to [0,\infty]$.
\end{proof}

\begin{remark}
It is unclear whether there is a ``canonical'' retract one should consider for $\Cu(A)$ of an arbitrary C*-algebra $A$. However, as we discuss in \cref{subsec: groupoid dichotomy}, when $A = C(X)$, there is an ``obvious'' candidate for a retract of $\Cu(A)$ that appears in the literature, corresponding to the Cuntz-equivalence classes of diagonal compact operators. This retract of $\Cu(C(X))$ also corresponds (modulo a completion) to the monoid used to build the type semigroup associated to C*-algebras of groupoids with unit space $X$ (see, for example, \cite{RS2020, BoenickeLi2020, Ma2022, KMP2025}).
\end{remark}

\section{Applications to groupoid C*-algebras}
\label{sec: groupoid C*-algebras}

In this final section, we apply our main theorems to groupoid C*-algebras. The point here is that crossed products $A \rtimesred S$ and $A \rtimesess S$ are groupoid C*-algebras if (and only if) $A$ is commutative. We construct a retracted dynamical Cuntz semigroup of a groupoid C*-algebra and show that it coincides with the type semigroup constructed in (for example) \cite{RS2020, BoenickeLi2020, Ma2022, KMP2025}. As a result, the dichotomy in \cref{cor: dichotomy groupoid} is a special case of \cite[Corollary~6.9]{KMP2025}.

\subsection{Groupoid C*-algebras as crossed products by inverse semigroups}
\label{subsec: groupoid C*s as CPs}

In this section, we give a brief introduction to the groupoid setting, and we formally present the correspondence between groupoid C*-algebras and crossed products of commutative C*-algebras by unital inverse semigroups (see \cref{thm: duality}).

Let $X$ be a locally compact Hausdorff space, and let $S$ be a unital inverse semigroup. An \emph{action $\alpha\colon S \curvearrowright X$} consists of collections of open sets $\{ D_e \subseteq X \}_{e \in E(S)}$ and partial homeomorphisms $\{ \alpha_s\colon D_{s^*s} \to D_{ss^*} \}_{s \in S}$ such that for all $s, t \in S$, $\alpha_{st} = \alpha_s \circ \alpha_t$ wherever composition is defined. Note that if $A \cong C_0(X)$, then it follows from Gelfand duality that an action of $S$ on $X$ induces an action of $S$ on $A$ (as defined in \cref{defn: C*-algebra action}), and vice versa. We will henceforth use the two notions of inverse-semigroup actions interchangeably. If we have an action $S \curvearrowright C_0(X)$, then for each $s \in S$, the spectrum of the ideal $I_{s,1}$ of $C_0(X)$ is the open subset $O_{s,1} \coloneqq \cup_{e \le s, 1} D_e$ of $X$.

Let $\GG$ be an \'etale groupoid with locally compact Hausdorff unit space $X \coloneqq \GGo$ and range and source maps $r,s\colon \GG \to X$. Let $S = S(\GG)$ be the inverse semigroup of open bisections of $\GG$. Then there is an action $\theta\colon S \curvearrowright X$ given by
\[
\theta_U(x) \coloneqq r\big(s\restr{U}^{-1}(x)\big) = UxU^*
\]
for all $U \in S$ and $x \in s(U) = U^*U$. We call $\theta$ the \emph{canonical action of $S$ on $X$}. As discussed on \cite[Page~230]{Exel2008}, $\theta$ induces an action $S \curvearrowright C_0(X)$, which we also denote by $\theta$, given by
\begin{equation} \label{eqn: groupoid action}
\theta_U(f)(x) \coloneqq f\big(\theta_U^{-1}(x)\big) = f\big(s\big(r\restr{U}^{-1}(x)\big)\big) = f(U^*xU)
\end{equation}
for all $U \in S$, $f \in C_0(s(U))$, and $x \in r(U)$. Note that for $U \in S$, we have
\[
C_0(X)_{U^*U} = \dom(\theta_U) = C_0(s(U)) \quad \text{ and } \quad C_0(X)_{UU^*} = \ran(\theta_U) = C_0(r(U)).
\]
Define
\[
\CC_c(\GG) \coloneqq \vecspan\{ f \in C(\GG) : f\restr{U} \in C_c(U) \text{ for some open bisection } U \subseteq \GG \text{ and } f\restr{\GG {\setminus} U} \equiv 0 \}.
\]
Recall (for example, from \cite[Definition~2.2]{KKLRU2021}) that $\CC_c(\GG)$ is dense in the C*-algebras $C_{\max}^*(\GG)$, $C_\red^*(\GG)$, and $C_\ess^*(\GG)$ of $\GG$, and that the identity map on $\CC_c(\GG)$ extends to quotient maps $C_{\max}^*(\GG) \twoheadrightarrow C_\red^*(\GG)$ and $C_\red^*(\GG) \twoheadrightarrow C_\ess^*(\GG)$.

In what follows, let $B_b(X)$ denote the C*-algebra of complex-valued bounded Borel functions on $X$, and let $\MM(X) \subseteq B_b(X)$ be the ideal of such functions with meagre support. We have $\Mloc(C_0(X)) \cong B_b(X) / \MM(X)$ and $B_b(X) \subseteq C_0(X)''$.

\begin{thm} \label{thm: duality}
There is a correspondence between groupoid C*-algebras and the crossed products built from actions of unital inverse semigroups on commutative C*-algebras, in the following sense.

\begin{enumerate}[label=(\alph*)]
\item \label{item: duality gpd to IS} Let $\GG$ be an \'etale groupoid with second-countable locally compact Hausdorff unit space $X \coloneqq \GGo$, let $S = S(\GG)$, and let $\theta\colon S \curvearrowright X$ be the canonical action of $S$ on $X$. Then there is a homomorphism
\begin{equation} \label{map: duality}
\psi_\alg\colon \CC_c(\GG) \to C_0(X) \rtimesalg S, \ \text{ such that } \ \psi_{\alg}(f) = \big(f \circ r\restr{U}^{-1}\big) \delta_U
\end{equation}
for $f \in C_c(U)$ and $U \in S$, that induces isomorphisms $\psi_{\max}, \psi_\red$, and $\psi_\ess$ making the following diagram commute.
\begin{center}
\begin{tikzcd}[scale=50em]
C_{\max}^*(\GG) \arrow[r]{}{\psi_{\max}} \arrow[d,two heads] & C_0(X) \rtimesfull S \arrow[d,two heads] & \\
C_\red^*(\GG) \arrow[r]{}{\psi_\red} \arrow[d,two heads] & C_0(X) \rtimesred S \arrow[d,two heads]{}{\Lambda} \arrow[r]{}{\ER} & B_b(X) \arrow[d,two heads] \\
C_\ess^*(\GG) \arrow[r]{}{\psi_\ess} & C_0(X) \rtimesess S \arrow[r]{}{\EL} & B_b(X) / \MM(X).
\end{tikzcd}
\end{center}
\item \label{item: duality IS to gpd} Let $X$ be a second-countable locally compact Hausdorff space, and let $S$ be a countable unital inverse semigroup acting on $X$. Then there is an \'etale groupoid $\GG$ with unit space $\GGo = X$ and a homomorphism $\psi_\alg$ defined as in \labelcref{map: duality}
that induces isomorphisms $\psi_{\max}, \psi_\red$, and $\psi_\ess$ such that the above diagram commutes.
\end{enumerate}
\end{thm}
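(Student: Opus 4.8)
The plan is to derive both statements from the essential-crossed-product machinery of Kwa\'sniewski--Meyer, and in particular from their identification of groupoid C*-algebras with inverse-semigroup crossed products (see \cite[Definition~2.14 and Theorem~6.13]{KwasniewskiMeyer2021}). For part~\cref{item: duality gpd to IS}, I would first check that $\psi_\alg$ is a well-defined $*$-homomorphism. Given $f \in C_c(U)$, the function $f \circ r\restr{U}^{-1}$ is supported on $r(U)$ and hence lies in $C_0(r(U)) = C_0(X)_{UU^*}$, so $(f \circ r\restr{U}^{-1})\delta_U$ is a legitimate element of $\CC_c(S, C_0(X))$. The key subtlety is independence of the bisection: if $\supp(f) \subseteq U \cap V$ for two bisections, then the two candidate images differ by an element of the ideal $\II_\alpha$ generated by the relations $a_s\delta_s - a_s\delta_t$ for $s \le t$, so they agree in $C_0(X) \rtimesalg S$. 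Checking that $\psi_\alg$ respects convolution and involution is then a direct computation on the spanning elements $f \in C_c(U)$ using \cref{eqn: groupoid action}.

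Next I would promote $\psi_\alg$ to an isomorphism $\psi_{\max}$ at the universal level. Both $C_{\max}^*(\GG)$ and $C_0(X) \rtimesfull S$ are defined by universal properties---the former for representations of $\CC_c(\GG)$, the latter for the covariant pairs of \cite[Definition~2.18]{BussMartinez2023}---and I would match these universal properties to produce an inverse to $\psi_{\max}$. To descend to the reduced and essential levels, the main point is that $\psi_{\max}$ intertwines the two weak conditional expectations: on the groupoid side, restriction of functions to the unit space $X$ yields maps into $B_b(X) \subseteq C_0(X)''$ and into $B_b(X)/\MM(X) \cong \Mloc(C_0(X))$, and I would verify on the dense subalgebra $C_0(X) \rtimesalg S$ that these agree with $\ER$ and $\EL$ from \cref{thm: ER existence and properties,thm: EL existence and properties}. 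Both right-hand squares of the diagram then commute by continuity, and since $C_0(X) \rtimesred S$ and $C_0(X) \rtimesess S$ are the quotients of $C_0(X) \rtimesfull S$ by the nuclei $\NN_\ER$ and $\NN_\EL$ (and the corresponding groupoid C*-algebras are defined analogously), $\psi_{\max}$ descends to isomorphisms $\psi_\red$ and $\psi_\ess$ making the entire diagram---including the left column of quotient maps---commute.

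For part~\cref{item: duality IS to gpd}, given a countable unital inverse semigroup $S$ acting on a second-countable locally compact Hausdorff space $X$, I would form the groupoid of germs $\GG \coloneqq X \rtimes S$, which is \'etale with unit space $\GGo = X$ and second-countable by the countability hypotheses. The canonical map $S \to S(\GG)$ sending $s$ to the germ bisection of $\alpha_s$ intertwines the given action with the canonical action $\theta$ given by \cref{eqn: groupoid action}, so applying part~\cref{item: duality gpd to IS} to this $\GG$ and transporting along $S \to S(\GG)$ yields the desired $\psi_\alg$ and the induced isomorphisms $\psi_{\max}, \psi_\red, \psi_\ess$.

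I expect the essential level to be the main obstacle. The reduced and maximal correspondences are classical (going back to \cite{Exel2008, Paterson1999}), but matching $\EL$ with the essential restriction requires the identification $\Mloc(C_0(X)) \cong B_b(X)/\MM(X)$ together with the precise computation of the nucleus $\NN_\EL$---equivalently, showing that $\psi_{\max}$ carries the kernel of $C_{\max}^*(\GG) \twoheadrightarrow C_\ess^*(\GG)$ exactly onto $\NN_\EL$. This is the content of \cite[Theorem~6.13]{KwasniewskiMeyer2021}, so the real work is a careful translation into the present notation rather than a genuinely new argument.
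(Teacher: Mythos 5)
Your proposal is correct and follows essentially the same route as the paper: the maximal-level isomorphism is Exel's result (which you propose to reprove via universal properties, and which the paper simply cites as \cite[Proposition~9.8 and Theorem~9.8]{Exel2008}), the descent to the reduced and essential levels is done exactly as you describe, by verifying on the dense subalgebra that $\psi_{\max}$ intertwines $\ER_\GG, \EL_\GG$ with $\ER, \EL$ and hence matches the nuclei, and part~(b) uses the groupoid of germs as in the cited Exel theorem. The only cosmetic difference is attribution: the paper carries out the expectation-intertwining computation directly (identifying $I_{U,1} = C_0(U \cap X)$ and $i_U = 1_{U \cap X} + \MM(X)$) rather than invoking \cite{KwasniewskiMeyer2021}.
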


\begin{proof}
For part~\cref{item: duality gpd to IS}, note that \cite[Proposition~9.8]{Exel2008} shows that there is an isomorphism
\[
\psi_{\max}\colon C_{\max}^*(\GG) \to C_0(X) \rtimesfull S,
\]
and the proof of \cite[Theorem~9.8]{Exel2008} shows that for $f \in C_c(U)$ and $U \in S$,
\[
\psi_{\alg}(f) = \psi_{\max}(f) = \big(f \circ r\restr{U}^{-1}\big) \delta_U.
\]
We claim that $\psi_{\max}$ descends to an isomorphism of the essential C*-algebras. To see this, let
\[
\ER_{\GG}\colon C_{\max}^*(\GG) \to B_b(X) \subseteq C_0(X)''
\]
denote the usual (weak) conditional expectation given by restriction of functions to the unit space $X$, and let
\[
\EL_{\GG}\colon C_{\max}^*(\GG) \to \Mloc(C_0(X))
\]
be the map $a \mapsto \ER_{\GG}(a) + \MM(X)$. Let $\NN_{\EL_\GG} \coloneqq \{ a \in C_{\max}^*(\GG) : \EL_\GG(a^*a) = 0 \}$. Since $C_\ess^*(\GG) = C_{\max}^*(\GG) / \NN_{\EL_\GG}$ and $C_0(X) \rtimesess S = \big(C_0(X) \rtimes_{\max} S\big) / \NN_\EL$, it suffices to show that $\psi_{\max}\big(\NN_{\EL_\GG}\big) = \NN_\EL$. We first show that $\EL_\GG = \EL \circ \psi_{\max}$. For this, fix $U \in S$. Then $I_{U,1} = C_0(U \cap X)$ and $i_U = 1_{U \cap X} + \MM(X)$. Let $f \in C_c(U)$. Since $C_{\max}^* (\GG)$ is equal to the closed span (with respect to the maximal norm) of such functions, it suffices to show that $\EL_\GG(f) = \EL\big(\psi(f)\big)$. Define $f^r \coloneqq f \circ r\restr{U}^{-1}$. Then $f^r \in C_c(UU^*)$ and $f = f^r \circ r\restr{U}$. Using \cref{thm: EL existence and properties} for the second equality, we get
\[
\EL\big(\psi_{\max}(f)\big) = \EL(f^r \delta_U) = f^r i_U = f^r \big(1_{U \cap X} + \MM(X)\big) = f\restr{X} + \MM(X) = \EL_\GG(f).
\]
Thus $\EL_\GG = \EL \circ \psi_{\max}$. To see that $\psi\big(\NN_{\EL_\GG}\big) \subseteq \NN_\EL$, fix $a \in \NN_{\EL_\GG}$. Then
\[
\EL\big(\psi_{\max}(a)^*\psi_{\max}(a)\big) = \EL\big(\psi_{\max}(a^*a)\big) = \EL_\GG(a^*a) = 0,
\]
and so $\psi_{\max}(a) \in \NN_\EL$ by the definition of $\NN_\EL$. For the reverse containment, fix $b \in \NN_\EL$. Then $\EL(b^*b) = 0$. Since $\psi_{\max}$ is an isomorphism, there is a unique element $a \in C_{\max}^*(\GG)$ such that $b = \psi_{\max}(a)$, and we have
\[
\EL_\GG(a^*a) = \EL\big(\psi_{\max}(a^*a)\big) = \EL\big(\psi_{\max}(a)^*\psi_{\max}(a)\big) = \EL(b^*b) = 0.
\]
Thus $\psi_{\max}\big(\NN_{\EL_\GG}\big) = \NN_\EL$, proving the claim. Therefore, $\psi_{\max}$ descends to an isomorphism $\psi_\ess\colon C_\ess^*(\GG) \to C_0(X) \rtimesess S$. By a similar argument using that $\ER \circ \psi_{\max} = \ER_\GG$, $\psi_{\max}$ also descends to an isomorphism $\psi_\red\colon C_\red^*(\GG) \to C_0(X) \rtimesred S$. That the diagram commutes is clear: the left and middle vertical arrows are induced by the identity maps on $\CC_c(\GG)$ and $C_0(X)\rtimes_{\alg}S$ respectively, and the right-most vertical arrow is the canonical quotient map.

For part~\cref{item: duality IS to gpd}, note that by \cite[Theorem~9.8]{Exel2008}, there exists a groupoid $\GG$ with unit space $X$ such that $C_{\max}^*(\GG) \cong C_0(X) \rtimes_{\max} S$, and the isomorphism, when restricted to $\CC_c(\GG)$, satisfies \labelcref{map: duality}. The rest of the argument follows as in the proof of part~\cref{item: duality gpd to IS}.
\end{proof}

\begin{remark}
\Cref{thm: duality} is known to hold in greater generality and in particular with no second-countability assumptions; for details, see \cite{BKM2025, KwasniewskiMeyer2021}. The condition of the space $X$ being second-countable is equivalent to the C*-algebra $C_0(X)$ being separable. While this separability assumption is not necessary in order to apply our results \cref{thm: retract stable finiteness,thm: retract PIS,thm: retract dichotomy}, we have chosen to assume it throughout \cref{sec: groupoid C*-algebras} as it simplifies some of our arguments, while still allowing us to show how our results apply to a large class of groupoid C*-algebras.
\end{remark}

\subsection{A dichotomy for the essential C*-algebra of a groupoid}
\label{subsec: groupoid dichotomy}

To describe our results on stable finiteness and pure infiniteness and our dichotomy theorem in the context of C*\nobreakdash-algebras of groupoids with second-countable compact Hausdorff unit space $X$, we first need to understand the associated Cuntz semigroup $\Cu(C(X))$. As mentioned earlier, $\Cu(C(X))$ has only been computed for low-dimensional spaces $X$. To overcome this, we restrict our attention to a well-behaved retract and apply the results of \cref{sec: retracts}.

We begin by recalling some results about Cuntz comparison in $C(X)$. For $f \in C(X)$, we define $\osupp(f) \coloneqq \{ x \in X : f(x) \ne 0 \}$, and we call this set the \emph{open support} of $f$.

\begin{lemma}[{see \cite[Propositions~2.2 and~4.4]{GardellaPerera2024}}] \label{lemma: Cuntz comparison for functions}
Let $X$ be a compact Hausdorff space, and fix $f,g \in C(X)^+$. Then
\begin{enumerate}[label=(\alph*), ref={\cref{lemma: Cuntz comparison for functions}(\alph*)}]
\item $f \preceq g $ if and only if $\osupp(f) \subseteq \osupp(g)$;
\item \label{item: Cuntz equivalence for functions} $f \sim g $ if and only if $\osupp(f) = \osupp(g)$; and
\item $f \ll g $ if and only if there is a compact set $K \subseteq X$ such that $\osupp(f) \subseteq K \subseteq \osupp(g)$.
\end{enumerate}
\end{lemma}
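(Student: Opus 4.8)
The plan is to prove \cref{lemma: Cuntz comparison for functions} by reducing all three statements to the single characterisation in part~(a) and a basic fact about compact Hausdorff spaces. First I would establish part~(a), which is the core of the lemma. The forward direction is the standard computation: if $\osupp(f) \subseteq \osupp(g)$, then for each $\varepsilon > 0$ the function $h_\varepsilon(x) \coloneqq f(x)/g(x)$ on the set $\{g \ge \varepsilon\}$ (extended by $0$) is continuous and satisfies $h_\varepsilon \, g \, h_\varepsilon^* \to (f - \varepsilon)_+$ uniformly, witnessing $(f-\varepsilon)_+ \preceq g$ and hence $f \preceq g$ after letting $\varepsilon \to 0$. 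For the converse, if $f \preceq g$ then there are $r_n$ with $r_n g r_n^* \to f$, and evaluating at any point $x \notin \osupp(g)$ forces $f(x) = 0$, so $\osupp(f) \subseteq \osupp(g)$.

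Given part~(a), part~(b) is immediate by applying the definition of $\sim$ (namely $f \preceq g$ and $g \preceq f$) together with part~(a) in each direction, so that $f \sim g$ if and only if $\osupp(f) \subseteq \osupp(g)$ and $\osupp(g) \subseteq \osupp(f)$, i.e.\ $\osupp(f) = \osupp(g)$. For part~(c) I would argue directly from \cref{defn: swb ll}\cref{item: swb ll} combined with part~(a). For the ``only if'' direction, assuming $f \ll g$, I would construct an increasing sequence $\{g_n\}$ (for instance $g_n \coloneqq (g - 1/n)_+$) whose open supports exhaust $\osupp(g)$ and whose supremum dominates $[g]_\Cu$; the way-below hypothesis then yields $f \preceq g_n$ for some $n$, and taking $K \coloneqq \{g \ge 1/n\}$ gives a compact set with $\osupp(f) \subseteq K \subseteq \osupp(g)$ via part~(a). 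For the ``if'' direction, given such a compact $K$, I would use a Urysohn-type function that is $1$ on $K$ and supported in $\osupp(g)$ to show that whenever an increasing sequence has supremum dominating $[g]_\Cu$, the corresponding open supports must eventually cover $K$, and hence $f$ is subequivalent to a tail term by compactness of $K$.

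The main obstacle I anticipate is the careful handling of part~(c), specifically the passage between the topological statement (existence of a compact $K$ sandwiched between the open supports) and the order-theoretic sequential way-below relation in $\Cu(C(X))$, which is defined via suprema of increasing sequences rather than directly in terms of functions. The delicate point is that an arbitrary increasing sequence in $\Cu(C(X))$ with supremum dominating $[g]_\Cu$ need not consist of classes of continuous functions, so I would need to invoke the fact (from the cited \cite{GardellaPerera2024}) that suprema of increasing sequences of function-classes correspond to open supports behaving as nested unions, and use compactness of $K$ to extract a finite (hence single, by monotonicity) index. Since the lemma is explicitly attributed to \cite[Propositions~2.2 and~4.4]{GardellaPerera2024}, I would lean on those results for the precise correspondence between suprema in $\Cu(C(X))$ and unions of open supports, and present the proof as an assembly of these facts rather than re-deriving the structure of $\Cu(C(X))$ from scratch.
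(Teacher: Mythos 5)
The paper never proves \cref{lemma: Cuntz comparison for functions}: it imports the statement wholesale from \cite[Propositions~2.2 and~4.4]{GardellaPerera2024}, so there is no in-paper argument to compare yours against, and your outline has to stand on its own. Parts (a) and (b), and the ``only if'' half of (c), are essentially sound. One repair is needed in (a): your $h_\varepsilon \coloneqq f/g$ on $\{g \ge \varepsilon\}$, extended by zero, is in general discontinuous at the boundary of $\{g \ge \varepsilon\}$, and $h_\varepsilon \, g \, h_\varepsilon^*$ need not converge to $(f-\varepsilon)_+$. The standard witness is instead $r_\varepsilon \coloneqq \big((f-\varepsilon)_+/g\big)^{1/2}$, extended by zero: since $\osupp(f) \subseteq \osupp(g)$, the function $(f-\varepsilon)_+$ vanishes on the open set $\{f < \varepsilon\}$, which contains the zero set of $g$, so $r_\varepsilon$ is continuous and $r_\varepsilon g r_\varepsilon = (f-\varepsilon)_+$ exactly; taking $\varepsilon = 1/n$ gives the sequence required by the definition of $f \preceq g$.

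The genuine gap is in the ``if'' half of (c). Your plan is: if an increasing sequence $\{x_n\}$ has supremum dominating $[g]_\Cu$, then the open supports of representatives of the $x_n$ eventually cover $K$, ``and hence $f$ is subequivalent to a tail term''. That last inference assumes that containment of open supports implies Cuntz subequivalence when the dominating element is an arbitrary element of $(C(X)\otimes\KK)^+$, and this is false. Take $X = S^2$ and let $p \in C(S^2, M_2(\C))^+$ be the Bott projection: its open support is all of $S^2$, yet $[1_X]_\Cu \not\le [p]_\Cu$, since otherwise $1_X \otimes e_{1,1}$ would be Murray--von Neumann equivalent to a subprojection of $p$, forcing the trivial and Bott line bundles to be isomorphic. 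Part (a) controls comparison only between elements of $C(X)^+$; for elements of $(C(X)\otimes\KK)^+$, support (equivalently, pointwise rank) data does not determine Cuntz comparison --- this failure is precisely why $\Lsc(X,\bar{\N})$ is merely a retract of $\Cu(C(X))$ in \cref{sec: groupoid C*-algebras}, rather than all of it. The correct route avoids inspecting the $x_n$ altogether: compactness of $K \subseteq \osupp(g)$ gives $\delta > 0$ with $g \ge \delta$ on $K$, so $\osupp(f) \subseteq K \subseteq \osupp\big((g-\delta/2)_+\big)$, whence $[f]_\Cu \le [(g-\delta/2)_+]_\Cu$ by part (a); then invoke the general C*-algebraic fact that $[(g-\delta/2)_+]_\Cu \ll [g]_\Cu$ (\cref{ineq: [(a - t)_+]_Cu ll [a]_Cu}, i.e.\ \cite[Remark~4.2]{GardellaPerera2024}, which rests on R{\o}rdam's lemma) and conclude $[f]_\Cu \ll [g]_\Cu$ from \cref{lemma: ll le properties}. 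That R{\o}rdam-type ingredient is genuinely operator-algebraic, cannot be recovered from support bookkeeping, and is the idea missing from your outline.
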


\begin{defn}
Define $\bar{\N} \coloneqq \N \cup \{\infty\}$. Let $X$ be a compact metrisable space. The collection of \emph{lower-semicontinuous functions} on $X$, denoted by $\Lsc(X,\bar{\N})$, is the set of functions $F\colon X \to \bar{\N}$ that are lower-semicontinuous in the sense that for each $i \in \N$, the set $F^{-1}(\{i, i+1, \dotsc \} \cup \{\infty\})$ is open in $X$. (Note that we do not require $F^{-1}(\infty)$ to be open.)
\end{defn}

Every function $F \in \Lsc(X,\bar{\N})$ has a unique \emph{normal form} such that
\[
F = \sum_{i=1}^\infty 1_{U_i}, \quad \text{ where } U_i = F^{-1}(\{ i, i+1, \dotsc \} \cup \{\infty\}).
\]
Note that $U_{i+1} \subseteq U_i$ for each $i \ge 1$. We equip $\Lsc(X,\bar{\N})$ with pointwise addition and partial order given by
\[
F \le H \text{ if and only if } F(x) \le H(x) \text{ for all } x \in X;
\]
equivalently, $F = \sum_{i=1}^\infty 1_{U_i} \le H = \sum_{i=1}^\infty 1_{V_i}$ if and only if $U_i \subseteq V_i$ for each $i \ge 1$.

It is shown in \cite[Corollary~4.19]{Vilalta2023} that with this structure, $\Lsc(X,\bar{\N})$ is a $\Cu$-semigroup. The sequentially way-below relation in $\Lsc(X,\bar{\N})$ is given by $F = \sum_{i=1}^\infty 1_{U_i} \ll H = \sum_{i=1}^\infty 1_{V_i}$ if and only if $\im(F)$ is a finite subset of $\N$ and for each $i \ge 1$ there exists a compact set $K_i \subseteq X$ such that $U_i \subseteq K_i \subseteq V_i$. Therefore, $F \in \Lsc(X,\bar{\N})^\ll$ if and only if $\im(F) \subseteq [0,N]$ for some $N \in \N$.

Suppose now that $X$ is a second-countable compact Hausdorff space. Then in particular, $X$ is metrisable. We will follow the argument in \cite[Corollary~4.8]{ThielVilalta2022} to show that $\Lsc(X,\bar{\N})$ is a retract of $\Cu(C(X))$. Note that for any nonempty open set $U \subseteq X$, we can apply the strong form of Urysohn's lemma to find some $f_U \in C(X)^+$ with $\osupp(f_U) = U$. Moreover, for any $g_U \in C(X)^+$ with $\osupp(g_U) = U$, we have $[g_U]_\Cu = [f_U]_\Cu$ by \cref{item: Cuntz equivalence for functions}. It is straightforward to show that the assignment $\rho(1_U) \coloneqq [f_U]_\Cu$ extends to a $\Cu$-morphism $\rho\colon \Lsc(X,\bar{\N}) \to \Cu(C(X))$. For all $f, g \in C(X,\KK)^+ \cong (C(X) \otimes \KK)^+$ such that $[f]_\Cu = [g]_\Cu$, we have $\rank(f(x)) = \rank(g(x))$ for all $x \in X$. Given $f \in C(X,\KK)^+$, we define $\sigma([f]_\Cu)(x) \coloneqq \rank(f(x))$. It is straightforward to check that $\sigma\colon \Cu(C(X)) \to \Lsc(X,\bar{\N})$ is a generalised $\Cu$-morphism and that $\osupp(\sigma([f]_\Cu)) \subseteq \osupp(f)$ for each $f \in C(X,\KK)^+$.

\begin{prop} \label{prop: retract Lcs}
Let $X$ be a second-countable compact Hausdorff space. Let
\[
\rho\colon \Lsc(X,\bar{\N}) \to \Cu(C(X)) \quad \text{ and } \quad \sigma\colon \Cu(C(X)) \to \Lsc(X,\bar{\N})
\]
be the $\Cu$-morphism and generalised $\Cu$-morphism (respectively) defined above. Then $\Lsc(X,\bar{\N})$ is a retract of $\Cu(C(X))$, and $\sigma\big(\Cu(C(X))^\ll\big) = \Lsc(X,\bar{\N})^\ll$.
\end{prop}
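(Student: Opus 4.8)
The plan is to treat the two assertions separately. For the retract, the maps $\rho$ and $\sigma$ are already known (from the discussion preceding the proposition) to be a $\Cu$-morphism and a generalised $\Cu$-morphism, so the only thing left to verify is $\sigma \circ \rho = \id_{\Lsc(X,\bar{\N})}$. For the submonoid identity I would obtain one inclusion for free from the general theory of retracts and prove the other directly; the latter is where the real work lies.

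First I would establish $\sigma \circ \rho = \id$ by arguing on normal forms. Fix $F = \sum_{i=1}^\infty 1_{U_i}$ with $U_{i+1} \subseteq U_i$. Using the strong form of Urysohn's lemma recalled above, together with rescaling, choose $f_i \in C(X)^+$ with $\osupp(f_i) = U_i$ and $\norm{f_i} \le 2^{-i}$ (taking $f_i = 0$ when $U_i = \emptyset$), and set $g \coloneqq \bigoplus_{i=1}^\infty f_i$. Since $\sum_i \norm{f_i} < \infty$ this converges in norm, so $g$ is a genuine element of $(C(X)\otimes\KK)^+$, and because the $U_i$ are decreasing we have $\rank(g(x)) = \#\{i : x \in U_i\} = F(x)$ for every $x \in X$; hence $\sigma([g]_\Cu) = F$ by the definition of $\sigma$. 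It then remains to identify $[g]_\Cu$ with $\rho(F)$: writing $g_n \coloneqq \bigoplus_{i=1}^n f_i$, the $g_n$ increase to $g$ in norm, so $[g]_\Cu = \sup_n [g_n]_\Cu$, and since $[f_i]_\Cu = \rho(1_{U_i})$ by \cref{lemma: Cuntz comparison for functions}\cref{item: Cuntz equivalence for functions} (equal open supports) and $\rho$ is a $\Cu$-morphism, $[g]_\Cu = \sup_n \rho\big(\sum_{i=1}^n 1_{U_i}\big) = \rho(F)$. Thus $\sigma(\rho(F)) = F$, and $\big(\Lsc(X,\bar{\N}),\rho,\sigma\big)$ is a retract of $\Cu(C(X))$.

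For the second claim, the inclusion $\Lsc(X,\bar{\N})^\ll \subseteq \sigma\big(\Cu(C(X))^\ll\big)$ is immediate from the retract property via \cref{item: R^ll and T^ll}. The content is the reverse inclusion. I would fix $x = [f]_\Cu \in \Cu(C(X))^\ll$ with $f \in (C(X)\otimes\KK)^+$ and $x \ll y$ for some $y = [h]_\Cu$, $h \in (C(X)\otimes\KK)^+$. Since $(h - 1/n)_+$ increases in norm to $h$, we have $[h]_\Cu = \sup_n [(h-1/n)_+]_\Cu$, so $x \ll [h]_\Cu$ yields $\epsilon > 0$ with $f \preceq (h-\epsilon)_+$; as $\sigma$ is order-preserving, this gives $\rank(f(x)) \le \rank\big((h-\epsilon)_+(x)\big)$ for all $x \in X$. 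It therefore suffices to bound $\rank\big((h-\epsilon)_+(x)\big)$ uniformly in $x$, after which $\im(\sigma([f]_\Cu)) \subseteq [0,N]$ places $\sigma([f]_\Cu)$ in $\Lsc(X,\bar{\N})^\ll$ by the characterisation of way-below elements recorded before the proposition.

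The main obstacle is precisely this uniform rank bound, which I would get from a finite-rank approximation and the min-max principle. Since $x \mapsto h(x)$ is norm-continuous and $X$ is compact, $h(X)$ is a norm-compact subset of $\KK$; the coordinate projections $P_N$ converge strongly to $1$, hence uniformly on norm-compact sets, so there is $N$ with $\sup_{x \in X} \norm{h(x) - P_N h(x) P_N} < \epsilon/2$. For each $x$ the operator $P_N h(x) P_N$ has rank at most $N$, while by the min-max principle its eigenvalues differ from those of $h(x)$ by less than $\epsilon/2$; thus each eigenvalue of $h(x)$ exceeding $\epsilon$ forces an eigenvalue of $P_N h(x) P_N$ exceeding $\epsilon/2 > 0$, whence $\rank\big((h-\epsilon)_+(x)\big) \le N$ for all $x$. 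Combining this with the pointwise inequality above gives $\sigma\big(\Cu(C(X))^\ll\big) \subseteq \Lsc(X,\bar{\N})^\ll$, and together with the reverse inclusion this completes the proof.
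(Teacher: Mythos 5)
Your proposal is correct, but it diverges from the paper's proof in both halves, and in each case you do a bit more work for a bit more rigour. For the identity $\sigma \circ \rho = \id$, the paper only computes $\sigma(\rho(1_U)) = 1_U$ on indicator functions and then invokes the fact that both maps are (generalised) $\Cu$-morphisms agreeing on these generators; you instead build, for an arbitrary $F = \sum_i 1_{U_i}$ in normal form, an explicit norm-convergent diagonal representative $g = \bigoplus_i f_i$ of $\rho(F)$ and read off $\sigma([g]_\Cu) = F$ pointwise --- a more hands-on route to the same conclusion. The real difference is in the inclusion $\sigma\big(\Cu(C(X))^\ll\big) \subseteq \Lsc(X,\bar{\N})^\ll$ (the other inclusion is \cref{item: R^ll and T^ll} in both arguments). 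The paper takes $[f]_\Cu \ll [g]_\Cu$, asserts the existence of an \emph{increasing} sequence $g_n \in C(X,M_n(\C))^+$ converging to $g$, and concludes $[f]_\Cu \le [g_m]_\Cu$ with $\rank(g_m(x)) \le m$; note that the naive choice $P_n g P_n$ is not increasing, so this existence claim is left somewhat delicate (it can be repaired, e.g.\ with $g^{1/2}P_n g^{1/2}$, which is increasing and of rank at most $n$). You sidestep this entirely: from $(h-1/n)_+ \nearrow h$ you get $f \preceq (h-\varepsilon)_+$, and then prove the needed uniform rank bound $\sup_{x}\rank\big((h(x)-\varepsilon)_+\big) \le N$ directly, using norm-compactness of $h(X)$, uniform finite-rank approximation by the compressions $P_N h(x) P_N$, and Weyl's eigenvalue perturbation inequality. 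Your version is longer but self-contained, replacing the paper's unproved approximation claim with a concrete compactness-plus-spectral argument; the paper's version is shorter at the cost of that gap.
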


\begin{proof}
Fix an open set $U \subseteq X$, and choose $f_U \in C(X)^+$ such that $\osupp(f_U) = U$. Recall that there is an embedding $C(X) \hookrightarrow C(X,\KK)$ that maps $f_U$ to the function $X \owns x \mapsto f_U(x) \, e_{1,1} \in \KK$. Thus, for each $x \in X$,
\[
\sigma\big(\rho(1_U)\big)(x) = \sigma([f_U]_\Cu)(x) = \rank(f_U(x)) = \rank(e_{1,1}) \, 1_{\osupp(f_U)}(x) = 1_U(x).
\]
It follows that $\sigma \circ \rho = \id_{\Lsc(X,\bar{\N})}$, and hence $\Lsc(X,\bar{\N})$ is a retract of $\Cu(C(X))$.

We now show that $\sigma\big(\Cu(C(X))^\ll\big) = \Lsc(X,\bar{\N})^\ll$. By \cref{item: R^ll and T^ll}, we have $\Lsc(X,\bar{\N})^\ll \subseteq \sigma\big(\Cu(C(X))^\ll\big)$. For the reverse containment, fix $f \in C(X,\KK)^+ \cong \big(C(X) \otimes \KK)^+$ such that $[f]_\Cu \in \Cu(A)^\ll$. Then $[f]_\Cu \ll [g]_\Cu$ for some $g \in C(X,\KK)^+$. Let $\{g_n\}_{n \in \N} \subseteq C(X,\KK)^+$ be an increasing sequence such that $g_n \in C(X,M_n(\C))^+$ for each $n \in \N$ and $g = \lim_{n \to \infty} g_n$. By \cite[Lemma~2.57]{Thiel2017}, $\big\{[g_n]_\Cu\big\}_{n \in \N} \subseteq \Cu(C(X))$ is an increasing sequence and $\sup_{n \in \N} \big([g_n]_\Cu\big) = [g]_\Cu$. Since $[f]_\Cu \ll [g]_\Cu$, there exists $m \in \N$ such that $[f]_\Cu \le [g_m]_\Cu$. Since $\sigma$ is order-preserving and $g_m \in C(X,M_m(\C))^+$, it follows that for all $x \in X$,
\[
\sigma([f]_\Cu)(x) \le \sigma([g_m]_\Cu)(x) = \rank(g_m(x)) \le m.
\]
Therefore, $\sigma([f]) \in \Lsc(X,\bar{\N})^\ll$, and so $\sigma\big(\Cu(C(X))^\ll\big) \subseteq \Lsc(X,\bar{\N})^\ll$.
\end{proof}

\begin{remark}
When $X$ has low covering dimension we can say more about $\Lsc(X,\bar{\N})$ and $\Cu(C(X))$. Let $X$ be a second-countable compact Hausdorff space with $\dim(X) \le 1$ (or $\dim X \le 2$ with an additional cohomological assumption). Then by \cite[Theorem~1.1]{Robert2013b}, the map $\sigma\colon \Cu(C(X)) \to \Lsc(X,\bar{\N})$ is an isomorphism (that is, a bijective $\Cu$-morphism). On the other hand, when $\dim(X) \ge 3$, $\sigma$ is not necessarily injective (or a $\Cu$-morphism).

If we remove the second-countability assumption on $X$ (so that $X$ is just a compact Hausdorff space) and place no restrictions on the covering dimension of $X$, then we need to consider a different collection of functions to obtain a retract of $\Cu(C(X))$. Consider the collection $\Lsc_\sigma(X,\bar{\N})$ of functions $F\colon X \to \bar{\N}$ such that for each $i \in \N$, the set $F^{-1}(\{i, i+1, \dotsc \} \cup \{\infty\})$ is $\sigma$-compact and open in $X$. This is a $\Cu$-semigroup (see \cite[Proposition~2.13]{ElliottIm2024}), and a similar argument to \cite[Corollary~4.8]{ThielVilalta2022} would show that it is a retract of $\Cu(C_0(X))$. (This result is referenced above \cite[Proposition~2.13]{ElliottIm2024}, but does not seem to have appeared in the literature.)
\end{remark}

We now show that the retract $\Lsc(X,\bar{\N})$ is strongly invariant (in the sense of \cref{defn: invariant retract}) under the action $\hat{\theta}\colon S(\GG) \curvearrowright \Cu(C(X))$ induced by the canonical action $\theta\colon S(\GG) \curvearrowright C(X)$, and that $\Lsc(X,\bar{\N})$ has the appropriate structure for us to apply the results of \cref{sec: retracts}.

\begin{prop} \label{prop: Lsc strongly invariant retract}
Let $\GG$ be an \'etale groupoid with second-countable compact Hausdorff unit space $X$, and let $\theta\colon S(\GG) \curvearrowright C(X)$ be the canonical action. Define $\rho\colon \Lsc(X,\bar{\N}) \to \Cu(C(X))$ and $\sigma\colon \Cu(C(X)) \to \Lsc(X,\bar{\N})$ as above. Then $\Lsc(X,\bar{\N})$ is a strongly invariant retract of $\Cu(C(X))$, and $[f]_\Cu \in \rho\big(\Lsc(X,\bar{\N})\big)$ for each $f \in C(X)^+$. The action $\hat{\theta}^\Lsc\colon S(\GG) \curvearrowright \Lsc(X,\bar{\N})$ (defined as in \cref{thm: invariant retract induces action}) satisfies $\hat{\theta}^\Lsc_U(F) = F \circ \theta_U^{-1}$ for all $U \in S(\GG)$ and $F \in \Lsc(X,\bar{\N})$. In particular, given $U \in S(\GG)$ and any open set $V \subseteq s(U)$, we have $\hat{\theta}^\Lsc_U(1_V) = 1_{r(UV)} = 1_{UVU^*}$.
\end{prop}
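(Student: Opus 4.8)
The plan is to reduce everything to two identities describing how the induced action $\hat\theta$ on $\Cu(C(X))$ interacts with the retract maps $\rho$ and $\sigma$ of \cref{prop: retract Lcs}. Fix $U \in S(\GG)$ and write $\theta_U^\KK \coloneqq \theta_U \otimes \id$; by \cref{thm: homomorphism induces Cu-morphism} the restriction of $\hat\theta_U$ to the ideal $\Cu(C_0(s(U)))$ sends $[g]_\Cu$ to $[\theta_U^\KK(g)]_\Cu$, and, componentwise from \cref{eqn: groupoid action}, $\theta_U^\KK(g) = g \circ \theta_U^{-1}$ (extended by zero off $r(U)$). I first note that whenever $\osupp(F) \subseteq s(U)$, the function $F \circ \theta_U^{-1}$, extended by zero, again lies in $\Lsc(X,\bar\N)$: for $i \ge 1$ the superlevel set $\{x : (F \circ \theta_U^{-1})(x) \ge i\}$ is the image under the homeomorphism $\theta_U$ of an open subset of $s(U)$, hence open in the open set $r(U)$, and so open in $X$.

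The first identity is the indicator formula $\hat\theta_U(\rho(F)) = \rho(F \circ \theta_U^{-1})$, valid for every $F \in \Lsc(X,\bar\N)$ with $\osupp(F) \subseteq s(U)$. I would prove it first for $F = 1_V$ with $V \subseteq s(U)$ open: choosing $f_V \in C(X)^+$ with $\osupp(f_V) = V$, the function $\theta_U(f_V) = f_V \circ \theta_U^{-1}$ has open support $\theta_U(V)$, so \cref{item: Cuntz equivalence for functions} gives $\hat\theta_U(\rho(1_V)) = [\theta_U(f_V)]_\Cu = \rho(1_{\theta_U(V)}) = \rho(1_V \circ \theta_U^{-1})$. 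The general case then follows by writing $F = \sup_N \sum_{i=1}^N 1_{U_i}$ in normal form and using that $\rho$ and $\hat\theta_U$ are $\Cu$-morphisms, together with the pointwise description of suprema of increasing sequences in $\Lsc(X,\bar\N)$. The second identity is the rank intertwining $\sigma(\hat\theta_U([g]_\Cu)) = \sigma([g]_\Cu) \circ \theta_U^{-1}$ for $[g]_\Cu \in \Cu(C_0(s(U)))$, which is immediate from $\sigma([h]_\Cu)(x) = \rank(h(x))$ and $\theta_U^\KK(g)(x) = g(\theta_U^{-1}(x))$, since rank is preserved by the isomorphism $\theta_U^\KK$ and both sides vanish off $r(U)$.

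With these two identities the structural claims become bookkeeping. Invariance of the retract follows from the indicator formula, since any element of $\Cu(C_0(s(U))) \cap \rho(\Lsc(X,\bar\N))$ is of the form $\rho(F)$ with $\osupp(F) \subseteq s(U)$, whence $\hat\theta_U(\rho(F)) = \rho(F \circ \theta_U^{-1}) \in \rho(\Lsc(X,\bar\N))$. For strong invariance (\cref{defn: invariant retract}) I would check the two required conditions: the containment $\rho(\sigma(\Cu(C_0(s(U))))) \subseteq \Cu(C_0(s(U)))$ holds because $\osupp(\sigma([g]_\Cu)) = \osupp(g) \subseteq s(U)$; and the identity $\sigma \circ \hat\theta_U \circ \rho \circ \sigma = \sigma \circ \hat\theta_U$ on $\Cu(C_0(s(U)))$ is obtained by evaluating the left-hand side via the indicator formula and $\sigma \circ \rho = \id$, and the right-hand side via the rank intertwining, both giving $\sigma([g]_\Cu) \circ \theta_U^{-1}$. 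The containment $[f]_\Cu \in \rho(\Lsc(X,\bar\N))$ for $f \in C(X)^+$ is the same open-support argument: $[f]_\Cu = \rho(1_{\osupp(f)})$ by \cref{item: Cuntz equivalence for functions}.

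Finally, the action formula is read off the definition $\hat\theta^\Lsc_U = \sigma \circ \hat\theta_U \circ \rho$ on its domain $\{F : \osupp(F) \subseteq s(U)\}$ (\cref{thm: invariant retract induces action}): applying $\sigma$ to the indicator formula and using $\sigma \circ \rho = \id$ yields $\hat\theta^\Lsc_U(F) = F \circ \theta_U^{-1}$. Specialising to $F = 1_V$ gives $\hat\theta^\Lsc_U(1_V) = 1_V \circ \theta_U^{-1} = 1_{\theta_U(V)}$, and the groupoid identity $\theta_U(V) = r(UV) = UVU^*$ finishes the proof. I expect the main obstacle to be purely the support and boundary bookkeeping under the partial homeomorphism $\theta_U$ — verifying membership in $\Lsc(X,\bar\N)$, tracking the extension-by-zero conventions, and the passage from indicators to arbitrary $F$ through suprema — rather than any conceptual difficulty, since both key identities rest on the single observation that the $\Cu$-action is precomposition by $\theta_U^{-1}$ while $\sigma$ is a pointwise rank, and these commute.
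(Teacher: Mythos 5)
Your proposal is correct and follows essentially the same route as the paper: both establish the key facts on indicator functions $1_V$ via Urysohn functions and the open-support characterisation of Cuntz equivalence (\cref{lemma: Cuntz comparison for functions}), extend to general $F$ through the normal form $F = \sum_i 1_{U_i}$ using that $\rho$ and $\hat{\theta}_U$ preserve sums and suprema, and verify the strong-invariance identity by the pointwise rank computation $\rank\big(\theta^\KK_U(g)(x)\big) = \rank\big(g(U^*xU)\big)$. Your packaging of the argument into the two named identities ($\hat{\theta}_U \circ \rho = \rho \circ (\,\cdot \circ \theta_U^{-1})$ and $\sigma \circ \hat{\theta}_U = (\,\cdot \circ \theta_U^{-1}) \circ \sigma$) is only an organisational difference from the paper's inline computations.
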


\begin{proof}
For each $f \in C(X)^+$, we have $[f]_\Cu = \rho(1_{\osupp(f)}) \in \rho\big(\Lsc(X,\bar{\N})\big)$.

Let $\hat{\theta}\colon S(\GG) \curvearrowright \Cu(C(X))$ be the action induced by $\theta\colon S(\GG) \curvearrowright C(X)$, as defined on \cpageref{page: alpha hat s}. To see that $\rho\big(\Lsc(X,\bar{\N})\big)$ is invariant under $\hat{\theta}$, fix $U \in S(\GG)$, and recall that $C(X)_{U^*U} = C_0(s(U))$. Fix $a \in \Cu\big(C_0(s(U))\big) \cap \rho\big(\Lsc(X,\bar{\N})\big)$, and let $F$ be the unique element of $\Lsc(X,\bar{\N})$ such that $\rho(F) = a$. Then $\osupp(F) \subseteq s(U)$. For each $i \ge 1$, let $V_i \coloneqq F^{-1}(\{ i, i+1, \dotsc \} \cup \{\infty\})$, and choose $f_{V_i} \in C(X)^+$ such that $\osupp(f_{V_i}) = V_i \subseteq s(U)$. Then
\[
F = \sum_{i=1}^\infty 1_{V_i} \quad \text{ and } \quad a = \rho(F) = \sum_{i=1}^\infty \rho(1_{V_i}) = \sum_{i=1}^\infty [f_{V_i}]_\Cu.
\]
By \cref{eqn: groupoid action}, for each $i \ge 1$ and $x \in X$, we have $\theta_U(f_{V_i})(x) = f_{V_i}(U^*xU)$, and hence $\osupp(\theta_U(f_{V_i})) = U V_i U^* = r(U V_i)$, which is an open subset of $X$ because the range and multiplication maps of \'etale groupoids are open. By \cref{item: Cuntz equivalence for functions}, functions in $C(X)^+$ are Cuntz equivalent if and only if they have the same open support, so it follows that
\begin{equation} \label{eqn: theta hat U}
\hat{\theta}_U([f_{V_i}]_\Cu) = [\theta_U(f_{V_i})]_\Cu = \rho(1_{r(U V_i)})
\end{equation}
for each $i \ge 1$. Therefore,
\[
\hat{\theta}_U(a) = \hat{\theta}_U(\rho(F)) = \sum_{i=1}^\infty \hat{\theta}_U\big(\rho(1_{V_i})\big) = \sum_{i=1}^\infty \hat{\theta}_U([f_{V_i}]_\Cu) = \sum_{i=1}^\infty \rho(1_{r(U V_i)}) = \rho\Big( \sum_{i=1}^\infty 1_{r(U V_i)} \Big),
\]
and thus $\rho\big(\Lsc(X,\bar{\N})\big)$ is invariant under $\hat{\theta}$. So $\Lsc(X,\bar{\N})$ is an invariant retract of $\Cu(C(X))$, and hence \cref{thm: invariant retract induces action} implies that there is an action $\hat{\theta}^\Lsc\colon S(\GG) \curvearrowright \Lsc(X,\bar{\N})$ such that for each $U \in S(\GG)$,
\[
\Lsc(X,\bar{\N})_{U^*U} = \rho^{-1}\big(\Cu\big(C_0(s(U))\big)\big) \quad \text{ and } \quad \hat{\theta}^\Lsc_U = \sigma \circ \hat{\theta}_U \circ \rho\restr{\Lsc(X,\bar{\N})_{U^*U}}.
\]
Fix $U \in S(\GG)$, and let $V$ be an open subset of $s(U)$. Then $1_V \in \Lsc(X,\bar{\N})_{U^*U}$. Choose a function $f_V \in C(X)^+$ such that $\osupp(f_V) = V$. Then \cref{eqn: theta hat U} implies that
\begin{equation} \label{eqn: theta hat U 1_V}
\hat{\theta}^\Lsc_U(1_V) = \sigma\big(\hat{\theta}_U(\rho(1_V))\big) = \sigma\big(\hat{\theta}_U([f_V]_\Cu)\big) = \sigma\big(\rho(1_{r(UV)})\big) = 1_{r(UV)} = 1_{UVU^*}.
\end{equation}
Now fix $F \in \Lsc(X,\bar{\N})_{U^*U}$, and write
\[
F = \sum_{i=1}^\infty 1_{V_i}, \quad \text{ where } V_i = F^{-1}(\{ i, i+1, \dotsc \} \cup \{\infty\}) \subseteq s(U).
\]
Then $\osupp\!\big(\hat{\theta}^\Lsc_U(F)\big) \subseteq UU^* = r(U)$, and \cref{eqn: theta hat U 1_V} implies that for all $x \in r(U)$, we have
\begin{equation} \label{eqn: theta hat Lsc U of F}
\hat{\theta}^\Lsc_U(F)(x) = \sum_{i=1}^\infty \hat{\theta}^\Lsc_U(1_{V_i})(x) = \sum_{i=1}^\infty 1_{U V_i U^*}(x) = \sum_{i=1}^\infty 1_{V_i}(U^*xU) = F(U^*xU) = F\big(\theta_U^{-1}(x)\big).
\end{equation}
Thus $\hat{\theta}^\Lsc_U(F) = F \circ \theta_U^{-1}$.

We now show that $\Lsc(X,\bar{\N})$ is strongly invariant. For this, fix $U \in S(\GG)$. To see that $\rho\big(\sigma(C(X)_{U^*U})\big) \subseteq C(X)_{U^*U}$, fix $f \in C_0(s(U), \KK)^+ \cong \big( C_0(s(U)) \otimes \KK \big)^+$, and let $F \coloneqq \sigma([f]_\Cu)$. Recall that $\osupp(F) \subseteq \osupp(f) \subseteq s(U)$, and write
\[
F = \sum_{i=1}^\infty 1_{V_i}, \quad \text{ where } V_i = F^{-1}(\{ i, i+1, \dotsc \} \cup \{\infty\}) \subseteq s(U).
\]
For each $i \ge 1$, choose $f_{V_i} \in C(X)^+$ such that $\osupp(f_{V_i}) = V_i \subseteq s(U)$. Then
\[
\rho\big(\sigma([f]_\Cu)\big) = \rho(F) = \sum_{i=1}^\infty \rho(1_{V_i}) = \sum_{i=1}^\infty [f_{V_i}]_\Cu \in \Cu\big(C_0(s(U))\big) = \Cu(C(X))_{U^*U}.
\]
Therefore, $\rho\big(\sigma(C(X)_{U^*U})\big) \subseteq C(X)_{U^*U}$. To see that $\sigma \circ \hat{\theta}_U \circ \rho \circ \sigma\restr{\Cu(C(X))_{U^U*}} = \sigma \circ \hat{\theta}_U$, it suffices by the definition of $\hat{\theta}^\Lsc$ to show that $\hat{\theta}^\Lsc_U \circ \sigma\restr{\Cu(C(X))_{U^*U}} = \sigma \circ \hat{\theta}_U$. For this, fix $g \in C_0(s(U),\KK)^+ \cong \big( C_0(s(U)) \otimes \KK \big)^+$. Then $[g]_\Cu \in \Cu(C(X))_{U^*U}$, and $\hat{\theta}_U([g]_\Cu) = [\theta^\KK_U(g)]_\Cu$, where $\theta^\KK_U = \theta_U \otimes \id_\KK$. Fix $x \in UU^* = r(U)$. Using \cref{eqn: groupoid action} for the second equality and \cref{eqn: theta hat Lsc U of F} for the final equality, we see that
\[
\sigma\big(\hat{\theta}_U([g]_\Cu)\big)(x) = \rank\!\big(\theta^\KK_U(g)(x)\big) = \rank\!\big(g(U^*xU)\big) = \sigma([g]_\Cu)(U^*xU) = \hat{\theta}^\Lsc_U\big(\sigma([g]_\Cu)\big)(x),
\]
as required. Therefore, $\Lsc(X,\bar{\N})$ is a strongly invariant retract of $\Cu(C(X))$.
\end{proof}

We now use \cref{prop: Lsc strongly invariant retract} to give an alternative characterisation of the $\hat{\theta}^\Lsc$-below relation $\preceq_\theta^\Lsc$ defined in \cref{defn: retracted alpha-below}.

\begin{lemma} \label{lemma: theta^Lsc-below characterisation}
Let $\GG$ be an \'etale groupoid with second-countable compact Hausdorff unit space $X$, and let $\theta\colon S(\GG) \curvearrowright C(X)$ be the canonical action. Fix $F,H \in \Lsc(X,\bar{\N})$. Then $F \preceq_\theta^\Lsc H$ if and only if, for every $L \in \Lsc(X,\bar{\N})$ such that $L \ll F$, there exist open bisections $\{U_i\}_{i=1}^n \subseteq S(\GG)$ and open sets $\big\{ V_i \subseteq s(U_i) : i \in \{1, \dotsc, n\} \big\}$ such that
\begin{inequality} \label{ineq: theta^Lsc-below 1_{V_i}}
L \ll \sum_{i=1}^n 1_{V_i} \quad \text{ and } \quad \sum_{i=1}^n \hat{\theta}^\Lsc_{U_i}(1_{V_i}) = \sum_{i=1}^n 1_{r(U_i V_i)} \ll H.
\end{inequality}
\end{lemma}

\begin{proof}
Suppose that $F \preceq_\theta^\Lsc H$. Fix $L \in \Lsc(X,\bar{\N})$ such that $L \ll F$. By \cref{defn: retracted alpha-below}, there exist $\{W_i\}_{i=1}^m \subseteq S(\GG)$ and $\big\{ K_i \in \Lsc(X,\bar{\N})_{W_i^*W_i} : i \in \{1, \dotsc, m\} \big\}$ such that
\[
L \ll \sum_{i=1}^m K_i \quad \text{ and } \quad \sum_{i=1}^m \hat{\theta}^\Lsc_{U_i}(K_i) \ll H.
\]
For each $i \in \{1, \dotsc, m\}$, we have $\hat{\theta}^\Lsc_{U_i}(K_i) \ll H$ by \cref{lemma: ll le properties}, and hence $\im\!\big(\hat{\theta}^\Lsc_{U_i}(K_i)\big)$ is a finite subset of $\N$. Thus we can replace each $K_i$ with a finite sum of characteristic functions on open subsets of $X$, giving the desired result. The reverse implication is trivial.
\end{proof}

\begin{remark}
In \cref{lemma: theta^Lsc-below characterisation} we have simplified the $\preceq_\theta^\Lsc$ relation by replacing arbitrary elements of $\Lsc(X,\bar{\N})$ with characteristic functions on open subsets of $X$. This is possible because any function $L \in \Lsc(X,\bar{\N})$ that satisfies $L \ll H$ for some $H \in \Lsc(X,\bar{\N})$ must take finitely many values in $\N$. Furthermore, in \cref{lemma: theta^Lsc-below characterisation}, each $U_i V_i$ is a bisection of $\GG$ with $s(U_i V_i) = V_i$, because $V_i \subseteq s(U_i)$. Thus \cref{ineq: theta^Lsc-below 1_{V_i}} shows that $\preceq_\theta^\Lsc$ coincides with the relations described \cite[Definition~4.3]{Ma2022} and \cite[Definition~4.16]{KMP2025}.
\end{remark}

It follows from \cite[Lemma~4.13]{KMP2025} that $\Lsc(X,\bar{\N})$ has almost refinement, and hence $\preceq_\theta^\Lsc$ is transitive by \cref{lemma: almost refinement implies precalpha transitive}. Thus we do not need to consider the path-below relation $\preceq_{\theta,\rmpath}^\Lsc$. Let $\sim_\theta^\Lsc$ denote the relation on $\Lsc(X,\bar{\N})^\ll$ given by
\[
F \sim_\theta^\Lsc H \iff F \preceq_\theta^\Lsc H \,\text{ and }\, H \preceq_\theta^\Lsc F.
\]
Then $\sim_\theta^\Lsc$ is an equivalence relation. For each $F \in \Lsc(X,\bar{\N})^\ll$, we write $[F]_\theta^\Lsc$ for the $\sim_\theta^\Lsc$-equivalence class of $F$.

\begin{defn} \label{defn: dynamical retract}
Let $\GG$ be an \'etale groupoid with second-countable compact Hausdorff unit space $X$, and let $\theta\colon S(\GG) \curvearrowright C(X)$ be the canonical action. We define the \emph{dynamical retract of $\GG$} to be the preordered abelian monoid $\Lsc(\GG) \coloneqq \Lsc(X,\bar{\N})^\ll / \!\!\sim_\theta^\Lsc$. (Since $\sigma\big(\Cu(C(X))^\ll\big) = \Lsc(X,\bar{\N})^\ll$ by \cref{prop: retract Lcs}, this is the retracted dynamical Cuntz semigroup with respect to the canonical action $\theta$.)
\end{defn}

Using the dynamical retract $\Lsc(\GG)$ of \cref{defn: dynamical retract}, the following are straightforward corollaries of \cref{thm: retract stable finiteness,thm: retract PIS}, respectively.

\begin{cor} \label{cor: stable finiteness groupoid}
Let $\GG$ be an \'etale groupoid whose unit space $X$ is second-countable, compact, and Hausdorff. Suppose that $C_\red^*(\GG)$ is simple. Then $C_\red^*(\GG) = C_\ess^*(\GG)$, and the following statements are equivalent.
\begin{enumerate}[label=(\arabic*)]
\item There exists a faithful invariant tracial state of $C(X)$.
\item There exists a faithful tracial state of $C_\ess^*(\GG)$.
\item The C*-algebra $C_\ess^*(\GG)$ is stably finite.
\item The monoid $\Lsc(X,\bar{\N})^\ll$ is completely non-paradoxical.
\item There exists $F \in \Lsc(X,\bar{\N})^\ll {\setminus} \{0\}$ that is completely non-paradoxical.
\item There exists a nontrivial order-preserving homomorphism $\nu\colon \Lsc(\GG) \to [0,\infty]$.
\item There exists a faithful invariant functional $\beta\colon \Cu(C(X)) \to [0,\infty]$ that is finite on $\Cu(C(X))^\ll$ and satisfies $\beta([1_X]_\Cu) = 1$.
\end{enumerate}
\end{cor}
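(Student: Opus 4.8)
The plan is to deduce this corollary directly from \cref{thm: retract stable finiteness}, applied to the canonical action $\theta\colon S(\GG) \curvearrowright C(X)$ and to the retract $(\Lsc(X,\bar{\N}),\rho,\sigma)$ of $\Cu(C(X))$. First I would verify the hypotheses of \cref{thm: retract stable finiteness}. Since $X$ is second-countable, compact, and Hausdorff, $X$ is metrisable, so $C(X)$ is separable; it is unital because $X$ is compact, and it is exact because it is commutative. By the equivalences recorded just before the corollary, $\GG$ being minimal is the same as $\theta$ being minimal, and $\GG$ being topologically free is the same as $\theta$ being aperiodic. Using the isomorphisms $\psi_\red$ and $\psi_\ess$ and the commuting diagram of \cref{thm: duality}, the hypothesis that $C_\red^*(\GG)$ is simple becomes $C(X) \rtimesred S(\GG)$ being simple, and $C_\red^*(\GG) = C_\ess^*(\GG)$ becomes $C(X) \rtimesred S(\GG) = C(X) \rtimesess S(\GG)$. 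Finally, \cref{prop: Lsc strongly invariant retract} shows that $(\Lsc(X,\bar{\N}),\rho,\sigma)$ is a strongly invariant retract of $\Cu(C(X))$, and it is nonzero since $X \ne \emptyset$ forces $1_X \ne 0$. Thus all hypotheses of \cref{thm: retract stable finiteness} are met.

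With the hypotheses verified, \cref{thm: retract stable finiteness} yields the equivalence of its conditions together with items (1)--(8) of \cref{thm: stable finiteness}, and I would then match these against the seven conditions of the corollary. Conditions (1), (2), (3), and (7) of the corollary are respectively items (1), (2), (3), and (8) of \cref{thm: stable finiteness}, once $C_\ess^*(\GG)$ is identified with $C(X) \rtimesess S(\GG)$ via $\psi_\ess$. Condition (4) is condition (5$'$) of \cref{thm: retract stable finiteness} with $R = \Lsc(X,\bar{\N})$. Condition (5) is condition (6$'$): here I would invoke \cref{prop: retract Lcs}, which gives $\sigma\big(\Cu(C(X))^\ll\big) = \Lsc(X,\bar{\N})^\ll$, so that the existence of a completely non-paradoxical element of $\sigma\big(\Cu(C(X))^\ll\big) \setminus \{0\}$ is precisely the stated condition on $\Lsc(X,\bar{\N})^\ll$. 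Condition (6) is condition (7$'$), using \cref{defn: dynamical retract} to identify the dynamical retract $\Lsc(\GG)$ with the retracted dynamical Cuntz semigroup $R_\alpha$ of $\theta$.

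The point requiring care is the translation dictionary between the groupoid and crossed-product pictures rather than any new analysis: I must transport simplicity, the coincidence of the reduced and essential algebras, minimality, and topological freeness correctly through \cref{thm: duality} and the equivalences stated before the corollary, and then identify the retract-specific conditions using \cref{prop: retract Lcs,defn: dynamical retract}. No genuinely new obstacle arises, since \cref{prop: Lsc strongly invariant retract} has already done the substantive work of establishing that $\Lsc(X,\bar{\N})$ is a strongly invariant (nonzero) retract; the corollary is then a bookkeeping consequence of \cref{thm: retract stable finiteness}.
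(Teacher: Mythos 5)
Your proposal is correct and is exactly the argument the paper intends: the paper states that \cref{cor: stable finiteness groupoid} is a straightforward corollary of \cref{thm: retract stable finiteness}, obtained by applying it to the canonical action $\theta\colon S(\GG) \curvearrowright C(X)$ with the strongly invariant retract $(\Lsc(X,\bar{\N}),\rho,\sigma)$ from \cref{prop: Lsc strongly invariant retract}, transporting the hypotheses via \cref{thm: duality} and the minimality/topological-freeness correspondences. Your matching of the seven conditions against items (1)--(8) of \cref{thm: stable finiteness} and (5$'$)--(7$'$) of \cref{thm: retract stable finiteness}, including the use of \cref{prop: retract Lcs} for $\sigma\big(\Cu(C(X))^\ll\big) = \Lsc(X,\bar{\N})^\ll$ and \cref{defn: dynamical retract} for $\Lsc(\GG)$, is precisely the bookkeeping the paper has in mind.
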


\begin{cor} \label{cor: pure infiniteness groupoid}
Let $\GG$ be an \'etale groupoid whose unit space $X$ is second-countable, compact, and Hausdorff. Suppose that $C_\red^*(\GG)$ is simple. Then $C_\red^*(\GG) = C_\ess^*(\GG)$. Consider the following statements.
\begin{enumerate}[label=(\arabic*)]
\item \label{item: PIS groupoid PIM} The dynamical retract $\Lsc(\GG)$ is purely infinite.
\item \label{item: PIS groupoid 21P} Every element of $\Lsc(X,\bar{\N})^\ll {\setminus} \{0\}$ is $(2,1)$-paradoxical.
\item \label{item: PIS groupoid PIC*} The C*-algebra $C_\ess^*(\GG)$ is purely infinite.
\item \label{item: PIS groupoid NTS} The C*-algebra $C_\ess^*(\GG)$ admits no tracial states.
\item \label{item: PIS groupoid OPH} There does not exist a nontrivial order-preserving homomorphism $\nu\colon \Lsc(\GG) \to [0,\infty]$.
\end{enumerate}
Then \cref{item: PIS groupoid PIM} $\implies$ \cref{item: PIS groupoid 21P} $\implies$ \cref{item: PIS groupoid PIC*} $\implies$ \cref{item: PIS groupoid NTS} $\implies$ \cref{item: PIS groupoid OPH}. If the dynamical retract $\Lsc(\GG)$ has plain paradoxes, then \cref{item: PIS groupoid OPH} $\implies$ \cref{item: PIS groupoid PIM}, and \crefrange{item: PIS groupoid PIM}{item: PIS groupoid OPH} are equivalent.
\end{cor}

We end the paper with an application of \cref{thm: retract dichotomy}, which is a special case of \cite[Corollary~6.9]{KMP2025}.

\begin{cor} \label{cor: dichotomy groupoid}
Let $\GG$ be an \'etale groupoid whose unit space $X$ is second-countable, compact, and Hausdorff. Suppose that $C_\red^*(\GG)$ is simple, and that $\Lsc(\GG)$ has plain paradoxes. Then $C_\ess^*(\GG)$ is either stably finite or purely infinite.
\end{cor}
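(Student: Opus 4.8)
The plan is to deduce this statement directly from \cref{thm: retract dichotomy} by translating the groupoid data into an inverse-semigroup action and invoking the retract $\Lsc(X,\bar{\N})$ constructed in \cref{subsec: groupoid dichotomy}. First I would set $A \coloneqq C(X)$ and $S \coloneqq S(\GG)$, and let $\theta\colon S \curvearrowright C(X)$ be the canonical action. By \cref{thm: duality}\cref{item: duality gpd to IS}, the isomorphisms $\psi_\red$ and $\psi_\ess$ identify $C_\red^*(\GG) \cong C(X) \rtimesred S$ and $C_\ess^*(\GG) \cong C(X) \rtimesess S$, so it suffices to prove that $C(X) \rtimesred S$ is either stably finite or purely infinite. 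Thus the whole argument reduces to verifying the hypotheses of \cref{thm: retract dichotomy} for $\theta$ together with the retract $(\Lsc(X,\bar{\N}),\rho,\sigma)$.

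Next I would check each hypothesis in turn. The inverse semigroup $S(\GG)$ is unital, with unit the bisection $\GGo = X$. Since $X$ is second-countable, compact, and Hausdorff, $C(X)$ is separable and unital; being commutative it is nuclear and hence exact. Minimality of $\GG$ gives minimality of $\theta$, and topological freeness of $\GG$ gives aperiodicity of $\theta$, both via the equivalences recorded just before \cref{cor: stable finiteness groupoid} (using \cite[Theorem~6.13]{KwasniewskiMeyer2021}). The standing assumption $C_\red^*(\GG) = C_\ess^*(\GG)$ transports, through the isomorphisms of \cref{thm: duality}, to the equality $C(X) \rtimesred S = C(X) \rtimesess S$. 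Finally, \cref{prop: Lsc strongly invariant retract} supplies exactly the remaining two retract hypotheses: $\Lsc(X,\bar{\N})$ is a \emph{strongly} invariant retract of $\Cu(C(X))$ under the induced action $\hat{\theta}$, and $[f]_\Cu \in \rho\big(\Lsc(X,\bar{\N})\big)$ for every $f \in C(X)^+$.

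It then remains only to match the retracted dynamical Cuntz semigroup with $\Lsc(\GG)$. By \cref{defn: dynamical retract} we have $\Lsc(\GG) = \Lsc(X,\bar{\N})^\ll / \!\!\sim_\theta^\Lsc$, and since $\sigma\big(\Cu(C(X))^\ll\big) = \Lsc(X,\bar{\N})^\ll$ by \cref{prop: retract Lcs}, this is precisely the object $R_\alpha$ of \cref{defn: retracted DCS} for the retract $R = \Lsc(X,\bar{\N})$. The hypothesis that $\Lsc(\GG)$ has plain paradoxes is therefore the hypothesis that $R_\alpha$ has plain paradoxes, so \cref{thm: retract dichotomy} yields the dichotomy for $C(X) \rtimesred S$, and hence for $C_\ess^*(\GG)$.

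I do not expect a genuine obstacle here, since all the substantive work has been done in \cref{thm: retract dichotomy} and \cref{prop: Lsc strongly invariant retract}; the only points requiring care are bookkeeping ones. The first is confirming that the duality isomorphisms of \cref{thm: duality} intertwine the reduced and essential crossed products with the groupoid C*-algebras, so that the equality $C_\red^*(\GG) = C_\ess^*(\GG)$ passes cleanly to the crossed-product side. The second is confirming that the retracted dynamical Cuntz semigroup built from $\theta$ is literally $\Lsc(\GG)$ rather than merely isomorphic to it, which is what \cref{prop: retract Lcs} and \cref{defn: dynamical retract} are arranged to guarantee.
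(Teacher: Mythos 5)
Your proposal is correct and is exactly the argument the paper intends: the corollary is stated as an immediate application of \cref{thm: retract dichotomy}, with \cref{thm: duality} supplying the identification $C_\red^*(\GG) \cong C(X) \rtimesred S(\GG)$, $C_\ess^*(\GG) \cong C(X) \rtimesess S(\GG)$, \cref{prop: retract Lcs,prop: Lsc strongly invariant retract} supplying the strongly invariant retract hypotheses, the equivalences recorded before \cref{cor: stable finiteness groupoid} translating minimality and topological freeness into minimality and aperiodicity of $\theta$, and \cref{defn: dynamical retract} identifying $\Lsc(\GG)$ with the retracted dynamical Cuntz semigroup. Your hypothesis-checking (separability and exactness of $C(X)$, unitality of $S(\GG)$, transport of $C_\red^*(\GG)=C_\ess^*(\GG)$ through the commuting diagram) matches what the paper leaves implicit, so there is nothing to add.
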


\begin{remark}
Let $\GG$ be a topological groupoid. Recall that the \emph{isotropy} of $\GG$ is the subgroupoid $\Iso(\GG) \coloneqq \{ \gamma \in \GG : r(\gamma) = s(\gamma) \}$. We say that $\GG$ is \emph{topologically free} if $\Iso(\GG) {\setminus} \GGo$ has empty interior. If $\GG$ is an \'etale groupoid whose unit space $X$ is locally compact and Hausdorff, then $\GG$ is topologically free if and only if the canonical action $\theta\colon S(\GG) \curvearrowright C(X)$ is aperiodic (by \cite[Theorem~6.13]{KwasniewskiMeyer2021}), and $\GG$ is minimal if and only if $\theta$ is minimal.

The condition in \cref{cor: stable finiteness groupoid,cor: pure infiniteness groupoid,cor: dichotomy groupoid} that $C_\red^*(\GG)$ is simple holds in particular when $\GG$ is minimal and topologically free and the singular ideal of $C_\red^*(\GG)$ is trivial (so that $C_\red^*(\GG) = C_\ess^*(\GG)$; see \cref{rem: simplicity and aperiodic actions and detecting ideals}\cref{item: aperiodic implies simple iff minimal}). The singular ideal and useful criteria for its triviality have been intensively studied in the literature; see, for instance, \cite{BGHL2025, Hume2025}.
\end{remark}

\vspace{2ex}
\bibliographystyle{amsplain}
\makeatletter\renewcommand\@biblabel[1]{[#1]}\makeatother
\bibliography{references}
\vspace{2ex}

\end{document}